\newtheoremstyle{newremark}
  {5pt}
  {5pt}
  {\rmfamily}
  {}
  {\rmfamily\bf}
  {.}
  {.5em}
  {}
\newtheorem{theorem}{Theorem}
\newtheorem{lemma}[theorem]{Lemma}
\newtheorem{corollary}[theorem]{Corollary}
\newtheorem{proposition}[theorem]{Proposition}
\theoremstyle{newremark}
\newtheorem{remark}[theorem]{Remark}
\newtheorem{definition}[theorem]{Definition}
\newtheorem*{definition*}{Definition} %no numbering for Theorem*
\newtheorem*{notations*}{Notations}
\numberwithin{theorem}{section}
\numberwithin{equation}{section}
\newcommand{\R}{\mathbb{R}} %real numbers
\newcommand{\bbS}{\mathbb{S}}
\DeclareMathOperator{\rmdiam}{\mathrm{diam}} %diameter
\DeclareMathOperator{\rmLip}{\mathrm{Lip}} %Lipschitz constant
\def\Xint#1{\mathchoice
{\XXint\displaystyle\textstyle{#1}}%
{\XXint\textstyle\scriptstyle{#1}}%
{\XXint\scriptstyle\scriptscriptstyle{#1}}%
{\XXint\scriptscriptstyle
\scriptscriptstyle{#1}}%
\!\int}
\def\XXint#1#2#3{{%
\setbox0=\hbox{$#1{#2#3}{\int}$}
\vcenter{\hbox{$#2#3$}}\kern-.5\wd0}}
\def\dashint{\Xint-}
\renewcommand{\leq}{\leqslant}
\renewcommand{\geq}{\geqslant}
\renewcommand{\subset}{\subseteq}
\renewcommand{\supset}{\supseteq}
\newcommand{\eps}{\varepsilon}
\newcommand{\abs}[1]{\left| #1 \right|} % for absolute value
\newcommand{\trans}{\mathsf{t}}
\newcommand{\trac}{{\rm tr}}
\begin{document}

%=================
% TITLE AND AUTHOR
%=================

\title[Torus-like solutions for the LdG model]{Torus-like  solutions for the Landau-de Gennes model. \\ Part I: the Lyuksyutov regime}

\author{Federico Dipasquale}
\address{Dipartimento di Matematica, Sapienza Universit\`{a} di Roma, P.le Aldo Moro 5, 00185 Roma, Italy}
\email{dipasquale@mat.uniroma1.it}

\author{Vincent Millot}
\address{LAMA, Univ Paris Est Creteil, Univ Gustave Eiffel, UPEM, CNRS, F-94010, Cr\'{e}teil, France}
\email{vincent.millot@u-pec.fr}

\author{Adriano Pisante}
\address{Dipartimento di Matematica, Sapienza Universit\`{a} di Roma, P.le Aldo Moro 5, 00185 Roma, Italy}
\email{pisante@mat.uniroma1.it}

%\date{\today}

%=========
% ABSTRACT
%=========

\begin{abstract}
We study global minimizers of a continuum Landau-de Gennes energy functional for nematic liquid crystals, in three-dimensional domains, under a Dirichlet boundary condition. In a relevant range of parameters (which we call {\sl Lyuksyutov regime}), the main result establishes the nontrivial topology of the biaxiality sets of minimizers for a large class of boundary conditions including  the homeotropic boundary data. To achieve this result, we first study minimizers subject to a physically relevant norm constraint (the {\sl Lyuksyutov constraint}), and show their regularity up to the boundary. From this regularity, we rigorously derive the norm constraint from the asymptotic Lyuksyutov regime. As a consequence,  isotropic melting  is avoided by unconstrained minimizers in this regime, which then allows us to analyse their biaxiality sets. In the case of a nematic droplet, it also implies that 
the radial hedgehog is an unstable equilibrium in the same regime of parameters. Technical results of  this paper will be largely employed 
in \cite{DMP1,DMP2}, where we  prove that biaxiality level sets are generically finite unions of tori for smooth  configurations minimizing the energy in restricted classes of axially symmetric maps satisfying a topologically nontrivial boundary condition.
\end{abstract}

\keywords{Liquid crystals; axisymmetric torus solutions; harmonic maps}

%\thanks{{\it Aknowledgements.} }

\maketitle

%==================
% TABLE OF CONTENTS
%==================

\tableofcontents

%%%%%%%%%%%%%%%%%%%%%%%%%%%%%%%%%%%%%%%%%%%%%%%%%%%%%%%%%%
%%%%%%%%%%%%%%%%%%%%%%%%%%%%%%%%%%%%%%%%%%%%%%%%%%%%%%%%%%

\section{Introduction}\label{sec:1}
Nematic liquid crystals (NLC) are mesophases of matter between the liquid and the solid phases. Nematic molecules typically have elongated shape, approximately rod-like, and can flow freely, like in a liquid, which forces  their long axes  to align locally along some common direction. This feature is the key for the extreme responsivity of nematics to external stimuli, which in turn is the reason why they are so useful in technological applications. Macroscopic configurations of nematics are usually described by continuum theories, the most successful being the phenomenological Landau-de Gennes (LdG) theory \cite{Virga,dGPr,Ba,MoNe} which accounts for the most convincing description of the experimentally observed optical defects \cite{Kleman,LPZZ}. 
In the present article, the first in a series of three, we study minimizing configurations of the Landau-de Gennes energy functional in three dimensional domains under a  Dirichlet boundary condition (or {\sl strong anchoring  condition} in the NLC terminology \cite{Ba}). 
Our primary objective (and main result) in this first part  is to show the emergence of topological structures in minimizers according to the (topological) non triviality of the boundary condition (Theorem \ref{topology}). Here the topology is sought in the so-called biaxial surfaces, level sets of an indicator function, the {\sl signed biaxiality parameter}, associated with any (smooth, non vanishing) configuration, see \eqref{signedbiaxiality} and the discussion below. The non triviality of those surfaces provides the first mathematically rigorous result on the nature of defects which, at least in model geometries, are expected to be of torus type \cite{PeTr,KVZ,GaMk,KV}. Toroidal structures will be found in our companion articles \cite{DMP1,DMP2} where the LdG energy is minimized over a restricted class of symmetric configurations.  The secondary objective here is to prepare the analytical ground for~\cite{DMP1,DMP2}. Before going further, let us now  describe the mathematical setting in  details.
\vskip5pt

According to the LdG theory, configurations of NLC are represented by an order parameter which is a second order tensor called {\sl $Q$-tensor}. It takes values in the $5$-dimensional space 
$$\mathcal{S}_0:=\Big\{Q=(Q_{ij})\in\mathscr{M}_{3\times 3}(\R) : Q=Q^\trans \, , \; \trac(Q)=0 \Big\}\,,$$
where $\mathscr{M}_{3\times3}(\R)$ is the real vector space made of $3\times3$-matrices,  
$Q^\trans$ denotes the transpose of $Q$, and $\trac(Q)$ the trace of $Q$. The space $\mathcal{S}_0$ is endowed with the Hilbertian structure given by the usual (Frobenius) inner product. Since the matrices under consideration are symmetric, the inner product and the induced norm are given by  $P:Q:=\sum_{i,j}P_{ij}Q_{ij}=\trac(PQ)$ and $|Q|^2=\trac(Q^2)$. Upon the choice of an orthonormal basis, $\mathcal{S}_0$ can be identified with the Euclidean space $\R^5$. In particular, $\big\{Q\in\mathcal{S}_0: |Q|=1\big\}=\mathbb{S}^4$, the $4$-dimensional sphere. 

In this way, a NLC configuration contained in a domain $\Omega\subset\R^3$ is represented by a map ${\bf Q}:\overline\Omega\to\mathcal{S}_0$. 
At a given point $x\in\overline\Omega$, one can distinguish three mutually distinct phases: {\it (i)}~the {\sl isotropic phase}, ${\bf Q}(x)=0$; {\it (ii)} the {\sl uniaxial phase}, ${\bf Q}(x)$ has a double eigenvalue; {\it (iii)}~the {\sl biaxial phase}, ${\bf Q}(x)$ has three distinct eigenvalues. A convenient way to measure biaxiality  among configurations away from isotropic points has been introduced in \cite{KWH}. It relies on the (classical) biaxiality parameter $1-6\frac{\rm{tr}(Q^3)^2}{|Q|^6}\in[0,1]$ which vanishes exactly on the uniaxial phase. In turn, the value $1$ characterizes the {\sl maximal biaxiality} with maximal gap between the (normalized) eigenvalues.  A drawback of this parameter comes from the fact that it does not distinguish two different phases within the uniaxial phase (see e.g., \cite{DLR,KV,KVZ}): {\it (i)} the {\sl positive uniaxial phase} where the lowest eigenvalue is  double; {\it (ii)} the {\sl negative uniaxial phase} where the highest eigenvalue is  double. For this reason, we shall use a modified notion of biaxiality parameter that we now define.
\begin{definition}
\label{defsignedbiaxiality}
 For any $Q\in \mathcal{S}_0\setminus\{0\}$, we define the {\em signed biaxiality} parameter of $Q$ as
 \begin{equation}
\label{signedbiaxiality}
\widetilde{\beta}( Q):=\sqrt{6} \,\frac{\rm{tr}( Q^3)}{|Q|^{3}}  \in [-1,1] \, .
\end{equation}
\end{definition}
With this definition at hand, if a matrix $Q$ has a spectrum $\sigma(Q)=\{ \lambda_1 ,\lambda_2, \lambda_3 \} \subset \mathbb{R}$ with eigenvalues in increasing order, then $\widetilde{\beta}(Q)=\pm 1$ iff the minimal/maximal eigenvalue is double (purely positive/negative uniaxial phase), $\widetilde{\beta}(Q)=0$ iff $\lambda_2=0$ and $\lambda_1=-\lambda_3$ (maximal biaxial phase), and $Q=0$ iff $\lambda_1=\lambda_2=\lambda_3$ (isotropic phase). 
\vskip5pt

Let us now assume that the  occupied region $\Omega \subset \mathbb{R}^3$ is a bounded open set  with smooth boundary.  
We consider the Landau-de Gennes energy with the so-called one-constant approximation for the elastic energy density, see e.g. \cite{Ba}. In this case, it takes the form 
\begin{equation}
\label{LDGenergyInitial}
\mathcal{F}_{\rm LG}({\bf Q})=\int_\Omega \frac{L}{2} |\nabla {\bf Q}|^2+F_{\rm B}({\bf Q}) \, dx \, , 
\end{equation}
and it is defined for configurations ${\bf Q}$ in the Sobolev space $W^{1,2}(\Omega; \mathcal{S}_0)$. 
The parameter $L>0$ is a material-dependent elastic constant, and the bulk potential $F_{\rm B}$ is the quartic polynomial
\begin{equation}
\label{abc-potential}
F_{\rm B}(Q):=-\frac{a^2}{2} {\rm tr}(Q^2)-\frac{b^2}{3} {\rm tr} ( Q^3)+\frac{c^2}{4} \big({\rm tr} (Q^2) \big)^2 \, ,
\end{equation}
where $a,b$ and $c$ are also material-dependent strictly positive constants. As usual, it is convenient to subtract-off an additive constant and introduce
\begin{equation}
\label{FBtilde}
\widetilde{F}_{\rm B}(Q):=F_{\rm B}(Q)-\min_{\mathcal{S}_0} F_{\rm B} \, ,
\end{equation}
so that the new potential becomes nonnegative. 
It turns out that the potential is minimal when the signed biaxiality is maximal and the norm equals a characteristic value $s_+>0$ determined by $a$, $b$, and $c$. More precisely,  $\widetilde{F}_{\rm B}(Q)=0$ iff $Q \in \mathcal{Q}_{\rm min}$ where $\mathcal{Q}_{\rm min}$ the vacuum-manifold made of positive uniaxial matrices 
\begin{equation}
\label{vacuum}
\mathcal{Q}_{\rm min}:= \left\{ Q\in \mathcal{S}_0:  Q=s_+ \left( n \otimes n -\frac13 I \right) \, , \quad n \in \mathbb{S}^2  \right\} \, ,
\end{equation}
and
\begin{equation}
\label{splus}
 s_+:=\frac{b^2+\sqrt{b^4+24a^2 c^2}}{4c^2} \, 
 \end{equation}
is the positive root of the characteristic equation 
\begin{equation}
\label{spluseq}
2c^2 t^2 -b^2 t-3 a^2=0\,.
 \end{equation}
Notice that, up to a multiplicative constant, $\mathcal Q_{\rm min}$ is the representation of the real projective plane $\R P^2=\mathbb{S}^2/\{\pm1\}$ through the {\sl  Veronese immersion} into $\mathbb{S}^4$ (see e.g. \cite[p.~80]{BairdWood}). Therefore $\mathcal Q_{\rm min}$ has nontrivial topology, and there are nontrivial homotopy groups $\pi_2(\mathcal{Q}_{\rm min})=\mathbb{Z}$ and $\pi_1(\mathcal{Q}_{\rm min})=\mathbb{Z}_2$, which are relevant for the presence of topological defects. We replace $\mathcal{F}_{\rm LG}$ by the energy functional corresponding to the new potential
\begin{equation}\label{LDGenergytildeori}
\widetilde{\mathcal{F}}_{\rm LG}({\bf Q}):=\int_\Omega \frac{L}{2} |\nabla {\bf Q}|^2+\widetilde{F}_{\rm B}({\bf Q}) \, dx \, , 
\end{equation}
which  is now the sum of two nonnegative terms, one penalizing spatial variations, and the other deviations from the vacuum manifold $\mathcal{Q}_{\rm min}$.

To reduce the dependence on the parameters, we rescale tensor maps by setting 
\begin{equation}\label{normaltens}
{\bf Q}(x)=:s_+ \sqrt{\frac23} Q(x)\,.
\end{equation}
Under this normalization, the vacuum manifold becomes exactly the real projective plane $\mathbb{R}P^2$, where $ \mathbb{R}P^2\subset \mathbb{S}^4$ is precisely embedded (and from now on identified with) through the Veronese immersion (provided by \eqref{vacuum} with $\sqrt{3/2}$ in place of $s_+$). In turn, the energy functional rewrites
\begin{equation}
\label{LDGenergytildebold}
\widetilde{\mathcal{F}}_{\rm LG}({\bf Q})= \frac23  s_+^2 L\,  \mathcal{F}_{\lambda,\mu}(Q) \, ,
\end{equation}
with
\begin{equation}
\label{LDGenergy}
 \mathcal{F}_{\lambda,\mu}(Q):=\int_\Omega \frac{1}{2} |\nabla  Q|^2+\lambda  W(Q)+  \frac{\mu}{4} (1-|Q|^2)^2\, dx \,.
\end{equation}
The reduced parameters $\lambda$ and $\mu$ are given by 
$$ \lambda:=\sqrt{\frac{2}{3}}\frac{b^2s_+}{L}>0 \, , \quad\mu := \frac{a^2}{L}>0 \, , $$
and the reduced smooth potential $W:\mathcal{S}_0\to \R$ is nonnegative and vanishes exactly on~$\mathbb{R}P^2$. 
More precisely, in view of \eqref{splus}-\eqref{spluseq}, the potential $W$ is explicitly given by
\begin{equation}
\label{Newpotential1}
W(Q)=\frac{1}{3\sqrt{6}}\Big(|Q|^3-\sqrt{6}{\rm tr}(Q^3)\Big) +\frac{1}{12\sqrt{6}}\big(3|Q|^2+2|Q|+1\big)\big(|Q|-1\big)^2\,,
\end{equation}
or equivalently, 
\begin{equation}
\label{Newpotential2}
W(Q)=\frac{1}{4\sqrt{6}}|Q|^4-\frac{1}{3}{\rm tr}(Q^3)+\frac{1}{12\sqrt{6}}\,.
\end{equation}
The structure relations \eqref{LDGenergy} and \eqref{Newpotential1} suggests that, in a regime where $\mu$ is large compared to~$\lambda$, the energy $\mathcal{F}_{\lambda,\mu}$ favours rescaled configurations of approximatively unit norm. 

The functional $\widetilde{\mathcal{F}}_{\rm LG}$ has already been studied in several parameters regimes. 
We emphasize the articles \cite{BaZa,Ca,CoLa,HeMaPi,INSZ1,Maj,MaZa,NgZa} as somehow directly related to the present paper, and we refer to \cite{Ba,Gart} for further references. To the best of our knowledge, the reduction \eqref{LDGenergytildebold}-\eqref{LDGenergy} seems to be new, and in turn, the regime where $\mu$ is large compared to~$\lambda$ has not been addressed in the mathematical literature. This is precisely the range of parameters we want to focus on.
\vskip5pt

Following \cite{Ly} (see also \cite{Gart,KVZ,KV,PeTr} for further discussion on the physical ground), 
we first make the fundamental assumption that the norm of an admissible configuration ${\bf Q}$  is given by the constant value proper of the vacuum manifold, i.e.,
\begin{equation}
\label{Lyuk}
|{\bf Q}(x)| \equiv \sqrt{\frac23}\, s_+  \qquad (\hbox{Lyuksyutov constraint}) \, .
\end{equation}
Under the Lyuksyutov constraint, the energy functional takes the form 
$$\widetilde{\mathcal{F}}_{\rm LG}({\bf Q})=\frac23 s_+^2L\mathcal{E}_\lambda(Q)$$ 
for rescaled tensor maps $Q \in W^{1,2}(\Omega; \mathbb{S}^4)$, where
\begin{equation}
\label{LDGenergytilde}
\mathcal{E}_\lambda(Q):=\int_\Omega \frac{1}{2} |\nabla Q|^2+\lambda W(Q)\, dx \, .
\end{equation}
The restriction of the potential $W:\mathcal{S}_0\to\R$ to $\mathbb{S}^4$ is given by 
\begin{equation}
\label{redpotential}
W(Q)=\frac{1}{3\sqrt{6}}\Big(1-\widetilde{\beta}(Q)\Big) \qquad \forall Q\in \mathbb{S}^4\,,
\end{equation}
where $\widetilde{\beta}(Q)$ is the signed biaxiality from Definition \ref{defsignedbiaxiality}.
In particular, $W$ is nonnegative on $\mathbb{S}^4$, $\{W=0\}\cap\mathbb{S}^4=\mathbb{R}P^2$, and $\nabla_{\rm tan} W (Q)=0$ for any $Q \in \mathbb{R}P^2$. As a consequence, when further restricted to the subspace of positive uniaxial configurations $W^{1,2}(\Omega;\mathbb{R}P^2)$, the energy functional \eqref{LDGenergytilde} reduces to the Dirichlet integral, i.e., the Frank-Oseen energy in the one-constant approximation. For an account on the qualitative properties of defects in the Frank-Oseen model, we refer the interested reader to e.g. \cite{AlLi,BCL}.
\vskip5pt

A critical point $Q_\lambda\in W^{1,2}(\Omega;\mathbb{S}^4)$ of $\mathcal{E}_\lambda$ among $\mathbb{S}^4$-valued maps satisfies in the sense of distributions in $\Omega$ the Euler-Lagrange equation
\begin{equation}\label{MasterEq}
\Delta Q_\lambda+|\nabla Q_\lambda|^2Q_\lambda =\lambda \nabla_{\rm tan} W (Q_\lambda) \, ,
\end{equation}
with the tangential gradient of $W$ along $\mathbb{S}^4 \subset \mathcal{S}_0$ given by
$$\nabla_{\rm tan} W (Q)=- \Big(Q^2-\frac{1}{3}I - {\rm tr}(Q^3)Q\Big)  \,.$$
The left hand side of \eqref{MasterEq} is usually called {\sl tension field} of $Q$. It is a tangent field along $Q$ in~$\mathbb{S}^4$, and equation \eqref{MasterEq} is nothing else but a perturbed harmonic map equation for $\mathbb{S}^4$-valued map with the extra term $\lambda \nabla_{\rm tan} W (Q)$ as a source term. Any tensor field $Q$ which is weakly harmonic among $\mathbb{S}^4$-valued maps and lying in the subspace $W^{1,2}(\Omega;\mathbb{R}P^2)$ is also weakly harmonic among maps in $W^{1,2}(\Omega;\mathbb{R}P^2)$\footnote{Observe that the converse implication is not true in general, because the Veronese immersion is minimal but it is not totally geodesic, and the tension field of $Q$ in $\mathbb{S}^4$ could be purely orthogonal to $\mathbb{R}P^2$ but nonzero. Thus, if $Q$ is weakly harmonic among map in the space $W^{1,2}(\Omega;\mathbb{R}P^2)$, i.e., it is a critical point of the Frank-Oseen energy,
%it is a critical point of the energy \eqref{LDGenergytilde} among uniaxial configurations satisfying the norm constraint \eqref{Lyuk}
then  it does not solve \eqref{MasterEq} in general. Hence it is not a critical point of the Landau-de Gennes energy under norm constraint.},  and provides a solution to \eqref{MasterEq}. Since everywhere discontinuous weakly harmonic maps among maps in $W^{1,2}(\Omega;\mathbb{R}P^2)$ do exist (see \cite{Ri}), we expect smoothness of solutions to \eqref{MasterEq} to fail in general, and their regularity  should rely in an essential way on energy minimality. 
\vskip5pt

We consider the minimization of the energy functional $\mathcal{E}_\lambda$ among maps in $W^{1,2}(\Omega;\bbS^4)$ satisfying a Dirichlet boundary condition  in the sense of traces.  We fix a  smooth boundary trace $Q_{\rm b} :\partial \Omega \to \mathbb{S}^4$, and we consider the set of admissible configurations 
\begin{equation}
\label{admissibleconf}
\mathcal{A}_{Q_{\rm b}}(\Omega):=\Big\{Q\in W^{1,2}(\Omega;\mathcal{S}_0): Q_{|\partial\Omega}=Q_{\rm b}\,,\; |Q|=1\text{ a.e. in $\Omega$}\Big\} \subset W^{1,2}(\Omega;\mathbb{S}^4)\,, 
\end{equation}
which is nonempty by \cite{HaLi}. Hence, one can fix a reference extension $\bar{Q}_{\rm b} \in \mathcal{A}_{Q_{\rm b}}(\Omega)$, which, as a matter of fact, can be chosen in $C^0(\overline{\Omega};\mathbb{S}^4)$, or even smooth in the interior since  $\pi_2(\mathbb{S}^4)=0$ (so that density of smooth maps in $\mathcal{A}_{Q_{\rm b}}(\Omega)$ holds, see e.g. \cite{BeZh}). 
By the direct method in the Calculus of Variations, it is routine to show that there exist  minimizers $Q_\lambda \in \mathcal{A}_{Q_{\rm b}}(\Omega)$ of $\mathcal{E}_\lambda$. Concerning regularity, such minimizers are smooth in $\Omega$, and up to to the boundary if $\partial\Omega$ and $Q_{\rm b}$ are regular enough. The energy $\mathcal{E}_\lambda$ being a $0$-order perturbation the Dirichlet energy, regularity can be recovered through the well established theory of minimizing harmonic maps, starting from the pioneering papers \cite{SU1,SU2,SU3} where H\"older continuity up to the boundary for minimizers for a class of energies including \eqref{LDGenergytilde} was first established. For  details on this theory, we refer to the books \cite{GiaqMart,LiWa2,Mos,Simon}. The precise regularity statement we shall rely on is the object of the following theorem.

\begin{theorem}\label{thm:full-regularity}
Assume that $\partial\Omega$ is of class $C^3$ and $Q_{\rm b}\in C^{1,1}(\partial\Omega;\mathbb{S}^4)$. 
If $Q_\lambda$ is a minimizer of $\mathcal{E}_\lambda$ in the class $\mathcal{A}_{Q_{\rm b}}(\Omega)$, then $Q_\lambda \in C^{\omega}(\Omega)\cap C^{1,\alpha}(\overline\Omega)$ for every $\alpha\in(0,1)$. If in addition, 
$Q_{\rm b} \in C^{2,\delta}(\partial \Omega)$ for some $\delta>0$, then $Q_\lambda \in C^{2,\delta}(\overline{\Omega})$, and, finally, if $\Omega$ is a domain with analytic boundary and $Q_{\rm b} \in C^\omega(\partial\Omega)$, then $Q_\lambda \in C^\omega(\overline{\Omega})$.
\end{theorem}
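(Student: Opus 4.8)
The plan is to deduce the theorem from the Schoen--Uhlenbeck regularity theory for energy minimizing maps \cite{SU1,SU2,SU3} together with a standard elliptic bootstrap for the Euler--Lagrange system \eqref{MasterEq}. The structural point is that the functional $\mathcal{E}_\lambda$ in \eqref{LDGenergytilde} is the Dirichlet energy plus the lower order term $\lambda W$, with $W$ and all its derivatives bounded on the compact target $\mathbb{S}^4$; hence every qualitative statement for energy minimizing harmonic maps into $\mathbb{S}^4$ carries over to minimizers of $\mathcal{E}_\lambda$ after the routine modifications needed to absorb a bounded zero-order term (see \cite{GiaqMart,Mos,Simon}). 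First I would establish interior continuity: by the $\varepsilon$-regularity theorem and the (almost) monotonicity formula, a minimizer $Q_\lambda\in\mathcal{A}_{Q_{\rm b}}(\Omega)$ lies in $C^{0,\alpha}_{\rm loc}(\Omega\setminus\rmsing(Q_\lambda))$ for every $\alpha\in(0,1)$, where $\rmsing(Q_\lambda)$ is relatively closed in $\Omega$ and of Hausdorff dimension at most $\dim\Omega-3=0$, hence locally finite. To exclude interior singularities one blows up at a putative singular point: the lower order term contributes only $O(r^2)$ to the scale-invariant energy on $B_r$ and drops out in the limit, so a blow-up limit is a nonconstant $0$-homogeneous energy minimizing harmonic map $\R^3\to\mathbb{S}^4$, equivalently a nonconstant harmonic map $\omega\colon\mathbb{S}^2\to\mathbb{S}^4$ whose $0$-homogeneous extension minimizes the energy on the unit ball. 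No such $\omega$ exists: choosing $q\in\mathbb{S}^4\setminus\omega(\mathbb{S}^2)$ (possible since $\omega(\mathbb{S}^2)$ is Lebesgue-null) and deforming the homogeneous extension towards $q$ through the contractible set $\mathbb{S}^4\setminus\{q\}$ produces a competitor of strictly smaller energy; this is the mechanism behind the smoothness of energy minimizing maps from a $3$-manifold into $\mathbb{S}^k$, $k\geq 3$, cf. \cite{SU1,BCL}. Therefore $\rmsing(Q_\lambda)=\emptyset$ and $Q_\lambda\in C^{0,\alpha}_{\rm loc}(\Omega)$ for all $\alpha<1$.

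Once interior continuity is available, $Q_\lambda$ solves \eqref{MasterEq} in $\Omega$, a semilinear elliptic system whose right-hand side is polynomial in $Q_\lambda$ and quadratic in $\nabla Q_\lambda$. Continuity together with the smallness of $\int_{B_r}|\nabla Q_\lambda|^2$ on small balls (from monotonicity) gives, via a Caccioppoli/reverse-H\"older argument, $\nabla Q_\lambda\in L^p_{\rm loc}$ for every $p<\infty$; then $W^{2,q}_{\rm loc}$-estimates for every $q<\infty$ give $Q_\lambda\in C^{1,\gamma}_{\rm loc}$, and iterating Schauder estimates in \eqref{MasterEq} gives $Q_\lambda\in C^\infty(\Omega)$. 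Since the nonlinearity in \eqref{MasterEq} is real analytic in $(Q,\nabla Q)$, the analyticity theorem for analytic elliptic systems (Morrey) then yields $Q_\lambda\in C^\omega(\Omega)$.

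For the behaviour near $\partial\Omega$ I would invoke the boundary $\varepsilon$-regularity and the boundary monotonicity formula of \cite{SU2}: under $\partial\Omega\in C^3$ and $Q_{\rm b}\in C^{1,1}$ (again the bounded zero-order term is harmless) these yield H\"older continuity of $Q_\lambda$ in a neighbourhood of $\partial\Omega$, so that, combined with the interior part, $Q_\lambda\in C^{0,\alpha}(\overline\Omega)$ for every $\alpha\in(0,1)$. Flattening the boundary and using boundary $L^p$ (Calder\'on--Zygmund) estimates for \eqref{MasterEq} with Dirichlet datum $Q_{\rm b}\in C^{1,1}\subset W^{2,p}(\partial\Omega)$ for every $p<\infty$ then upgrades this to $Q_\lambda\in W^{2,p}_{\rm loc}(\overline\Omega)\hookrightarrow C^{1,\alpha}(\overline\Omega)$ for every $\alpha\in(0,1)$. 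If in addition $Q_{\rm b}\in C^{2,\delta}(\partial\Omega)$, boundary Schauder estimates for \eqref{MasterEq} give $Q_\lambda\in C^{2,\delta}(\overline\Omega)$; and if $\partial\Omega$ and $Q_{\rm b}$ are real analytic, the analytic boundary regularity theorem for elliptic systems yields $Q_\lambda\in C^\omega(\overline\Omega)$.

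The two genuinely nontrivial inputs are the two facts borrowed from \cite{SU1,SU2}: the emptiness of the interior singular set --- which essentially uses energy minimality and the topology of $\mathbb{S}^4$ (recall from the footnote above that mere critical points valued in $\mathbb{R}P^2\subset\mathbb{S}^4$ need not be smooth, so minimality cannot be dropped) --- and the absence of a boundary singular set, which is the technically heaviest ingredient and is where the smoothness hypotheses on $\partial\Omega$ and $Q_{\rm b}$ are used. Once continuity of $Q_\lambda$ up to $\overline\Omega$ is secured, everything else is a routine elliptic bootstrap with no further obstacle.
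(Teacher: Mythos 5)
Your proposal is correct at the level of architecture, and the paper itself concedes (in the paragraph following the statement) that the result ``cannot be truly considered as new'' and that it is recoverable from the theory of \cite{SU1,SU2,SU3}. Your route, however, is genuinely different in spirit from the paper's: you invoke Schoen--Uhlenbeck as a black box (interior and boundary $\varepsilon$-regularity and monotonicity, the dimension bound on the singular set, the nonexistence of nonconstant minimizing tangent maps), then close with an elliptic bootstrap. The paper instead develops everything from scratch: monotonicity via a penalty-approximation Pohozaev argument rather than inner variations (Proposition~\ref{monotonprop} and Proposition~\ref{corintmonotform}); strong compactness of blow-ups via the Luckhaus interpolation lemma (Propositions~\ref{interiorblowup} and~\ref{boundaryblowup}); $\varepsilon$-regularity proved simultaneously in the interior and at the boundary by combining the Scheven reflection with the Chang--Wang--Yang iteration scheme (Theorem~\ref{epsregthm}); and Lipschitz continuity via harmonic replacement (Proposition~\ref{propLipcont}). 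The reason for insisting on a self-contained treatment is stated explicitly in the introduction: the companion papers \cite{DMP1,DMP2} minimize $\mathcal{E}_\lambda$ over a restricted symmetric class, to which \cite{SU1,SU2,SU3} do not apply directly, so the estimates and constants have to be tracked in terms of $\lambda$, $\Omega$, $Q_{\rm b}$.

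Two more pointed remarks. First, the step where you exclude nonconstant minimizing tangent maps $\omega\colon\mathbb{S}^2\to\mathbb{S}^4$ --- pick $q\notin\omega(\mathbb{S}^2)$ and ``deform through the contractible $\mathbb{S}^4\setminus\{q\}$ to obtain a competitor of strictly smaller energy'' --- is the right intuition, but as written it only produces \emph{a} continuous filling, not one with strictly smaller Dirichlet energy, so the contradiction is not closed. The actual input is \cite[Theorem~2.7]{SU3} (smoothness of minimizing harmonic maps from $\mathbb{R}^3$ into $\mathbb{S}^4$), proved there via a second variation / stability argument, and this is precisely what the paper cites in Proposition~\ref{interiorsmallenergy}; at the boundary the paper uses Lemaire's rigidity theorem \cite{Le} in the spot where you invoke \cite{SU2}. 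Second, the interior bootstrap you sketch (reverse H\"older gives $\nabla Q_\lambda\in L^p_{\rm loc}$ for all $p<\infty$, hence $W^{2,q}_{\rm loc}$, etc.) is imprecise: reverse H\"older yields only a fixed gain $L^{2+\varepsilon}$, and $|\nabla Q_\lambda|^2\in L^1$ is not a legal Calder\'on--Zygmund right-hand side, so the iteration must instead be driven by the Campanato decay of the scaled energy coming from H\"older continuity. The paper's harmonic replacement (Proposition~\ref{propLipcont}) bypasses this by reaching Lipschitz in one go, after which Schauder theory, Morrey analyticity and their boundary versions give the stated $C^{1,\alpha}(\overline\Omega)$, $C^{2,\delta}(\overline\Omega)$ and $C^\omega(\overline\Omega)$ conclusions exactly as you say.
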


Besides the fact that Theorem \ref{thm:full-regularity} cannot be truly considered as new, we shall present a detailed proof,   for essentially  two reasons. The first and main reason is that it gives us the opportunity to develop a full set of estimates (and identities) available for more general critical points of $\mathcal{E}_\lambda$,  keeping track of the data (domain, boundary condition, parameters). With this respect, it paves the way for our companion articles \cite{DMP1,DMP2}, where we consider minimizers of $\mathcal{E}_\lambda$ in a restricted class of symmetric maps for which  \cite{SU1,SU2,SU3} do not apply, and we perform some asymptotic analysis with respect to those data, see Remark~\ref{finalremark}. To effectively apply our estimates in \cite{DMP1,DMP2}, we had to  rely as less as possible on energy minimality, and we made  explicit  estimates coming from the regularity theory for stationary harmonic maps (see e.g. \cite{Ev,LiWa2,Mos})  which will be crucial to obtain compactness properties for the corresponding  solutions to \eqref{MasterEq}. Our second reason is to present a proof which is self-contained and elementary (even if rather long), aiming to popularize tools from harmonic maps theory, and hoping that it could be useful to the NLC community. 

The proof follows somehow a classical scheme, but it presents some differences we want to comment on. 
The crucial point is  to obtain Lipschitz continuity, as higher order regularity can be then deduced from linear elliptic theories. 
 For both  interior and  boundary regularity, the main steps are: {\sl (i)} monotonicity formulae; {\sl (ii)} strong compactness of blow-ups; {\sl (iii)} constancy of blow-up limits (Liouville property); {\sl (iv)} continuity under smallness of the scaled energy ($\eps$-regularity); {\sl (v)}~Lipschitz continuity.  
The monotonicity formula here is not obtained by inner variations, but instead by a (regularizing) penalty approximation for which we can use the classical Pohozaev multiplier argument (see e.g. \cite{ChenStruwe}, or \cite{MaZa} in the LdG context).  
More precisely, we relax the norm constraint, and passing to the limit in the monotonicity formulae for approximated problems, we obtain interior and boundary monotonicity formulae.
Strong compactness of blow-ups is obtained as usual by construction of comparison maps  based on the Luckhaus lemma  \cite{Lu}, see e.g. \cite{Simon}. 
The constancy of blow-up limits follows from  \cite{SU3} at interior points, and from \cite{Le} at boundary points. 
Our approach to $\eps$-regularity  treats in a unified way the interior and the boundary case, adapting for the latter the clever reflection trick devised in \cite{Scheven} for harmonic maps. H\"{o}lder-continuity under smallness of the scaled energy is not deduced from Hardy-BMO duality as in \cite{Ev}, or from integrability by compensation as in \cite{RiSt}. Here we adapt to our context the elementary iteration approach introduced in \cite{CWY}, as already done in \cite{PaPi} for a similar minimization problem.  
Finally, Lipschitz continuity is obtained using a harmonic replacement argument in the spirit of~\cite{Sch}.
\vskip5pt

With Theorem \ref{thm:full-regularity} at hand, we now remove the norm constraint \eqref{Lyuk}, and we consider the unrestricted energy functional \eqref{LDGenergy}. We minimize  $\mathcal{F}_{\lambda,\mu}$ over maps in $W^{1,2}(\Omega;\mathcal{S}_0)$ still satisfying a Dirichlet boundary condition. Given $Q_{\rm b} \in C^{1,1}(\partial \Omega; \mathbb{S}^4)$, existence of minimizers $Q_\lambda^\mu$ of $\mathcal{F}_{\lambda,\mu}$ in $ W_{Q_{\rm b}}^{1,2}(\Omega;\mathcal{S}_0)$ follows again from the direct method in the Calculus of Variations. In addition, the usual interior and boundary regularity for semilinear elliptic equations applied to the Euler-Lagrange equation satisfied by critical points of $\mathcal{F}_{\lambda,\mu}$ (see \eqref{ELeqQmu}),   
implies that  $Q^\mu_\lambda \in C^{1,\alpha}(\overline{\Omega};\mathcal{S}_0) \cap C^\omega (\Omega; \mathcal{S}_0)$ for every $\alpha \in (0,1)$.
At this stage, we are interested in the asymptotic behaviour of minimizers $Q^\mu_\lambda$ in the range of parameters (that we call {\em Lyuksyutov regime})
\begin{equation}\label{eq:lyuk-regime}
	\lambda=\sqrt{\frac{2}{3}} \frac{b^2s_+}{L}\ \equiv {\rm const} \, , \qquad \mu= \frac{a^2}{L} \to +\infty \, .
\end{equation}
Particular cases are given by  $a^2 \to \infty \, , \, \, b^2 \sim |a|^{-1}$ or $L\to 0 \, , \, \, b^2 \sim L$. These regimes resemble the low-temperature limit and the small elastic constant limit, respectively. For further discussions on this aspect and related asymptotic limits, we refer to Remark~\ref{rmk:rescaling} and \cite{Gart}.

Under these restrictions on the parameters, the last term in $\mathcal{F}_{\lambda,\mu}$ acts as a penalty approximation of the norm constraint \eqref{Lyuk}.   
The family $\{\mathcal{F}_{\lambda,\mu}\}_\mu$ converges to the functional $\mathcal{E}_\lambda$ (in the sense of $\Gamma$-convergence, see e.g. \cite{Bra}), and minimizers of $\mathcal{F}_{\lambda,\mu}$ converge to minimizers of $\mathcal{E}_\lambda$  in the energy space. Then Theorem \ref{thm:full-regularity} comes into play to prove that, in the Lyuksyutov asymptotic regime, the norm of minimizers of $\mathcal{F}_{\lambda,\mu}$ converges uniformly to one, hence providing a mathematical justification of the norm constraint  \eqref{Lyuk} originally introduced in \cite{Ly}. As a byproduct, minimizers do not exhibit the isotropic phase for $\mu$ large enough compared to $\lambda$, the fundamental point of our (upcoming) discussion.

\begin{theorem}\label{thm:noisotropicphase}
Assume that $\partial\Omega$ is of class $C^3$ and $Q_{\rm b}\in C^{1,1}(\partial\Omega;\mathbb{S}^4)$.   
There exist minimizers $Q^\mu_\lambda$  of $\mathcal{F}_{\lambda,\mu}$ in the class $W^{1,2}_{Q_{\rm b}}(\Omega;\mathcal{S}_0)$, and  any such $Q_\lambda^\mu$ belongs to $C^\omega(\Omega)\cap C^{1,\alpha}(\overline\Omega)$ for every $\alpha\in(0,1)$. 
In addition, as $\mu \to \infty$ with $\lambda$ constant (Lyuksyutov regime), the following holds:
\begin{enumerate}
\item there exist a (not relabeled) subsequence and $Q_\lambda \in W^{1,2}(\Omega;\mathbb{S}^4)$ minimizing $\mathcal{E}_\lambda$ in the class $\mathcal{A}_{Q_{\rm b}}(\Omega)$ such that $Q^\mu_\lambda \to Q_\lambda$ strongly in $ W^{1,2}(\Omega;\mathcal{S}_0)$;
\vskip5pt
\item  $\mathcal{F}_{\lambda,\mu}(Q_\lambda^\mu) \to \mathcal{E}_\lambda(Q_\lambda)$ and $\mu \int_\Omega(1-|Q_\lambda^\mu|^2)^2\,\mbox{d}x \to 0$;
\vskip5pt
\item  $|Q_\lambda^\mu| \to 1$  uniformly in $\overline{\Omega}$.
\end{enumerate}
\vskip3pt
\noindent In particular, for each $\lambda>0$, there exists a value $\mu_\lambda=\mu_\lambda(\lambda,\Omega,Q_{\rm b})>0$ such that for $\mu>\mu_\lambda$, any minimizer $Q_\lambda^\mu$ of $\mathcal{F}_{\lambda,\mu}$  satisfies $|Q_\lambda^\mu|>0$ in $\overline\Omega$, i.e., minimizers do not exhibit the isotropic phase.
\end{theorem}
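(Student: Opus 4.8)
The plan is to combine the $\Gamma$-convergence picture (which yields weak compactness and the convergence of energies) with the uniform regularity provided by Theorem~\ref{thm:full-regularity} to upgrade $L^2$-type control on $1-|Q_\lambda^\mu|^2$ to uniform convergence of the norms. Existence of minimizers $Q_\lambda^\mu$ and their $C^{1,\alpha}(\overline\Omega)\cap C^\omega(\Omega)$ regularity follow from the direct method together with standard interior and boundary Schauder theory for the semilinear Euler--Lagrange system \eqref{ELeqQmu}, so the substance is in items (1)--(3). For (1) and (2): fixing the reference extension $\bar Q_{\rm b}\in\mathcal{A}_{Q_{\rm b}}(\Omega)$, minimality gives $\mathcal{F}_{\lambda,\mu}(Q_\lambda^\mu)\leq \mathcal{F}_{\lambda,\mu}(\bar Q_{\rm b})=\mathcal{E}_\lambda(\bar Q_{\rm b})$ since $|\bar Q_{\rm b}|\equiv 1$; this bounds $\int_\Omega \tfrac12|\nabla Q_\lambda^\mu|^2+\lambda W(Q_\lambda^\mu)$ uniformly in $\mu$ and forces $\mu\int_\Omega(1-|Q_\lambda^\mu|^2)^2\,dx\leq C$, hence $\int_\Omega(1-|Q_\lambda^\mu|^2)^2\,dx\to 0$. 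Thus $Q_\lambda^\mu\rightharpoonup Q_\lambda$ weakly in $W^{1,2}$ along a subsequence with $|Q_\lambda|=1$ a.e.\ and $Q_{\lambda|\partial\Omega}=Q_{\rm b}$, so $Q_\lambda\in\mathcal{A}_{Q_{\rm b}}(\Omega)$. The standard $\Gamma$-convergence/$\liminf$ argument (lower semicontinuity of the Dirichlet term and of $\int W$, nonnegativity of the penalty) combined with the recovery inequality $\limsup_\mu \mathcal{F}_{\lambda,\mu}(Q_\lambda^\mu)\leq \limsup_\mu \mathcal{F}_{\lambda,\mu}(Q^\star)=\mathcal{E}_\lambda(Q^\star)$ for any competitor $Q^\star\in\mathcal{A}_{Q_{\rm b}}(\Omega)$ shows $Q_\lambda$ minimizes $\mathcal{E}_\lambda$ and $\mathcal{F}_{\lambda,\mu}(Q_\lambda^\mu)\to\mathcal{E}_\lambda(Q_\lambda)$; strong $W^{1,2}$ convergence then follows by matching the Dirichlet energies (the $\lambda W$ terms converge by Fatou on both sides, and $\mu\int(1-|Q_\lambda^\mu|^2)^2\to 0$ is squeezed between $0$ and $\mathcal{F}_{\lambda,\mu}(Q_\lambda^\mu)-\int\tfrac12|\nabla Q_\lambda^\mu|^2-\lambda\int W(Q_\lambda^\mu)\to 0$).

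For item (3), the uniform convergence, the idea is a Bochner/maximum-principle argument on $u_\mu:=1-|Q_\lambda^\mu|^2$. Writing $\rho_\mu:=|Q_\lambda^\mu|$, from the Euler--Lagrange equation one derives a differential inequality of the form $-\Delta u_\mu + \mu\, g(\rho_\mu)\,u_\mu \leq C|\nabla Q_\lambda^\mu|^2$ where $g>0$ is bounded below near $\rho=1$; equivalently $\Delta \rho_\mu^2 \geq 2|\nabla Q_\lambda^\mu|^2\rho_\mu^{?} + \mu\,(\text{penalty contribution}) - (\text{potential contribution})$, the precise form being the one already used for penalty approximations (e.g.\ as in \cite{MaZa}). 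The obstruction to a clean global maximum principle is the gradient term $|\nabla Q_\lambda^\mu|^2$, which is only $L^1$-bounded a priori; this is where Theorem~\ref{thm:full-regularity} — or rather the $\varepsilon$-regularity and Lipschitz estimates developed in its proof, applied to the limit $Q_\lambda$ and transferred along the strong $W^{1,2}$ convergence — is essential. Concretely, one proves a \emph{uniform} (in $\mu$) local gradient bound $\|\nabla Q_\lambda^\mu\|_{L^\infty(B_{r}(x_0))}\leq C(r)$ on any ball compactly inside $\Omega$, and a corresponding boundary gradient bound near $\partial\Omega$ using the $C^{1,\alpha}(\overline\Omega)$ estimate for $Q_\lambda$ and stability of the $\varepsilon$-regularity under the penalization; this makes the right-hand side of the differential inequality uniformly bounded, so the maximum principle gives $\sup_{\overline\Omega}u_\mu\leq C/\mu\to 0$, i.e.\ $\rho_\mu\to 1$ uniformly. (The boundary term is handled because $|Q_{\rm b}|=1$ on $\partial\Omega$, so $u_\mu=0$ there and the comparison is against the full penalty.)

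Finally, the concluding sentence is immediate from (3): uniform convergence $|Q_\lambda^\mu|\to 1$ in $\overline\Omega$ gives, for each $\lambda$, a threshold $\mu_\lambda=\mu_\lambda(\lambda,\Omega,Q_{\rm b})$ such that $|Q_\lambda^\mu|\geq \tfrac12$ on $\overline\Omega$ for all $\mu>\mu_\lambda$, in particular $|Q_\lambda^\mu|>0$ everywhere, ruling out the isotropic phase. The main obstacle, as indicated, is establishing the $\mu$-uniform Lipschitz/gradient bound up to the boundary: one must verify that the monotonicity formulae, blow-up compactness, and $\varepsilon$-regularity of Theorem~\ref{thm:full-regularity} — proved there via a penalty approximation in the first place — pass to the family $\{Q_\lambda^\mu\}$ with constants independent of $\mu$, using the energy bound $\mathcal{E}_\lambda(\bar Q_{\rm b})$ and the strong convergence to the regular limit $Q_\lambda$ to localize the smallness of scaled energy. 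Once that uniform bound is in hand, the rest is the maximum-principle estimate on $1-|Q_\lambda^\mu|^2$.
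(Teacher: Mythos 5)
Items (1) and (2) in your proposal follow essentially the same route as the paper: minimality against the competitor $\bar Q_{\rm b}\in\mathcal{A}_{Q_{\rm b}}(\Omega)$, weak compactness, identification of the limit via lower semicontinuity, and strong convergence by matching Dirichlet energies. That part is fine.

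Your route to item (3), however, has a genuine gap. You propose a maximum-principle argument on $u_\mu=1-|Q_\lambda^\mu|^2$ supported by a \emph{$\mu$-uniform} gradient bound $\|\nabla Q_\lambda^\mu\|_{L^\infty}\le C$. The only gradient estimate available in the paper for the penalized minimizers is Lemma~\ref{estigradLDG}, which gives $|\nabla Q_\lambda^\mu|\le C(\sqrt{\lambda+\mu}+1)$; this blows up like $\sqrt\mu$. Turning it into a $\mu$-uniform bound would require an $\varepsilon$-regularity theorem for the \emph{penalized} system \eqref{ELeqQmu} with constants independent of $\mu$, which is not contained in Theorem~\ref{thm:full-regularity} (that result concerns minimizers of the constrained functional $\mathcal{E}_\lambda$) and is not established anywhere in the paper. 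Even granting the uniform gradient bound, the maximum-principle inequality one derives from \eqref{ELeqQmu} reads, at an interior maximum $x_0$ of $u_\mu$,
\[
\mu\, u_\mu(x_0)\,|Q_\lambda^\mu(x_0)|^2 \;\le\; |\nabla Q_\lambda^\mu(x_0)|^2 + C\lambda\,,
\]
and the factor $|Q_\lambda^\mu(x_0)|^2$ degenerates precisely when $|Q_\lambda^\mu(x_0)|$ is small, i.e., exactly in the regime you are trying to exclude; so the comparison argument does not close by itself.

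The paper avoids both difficulties by not asking for a uniform gradient bound at all. In the proof of Proposition~\ref{propnomelt} one argues by contradiction: if $|Q_\lambda^{\mu_n}(x_n)|\le\delta$ for some $x_n$, the (non-uniform) bound $|\nabla Q_\lambda^{\mu_n}|\le C\sqrt{\mu_n}$ gives a ball $B_{r_n}(x_n)$ of radius $r_n=\kappa\mu_n^{-1/2}$ on which $|Q_\lambda^{\mu_n}|^2\le (1+\delta^2)/2$. On that ball the \emph{scaled} penalty
\[
\frac{\mu_n}{4 r_n}\int_{B_{r_n}(x_n)}(1-|Q_\lambda^{\mu_n}|^2)^2\,dx \;\ge\; \frac{\pi\kappa^2(1-\delta^2)^2}{12}
\]
is bounded below by a fixed constant, because $\mu_n r_n^2=\kappa^2$. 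The monotonicity formulae of Lemma~\ref{monotformLDG} propagate this lower bound from scale $r_n$ up to a fixed scale $r_0$ (treating interior and boundary points separately), contradicting the smallness of $\frac1{r_0}\mathcal{F}_{\lambda,\mu_n}(Q_\lambda^{\mu_n},B_{r_0}\cap\Omega)$, which in turn follows from the strong $W^{1,2}$-convergence to the \emph{regular} limit $Q_\lambda$ (this is where Theorem~\ref{thm:full-regularity} is actually used). You should replace your Bochner/maximum-principle step by this scaled-energy contradiction: it exploits the $\sqrt{\mu}$-scaled gradient estimate at the natural scale $\mu^{-1/2}$ instead of trying to remove the $\mu$-dependence altogether.
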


This theorem is very much inspired  by the important paper \cite{MaZa}, where minimizers of $\widetilde{\mathcal{F}}_{\rm LG}$ (see \eqref{LDGenergytildeori}) are studied in the regime $L\to 0$, the other parameters being fixed. It is proved that they converge towards minimizing harmonic maps into $\mathcal{Q}_{\rm min}$ (see \eqref{vacuum}), hence recovering the Frank-Oseen model of NLC in the one-constant approximation. Under our normalization \eqref{normaltens}-\eqref{LDGenergytildebold}, the analysis in \cite{MaZa} corresponds to the regime $\lambda\to\infty$ and $\mu\to\infty$ with $\lambda\sim\mu$, and limits of minimizers are minimizing harmonic maps into $\R P^2$. The 
Lyuksyutov regime \eqref{eq:lyuk-regime} is thus different, and even if Theorem~\ref{thm:noisotropicphase} shares some features with \cite{MaZa}, it complements the result in \cite{MaZa}  giving in the limit another asymptotic theory. 

In Theorem ~\ref{thm:noisotropicphase}, claims (1) and (2)  can be seen as a standard consequence of the $\Gamma$-convergence of the family $\{\mathcal{F}_{\lambda,\mu}\}_\mu$ to $\mathcal{E}_\lambda$, although for the reader's convenience such notion is not explicitly used in the proof (but just mentioned here for readers familiar with it). As a matter of fact, the two claims rely on a sharp two-sided bound on the energies $ \{ \mathcal{F}_{\lambda,\mu}(Q_\lambda^\mu) \}_\mu$,  the lower semicontinuity property of the energy functionals,  the construction of trial sequences, 
and the standard weak compactness in $W^{1,2}$ coming from the equicoercivity of the energies.  
 Then minimum points strongly converge to minimum points in $W^{1,2}$, and the two claims follow as the upper and the lower bound mentioned above coincide.  
As already emphasized, claim (3) is the most important conclusion here as it guarantees that the isotropic phase is avoided  
in the Lyuksyutov regime (as proved in the different low-temperature regime in \cite{Ca} and  \cite{CoLa,HeMaPi}, in 2D and 3D respectively), and uniform convergence of the norm to one  provides a mathematical justification of the Lyuksyutov constraint. 
 The proof of claim (3) is reminiscent from Ginzburg-Landau theories as in  \cite{MaZa}.  
 It is crucially based on Theorem~\ref{thm:full-regularity} where the smoothness of the limiting minimizer $Q_\lambda$ together with the strong $W^{1,2}$-convergence yields  smallness of the scaled energy of $Q^\mu_\lambda$ at a sufficiently small scale. Then, elliptic regularity combined with monotonicity formulae  in a way similar to \cite[Propositions 4 and 6]{MaZa} leads to the uniform convergence of $|Q^\mu_\lambda|$. 
\vskip5pt

To illustrate our discussion so far, let us now consider the model case of a nematic droplet, i.e., when $\Omega=\{ |x|<1 \}$ is the unit ball. The outer unit normal to the boundary is $\overset{\rightarrow}{n}(x)=x/|x|$, and a natural boundary datum is the so-called {\sl radial anchoring}, namely 
\begin{equation}\label{hedgehogbdrydata}
Q_{\rm b}(x)=\sqrt{\frac32} \left( \frac{x}{|x|} \otimes \frac{x}{|x|}-\frac13 I \right)\,.
\end{equation}
Since $\overset{\rightarrow}{n} \colon \partial \Omega \to \mathbb{S}^2$ is harmonic, the homogeneous extension $\bar{H}(x)=Q_{\rm b}\left(x/|x|\right)$ (called the {\sl constant-norm hedgehog}) is a weakly harmonic map from $\Omega$ into $\mathbb{R}P^2$, and it is an energy minimizer of $\mathcal{E}_\lambda$ over $W_{Q_{\rm b}}^{1,2}(\Omega;\mathbb{R}P^2)$ by the lifting property of $W^{1,2}$-maps in $\mathbb{R}P^2$ in \cite{BaZa} and the celebrated result in \cite{BCL}. Moreover, a direct computation shows that $\bar{H}$ is also a weak solution to \eqref{MasterEq}, i.e., it is a critical point of $\mathcal{E}_\lambda$. As $\bar{H}$ is singular at the origin, Theorem \ref{thm:full-regularity} tells us that $\bar{H}$ is not  minimizing $\mathcal{E}_\lambda$ in the class $\mathcal{A}_{Q_{\rm b}}(\Omega)$. We shall  prove in Proposition \ref{consthedgehoginstability} that $\bar{H}$ is in fact strictly  unstable in many directions, employing an argument similar to \cite{SU3}, an explicit computation of the second variation of energy, and  a perturbation localized near the origin. 

Still in the case of a nematic droplet subject to radial anchoring, the energy functional $\mathcal{F}_{\lambda,\mu}$ has an $O(3)$-equivariant (radial) critical point commonly known as {\em radial hedgehog} 
\begin{equation}
\label{radialhedgehog}
H_\lambda^\mu(x):=s_\lambda^\mu(|x|) \left( \frac{x}{|x|} \otimes \frac{x}{|x|}-\frac13 I \right)  \, , \quad 0< |x|<1 \, .
\end{equation}
This solution is obtained from a unique function $s_\mu^\lambda(|x|)$ increasing from $0$ to $\sqrt{3/2}$ solving an ODE with the prescribed values at $|x|=0$ and $|x|=1$, see e.g. \cite{Maj,INSZ2} and the references therein. It turns out to be the unique uniaxial critical point of $\mathcal{F}_{\lambda,\mu}$ w.r.to arbitrary (not necessarily uniaxial) perturbations, see~\cite{Lam}. As the origin is an isotropic point, Theorem \ref{thm:noisotropicphase} shows that $H_\lambda^\mu$ does not minimize $\mathcal{F}_{\lambda,\mu}$ in the class $W^{1,2}_{Q_{\rm b}}(\Omega; \mathcal{S}_0)$, at least for $\mu$ large enough compared to $\lambda$. Hence minimizers cannot be purely uniaxial, and biaxial escape must occur. Using the strong convergence of $H_\lambda^\mu$ to $\bar H$ as $\mu\to\infty$, we pass to the limit in the second variation of $\mathcal{F}_{\lambda,\mu}$ at $H^\mu_\lambda$, and we deduce in Theorem \ref{hedgehoginstability}  the instability of $H_\lambda^\mu$ w.r.to biaxial perturbations for $\mu$ large enough. Both properties are the counterpart in the Lyuksyutov regime of the instability of the radial hedgehog in the low-temperature limit (essentially $a^2 \to \infty$) already proved in \cite{INSZ1} (see also \cite{GaMk2,Maj}) together with the (infinitesimal) biaxial escape phenomenon obtained there (see also Remarks \ref{instcomparison} and \ref{biaxialescape}).
\vskip5pt

Once the smoothness of $Q_\lambda$ and the absence of isotropic phase for $Q^\mu_\lambda$ are established, we can discuss for both cases the topological properties related to the presence of the biaxial phase, and the way they are connected with the topology of the vacuum manifold $\R P^2$. The starting point is that $Q_\lambda$ and $Q^\mu_\lambda$ are configurations satisfying 
\[ (HP_0)   \quad Q \in C^1(\overline{ \Omega}; \mathcal{S}_0 \setminus \{ 0\})\cap C^\omega(\Omega; \mathcal{S}_0) \, . 
  \]
The first assumption at the boundary that we impose on a configuration $Q:\overline\Omega\to\mathcal{S}_0\setminus\{0\}$ is the following
\[ (HP_1)  \qquad  \bar{\beta}:=  \min_{x \in \partial \Omega} \widetilde{\beta}\circ Q(x)>-1 \, . 
  \]
The case $\bar{\beta}=1$ occurs for the main and most natural example of positive uniaxial, i.e., $\R P^2$-valued, boundary condition, which  is 
\begin{equation}
\label{uniaxialbc}
Q_{\rm b} (x)=\sqrt{\frac32} \left(v(x) \otimes v(x)-\frac13 I \right) \quad \hbox{for all } x \in \partial \Omega \, , \quad v \in C^{1,1}(\partial \Omega ;\mathbb{S}^2)  \, .
\end{equation}
In particular, the choice $v(x)=\overset{\rightarrow}{n}(x)$ (the outer unit normal to the boundary~$\partial \Omega$) corresponds to the so-called {\sl homeotropic} boundary condition  (or {\sl radial anchoring}).

Since $\Omega \subset \mathbb{R}^3$ is a bounded open set with smooth boundary,  
we know that $\partial \Omega$ is a finite union of embedded smooth surfaces (in fact, $C^1$-regularity is enough). More precisely, $\partial \Omega = \cup_{i=1}^N S_i$ where the surfaces $S_i$ are smooth, embedded, connected, orientable, and boundaryless. The second (topological) assumption we make on $\Omega$ is
\[ (HP_2) \qquad \Omega \hbox{ is connected and simply connected.} 
 \]
 Under this assumption, each surface $S_i$ has zero genus, so it is an embedded sphere (see Lemma~\ref{simpledomains}). The domain $\Omega$ is thus a topological ball with finitely many disjoint closed balls  removed from its interior. By assumption $(HP_1)$, the maximal eigenvalue $\lambda_{\rm max}(x)$ of $Q(x)$ is simple for every $x \in \partial \Omega$. Hence there exists a corresponding well defined eigenspace map $V_{\rm max} \in C^1(\partial \Omega; \mathbb{R}P^2)$, and this map has a (nonunique) lifting $v_{\rm max} \in C^1(\partial \Omega; \mathbb{S}^2)$ since each surface $S_i$ has zero genus. To enforce the emergence of topology in the minimizers, we finally make a third assumption 
\[ (HP_3) \qquad \deg(v_{\rm max}, \partial \Omega)= \sum_{i=1}^N \deg(v_{\rm max},S_i)   \hbox{ is odd} \, .
 \]
 Notice that this property  only  depends on the map $V_{\rm max}$, and it does not depend on the choice of the lifting $v_{\rm max}$. In case of radial anchoring (i.e., $Q_{\rm b}$ of the form \eqref{uniaxialbc} with $v=\overset{\rightarrow}{n}=v_{\rm max}$), it is satisfied whenever $N$ is odd, that is whenever $\partial \Omega$ has an odd number of connected components (or, equivalently, if the domain $\Omega$ is a topological ball with an even number of disjoint closed ball removed from its interior). 
\vskip5pt

In order to emphasize the consequence of assumptions $(HP_0)$-$(HP_3)$ on a configuration $Q$ satisfying $Q=Q_{\rm b}$ on $\partial\Omega$, let us assume for a moment that $ \, Q_{\rm b}$ is $\R P^2$-valued. Then  
$Q_{\rm b}$ admits a lifting by $(HP_2)$, i.e., $Q_{\rm b}$ is of the form~\eqref{uniaxialbc}. Moreover, any lifting $v \in C^1(\partial \Omega; \mathbb{S}^2)$ of $Q_{\rm b}$ admits a extension $\bar{v}$ in $W^{1,2}(\Omega; \mathbb{S}^2)$ (see e.g. \cite{HaLi}), but no continuous extension because of  $(HP_3)$. As a consequence, $Q_{\rm b}$ admits an extension $\bar{Q}_{\rm b} \in W^{1,2}(\Omega; \mathbb{R}P^2)$ of the form 
\begin{equation}
\label{lifting} 
 \bar{Q}_{\rm b}(x)=\sqrt{\frac32}  \left( \bar{v}(x) \otimes \bar{v}(x) -\frac13 I \right) \, .
 \end{equation}
In view of \cite{BaZa} and $(HP_3)$, any extension of  $Q_{\rm b}$ in $W^{1,2}(\Omega; \mathbb{R}P^2)$  is in fact of the form \eqref{lifting} for a suitable (necessarily) discontinuous map $\bar{v} \in W^{1,2}(\Omega; \mathbb{S}^2)$.  The configuration $Q$ being smooth and without isotropic phase by  $(HP_0)$, it cannot be $\R P^2$-valued, i.e,  positive uniaxial, and biaxial escape must occur again for purely topological reasons.
\vskip5pt

To describe the way a configuration $Q$ encodes some topological information, we shall make use of the biaxiality function as follows.  

\begin{definition}
\label{defbiaxialityregions}
Given  $Q \in C^0(\overline{\Omega};\mathcal{S}_0 \setminus \{ 0\})$, we define its {\em biaxiality function} $\beta:=\widetilde\beta \circ Q$  and for each $t\in [-1,1]$ the associated {\em biaxiality regions} as the closed subsets of $\overline{\Omega}$ 
given by 
 \begin{equation}
 \label{biaxialityregions}
 \{\beta \leq t\}:=\big \{ x\in \overline{\Omega} : \widetilde{\beta} \circ Q(x) \leq t \big\}\quad\text{and}\quad\{ \beta\geq t\}:=\big\{  x\in \overline{\Omega} : \widetilde{\beta} \circ Q(x) \geq t \big\}\, , 
 \end{equation}
 where $\widetilde\beta$ is the signed biaxiality parameter  \eqref{signedbiaxiality}. The corresponding {\em biaxial surfaces} are defined as 
 \begin{equation}\label{eq:biaxial-surface}
  	\{\beta=t \}:=\big \{  x\in \overline{\Omega} : \widetilde{\beta} \circ Q(x) =t \big\}\,.
  \end{equation}
 \end{definition}
 
 Observe that if $t\in (-1,1)$ is a regular value of $\beta$, then biaxial surfaces are smooth surfaces inside $\overline{\Omega}$, possibly with boundary which is anyway smooth and contained in $\partial \Omega$. Moreover, the regions in \eqref{biaxialityregions} are homotopically equivalent to their interior $\{ \beta<t \}$ and $\{ \beta>t \}$, since the biaxial surfaces are actually smooth and serve as their common boundary.
 
We now introduce a notion of {\sl``mutual linking''}, a property that will (partially) encode the topological nontriviality of the biaxiality regions.
 \begin{definition}
 \label{linking}
 Let $A, B \subseteq \overline{\Omega}$ be two disjoint compact subsets. The sets $A$ and $B$ are said to be {\em mutually linked} \footnote{As an example, if $\Omega$ is the unit ball, $A$ is an unknotted embedded copy of $\bbS^1$ into $\Omega$, and $B= \overline{\Omega}\setminus A_\delta$ with $A_\delta$ a sufficiently small tubular neighborhood of $A$, then $A$ and $B$ are mutually linked. 
} if $A$ is not contractible in $\overline{\Omega}\setminus B$ and $B$ is not contractible in $\overline{\Omega}\setminus A$.
 \end{definition}

To illustrate this definition, let us discuss again the case of a nematic droplet. If $\Omega$ is the unit ball and $Q_{\rm b}$ is the hedgehog boundary data \eqref{hedgehogbdrydata}, we expect the minimizers  $Q_\lambda$ or  $Q^\mu_\lambda$ to be axially symmetric  around a fixed axis (in a sense made precise below). In particular, we expect their biaxiality regions \eqref{biaxialityregions} to be axially symmetric as well. More precisely, $\{ \beta < t\}$ with $t\in (-1,1)$ should be an increasing family of axially symmetric solid tori, and the complementary regions $\{ \beta>t \}$ should be kind of distance neighborhoods from the boundary $\partial \Omega$ with cylindrical neighborhoods of the symmetry axis added. In the extreme case $t=\pm 1$, we expect $\{\beta=-1\}$ to be a circle with axial symmetry, and $\{ \beta=1\}$ to be the sphere $\partial \Omega$ with the segment connecting the two antipodal points lying on the symmetry axis added. Clearly sub and superlevel of the biaxiality function should be mutually linked in the sense of Definition \ref{linking} above. This conjectural picture is supported by numerical simulations as already detailed in \cite{PeTr,KVZ,GaMk,KV}, where authors refer to it as the {\bf ``torus solution''} of the Landau-de Gennes model. For the nematic droplet with radial anchoring, the situation clearly reminds the one corresponding to the Hopf fibration 
 \[ \mathbb{C} \times \mathbb{C} \supset \mathbb{S}^3 \overset{\Phi}{\longrightarrow} \mathbb{S}^2 \subset \mathbb{C} \times \mathbb{R} \, , \qquad \Phi(z_1,z_2)=(2 z_1 \overline{z_2} , |z_1|^2-|z_2|^2) \,, \]
where the subsets $\{ |z_1|^2-|z_2|^2 > t \}$ and $\{ |z_1|^2-|z_2|^2 <t \}$ with $t\in (-1,1)$ form a decomposition of $\mathbb{S}^3$ into two disjoint mutually linked solid tori (a so-called Heegaard splitting).
\vskip5pt

Once again, Theorems \ref{thm:full-regularity} and \ref{thm:noisotropicphase} makes assumption  $(HP_0)$ available for $Q_\lambda$ and $Q^\mu_\lambda$ with $\mu$ larger than the constant $\mu_\lambda=\mu_\lambda(\lambda,\Omega,Q_{\rm b})$ (provided by Theorem \ref{thm:noisotropicphase}). It allows us to prove a weak counterpart of the conjectural picture described in the example above, which is therefore the main result of this paper.

\begin{theorem}\label{topology}
Assume that $\partial\Omega$ is of class $C^3$ and $Q_{\rm b}\in C^{1,1}(\partial\Omega;\mathbb{S}^4)$. Let $Q$ be either a minimizer of $\mathcal{E}_\lambda$ over $\mathcal{A}_{Q_{\rm b}}(\Omega)$, 
or  a minimizer of $\mathcal{F}_{\lambda,\mu}$ over $W^{1,2}_{Q_{\rm b}}(\Omega;\mathcal{S}_0)$ with $\mu>\mu_\lambda$ so that $(HP_0)$ holds. If assumptions $(HP_1)$-$(HP_3)$ also hold (e.g., $\Omega$ is connected and simply connected, $\partial \Omega$ has an odd number of connected components, and  $Q_{\rm b}(x)=\sqrt{3/2} (\overset{\rightarrow}{n}(x) \otimes \overset{\rightarrow}{n}(x)-\frac13 I)$ is the radial anchoring), then the biaxiality regions associated with the configuration $Q$ satisfy:
\begin{itemize}
\item[1)] the set of singular values of $\beta=\widetilde\beta\circ Q$ in $[-1,\bar{\beta}]$ is at most countable, and it can accumulate only at $\bar{\beta}$; moreover, for any regular value $-1<t<\bar{\beta}$ of $\beta$ the set $\{ \beta=t\} \subset \Omega$ is a smooth surface with a connected component of positive genus;
\vskip3pt
\item[2)] for any $-1\leq t_1<t_2 < \bar{\beta}$, the sets $\{ \beta \leq t_1\} \subset \Omega$ and $\{ \beta\geq t_2\}\subset \overline{\Omega}$ are nonempty, compact, and not simply connected; 
\vskip3pt
\item[3)] if in addition $Q \in C^\omega(\overline{\Omega})$ and $\bar{\beta}=1$, then the set of critical values is finite and $\{ \beta=1 \}\subset \overline{\Omega}$ is nonempty, compact, and not simply connected; in particular $\{ \beta=1 \}\cap \Omega$ is not empty;   
\vskip3pt
\item[4)] for any $-1\leq t_1<t_2 < \bar{\beta}$, if the interval $(t_1,t_2)$ contains no critical value, then $\{ \beta\leq t_1\}$ and $\{ \beta \geq t_2\}$ are mutually linked.
\end{itemize}
\end{theorem}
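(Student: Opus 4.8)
The plan is to reduce everything to the topology of the line field of the largest eigenvalue of $Q$, with $(HP_3)$ as the single source of non-triviality. By Theorems~\ref{thm:full-regularity} and \ref{thm:noisotropicphase}, the configuration $Q$ (a minimizer of $\mathcal{E}_\lambda$, or of $\mathcal{F}_{\lambda,\mu}$ with $\mu>\mu_\lambda$) satisfies $(HP_0)$, so from now on I work with an arbitrary $Q$ satisfying $(HP_0)$--$(HP_3)$. First I record the structure of the domain: by $(HP_2)$ and Lemma~\ref{simpledomains}, $\partial\Omega=\bigsqcup_{i=1}^N S_i$ with each $S_i$ an embedded $2$-sphere, so $\overline\Omega$ is a topological ball with finitely many disjoint closed balls removed; in particular $\overline\Omega$ and every open subset of $\overline\Omega$ containing $\partial\Omega$ are connected, $H_1(\overline\Omega;\Z)=0$, and $[\partial\Omega]=0$ in $H_2(\overline\Omega;\Z)$. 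By $(HP_1)$ the set $\{\beta>-1\}$ is open and contains $\partial\Omega$; on it the top eigenvalue of $Q$ is simple, so it carries the eigenline map $V_{\max}\in C^1(\{\beta>-1\};\R P^2)$ (analytic in $\Omega$), whose restriction to each $S_i$ lifts to $\mathbb{S}^2$ (as $S_i$ is simply connected), and $(HP_3)$ says precisely that any such lift $v_{\max}$ has $\sum_i\deg(v_{\max},S_i)$ odd.

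Next the elementary ingredients. Since $\beta=\widetilde\beta\circ Q$ is real-analytic in $\Omega$, for every $t<\bar\beta$ the level set $\{\beta=t\}$ is a compact analytic subset of $\Omega$, and on each interval $[-1,\bar\beta-\veps]$ the critical values of $\beta$ are finite in number (the critical set in the relatively compact analytic region $\{\beta\le\bar\beta-\veps\}$ has finitely many components, on each of which $\beta$ is constant); letting $\veps\downarrow0$ gives the first half of claim~1, and when $Q\in C^\omega(\overline\Omega)$ and $\bar\beta=1$ one gets finiteness on all of $[-1,1]$, the first half of claim~3. For a regular value $-1<t<\bar\beta$ the implicit function theorem makes $\{\beta=t\}$ a smooth closed surface (closed since $\{\beta\le t\}$ is relatively compact in $\Omega$), and the negative gradient flow of $\beta$ shows that if $(t_1,t_2)$ contains no critical value then $\{t_1\le\beta\le t_2\}\cong\{\beta=t_1\}\times[0,1]$; hence the inclusions $\{\beta\le t_1\}\hookrightarrow\{\beta<t_2\}=\overline\Omega\setminus\{\beta\ge t_2\}$ and $\{\beta\ge t_2\}\hookrightarrow\{\beta>t_1\}=\overline\Omega\setminus\{\beta\le t_1\}$ are homotopy equivalences. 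This already reduces claim~4 to the non-simple-connectedness in claim~2: by claim~2 neither $\{\beta\le t_1\}$ nor $\{\beta\ge t_2\}$ is simply connected, hence neither is contractible, and a homotopy equivalence cannot be null-homotopic unless its source is contractible, so neither set is contractible in the complement of the other, which is exactly mutual linking in the sense of Definition~\ref{linking}.

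The heart of the proof is the obstruction carried by $V_{\max}$. First, $\{\beta=-1\}\neq\emptyset$: otherwise $V_{\max}$ is continuous on the simply connected $\overline\Omega$, hence lifts to $v\in C(\overline\Omega;\mathbb{S}^2)$, and then $\sum_i\deg(v|_{S_i})=\langle v^*\omega_{\mathbb{S}^2},[\partial\Omega]\rangle=0$ since $[\partial\Omega]=0$ in $H_2(\overline\Omega;\Z)$, contradicting $(HP_3)$. More generally, for a regular value $-1<t<\bar\beta$ the set $\{\beta\ge t\}$ is a compact $3$-manifold with boundary $\partial\Omega\sqcup\{\beta=t\}$ lying inside $\{\beta>-1\}$, so $\{\beta=t\}$ is homologous, up to sign, to $\partial\Omega$ inside $\{\beta>-1\}$; the monodromy of $V_{\max}$ around $\{\beta=-1\}$ is the nontrivial element of $\Z_2$ (two top eigenvalues crossing), and a cobordism argument on $\overline\Omega\setminus N(\{\beta=-1\})$, with boundary $\partial\Omega\sqcup\partial N(\{\beta=-1\})$, transfers the odd count of $(HP_3)$ into a nonzero ``degree around $\{\beta=-1\}$''. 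From this I would deduce: (i) $V_{\max}$ admits no lift on any neighbourhood of $\{\beta=t\}$, so some component of $\{\beta=t\}$ has nontrivial $\pi_1$ and thus positive genus (second half of claim~1); (ii) if $\{\beta\ge t_2\}$ were simply connected then, $V_{\max}$ being defined on it and $[\partial\Omega]$ null-homologous there, the odd degree on $\partial\Omega$ would vanish, impossible; likewise, if $\{\beta\le t_1\}$ were simply connected then a small closed surface enclosing $\{\beta=-1\}$ would bound inside it, forcing its nonzero degree to vanish, impossible; this gives claim~2, and the same argument run up to the boundary in the analytic case $\bar\beta=1$ gives claim~3, where $\{\beta=1\}\cap\Omega\neq\emptyset$ since otherwise $\{\beta=1\}=\partial\Omega$ and $\{\beta<1\}=\Omega$ would be simply connected, killing the obstruction.

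The main obstacle is making precise the ``degree around $\{\beta=-1\}$'' and its transport to $\{\beta=t\}$. Because $V_{\max}$ is only $\R P^2$-valued and in general does not lift on $\{\beta>-1\}$ --- it genuinely carries $\Z_2$ monodromy around the disclination $\{\beta=-1\}$ --- one cannot integrate the degree of a single-valued lift; one must work with a relative Euler/obstruction class (in the spirit of the topological degree theory for manifold-valued Sobolev maps, cf.\ \cite{BCL,BaZa}), keep track of orientations and of the sign ambiguity of degrees of lifts $\mathbb{S}^2\to\mathbb{S}^2$, and handle non-regular values $t$ by approximation from regular ones using the product structure of the second paragraph. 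I also expect one must verify that the positive-genus component detected in claim~1 is the one actually carrying the nonzero degree, and, for claims~2 and~4, convert ``not simply connected'' for the possibly disconnected, a priori only-$C^1$ sets $\{\beta\le t_1\}$ and $\{\beta\ge t_2\}$ into the stated linking, which is where the homotopy equivalences above are used.
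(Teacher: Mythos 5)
Your preliminary reductions are sound and match the paper: the retraction/gradient-flow argument reducing claim~4 to claim~2, Sard for analytic functions giving the first part of claims 1 and 3, and the lifting argument proving $\{\beta=-1\}\neq\emptyset$ (which is exactly Lemma~\ref{notempty}). The gap is in the central step, proving claims 1 and 2. You propose to set up a ``degree around $\{\beta=-1\}$'' and transport it by cobordism, but you yourself note that $V_{\max}$ has genuine $\Z_2$ monodromy on $\{\beta>-1\}$ so no single-valued lift exists there, and you do not resolve this. More to the point, the argument ``(ii) if $\{\beta\ge t_2\}$ were simply connected then, $V_{\max}$ being defined on it and $[\partial\Omega]$ null-homologous there, the odd degree on $\partial\Omega$ would vanish'' does not go through as stated: inside the $3$-manifold $\{\beta\ge t_2\}$ it is the \emph{whole boundary} $\partial\Omega\sqcup\{\beta=t_2\}$ that is null-homologous, so Stokes only gives that the degree over $\partial\Omega$ equals minus the degree over $\{\beta=t_2\}$, and without further input that is no contradiction. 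Likewise ``(i) $V_{\max}$ admits no lift on any neighbourhood of $\{\beta=t\}$'' is not a correct deduction; on a tubular neighbourhood of a genus-zero component a lift always exists.

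The idea you are missing is the one that makes the paper's Proposition~\ref{nosimplyconnected} work. The paper argues by contradiction: \emph{assume} every component of $\{\beta=t\}$ has genus zero; then by Lemma~\ref{simpledomains} the $3$-manifold $\{\beta\ge t\}$ is simply connected, so $V_{\max}$ lifts to $\widetilde v$ on all of it, and Stokes gives $\sum_i\deg(\widetilde v,S_i)+\sum_j\deg(\widetilde v,\widetilde S_j)=0$, hence by $(HP_3)$ some $\deg(\widetilde v,\widetilde S_{j_*})\neq 0$, i.e.\ the pullback bundle $\widetilde v^*(T\mathbb S^2)$ over $\widetilde S_{j_*}$ has nonzero Euler number. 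The decisive observation is then that on $\{\beta=t\}$ with $-1<t<1$ \emph{all three eigenvalues of $Q$ are simple}, so the two remaining (lifted) eigenvector fields $\widetilde v_{\rm mid},\widetilde v_{\rm min}$ furnish a global frame of $\widetilde v^*(T\mathbb S^2)$ over $\widetilde S_{j_*}$, forcing that Euler number to vanish --- the contradiction. This eigenvector-frame triviality is precisely the step your ``degree around the disclination'' picture would also need (to kill the contribution of $\{\beta=t\}$ in the Stokes identity), and it is absent from your proposal. Claim~2 then follows in the paper by applying Lemma~\ref{simpledomains} to $\{\beta<t\}$, whose boundary now contains a positive-genus component, and claim~3 by Corollary~\ref{analyticcor}.
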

Claim 1) on discreteness of the set of singular values is a consequence of the analytic Morse-Sard theorem from \cite{SoSo}. The rest of the claim together with claim 2) is proved by contradiction using a degree-counting argument. The key observation  is that on each spherical component of a biaxial surface $\{ \beta=t\}$, the pull back bundle $E={v_{\rm max}}^* F$ of the tangent bundle $F=T\mathbb{S}^2\to \mathbb{S}^2$ under the lifting $v_{\rm max}$ of the eigenspace map $V_{\rm max}$ must be trivial (hence its Euler number vanishes). Then the contradiction coming essentially from $(HP_3)$ ensures that some $S_i$ has positive genus. The argument for 2) and 3) above holds for regular values $t \in (-1,\bar{\beta})$, and the extension to arbitrary values is based on the analytic regularity of $Q$ and the {\L}ojasiewicz retraction theorem \cite{Lo} (it is the only instance where this property is used). Finally, the linking property in 4) follows easily by contradiction using a deformation of the biaxial regions along the positive/negative gradient flow of $\beta$. 
We expect analogous properties to hold also for $t \in (\bar{\beta},1)$, but this case seems to be more subtle since the biaxial surfaces meet the boundary $\partial \Omega$, and we do not have rigorous result in this direction at present. 

As the conclusions of the theorem are  weak counterparts of the properties conjectured for the torus solution on a nematic droplet, we refer to such solutions on a general domain as {\bf ``torus-like solutions''}. It is a very challenging open problem to obtain a precise estimate on the genus of the surfaces $S_i$, if any. 
Any control on it should depend on a subtle role of the genus in giving a possible lower order correction term in the energy expansion of the minimizing configurations.

\begin{remark}\label{finalremark}
In our subsequent papers \cite{DMP1} and \cite{DMP2} of the series, we continue this analysis focusing on axially symmetric configurations. Letting $\mathbb{S}^1$ act by rotation around the vertical axis on an $\mathbb{S}^1$-invariant domain $\Omega \subset \mathbb{R}^3$, and on $\mathcal{S}_0$ by the induced action $\mathcal{S}_0 \ni A\mapsto R \,A \,R^t  \in \mathcal{S}_0$, $R \in \mathbb{S}^1$, we consider Sobolev maps $Q \in W^{1,2}(\Omega; \mathcal{S}_0)$ satisfying the equivariance property
\begin{equation}
\label{equivariance}
 Q(Rx)=R Q(x) R^t \qquad \forall R\in \mathbb{S}^1 \, .
 \end{equation}
Minimizing the energy functional \eqref{LDGenergytilde} or \eqref{LDGenergy} in the appropriate class of equivariant configurations will provide minimizers which are either smooth and nowhere vanishing, or with singularities/isotropic points, depending on the geometry of the domain and on the chosen boundary data. In case such defects are not present, we will be able to show that the level sets of the signed biaxiality parameter are generically finite union of axially symmetric tori. On the other hand, when singularieties/isotropic points occur, the regularity/absence of isotropic phase results of the present paper will show that axial symmetry of minimizers is not inherited from the boundary condition, and axial symmetry breaking and nonuniqueness phenomena must occur. Such phenomena were already proved in \cite{AlLi} for minimizers of the Frank-Oseen energy, and our results are the natural counterpart for the Landau-de Gennes model, in agreement with the numerical simulations in \cite{DLR}. 
\end{remark}
\vskip10pt

%%%%%%%%%%%%%%%%%%%%%%%%%%%%%%%%%%%%%%%%%%%%%%%%%%%%%%%

\noindent \textbf{Acknowledgements.} We would like to warmly thank Eugene Gartland for his comments and suggestions, and for pointing out reference \cite{Gart}. We would also like to thank the anonymous referees for their comments leading to a strong improvement in the presentation of this article. The whole project started some years ago while A.P. was visiting V.M. at the DMA in \'Ecole Normale Sup\'erieure de Paris. He thanks the DMA for the invitation and for the warm hospitality.

%%%%%%%%%%%%%%%%%%%%%%%%%%%%%%%%%%%%%%%%%%%%%%%%%%%%%%%
%%%%%%%%%%%%%%%%%%%%%%%%%%%%%%%%%%%%%%%%%%%%%%%%%%%%%%%

\section{Small energy regularity theory: a tool box}
\label{sec:3}

%%%%%%%%%%%%%%%%%%%%%%%%%%%%%%%%%%%%%%%%%%%%%%%%%%%%%%%
%%%%%%%%%%%%%%%%%%%%%%%%%%%%%%%%%%%%%%%%%%%%%%%%%%%%%%%

The aim of this section is to provide several regularity estimates, both in the interior and at the boundary,  for weak solutions of \eqref{MasterEq} under certain general conditions.  We emphasize that the material developed here is not restricted to  minimizers of the energy functional $\mathcal{E}_\lambda$, but it applies to rather general critical points satisfying suitable {\sl energy monotonicity formulae}.  With this respect, we shall make a crucial use of the results of this section in our companion papers \cite{DMP1,DMP2} where we considered solutions obtained by  minimization of  $\mathcal{E}_\lambda$ in restricted (symmetric) classes. 
\vskip3pt 
Before going further, let us  precise for completeness the (usual) notion of critical point of $\mathcal{E}_\lambda$ over the nonlinear space $W^{1,2}(\Omega;\mathbb{S}^4)$, and show that critical points are exactly the distributional solutions of \eqref{MasterEq} belonging to $W^{1,2}(\Omega;\mathbb{S}^4)$. 

\begin{definition}\label{defcritpt}
A map $Q_\lambda\in W^{1,2}(\Omega;\mathbb{S}^4)$ is said to be a critical point of $\mathcal{E}_\lambda$ if 
$$\left[\frac{d}{dt}\mathcal{E}_\lambda\left(\frac{Q_\lambda+t\Phi}{|Q_\lambda +t\Phi|}\right)\right]_{t=0}=0 $$
for every $\Phi\in C^1_c(\Omega;\mathcal{S}_0)$. 
\end{definition}

The Euler-Lagrange equation for critical points of $\mathcal{E}_\lambda$ reads as follows. 

\begin{proposition}\label{propstartvarEL}
A map $Q_\lambda\in W^{1,2}(\Omega;\mathbb{S}^4)$ is a critical point of $\mathcal{E}_\lambda$ if and only if  
\begin{equation}\label{varELeqtangfields}
\int_{\Omega} \nabla Q_\lambda : \nabla\Phi\,dx=\lambda\int_\Omega Q_\lambda^2:\Phi\,dx
\end{equation}
for every $\Phi\in W^{1,2}(\Omega;{Q_\lambda}^{\!*}T\mathbb{S}^4)$ compactly supported in $\Omega$ (i.e., for every $\Phi\in W^{1,2}(\Omega;\mathcal{S}_0)$ compactly supported in $\Omega$ and satisfying $\Phi(x)\in T_{Q_\lambda(x)}\mathbb{S}^4$ for a.e. $x\in\Omega$), or equivalently, if and only if 
\begin{equation}\label{distribELeq}
-\Delta Q_\lambda=|\nabla Q_\lambda|^2Q_\lambda+\lambda\Big(Q_\lambda^2-\frac{1}{3}I-{\rm tr}(Q^3_\lambda)Q_\lambda\Big) \quad\text{in $\mathscr{D}^\prime(\Omega)$}\,.
\end{equation}
\end{proposition}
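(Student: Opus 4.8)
The plan is to compute the first variation of $\mathcal{E}_\lambda$ along the admissible curves $t\mapsto Q_t:=(Q_\lambda+t\Phi)/|Q_\lambda+t\Phi|$ and then reconcile the three formulations by elementary algebra. First I would check the curve is well defined: for $\Phi\in C^1_c(\Omega;\mathcal{S}_0)$ nonzero and $|t|<(2\|\Phi\|_{L^\infty})^{-1}$ one has $\tfrac12\le|Q_\lambda+t\Phi|\le\tfrac32$ a.e., the map $A\mapsto A/|A|$ is Lipschitz on the annulus $\{\tfrac12\le|A|\le\tfrac32\}$, hence $Q_t\in W^{1,2}(\Omega;\mathbb{S}^4)$ and $Q_t=Q_\lambda$ outside $\rmspt\Phi$. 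Writing $R_t:=Q_\lambda+t\Phi$ and using the identity $\tfrac12|\nabla Q_t|^2=\tfrac{|\nabla R_t|^2}{2|R_t|^2}-\tfrac{\sum_k(R_t:\partial_kR_t)^2}{2|R_t|^4}$, one bounds the $t$-derivative of the integrand of $\mathcal{E}_\lambda(Q_t)$, uniformly for small $t$, by a fixed function of the form $C(|\nabla Q_\lambda|^2+|\nabla\Phi|^2+1)$ supported in $\rmspt\Phi$ and hence in $L^1(\Omega)$, so differentiation under the integral sign is legitimate. Computing the pointwise derivative at $t=0$ with the chain rule (the potential $W$ in \eqref{Newpotential2} is smooth on $\mathcal{S}_0$), the crucial simplification is $Q_\lambda:\partial_kQ_\lambda=\tfrac12\partial_k|Q_\lambda|^2=0$ a.e., which annihilates all cross terms and leaves
\[
\frac{d}{dt}\mathcal{E}_\lambda(Q_t)\Big|_{t=0}=\int_\Omega\Big(\nabla Q_\lambda:\nabla\Phi-|\nabla Q_\lambda|^2\,(Q_\lambda:\Phi)+\lambda\,{\rm tr}(Q_\lambda^3)\,(Q_\lambda:\Phi)-\lambda\,Q_\lambda^2:\Phi\Big)\,dx=:\mathcal{L}(\Phi)\,,
\]
a linear functional of $\Phi$.

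For the equivalence with \eqref{distribELeq}: since ${\rm tr}\,\Phi=0$ we may add the vanishing term $\tfrac{\lambda}{3}I:\Phi$, so that $\mathcal{L}(\Phi)=0$ for all $\Phi\in C^1_c(\Omega;\mathcal{S}_0)$ is exactly \eqref{distribELeq} tested against symmetric trace-free fields. To promote this to an identity in $\mathscr{D}^\prime(\Omega)$ I would decompose an arbitrary test field $\Phi\in C^\infty_c(\Omega;\mathscr{M}_{3\times3}(\R))$ into its symmetric trace-free, antisymmetric, and pure-trace parts: the antisymmetric part pairs to zero on both sides because $Q_\lambda$, $Q_\lambda^2$, $I$ are symmetric; the pure-trace part pairs to zero because ${\rm tr}\,Q_\lambda\equiv0$ on the left and, on the right, $|\nabla Q_\lambda|^2Q_\lambda+\lambda(Q_\lambda^2-\tfrac13I-{\rm tr}(Q_\lambda^3)Q_\lambda)$ has trace $\lambda(|Q_\lambda|^2-1)=0$ since $|Q_\lambda|=1$ a.e.\ — this is precisely the role of the $-\tfrac13I$. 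Conversely, testing \eqref{distribELeq} against $C^1_c(\Omega;\mathcal{S}_0)$ returns $\mathcal{L}(\Phi)=0$, hence criticality; so Definition~\ref{defcritpt} $\Leftrightarrow$ \eqref{distribELeq}.

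For the equivalence with \eqref{varELeqtangfields} I would proceed in two steps. First, extend $\mathcal{L}(\Phi)=0$ from $C^1_c(\Omega;\mathcal{S}_0)$ to compactly supported $\Phi\in W^{1,2}(\Omega;\mathcal{S}_0)\cap L^\infty$ by mollification: mollifiers do not increase the sup norm, so dominated convergence handles the only term with a non-$L^\infty$ coefficient, $\int|\nabla Q_\lambda|^2(Q_\lambda:\Phi_\eps)$, while the remaining terms are continuous in $L^2$. When $\Phi$ is moreover a \emph{bounded} tangent field, $Q_\lambda:\Phi=0$ a.e.\ kills the terms carrying that factor and $\mathcal{L}(\Phi)=0$ becomes precisely $\int\nabla Q_\lambda:\nabla\Phi=\lambda\int Q_\lambda^2:\Phi$. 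For a general $\Phi\in W^{1,2}(\Omega;{Q_\lambda}^{\!*}T\mathbb{S}^4)$ with compact support, truncate by $\Phi_M:=\eta_M\circ\Phi$ with $\eta_M:\mathcal{S}_0\to\mathcal{S}_0$ radial, equal to the identity on $\{|\xi|\le M\}$, bounded and with $|D\eta_M|\le C$; since $\eta_M(\xi)$ is a scalar multiple of $\xi$, $\Phi_M$ is again a (bounded) tangent field, the previous step applies to it, and letting $M\to\infty$ with $\Phi_M\to\Phi$ in $W^{1,2}$ (dominated convergence, using $\Phi_M=\Phi$ on $\{|\Phi|\le M\}$ and $|\nabla\Phi_M|\le C|\nabla\Phi|$) yields \eqref{varELeqtangfields}, both sides being $W^{1,2}$-continuous in $\Phi$. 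For the converse, given $\Phi\in C^1_c(\Omega;\mathcal{S}_0)$ its tangential projection $\Phi^\top:=\Phi-(Q_\lambda:\Phi)Q_\lambda$ lies in $W^{1,2}(\Omega;{Q_\lambda}^{\!*}T\mathbb{S}^4)$ with compact support; substituting it into \eqref{varELeqtangfields} and again using $Q_\lambda:\partial_kQ_\lambda=0$ to simplify $\int\nabla Q_\lambda:\nabla\Phi^\top$ reproduces $\mathcal{L}(\Phi)=0$, i.e.\ criticality.

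The genuinely delicate point — the main obstacle — is exactly this passage between the $C^1_c$-variational formulation and testing against $W^{1,2}$ tangent fields: in dimension three $\mathcal{E}_\lambda$ is energy-critical, $|\nabla Q_\lambda|^2Q_\lambda$ is merely $L^1$, and \eqref{distribELeq} cannot be tested against arbitrary $W^{1,2}_c$ fields; the two-step approximation above works precisely because against tangent fields the $L^1$ term disappears and because both mollification and radial truncation on the target respect the constraint $Q_\lambda:\Phi=0$. Everything else is elementary: the pointwise differentiation of the energy density, the chain rule for the smooth $W$, and the trace/symmetry bookkeeping — together with the remark (useful later for \eqref{MasterEq}) that on tangent fields $Q_\lambda^2:\Phi=(Q_\lambda^2-\tfrac13I-{\rm tr}(Q_\lambda^3)Q_\lambda):\Phi=-\nabla_{\rm tan}W(Q_\lambda):\Phi$.
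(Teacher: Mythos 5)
Your proof is correct and rests on the same outer-variation formula and the same technical ingredients as the paper's (mollification with controlled sup-norm, radial truncation on the target, the tangential projection $\Phi\mapsto\Phi-(Q_\lambda:\Phi)Q_\lambda$), but you organize the three-way equivalence differently. The paper proves the implications in a cycle: criticality $\Rightarrow$ \eqref{varELeqtangfields} (Step~2) $\Rightarrow$ \eqref{distribELeq} (Step~3) $\Rightarrow$ criticality (Step~4); you prove the two equivalences criticality $\Leftrightarrow$ \eqref{distribELeq} and criticality $\Leftrightarrow$ \eqref{varELeqtangfields} independently. Your criticality $\Leftrightarrow$ \eqref{distribELeq} step is shorter than the paper's passage through \eqref{varELeqtangfields}: after adding the vanishing multiplier term $\tfrac{\lambda}{3}(I:\Phi)$, the identity $\mathcal{L}(\Phi)=0$ for $\Phi\in C^1_c(\Omega;\mathcal{S}_0)$ \emph{is} \eqref{distribELeq} tested against $\mathcal{S}_0$-valued fields, and since both sides of \eqref{distribELeq} take values in $\mathcal{S}_0$ the extra verification against general matrix-valued test fields (via the symmetric/antisymmetric/trace decomposition) is correct but strictly speaking redundant. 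For criticality $\Rightarrow$ \eqref{varELeqtangfields} you reorder the two approximation steps: you first extend $\mathcal{L}=0$ to all bounded, compactly supported $W^{1,2}$ fields by mollification, then specialize to tangent fields where $Q_\lambda:\Phi=0$ annihilates the $L^1$ terms, and only then remove the boundedness restriction by radial truncation; the paper instead truncates the tangent field to $L^\infty$ first and then approximates it by non-tangent $C^1_c$ maps $\Phi_k$, exploiting that $Q_\lambda:\Phi_k\to0$ a.e.\ together with dominated convergence against the fixed dominant $\|\Phi\|_\infty|\nabla Q_\lambda|^2$. Both orderings handle the borderline term $|\nabla Q_\lambda|^2Q_\lambda\in L^1$ correctly; the difference from the paper is purely organizational.
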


\begin{proof}
{\it Step 1.} Given $Q\in W^{1,2}(\Omega;\mathbb{S}^4)$, let us consider $\Phi\in C^1_c(\Omega;\mathcal{S}_0)$, and set for $t$ small enough, 
$$Q_t:=\frac{Q+t\Phi}{|Q +t\Phi|}\in W^{1,2}(\Omega;\mathcal{S}_0)\,.$$
Classically (see e.g. \cite[Section 2.2]{Simon}), we have 
$$\left[\frac{d}{dt}\int_\Omega \frac{1}{2}|\nabla Q_t|^2\,dx\right]_{t=0}= \int_{\Omega}\big(\nabla Q:\nabla\Phi-|\nabla Q|^2Q:\Phi\big)\,dx\,.$$
On the other hand, a straightforward computations yields 
$$\left[\frac{d}{dt}\int_\Omega W(Q_t)\,dx\right]_{t=0}= -\int_{\Omega}\big(Q^2:\Phi-{\rm tr}(Q^3)Q:\Phi\big)\,dx\,,$$
and thus 
\begin{multline}\label{firstoutvar}
\left[\frac{d}{dt}\mathcal{E}_\lambda\left(\frac{Q+t\Phi}{|Q +t\Phi|}\right)\right]_{t=0}= \int_{\Omega}\nabla Q:\nabla\Phi\,dx-\int_{\Omega}|\nabla Q|^2Q:\Phi\,dx\\
-\lambda\int_{\Omega}\big(Q^2:\Phi-{\rm tr}(Q^3)Q:\Phi\big)\,dx\,.
\end{multline}

\noindent{\it Step 2.} Assume that $Q_\lambda\in W^{1,2}(\Omega;\mathbb{S}^4)$ is a critical point of 
$\mathcal{E}_\lambda$. We  consider $\Phi\in W^{1,2}(\Omega;{Q_\lambda}^{\!*}T\mathbb{S}^4)$ compactly supported in $\Omega$, and prove that \eqref{varELeqtangfields} holds. By a standard truncation argument, we can assume that $\Phi\in L^\infty(\Omega)$. By a usual approximation argument, we can find a sequence $\{\Phi_k\}\subset C^1_c(\Omega;\mathcal{S}_0)$ such that $\Phi_k\to \Phi$ a.e. in $\Omega$ and strongly in $W^{1,2}(\Omega)$, and satisfying $\|\Phi_k\|_{L^\infty(\Omega)}\leq \|\Phi\|_{L^\infty(\Omega)}$. Then we deduce from Step 1 and the criticality of $Q_\lambda$ that
\begin{equation}\label{alongphik}
\int_{\Omega}\nabla Q_\lambda:\nabla\Phi_k\,dx=  \int_{\Omega}|\nabla Q_\lambda|^2Q_\lambda:\Phi_k\,dx+\lambda\int_{\Omega}\big(Q^2_\lambda:\Phi_k-{\rm tr}(Q^3_\lambda)Q_\lambda:\Phi_k\big)\,dx \,.
\end{equation}
Since $Q_\lambda:\Phi=0$ a.e. in $\Omega$, we deduce by dominated convergence that $|\nabla Q_\lambda|^2Q_\lambda:\Phi_k\to 0$ and $(Q^2_\lambda:\Phi_k-{\rm tr}(Q^3_\lambda)Q_\lambda:\Phi_k)\to Q^2_\lambda:\Phi$ in $L^1(\Omega)$. Hence, letting $k\to\infty$ in \eqref{alongphik} leads to \eqref{varELeqtangfields}. 
\vskip5pt

\noindent{\it Step 3.} Assume  that  $Q_\lambda\in W^{1,2}(\Omega;\mathbb{S}^4)$ satisfies  \eqref{varELeqtangfields}, and fix an arbitrary $\Phi\in C^1_c(\Omega;\mathscr{M}^{\rm sym}_{3\times 3}(\R))$. Define $\Phi_0:=\Phi-\frac{1}{3}(\Phi:I)I\in C^1_c(\Omega;\mathcal{S}_0)$. Noticing that 
$$\Phi_0-(Q_\lambda:\Phi_0)Q_\lambda\in W^{1,2}_0(\Omega;{Q_\lambda}^{\!*}T\mathbb{S}^4)\,,$$
we infer from  \eqref{varELeqtangfields} that 
\begin{equation}\label{cpadutou}
\int_{\Omega}\nabla Q_\lambda:\nabla\Phi_0\,dx=  \int_{\Omega}|\nabla Q_\lambda|^2Q_\lambda:\Phi_0\,dx+\lambda\int_{\Omega}\big(Q^2_\lambda:\Phi_0-{\rm tr}(Q^3_\lambda)Q_\lambda:\Phi_0\big)\,dx\,.
\end{equation}
Since $Q_\lambda:I={\rm tr}(Q_\lambda)=0$ and $|Q_\lambda|^2={\rm tr}(Q_\lambda^2)=1$, this last identity leads to 
$$\int_{\Omega}\nabla Q_\lambda:\nabla\Phi\,dx=  \int_{\Omega}|\nabla Q_\lambda|^2Q_\lambda:\Phi\,dx+\lambda\int_{\Omega}\Big(Q^2_\lambda-\frac{1}{3}I -{\rm tr}(Q^3_\lambda)Q_\lambda\Big):\Phi\,dx\,,$$
and \eqref{distribELeq} follows. 
\vskip5pt

\noindent{\it Step 4.} Finally, if $Q_\lambda\in W^{1,2}(\Omega;\mathbb{S}^4)$ satisfies  \eqref{distribELeq}, then \eqref{cpadutou} holds for every $\Phi_0\in C^1_c(\Omega;\mathcal{S}_0)$. 
 In  view of \eqref{firstoutvar}, it implies that $Q_\lambda$ is indeed a critical point of $\mathcal{E}_\lambda$. 
\end{proof}

\begin{remark}
If a map $Q_\lambda\in W^{1,2}(\Omega;\mathbb{S}^4)$ is a minimizer of $\mathcal{E}_\lambda$ among all $Q\in W^{1,2}(\Omega;\mathbb{S}^4)$ such that $Q-Q_\lambda$ is compactly supported in $\Omega$,  then $Q_\lambda$ is a critical point of $\mathcal{E}_\lambda$ by the first order condition for minimality. In particular, if $Q_\lambda$ is minimizing $\mathcal{E}_\lambda$ over $\mathcal{A}_{Q_{\rm b}}(\Omega)$, then $Q_\lambda$ satisfies \eqref{distribELeq} (or equivalently \eqref{varELeqtangfields}).
\end{remark}

%%%%%%%%%%%%%%%%%%%%%%%%%%%%%%%%%%%%%%%%%%%%%%%%%%%%%%%

\subsection{Monotonicity formulae for approximable critical points}\label{sec:eps-reg}
In this  subsection, our goal is (essentially) to derive the afore mentioned monotonicity formulae for certain critical points of $\mathcal{E}_\lambda$. 
Concerning  minimizers, such formulae can be classically obtained by inner variations of the energy. However this argument can not be used when considering energy minimizers over symmetric classes as we do in \cite{DMP1,DMP2}. To circumvent this difficulty, we consider critical points of $\mathcal{E}_\lambda$ which can be (strongly) approximated by critical points of a suitable Ginzburg-Landau functional in which the constraint to be $\mathbb{S}^4$-valued is relaxed. In this way, the approximate solution is smooth enough to derive the monotonicity formulae from the Euler-Lagrange equation, and we conclude by taking the limit in the approximation parameter. This procedure applies of course to minimizers (as we shall see in Section~\ref{sec:regularity}), but also to the symmetric solutions of  \eqref{MasterEq} considered in \cite{DMP1,DMP2}. Let us now describe it in details.
\vskip3pt

Given a bounded open set $\Omega\subset\R^3$, a reference map $Q_{\rm ref}\in \mathcal{A}_{Q_{\rm b}}(\Omega)$ and a small parameter $\varepsilon\in(0,\lambda^{-1/2})$, we consider the energy functional $\mathcal{GL}_{\varepsilon}(Q_{\rm ref}; \cdot)$ defined over $W^{1,2}(\Omega;\mathcal{S}_0)$ by 
\begin{equation}\label{defGLepsQref}
\mathcal{GL}_{\varepsilon}(Q_{\rm ref}; Q):= \mathcal{E}_\lambda(Q)+\frac{1}{4\varepsilon^2}\int_\Omega(1-|Q|^2)^2\,dx+\frac{1}{2}\int_\Omega|Q-Q_{\rm ref}|^2\,dx\,.
\end{equation}
If $Q_{\rm ref}$ can be achieved as a (strong) limit of critical points of  $\mathcal{GL}_{\varepsilon}(Q_{\rm ref};\cdot)$ when $\eps\to0$, then $Q_{\rm ref}$ satisfies the monotonicity formulae  stated in Proposition \ref{monotonprop} below. Its proof involves of course some very classical computations, see e.g. \cite{ChenStruwe},  as implemented in \cite{MaZa}  for the minimizers of the energy functional \eqref{LDGenergytildeori} without norm constraint. Here, the computations follow closely \cite{MaZa}, but they also provide some explicit dependence of the constants with respect to the data, a property which will be used in the subsequent papers \cite{DMP1,DMP2}.

\begin{proposition}\label{monotonprop}
Assume that $\partial\Omega$ is of class $C^{3}$ and $Q_{\rm b}\in C^{1,1}(\partial\Omega;\mathbb{S}^4)$. Let $Q_{\rm ref}\in \mathcal{A}_{Q_{\rm b}}(\Omega)$. For each $\varepsilon>0$, let $Q_\varepsilon\in W^{1,2}_{\rm Q_{\rm b}}(\Omega;\mathcal{S}_0)$
be a critical point of the functional $\mathcal{GL}_{\varepsilon}(Q_{\rm ref}; \cdot)$. If 
\begin{equation}\label{condconvmonotform}
Q_\varepsilon\mathop{\longrightarrow}\limits_{\varepsilon\to 0} Q_{\rm ref} \text{ in $L^2(\Omega)\,$, and }\;\mathcal{GL}_{\varepsilon}(Q_{\rm ref}; Q_\varepsilon)\mathop{\longrightarrow} \limits_{\varepsilon\to 0}\mathcal{E}_\lambda(Q_{\rm ref})\,,
\end{equation}
then $Q_{\rm ref}$ satisfies
\begin{enumerate}
\item[\rm 1)]  the {\sl Interior Monotonicity Formula}:
\begin{multline}\label{IntMonForm}
\frac{1}{r}\mathcal{E}_\lambda(Q_{\rm ref},B_r(x_0)) -\frac{1}{\rho}\mathcal{E}_\lambda(Q_{\rm ref},B_\rho(x_0))=\\
\int_{B_r(x_0)\setminus B_\rho(x_0)} \frac{1}{|x-x_0|}\bigg|\frac{\partial Q_{\rm ref}}{\partial |x-x_0|}\bigg|^2\,dx
+ 2\lambda\int_\rho^r\bigg(\frac{1}{t^2}\int_{B_t(x_0)}W(Q_{\rm ref})\,dx\bigg)\,dt
\end{multline}
for every $x_0\in\Omega$ and every $0<\rho<r\leq{\rm dist}(x_0,\partial\Omega)$;
\vskip5pt

\item[\rm 2)]  the {\sl Boundary Monotonicity Inequality}: there exist two constants $C_\Omega>0$ and ${\bf r}_\Omega>0$ (depending only on $\Omega$) such that 
\begin{multline}\label{BdMonForm}
\frac{1}{r}\mathcal{E}_\lambda(Q_{\rm ref},B_r(x_0)\cap\Omega) -\frac{1}{\rho}\mathcal{E}_\lambda(Q_{\rm ref},B_\rho(x_0)\cap\Omega)\geq -(r-\rho)K_\lambda(Q_{\rm b},Q_{\rm ref})\\
+\int_{\big(B_r(x_0)\setminus B_\rho(x_0)\big)\cap\Omega} \frac{1}{|x-x_0|}\bigg|\frac{\partial Q_{\rm ref}}{\partial |x-x_0|}\bigg|^2\,dx
+ 2\lambda\int_\rho^r\bigg(\frac{1}{t^2}\int_{B_t(x_0)\cap\Omega}W(Q_{\rm ref})\,dx\bigg)\,dt
\end{multline}
for every $x_0\in\partial\Omega$ and every $0<\rho<r<{\bf r}_\Omega$, where 
$$K_\lambda(Q_{\rm b},Q_{\rm ref}):= C_\Omega\bigg(\|\nabla_{\rm tan} Q_{\rm b}\|^2_{L^\infty(\partial\Omega)}+\lambda\|W(Q_{\rm b})\|_{L^1(\partial\Omega)}+\|\nabla Q_{\rm ref}\|^2_{L^2(\Omega)}\bigg)  \,.$$
\end{enumerate}
\end{proposition}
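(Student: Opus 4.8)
The plan is to work at fixed $\varepsilon>0$ with the smooth critical point $Q_\varepsilon$ of $\mathcal{GL}_\varepsilon(Q_{\rm ref};\cdot)$, derive a Pohozaev-type identity from its Euler--Lagrange equation by testing against the radial multiplier $(x-x_0)\cdot\nabla Q_\varepsilon$, and only then pass to the limit $\varepsilon\to 0$ using the convergence hypotheses \eqref{condconvmonotform}. First I would record the Euler--Lagrange equation for $\mathcal{GL}_\varepsilon(Q_{\rm ref};\cdot)$: since the norm constraint is relaxed, $Q_\varepsilon$ solves a genuine semilinear elliptic system
\begin{equation*}
-\Delta Q_\varepsilon = -\lambda\nabla W(Q_\varepsilon) + \frac{1}{\varepsilon^2}(1-|Q_\varepsilon|^2)Q_\varepsilon - (Q_\varepsilon-Q_{\rm ref}) \quad\text{in }\Omega,
\end{equation*}
with $Q_\varepsilon=Q_{\rm b}$ on $\partial\Omega$, and by standard elliptic regularity $Q_\varepsilon$ is smooth in $\Omega$ and $C^{1,\alpha}$ up to $\partial\Omega$ (using $\partial\Omega\in C^3$, $Q_{\rm b}\in C^{1,1}$). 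Write $e_\varepsilon(Q_\varepsilon):=\tfrac12|\nabla Q_\varepsilon|^2+\lambda W(Q_\varepsilon)+\tfrac{1}{4\varepsilon^2}(1-|Q_\varepsilon|^2)^2+\tfrac12|Q_\varepsilon-Q_{\rm ref}|^2$ for the full energy density.

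\textbf{The interior formula.} For $x_0\in\Omega$ and $0<\rho<r\le\mathrm{dist}(x_0,\partial\Omega)$, multiply the equation by $(x-x_0)\cdot\nabla Q_\varepsilon$ and integrate over the annulus $B_r(x_0)\setminus B_\rho(x_0)$. The Laplacian term yields, after integration by parts, a boundary integral on the two spheres involving $\tfrac12|\nabla Q_\varepsilon|^2$ and $|\partial_\nu Q_\varepsilon|^2$, plus an interior term $\tfrac12|\nabla Q_\varepsilon|^2\,\mathrm{div}(x-x_0)=\tfrac32|\nabla Q_\varepsilon|^2$; the potential and lower-order terms contribute $3$ times their integral minus sphere boundary contributions, except the nonconstraint penalty $\tfrac12|Q_\varepsilon-Q_{\rm ref}|^2$ produces an extra term $\langle Q_\varepsilon-Q_{\rm ref},(x-x_0)\cdot\nabla Q_\varepsilon\rangle$ that does not telescope — this is precisely the origin of the $\|\nabla Q_{\rm ref}\|_{L^2}^2$ term later. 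Organizing in the standard way gives a differential inequality for $\Phi_\varepsilon(t):=t^{-1}\int_{B_t(x_0)}e_\varepsilon(Q_\varepsilon)\,dx$; integrating from $\rho$ to $r$ produces \eqref{IntMonForm} with extra non-negative Ginzburg--Landau terms on the right and a remainder controlled by $\int_{B_r}|Q_\varepsilon-Q_{\rm ref}|\,|x-x_0||\nabla Q_\varepsilon|$. Passing to the limit $\varepsilon\to 0$: by \eqref{condconvmonotform} one has $Q_\varepsilon\to Q_{\rm ref}$ strongly in $W^{1,2}$ (the energy convergence upgrades $L^2$-convergence to strong $W^{1,2}$ and forces $\varepsilon^{-2}\int(1-|Q_\varepsilon|^2)^2\to 0$ and $\int|Q_\varepsilon-Q_{\rm ref}|^2\to 0$), so $\int_{B_t}e_\varepsilon(Q_\varepsilon)\to\mathcal{E}_\lambda(Q_{\rm ref},B_t)$ for a.e. $t$, the remainder vanishes, and the defect term $\int_{B_r\setminus B_\rho}|x-x_0|^{-1}|\partial_{|x-x_0|}Q_\varepsilon|^2$ is handled by weak lower semicontinuity and Fatou. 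One gets \eqref{IntMonForm} as an identity (not just inequality) by using both the monotonicity and a matching upper estimate, exactly as in \cite{MaZa}.

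\textbf{The boundary inequality.} For $x_0\in\partial\Omega$ the same multiplier identity must be corrected because $(x-x_0)\cdot\nabla Q_\varepsilon$ is not tangent to $\partial\Omega$. The standard device (see \cite{ChenStruwe}, and \cite{MaZa} for the LdG setting) is to flatten the boundary near $x_0$ via a $C^2$-diffeomorphism, split the radial field into a tangential part, which is admissible and produces the telescoping structure, and a normal correction of size $O(|x-x_0|^2)$; the error terms from this correction, together with the boundary integral over $B_r(x_0)\cap\partial\Omega$ of the tangential energy of $Q_{\rm b}$, are bounded by $(r-\rho)$ times a constant involving $\|\nabla_{\rm tan}Q_{\rm b}\|_{L^\infty}^2$, $\lambda\|W(Q_{\rm b})\|_{L^1(\partial\Omega)}$, and (from the non-telescoping penalty term, after Cauchy--Schwarz and the uniform energy bound) $\|\nabla Q_{\rm ref}\|_{L^2(\Omega)}^2$. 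The geometric constants $C_\Omega$ and the radius ${\bf r}_\Omega$ below which the flattening chart is controlled depend only on $\Omega$. Passing to the limit $\varepsilon\to 0$ as before yields \eqref{BdMonForm}.

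\textbf{Main obstacle.} The genuinely delicate point is the boundary case: keeping careful track of all error terms produced by the non-tangential radial multiplier near $\partial\Omega$ and by the extra penalty $\tfrac12|Q_\varepsilon-Q_{\rm ref}|^2$, and showing they collapse into the single clean bound $(r-\rho)K_\lambda(Q_{\rm b},Q_{\rm ref})$ with the claimed explicit dependence on the data — in particular that no term depends on $\varepsilon$ after using $\sup_\varepsilon\mathcal{GL}_\varepsilon(Q_{\rm ref};Q_\varepsilon)<\infty$ — requires patient bookkeeping. A secondary technical point is justifying the multiplier computation rigorously: since $Q_\varepsilon$ is only known to be $C^{1,\alpha}(\overline\Omega)\cap C^\infty(\Omega)$, one first works on $B_r(x_0)\cap\Omega_\delta$ for an inner approximation $\Omega_\delta$ and then lets $\delta\to 0$, or equivalently uses a Lipschitz cutoff in the annulus, which causes no difficulty.
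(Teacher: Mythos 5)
Your general strategy --- passing to the smooth critical points $Q_\varepsilon$ of the relaxed functional, running a Pohozaev argument with the radial multiplier $(x-x_0)\cdot\nabla Q_\varepsilon$, and taking the limit $\varepsilon\to 0$ --- is exactly the route the paper takes. For the interior formula your account is essentially correct, though you over-engineer the limit passage: the Pohozaev computation at finite $\varepsilon$ already produces an \emph{identity} for $\frac{1}{r}\mathcal{GL}_\varepsilon(Q_{\rm ref};Q_\varepsilon,B_r)-\frac{1}{\rho}\mathcal{GL}_\varepsilon(Q_{\rm ref};Q_\varepsilon,B_\rho)$ whose right-hand side consists of nonnegative terms plus a single signed remainder $\int_\rho^r t^{-2}\int_{B_t}(Q_\varepsilon-Q_{\rm ref}):(x\cdot\nabla)Q_{\rm ref}$; once you know (from energy convergence) that $Q_\varepsilon\to Q_{\rm ref}$ strongly in $W^{1,2}$ and $\varepsilon^{-2}\int(1-|Q_\varepsilon|^2)^2\to 0$, every term passes to the limit directly, so no Fatou argument and no ``matching upper estimate'' is needed; \eqref{IntMonForm} emerges as an equality immediately.

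At the boundary there is a genuine gap. You propose to flatten $\partial\Omega$ near $x_0$, whereas the paper works directly with the multiplier $(x-x_0)\cdot\nabla Q_\varepsilon$ and the geometric bounds $\mathcal{H}^2(B_t(x_0)\cap\partial\Omega)\leq C t^2$, $|(x-x_0)\cdot\nu|\leq C t^2$; but regardless of which route you take, the Pohozaev identity on $B_t(x_0)\cap\Omega$ produces surface integrals on $B_t(x_0)\cap\partial\Omega$ that contain the normal derivative $\partial_\nu Q_\varepsilon$ --- explicitly, terms like $\int_{B_t\cap\partial\Omega}(x\cdot\nu)\,|\partial_\nu Q_\varepsilon|^2$ and cross terms $\int_{B_t\cap\partial\Omega}(x\cdot\tau)\,\partial_\nu Q_\varepsilon:\nabla_{\rm tan}Q_{\rm b}$. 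Even though the geometric coefficients $(x\cdot\nu)$ and $(x\cdot\tau)$ carry smallness factors $O(t^2)$ and $O(t)$, the quantity $\|\partial_\nu Q_\varepsilon\|_{L^2(\partial\Omega)}$ is not controlled a priori, and your outline gives no mechanism to bound it. The paper supplies exactly this missing ingredient via a separate preliminary Pohozaev-type estimate (equation \eqref{estnormderGL}) obtained by testing the Euler--Lagrange system against $(V\cdot\nabla)Q_\varepsilon$, where $V=-\nabla\Phi_\Omega$ and $\Phi_\Omega$ solves $-\Delta\Phi_\Omega=1$ in $\Omega$ with $\Phi_\Omega=0$ on $\partial\Omega$; because $V$ is normal on $\partial\Omega$ with $V\cdot\nu$ bounded below by Hopf's lemma, this yields
\[
\int_{\partial\Omega}\Big|\frac{\partial Q_\varepsilon}{\partial\nu}\Big|^2\,d\mathcal{H}^2\leq C_\Omega\Big(\|\nabla_{\rm tan}Q_{\rm b}\|^2_{L^2(\partial\Omega)}+\lambda\|W(Q_{\rm b})\|_{L^1(\partial\Omega)}+\|\nabla Q_\varepsilon\|_{L^2(\Omega)}^2+\|\nabla Q_{\rm ref}\|_{L^2(\Omega)}^2+\|Q_\varepsilon-Q_{\rm ref}\|_{L^2(\Omega)}^2\Big)\,.
\]
It is precisely this estimate, not the penalty remainder, that is the main source of the $\|\nabla Q_{\rm ref}\|_{L^2(\Omega)}^2$ contribution to $K_\lambda(Q_{\rm b},Q_{\rm ref})$ after letting $\varepsilon\to 0$. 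Without some such global normal-derivative bound --- whether obtained as in the paper or by another means --- your boundary computation does not close, so this is a missing idea rather than patient bookkeeping.
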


\begin{proof}
{\it Step 1:   Euler-Lagrange equation, regularity, and convergence.} Since $Q_\eps$ is a critical point of $\mathcal{GL}_{\varepsilon}(Q_{\rm ref}; \cdot)$ over $W^{1,2}_{\rm Q_{\rm b}}(\Omega;\mathcal{S}_0)$, it satisfies the Euler-Lagrange equation 
\begin{equation}\label{ELeqGL}
\begin{cases}
\displaystyle-\Delta Q_\varepsilon=\lambda\bigg(Q_\varepsilon^2-\frac{1}{3}|Q_\varepsilon|^2I-\frac{1}{\sqrt{6}}|Q_\eps|^2Q_\eps\bigg)  +\frac{1}{\varepsilon^2}(1-|Q_\varepsilon|^2)Q_\varepsilon -(Q_\varepsilon-Q_{\rm ref}) & \text{in $\Omega$}\,,\\
Q_\varepsilon=Q_{\rm b} & \text{on $\partial\Omega$}\,.
\end{cases}
\end{equation}
This equation can be easily derived from outer variations noticing that the term $\frac{1}{3}|Q_\varepsilon|^2I$ corresponds to the Lagrange multiplier associated with the traceless constraint and using the expression \eqref{Newpotential2} for the potential $W$. 
By the Sobolev embedding $W^{1,2}(\Omega)\hookrightarrow L^6(\Omega)$, we have $Q_\eps\in L^6(\Omega)$, which implies that $\Delta Q_\eps\in L^2(\Omega)$. Note that the regularity assumption on $Q_{\rm b}$ and $\partial\Omega$  ensures that $Q_{\rm b}$ admits a $C^{1,1}$ extension (with values in $\mathcal{S}_0$) to the whole domain $\Omega$ (see the material in Subsection \ref{subsecextension}).  
By elliptic regularity, we thus have $Q_\eps\in W^{2,2}(\Omega)$, see e.g. \cite[Theorem 8.12]{GilbTrud}. In particular, $Q_\eps\in W^{1,6}(\Omega)$ and thus $Q_\eps\in L^\infty(\Omega)$ by the Sobolev embedding $W^{1,6}(\Omega)\hookrightarrow L^\infty(\Omega)$. Hence, $\Delta Q_\eps\in L^\infty(\Omega)$, and by elliptic regularity again, we have $Q_\eps\in C^{1,\alpha}(\overline\Omega)$ for every $\alpha\in(0,1)$, see e.g. \cite[Theorem 8.34]{GilbTrud}. 

We now claim that assumption \eqref{condconvmonotform} implies that 
$$Q_\varepsilon\mathop{\longrightarrow}\limits_{\varepsilon\to 0} Q_{\rm ref} \text{ strongly in $W^{1,2}(\Omega)\,$, and }\; \frac{1}{\eps^2}\int_{\Omega}(1-|Q_\eps|^2)^2\,dx\mathop{\longrightarrow}\limits_{\varepsilon\to 0} 0\,.$$
Indeed, we first infer from \eqref{condconvmonotform} that $\{Q_\eps\}_{\eps>0 }$ remains bounded in $W^{1,2}(\Omega)$ as $\eps\to0$.  Therefore, given an arbitrary sequence $\eps_n\to 0$, we have $Q_{\eps_n}\rightharpoonup Q_{\rm ref}$ weakly in $W^{1,2}(\Omega)$.  In particular, $Q_{\eps_n}\to Q_{\rm ref}$ in $L^4(\Omega)$ by the compact Sobolev embedding $W^{1,2}(\Omega)\hookrightarrow L^4(\Omega)$. As a consequence, $\int_\Omega W(Q_{\eps_n})\,dx\to \int_\Omega W(Q_{\rm ref})\,dx$. On the other hand, by \eqref{condconvmonotform} and lower semi-continuity of the Dirichlet integral, we have 
\begin{multline*}
\mathcal{E}_{\lambda}(Q_{\rm ref})\leq \liminf_{n\to\infty} \mathcal{E}_{\lambda}(Q_{\eps_n})\leq \limsup_{n\to\infty} \mathcal{E}_{\lambda}(Q_{\eps_n}) \\
\leq \lim_{n\to\infty}\Big(\mathcal{E}_{\lambda}(Q_{\eps_n})+\frac{1}{4\eps_n^2}\int_{\Omega}(1-|Q_{\eps_n}|^2)^2\,dx\Big)=\mathcal{E}_{\lambda}(Q_{\rm ref})\,.
\end{multline*}
Hence $\frac{1}{\eps_n^2}\int_\Omega (1-|Q_{\eps_n}|^2)^2\,dx\to 0$, and $\|\nabla Q_{\eps_n}\|_{L^2(\Omega)}\to \|\nabla Q_{\rm ref}\|_{L^2(\Omega)}$. This latter fact, combined with the $W^{1,2}$-weak convergence, implies the  $W^{1,2}$-strong convergence of $Q_{\eps_n}$ toward  $Q_{\rm ref}$. 
\vskip5pt

\noindent{\it Step 2: Interior Monotonicity Formula.} Without loss of generality, we may assume that $x_0=0$. Let us take the inner product of  \eqref{ELeqGL} with $(x\cdot\nabla)Q_\eps$, and integrate by parts over the ball $B_t$ of radius $t\in(\rho,r)$. It yields
\begin{multline*}
-\frac{1}{2}\int_{B_t}|\nabla Q_\eps|^2\,dx+\frac{t}{2}\int_{\partial B_t}|\nabla Q_\eps|^2\,d\mathcal{H}^2-\lambda\int_{B_t}W(Q_\eps)\,dx+\lambda t\int_{\partial B_t}W(Q_\eps)\,d\mathcal{H}^2\\
-\frac{1}{4\eps^2}\int_{B_t}(1-|Q_\eps|^2)^2\,dx+\frac{t}{4\eps^2}\int_{\partial B_t}(1-|Q_\eps|^2)^2\,d\mathcal{H}^2-\frac{1}{2}\int_{B_t}|Q_\eps-Q_{\rm ref}|^2\,dx+\frac{t}{2}\int_{\partial B_t}|Q_\eps-Q_{\rm ref}|^2\,d\mathcal{H}^2\\
=t\int_{\partial B_t}\Big|\frac{\partial Q_\eps}{\partial |x|}\Big|^2\,d\mathcal{H}^2+2\lambda\int_{B_t}W(Q_\eps)\,dx+\frac{1}{2\eps^2}\int_{B_t}(1-|Q_\eps|^2)^2\,dx\\
+\int_{B_t}|Q_\eps-Q_{\rm ref}|^2\,dx-\int_{B_t}(Q_\eps-Q_{\rm ref}):\big((x\cdot\nabla)Q_{\rm ref}\big)\,dx\,.  
\end{multline*}
Dividing both sides by $t^2$, we obtain
\begin{multline*}
\frac{d}{dt}\bigg(\frac{1}{t}\mathcal{GL}_{\varepsilon}(Q_{\rm ref}; Q_\eps,B_t)\bigg)=\frac{1}{t}\int_{\partial B_t}\Big|\frac{\partial Q_\eps}{\partial |x|}\Big|^2\,d\mathcal{H}^2+
\frac{2\lambda}{t^2}\int_{B_t}W(Q_\eps)\,dx\\
+\frac{1}{2\eps^2t^2}\int_{B_t}(1-|Q_\eps|^2)^2\,dx+\frac{1}{t^2}\int_{B_t}|Q_\eps-Q_{\rm ref}|^2\,dx-\frac{1}{t^2}\int_{B_t}(Q_\eps-Q_{\rm ref}):\big((x\cdot\nabla)Q_{\rm ref}\big)\,dx\,.
\end{multline*}
Integrating this identity between $\rho$ and $r$ yields
\begin{multline*}
\frac{1}{r}\mathcal{GL}_{\varepsilon}(Q_{\rm ref}; Q_\eps,B_r)-\frac{1}{\rho}\mathcal{GL}_{\varepsilon}(Q_{\rm ref}; Q_\eps,B_\rho)=\int_{B_r\setminus B_\rho}\frac{1}{|x|}\Big|\frac{\partial Q_\eps}{\partial |x|}\Big|^2\,dx\\
+2\lambda\int_\rho^r\bigg(\frac{1}{t^2}\int_{B_t}W(Q_{\eps})\,dx\bigg)\,dt+\frac{1}{2\eps^2}\int_\rho^r\bigg(\frac{1}{t^2}\int_{B_t}(1-|Q_\eps|^2)^2\,dx\bigg)\,dt\\
+\int_\rho^r\bigg(\frac{1}{t^2}\int_{B_t}|Q_\eps-Q_{\rm ref}|^2\,dx\bigg)\,dt-\int_\rho^r\bigg(\frac{1}{t^2}\int_{B_t}(Q_\eps-Q_{\rm ref}):\big((x\cdot\nabla)Q_{\rm ref}\big)\,dx\bigg)\,dt\,.
\end{multline*}
In view of the convergences established in Step 1, letting $\eps\to0$ in this last identity leads to  \eqref{IntMonForm}. 
\vskip5pt

\noindent{\it Step 3: Boundary Monotonicity Inequality.} We first claim that there exists a constant $C_\Omega>0$ depending only on $\Omega$ such that
\begin{multline}\label{estnormderGL}
\int_{\partial\Omega}\Big|\frac{\partial Q_\eps}{\partial\nu}\Big|^2\,d\mathcal{H}^2\leq C_\Omega\bigg(\|\nabla_{\rm tan}Q_{\rm b}\|^2_{L^2(\partial\Omega)} +\lambda\|W(Q_{\rm b})\|_{L^1(\partial\Omega)} \\
+\|\nabla Q_\eps\|_{L^2(\Omega)}^2+\|\nabla Q_{\rm ref}\|_{L^2(\Omega)}^2+\|Q_\eps-Q_{\rm ref}\|_{L^2(\Omega)}^2\bigg)\,.
\end{multline}
To prove this estimate, let us introduce $\Phi_\Omega\in C^{2,\alpha}(\overline\Omega)$  the unique solution of
$$\begin{cases}
-\Delta\Phi_\Omega=1 & \text{in $\Omega$}\,,\\
\Phi_\Omega=0 & \text{on $\partial\Omega$}\,,
\end{cases}
$$
see e.g. \cite[Theorem 6.14]{GilbTrud}.  
We consider $V:\overline\Omega\to\R^3$ the $C^{1,\alpha}$-vector field given by $V:=-\nabla\Phi_\Omega$. Note that $V=(V\cdot\nu)\nu$ on~$\partial\Omega$ (since $\Phi_\Omega$ is constant on $\partial\Omega$), where $\nu$ is outer unit normal on $\partial \Omega$. Taking the inner product 
of \eqref{ELeqGL} with $(V\cdot\nabla)Q_\eps$, and integrating by parts over $\Omega$ leads to 
\begin{multline*}
\frac{1}{2}\int_{\partial\Omega}\Big|\frac{\partial Q_\eps}{\partial\nu}\Big|^2(V\cdot\nu)\,d\mathcal{H}^2\\
+\int_\Omega\Big( \frac{1}{2}|\nabla Q_\eps|^2+\lambda W(Q_\eps)+\frac{1}{4\eps^2}(1-|Q_\eps|^2)^2+\frac{1}{2}|Q_\eps-Q_{\rm ref}|^2\Big){\rm div}(V)\,dx\\
=\frac{1}{2}\int_{\partial\Omega}|\nabla_{\rm tan} Q_{\rm b}|^2(V\cdot\nu)\,d\mathcal{H}^2+\lambda\int_{\partial\Omega}W(Q_{\rm b})(V\cdot\nu)\,d\mathcal{H}^2\\+\int_\Omega \sum_{i,j=1}^3(\partial_iQ_\eps:\partial_jQ_\eps)\partial_jV_i\,dx
+\int_\Omega(Q_\eps-Q_{\rm ref}):(V\cdot\nabla)Q_{\rm ref}\,dx\,,
\end{multline*}
since $Q_\eps=Q_{\rm ref}=Q_{\rm b}$ on $\partial\Omega$ and $|Q_{\rm b}|=1$. Using ${\rm div}(V)=1$ in $\Omega$, we deduce that  
\begin{multline*}
\int_{\partial\Omega}\Big|\frac{\partial Q_\eps}{\partial\nu}\Big|^2(V\cdot\nu)\,d\mathcal{H}^2\leq C \|V\|_{C^1(\Omega)}\bigg(\|\nabla_{\rm tan}Q_{\rm b}\|^2_{L^2(\partial\Omega)} +\lambda\|W(Q_{\rm b})\|_{L^1(\partial\Omega)}+\|\nabla Q_\eps\|_{L^2(\Omega)}^2\\
+\|\nabla Q_{\rm ref}\|_{L^2(\Omega)}^2+\|Q_\eps-Q_{\rm ref}\|_{L^2(\Omega)}^2\bigg)\,,
\end{multline*}
for some universal constant $C>0$. On the other hand, by the Hopf lemma, there is a constant ${\bf c}^0_\Omega>0$ depending only on $\Omega$ such that $V\cdot\nu\geq {\bf c}^0_\Omega$ on~$\partial\Omega$, and \eqref{estnormderGL} follows. 
\vskip3pt

We now fix $x_0\in \partial\Omega$. By the smoothness assumption on $\partial\Omega$, there are two constants ${\bf r}_\Omega>0$ and ${\bf c}^1_\Omega>0$ (depending only $\Omega$) such that for every $t\in(0,{\bf r}_\Omega)$, 
\begin{equation}\label{propregbdomega}
\mathcal{H}^2\big(B_t(x_0)\cap\partial \Omega\big)\leq {\bf c}^1_\Omega t^2\,, \text{ and }  \big|(x-x_0)\cdot\nu(x)\big|\leq {\bf c}^1_\Omega t^2\text{ on $B_t(x_0)\cap\partial\Omega$}\,.  
\end{equation}
In what follows, we assume without loss of generality that $x_0=0$. Let us fix $0<\rho<r<{\bf r}_\Omega$. Taking once again the inner product of \eqref{ELeqGL} with $(x\cdot\nabla)Q_\eps$, we integrate the result by parts in $B_t\cap\Omega$ with $t\in(\rho,r)$. Similarly to Step 2, it yields (after dividing by $t^2$)
\begin{multline}\label{pohozbdident}
\frac{d}{dt}\bigg(\frac{1}{t}\mathcal{GL}_{\varepsilon}(Q_{\rm ref}; Q_\eps,B_t\cap\Omega)\bigg)=\frac{1}{t}\int_{\Omega\cap\partial B_t}\Big|\frac{\partial Q_\eps}{\partial |x|}\Big|^2\,d\mathcal{H}^2+
\frac{2\lambda}{t^2}\int_{\Omega\cap B_t}W(Q_\eps)\,dx\\
+\frac{1}{2\eps^2t^2}\int_{B_t}(1-|Q_\eps|^2)^2\,dx+\frac{1}{t^2}\int_{\Omega\cap B_t}|Q_\eps-Q_{\rm ref}|^2\,dx
-\frac{1}{t^2}\int_{\Omega\cap B_t}(Q_\eps-Q_{\rm ref}):\big((x\cdot\nabla)Q_{\rm ref}\big)\,dx\\
-\frac{1}{2t^2}\int_{B_t\cap\partial\Omega}|\nabla Q_\eps|^2(x\cdot\nu)\,d\mathcal{H}^2 +\frac{1}{t^2}\int_{B_t\cap\partial\Omega}\frac{\partial Q_\eps}{\partial\nu}:\big((x\cdot\nabla)Q_\eps\big)\,d\mathcal{H}^2\\
-\frac{\lambda}{t^2}\int_{B_t\cap\partial\Omega}W(Q_{\rm b})(x\cdot\nu)\,d\mathcal{H}^2\,. 
\end{multline}
Note that we used once again $Q_\eps=Q_{\rm ref}=Q_{\rm b}$ on $\partial\Omega$,  and $|Q_{\rm b}|=1$.  Next, if we denote by $(\tau_1,\tau_2)$ an orthonormal basis of the tangent space of $\partial\Omega$ at $x$, we have 
\begin{multline*}
-\frac{1}{2}\int_{B_t\cap\partial\Omega}|\nabla Q_\eps|^2(x\cdot\nu)\,d\mathcal{H}^2+\int_{B_t\cap\partial\Omega}\frac{\partial Q_\eps}{\partial\nu}:\big((x\cdot\nabla)Q_\eps\big)\,d\mathcal{H}^2= 
\frac{1}{2}\int_{B_t\cap\partial\Omega}\Big|\frac{\partial Q_\eps}{\partial\nu}\Big|^2(x\cdot\nu)\,d\mathcal{H}^2\\
 -\frac{1}{2}\int_{B_t\cap\partial\Omega}\Big|\frac{\partial Q_{\rm b}}{\partial\tau_1}\Big|^2(x\cdot\nu)\,d\mathcal{H}^2 -\frac{1}{2}\int_{B_t\cap\partial\Omega}\Big|\frac{\partial Q_{\rm b}}{\partial\tau_2}\Big|^2(x\cdot\nu)\,d\mathcal{H}^2\\
 +\int_{B_t\cap\partial\Omega}\frac{\partial Q_\eps}{\partial\nu}:\frac{\partial Q_{\rm b}}{\partial\tau_1}(x\cdot\tau_1)\,d\mathcal{H}^2
 +\int_{B_t\cap\partial\Omega}\frac{\partial Q_\eps}{\partial\nu}:\frac{\partial Q_{\rm b}}{\partial\tau_2}(x\cdot\tau_2)\,d\mathcal{H}^2\,.
\end{multline*}
Then we infer from \eqref{propregbdomega} that 
\begin{multline}\label{estibdpohoz}
-\frac{1}{2}\int_{B_t\cap\partial\Omega}|\nabla Q_\eps|^2(x\cdot\nu)\,d\mathcal{H}^2+\int_{B_t\cap\partial\Omega}\frac{\partial Q_\eps}{\partial\nu}:\big((x\cdot\nabla)Q_\eps\big)\,d\mathcal{H}^2
\geq\\ 
- C_\Omega t^2\big(\|\partial_\nu Q_\eps\|^2_{L^2(\partial\Omega)}+\|\nabla_{\rm tan}Q_{\rm b}\|^2_{L^\infty(\partial\Omega)}\big)\,,
\end{multline}
for a constant $C_\Omega>0$ depending only on the constants ${\bf r}_\Omega$ and ${\bf c}^1_\Omega$. Still by \eqref{propregbdomega}, we have 
\begin{equation}\label{addpotenttermonbdry}
\int_{B_t\cap\partial\Omega}W(Q_{\rm b})(x\cdot\nu)\,d\mathcal{H}^2\leq C_\Omega t^2 \int_{\partial\Omega}W(Q_{\rm b})\,d\mathcal{H}^2\,.
\end{equation}
Inserting \eqref{estibdpohoz}, \eqref{addpotenttermonbdry}, and \eqref{estnormderGL} in \eqref{pohozbdident}, and integrating the resulting inequality between $\rho$ and $r$ yields  
\begin{multline*}
\frac{1}{r}\mathcal{GL}_{\varepsilon}(Q_{\rm ref}; Q_\eps,B_r\cap\Omega) -\frac{1}{\rho}\mathcal{GL}_{\varepsilon}(Q_{\rm ref}; Q_\eps,B_\rho\cap\Omega)
\geq -(r-\rho)\widetilde K_\lambda(Q_{\rm b},Q_{\rm ref},Q_\eps)\\
+\int_{(B_r\setminus B_\rho)\cap\Omega}\frac{1}{|x|}\Big|\frac{\partial Q_\eps}{\partial |x|}\Big|^2\,dx+2\lambda\int_\rho^r\bigg(\frac{1}{t^2}\int_{B_t\cap\Omega}W(Q_{\eps})\,dx\bigg)\,dt\\
+\frac{1}{2\eps^2}\int_\rho^r\bigg(\frac{1}{t^2}\int_{B_t\cap\Omega}(1-|Q_\eps|^2)^2\,dx\bigg)\,dt+\int_\rho^r\bigg(\frac{1}{t^2}\int_{B_t\cap\Omega}|Q_\eps-Q_{\rm ref}|^2\,dx\bigg)\,dt\\
-\int_\rho^r\bigg(\frac{1}{t^2}\int_{B_t\cap\Omega}(Q_\eps-Q_{\rm ref}):\big((x\cdot\nabla)Q_{\rm ref}\big)\,dx\bigg)\,dt\,,
\end{multline*}
where 
\begin{multline*}
\widetilde K_\lambda(Q_{\rm b},Q_{\rm ref},Q_\eps):=C_\Omega\bigg(\|\nabla_{\rm tan}Q_{\rm b}\|^2_{L^\infty(\partial\Omega)}+\lambda\|W(Q_{\rm b})\|_{L^1(\partial\Omega)}\\
+\|\nabla Q_{\eps}\|^2_{L^2(\Omega)}+\|\nabla Q_{\rm ref}\|^2_{L^2(\Omega)}+\|Q_\eps-Q_{\rm ref}\|^2_{L^2(\Omega)}\bigg)\,,
\end{multline*}
and $C_\Omega>0$ is a constant depending only on ${\bf r}_\Omega$, ${\bf c}^1_\Omega$,  $({{\bf c}^0_\Omega})^{-1}\|\nabla\Phi_\Omega\|_{C^1(\Omega)}$, and the ($2$-dimensional)   measure of~$\partial\Omega$. In view of the convergences established in Step 1, letting $\eps\to0$ in this last inequality leads to~\eqref{BdMonForm}. 
\end{proof}

\begin{remark}[Specific geometry \cite{DMP2}]\label{remmonotbdflatcase}
In our companion paper \cite{DMP2}, we consider a domain $\Omega$ and a boundary condition $Q_{\rm b}$ for which the following situation occurs:  
$0\in\partial\Omega$, $B_1\cap\Omega=B_1\cap\{x_3>0\}$, and $Q_{\rm b}$  is constant on $B_1\cap\partial\Omega=B_1\cap\{x_3=0\}$.
In this situation, the boundary monotonicity inequality \eqref{BdMonForm} for points on  $B_1\cap\partial\Omega$ becomes an equality of the following form: 
for every point $x_0\in B_1\cap\partial\Omega$ and every $0<\rho<r<1-|x_0|$, 
\begin{multline*}
\frac{1}{r}\mathcal{E}_\lambda(Q_{\rm ref},B_r(x_0)\cap\Omega) -\frac{1}{\rho}\mathcal{E}_\lambda(Q_{\rm ref},B_\rho(x_0)\cap\Omega)=\\
\int_{\big(B_r(x_0)\setminus B_\rho(x_0)\big)\cap\Omega} \frac{1}{|x-x_0|}\bigg|\frac{\partial Q_{\rm ref}}{\partial |x-x_0|}\bigg|^2\,dx
+ 2\lambda\int_\rho^r\bigg(\frac{1}{t^2}\int_{B_t(x_0)\cap\Omega}W(Q_{\rm ref})\,dx\bigg)\,dt\,.
\end{multline*}
Indeed, it suffices to notice that $(x-x_0)\cdot\nu=0$ and $\nabla_{\rm tan}Q_{\rm b}=0$ on  $B_1\cap\partial\Omega$, and then use this facts in identity \eqref{pohozbdident}. 
\end{remark}

One of the main consequences of the monotonicity formulae in Proposition \ref{monotonprop} is a uniform control of the energy in small balls. Recalling that $\bar{Q}_{\rm b}\in\mathcal{A}_{Q_{\rm b}}(\Omega)$ is a given $\mathbb{S}^4$-valued extension to the domain $\Omega$ of the boundary condition $Q_{\rm b}$, we have 

\begin{lemma}\label{corolmonotform}
Assume that $\partial\Omega$ is of class $C^{3}$ and $Q_{\rm b}\in C^{1,1}(\partial\Omega;\mathbb{S}^4)$. 
If $Q_{\rm ref}\in\mathcal{A}_{Q_{\rm b}}(\Omega)$ satisfies the monotonicity formulae~\eqref{IntMonForm} and \eqref{BdMonForm} with 
$$K_\lambda(Q_{\rm b},Q_{\rm ref})\leq  C_\Omega\big(\|\nabla_{\rm tan} Q_{\rm b}\|^2_{L^\infty(\partial\Omega)}+\lambda\|W(Q_{\rm b})\|_{L^1(\partial\Omega)}+\mathcal{E}_\lambda(\bar{Q}_{\rm b})\big)$$ 
for some constant $C_\Omega>0$ depending only on $\Omega$, then
\begin{enumerate}
\item for every $x_0\in \Omega$ and $r\in\big(0,{\rm dist}(x_0,\partial\Omega)\big)$, 
$$\mathop{\sup}\limits_{B_\rho(x)\subset B_{r/2}(x_0)} \frac{1}{\rho}\mathcal{E}_\lambda\big(Q_{\rm ref},B_\rho(x)\big)\leq \frac{2}{r}\mathcal{E}_\lambda\big(Q_{\rm ref},B_r(x_0)\big)\,;$$
\item there exist two constants ${\bf r}^{(1)}_{\Omega}>0$ (depending only on $\Omega$) and $C^{\lambda}_{Q_{\rm b}}$ (depending only on $\Omega$, $Q_{\rm b}$, $\lambda\|W(Q_{\rm b})\|_{L^1(\partial\Omega)}$, and $\mathcal{E}_\lambda(\bar{Q}_{\rm b})$) such that for every $x_0\in \partial\Omega$ and $r\in(0,{\bf r}^{(1)}_{\Omega})$, 
\begin{equation}\label{conclcormonotformbdry}
\mathop{\sup}\limits_{B_\rho(x)\subset B_{r/6}(x_0)} \frac{1}{\rho}\mathcal{E}_\lambda\big(Q_{\rm ref},B_\rho(x)\cap\Omega\big)\leq \frac{4}{r}\mathcal{E}_\lambda\big(Q_{\rm ref},B_r(x_0)\cap\Omega\big) +C^{\lambda}_{Q_{\rm b}} r\,.
\end{equation}
\end{enumerate}
\end{lemma}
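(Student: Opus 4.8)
\emph{Proof proposal.} The plan is to derive both estimates purely by manipulating the two monotonicity relations postulated in the hypothesis, comparing energies on concentric and on non-concentric balls; no further ingredient (not even Proposition~\ref{monotonprop}) is needed, since \eqref{IntMonForm} and \eqref{BdMonForm} are assumed to hold for $Q_{\rm ref}$ directly.

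For the interior bound (1), I would fix $x_0\in\Omega$, $r\in(0,{\rm dist}(x_0,\partial\Omega))$, and a ball $B_\rho(x)\subset B_{r/2}(x_0)$, so that $|x-x_0|+\rho\le r/2$. Then $\rho<r-|x-x_0|\le{\rm dist}(x,\partial\Omega)$ and $B_{r-|x-x_0|}(x)\subset B_r(x_0)$. Applying \eqref{IntMonForm} with center $x$ and radii $\rho<r-|x-x_0|$ and discarding the two nonnegative terms on the right-hand side gives $\tfrac1\rho\mathcal{E}_\lambda(Q_{\rm ref},B_\rho(x))\le\tfrac1{r-|x-x_0|}\mathcal{E}_\lambda(Q_{\rm ref},B_{r-|x-x_0|}(x))\le\tfrac2r\mathcal{E}_\lambda(Q_{\rm ref},B_r(x_0))$, where I used $r-|x-x_0|\ge r/2$ and monotonicity of $\mathcal{E}_\lambda$ under inclusion of domains. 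Taking the supremum over admissible pairs $(x,\rho)$ yields (1).

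For the boundary bound (2), I would fix $x_0\in\partial\Omega$ and $B_\rho(x)\subset B_{r/6}(x_0)$ with $B_\rho(x)\cap\Omega\ne\emptyset$ (otherwise there is nothing to prove), so $|x-x_0|+\rho\le r/6$. Let $y\in\partial\Omega$ realize $d:={\rm dist}(x,\partial\Omega)$; then $d\le|x-x_0|$, hence $|y-x_0|\le d+|x-x_0|\le r/3$ and $B_{2r/3}(y)\subset B_r(x_0)$. I would then split into two cases. If $\rho\le d$, then $B_\rho(x)\subset B_d(x)\subset\Omega$ and $B_d(x)\subset B_{2d}(y)\cap\Omega$; I combine \eqref{IntMonForm} at $x$ between $\rho$ and $d$ with \eqref{BdMonForm} at $y$ between $2d$ and $2r/3$ (legitimate provided $2r/3<{\bf r}_\Omega$, which fixes ${\bf r}^{(1)}_\Omega:=\tfrac32{\bf r}_\Omega$), discarding the nonnegative terms and bounding the boundary error by $(2r/3)\,K_\lambda(Q_{\rm b},Q_{\rm ref})$. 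If instead $\rho>d$, then $B_\rho(x)\cap\Omega\subset B_{2\rho}(y)\cap\Omega$ and it is enough to apply \eqref{BdMonForm} at $y$ between $2\rho$ and $2r/3$. In both cases one lands on $\tfrac1\rho\mathcal{E}_\lambda(Q_{\rm ref},B_\rho(x)\cap\Omega)\le\tfrac3r\mathcal{E}_\lambda(Q_{\rm ref},B_r(x_0)\cap\Omega)+\tfrac43 r\,K_\lambda(Q_{\rm b},Q_{\rm ref})$ (so in fact with a constant $3$, better than the stated $4$); inserting the assumed bound $K_\lambda(Q_{\rm b},Q_{\rm ref})\le C_\Omega(\|\nabla_{\rm tan}Q_{\rm b}\|^2_{L^\infty(\partial\Omega)}+\lambda\|W(Q_{\rm b})\|_{L^1(\partial\Omega)}+\mathcal{E}_\lambda(\bar{Q}_{\rm b}))$ and absorbing constants defines $C^\lambda_{Q_{\rm b}}$, which then depends only on the claimed quantities. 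Taking the supremum over $(x,\rho)$ gives \eqref{conclcormonotformbdry}.

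I do not expect a genuine analytic obstacle here: the potential and radial-derivative terms are simply dropped, and the only defect from exactness at the boundary is the single lower-order term $(r-\rho)K_\lambda$. The one place that requires care is the bookkeeping of the auxiliary radii, namely checking that $2d$, $2\rho$, $2r/3$ all stay below ${\bf r}_\Omega$ and that the chain of inclusions $B_\rho(x)\subset B_{2d}(y)$ (resp.\ $B_{2\rho}(y)$) and $B_{2r/3}(y)\subset B_r(x_0)$ holds, all of which follow from $d\le|x-x_0|\le r/6-\rho$.
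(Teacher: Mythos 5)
Your proof is correct and follows the same underlying strategy as the paper's: chain the interior monotonicity formula \eqref{IntMonForm} and the boundary monotonicity inequality \eqref{BdMonForm} through suitably chosen intermediate balls, discarding the nonnegative terms and keeping the single lower-order boundary error $(r-\rho)K_\lambda$. The one noticeable difference is in the boundary argument: the paper distinguishes three cases according to whether $x\in\partial\Omega$, $x\notin\partial\Omega$ with $|x-\boldsymbol{\pi}_\Omega(x)|\le\rho$, or $x\in\Omega$ with $|x-\boldsymbol{\pi}_\Omega(x)|>\rho$, reducing each to the first via the nearest-point projection $\boldsymbol{\pi}_\Omega$; for this reason the paper takes ${\bf r}^{(1)}_\Omega\in(0,{\bf r}_\Omega)$ small enough that $\boldsymbol{\pi}_\Omega$ is single-valued on the corresponding tubular neighborhood. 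You instead pick an arbitrary $y\in\partial\Omega$ realizing $d={\rm dist}(x,\partial\Omega)$ (existence by compactness suffices; uniqueness is never used) and split only on $\rho\le d$ versus $\rho>d$. This is marginally more elementary, removes the tubular-neighborhood condition from the definition of ${\bf r}^{(1)}_\Omega$ (you trade it for $r<\tfrac32{\bf r}_\Omega$, which is harmless since ${\bf r}^{(1)}_\Omega$ need not lie below ${\bf r}_\Omega$), and incidentally yields $3/r$ in place of the stated $4/r$. The radius bookkeeping you flag at the end checks out: $d\le|x-x_0|<r/6$ gives $2d,2\rho<r/3<2r/3$ and $B_{2r/3}(y)\subset B_r(x_0)$, and $C^\lambda_{Q_{\rm b}}=\tfrac43 K_\lambda(Q_{\rm b},Q_{\rm ref})$ has the required dependence because of the assumed bound on $K_\lambda(Q_{\rm b},Q_{\rm ref})$.
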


\begin{proof}
{\it Step 1: proof of (1).} We assume without loss of generality that $x_0=0$, and we consider an arbitrary ball $B_\rho(x)\subset B_{r/2}$. 
By the interior monotonicity formula \eqref{IntMonForm}, we have 
$$ \frac{1}{\rho}\mathcal{E}_\lambda\big(Q_{\rm ref},B_\rho(x)\big)\leq  \frac{1}{\rho+|x|}\mathcal{E}_\lambda\big(Q_{\rm ref},B_{\rho+|x|}(x)\big)\leq  \frac{1}{\rho+|x|}\mathcal{E}_\lambda\big(Q_{\rm ref},B_{2(\rho+|x|)}\big)\leq \frac{2}{r}\mathcal{E}_\lambda\big(Q_{\rm ref},B_r\big)\,,$$
and the claim is proved. 
\vskip5pt

\noindent{\it Step 2: proof of (2).} We choose ${\bf r}^{(1)}_\Omega\in (0,{\bf r}_\Omega)$ (where ${\bf r}_\Omega$ is given by Proposition \ref{monotonprop}) in such a way that the nearest point projection $\boldsymbol{\pi}_\Omega$ on $\partial\Omega$ is well defined in the  ${\bf r}^{(1)}_\Omega$-tubular neighborhood of $\partial\Omega$. Once again, we may assume that $x_0=0$, and we consider  $B_\rho(x)\subset B_{r/6}$.  
We now distinguish different cases. 

Assume first that $x\in\partial \Omega$. Then, we deduce from the boundary monotonicity inequality \eqref{BdMonForm} that 
\begin{multline*}
 \frac{1}{\rho}\mathcal{E}_\lambda\big(Q_{\rm ref},B_\rho(x)\cap\Omega\big)\leq  \frac{1}{\rho+|x|}\mathcal{E}_\lambda\big(Q_{\rm ref},B_{\rho+|x|}(x)\cap\Omega\big)+C^{\lambda}_{Q_{\rm b}} |x| \\
 \leq \frac{1}{\rho+|x|}\mathcal{E}_\lambda\big(Q_{\rm ref},B_{2(\rho+|x|)}\cap\Omega\big)+C^{\lambda}_{Q_{\rm b}} r \leq \frac{2}{r}\mathcal{E}_\lambda\big(Q_{\rm ref},B_{r}\cap\Omega\big)+C^{\lambda}_{Q_{\rm b}} r \,.
\end{multline*} 
Next, for $x\not\in\partial\Omega$ and $|x-\boldsymbol{\pi}_\Omega(x)|\leq\rho$, we have  $2\rho+|\boldsymbol{\pi}_\Omega(x)|\leq r/2$ so that 
$$  \frac{1}{\rho}\mathcal{E}_\lambda\big(Q_{\rm ref},B_\rho(x)\cap\Omega\big)  \leq  \frac{1}{\rho}\mathcal{E}_\lambda\big(Q_{\rm ref},B_{2\rho}(\boldsymbol{\pi}_\Omega(x))\cap\Omega\big) \leq  \frac{4}{r}\mathcal{E}_\lambda\big(Q_{\rm ref},B_{r}\cap\Omega\big)+C^{\lambda}_{Q_{\rm b}} r \,,$$
by the previous inequality. 

Finally, for $x\in\Omega$ and $|x-\boldsymbol{\pi}_\Omega(x)|>\rho$, we have $B_\rho(x)\subset\Omega$ and thus 
$$ \frac{1}{\rho}\mathcal{E}_\lambda\big(Q_{\rm ref},B_\rho(x)\big)\leq  \frac{1}{|x-\boldsymbol{\pi}_\Omega(x)|}\mathcal{E}_\lambda\big(Q_{\rm ref},B_{|x-\boldsymbol{\pi}_\Omega(x)|}(x)\big)\leq \frac{4}{r}\mathcal{E}_\lambda\big(Q_{\rm ref},B_{r}\cap\Omega\big)+C^{\lambda}_{Q_{\rm b}} r \,,$$
where we have used again the previous inequality,  $|x-\boldsymbol{\pi}_\Omega(x)|\leq r/6$, and $|\boldsymbol{\pi}_\Omega(x)|\leq r/6$.  
\end{proof}

\begin{remark}[Specific geometry \cite{DMP2}]\label{furtherremflatbdrycase}
As already mentioned in Remark \ref{remmonotbdflatcase}, we consider in our companion paper \cite{DMP2} a situation where $0\in\partial\Omega$, $B_1\cap\Omega=B_1\cap\{x_3>0\}$, $Q_{\rm b}$  is constant on $B_1\cap\partial\Omega=B_1\cap\{x_3=0\}$. In this case, if $Q_{\rm ref}\in\mathcal{A}_{Q_{\rm b}}(\Omega)$ satisfies the boundary monotonicity formula 
in Remark \ref{remmonotbdflatcase}, then  we can repeat the argument 
in Lemma \ref{corolmonotform} above to obtain 
\begin{equation}\label{conclfurtherremflatbdrycase}
\mathop{\sup}\limits_{B_\rho(x)\subset B_{1/6}} \frac{1}{\rho}\mathcal{E}_\lambda\big(Q_{\rm ref},B_\rho(x)\cap\Omega\big)\leq 4\mathcal{E}_\lambda\big(Q_{\rm ref},B_1\cap\Omega\big)
\end{equation}
instead of \eqref{conclcormonotformbdry} (with $x_0=0$ and $r=1$). 
\end{remark}

\subsection{Reflection across the boundary}\label{subsecextension}

To obtain regularity estimates at the boundary for critical points of $\mathcal{E}_\lambda$ in the class $\mathcal{A}_{Q_{\rm b}}(\Omega)$, we rely on  general arguments and results developed by C.~Scheven in \cite{Scheven}. Here we make them fully explicit in our case where the target manifold is a sphere  and the boundary is not flat. We even obtain a slight improvement compared to \cite{Scheven} as we only require $C^{1,1}$-regularity for the boundary condition (compared to $C^{2,\alpha}$ in \cite{Scheven}). The main idea is to construct a suitable reflection across the boundary taking into account the prescribed boundary condition $Q_{\rm b}$ in such a way that the reflected critical point satisfies an equation similar in nature to \eqref{distribELeq} in a larger domain. Boundary regularity can then be treated as an interior regularity problem. The aim of this subsection is to construct such reflection and  to derive the resulting equation in the extended domain. We proceed as follows. 

 We still assume that the boundary of the bounded open set $\Omega\subset\R^3$ is of class $C^{3}$. In this way,  we can find a small number $\delta_\Omega>0$ such that the nearest point projection ${\boldsymbol \pi}_\Omega$ on $\partial\Omega$ is well defined and of class $C^{2}$ in the $(2\delta_\Omega)$-tubular neighborhood of $\partial\Omega$ (see e.g. \cite[Chapter 2, Section~2.12.3]{Simon}). We set for $\delta\in(0,2\delta_{\Omega} ]$, 
\begin{align*}
 U_\delta & :=\big\{x\in\R^3:{\rm dist}(x,\partial\Omega)<\delta\big\}\,,\\[3pt] 
U_\delta^{\rm ex} & :=\big\{x\in U_\delta : (x-{\boldsymbol \pi}_\Omega(x))\cdot\nu({\boldsymbol \pi}_\Omega(x))>0\big\}\,,\\[3pt]
 U_\delta^{\rm in} & :=U_\delta\setminus\overline{U_\delta^{\rm ex}}\,,
\end{align*}
where $\nu$ denotes the outer unit  normal vector field on $\partial\Omega$. 
Choosing $\delta_\Omega$ smaller if necessary, we can assume that 
$$\Omega\cap B_{2\delta_\Omega}(x)= U^{\rm in}_{2\delta_\Omega} \cap B_{2\delta_\Omega}(x)\quad\forall x\in\partial\Omega\,.$$
The {\sl geodesic reflection} across $\partial\Omega$ is the involutive $C^2$-diffeomorphism $\boldsymbol{\sigma}_\Omega:U_{2\delta_\Omega}\to U_{2\delta_\Omega}$ given by 
$$\boldsymbol{\sigma}_\Omega(x):=2{\boldsymbol \pi}_\Omega(x)-x\,. $$
It satisfies
$$\boldsymbol{\sigma}_\Omega(U_\delta^{\rm in})= U_\delta^{\rm ex}\quad\forall\delta\in(0,2\delta_\Omega)\,,\text{ and}\quad \boldsymbol{\sigma}_\Omega(x)=x\quad \forall x\in\partial\Omega\,. $$
Being involutive, its (matrix) differential satisfies
\begin{equation}\label{invdiffsigma}
D\boldsymbol{\sigma}_\Omega(\boldsymbol{\sigma}_\Omega(x))D\boldsymbol{\sigma}_\Omega(x)=I \quad\forall x\in U_{2\delta_\Omega}\,. 
\end{equation}
Moreover, for every $x\in\partial\Omega$ we have  
$$D\boldsymbol{\sigma}_\Omega(x) v = 2{\bf p}_x(v)-v\quad \forall v\in \R^3\,,$$
where ${\bf p}_x$ denotes the orthogonal projection of $\R^3$ onto the tangent plane $T_x(\partial\Omega)$, i.e., in this case $D\boldsymbol{\sigma}_\Omega(x)$ is the (linear) reflection across the tangent plane $T_x(\partial\Omega)$. In particular, 
\begin{equation}\label{diffsigmbdident}
D\boldsymbol{\sigma}_\Omega(x) \big(D\boldsymbol{\sigma}_\Omega(x)\big)^\trans =D\boldsymbol{\sigma}_\Omega(x) D\boldsymbol{\sigma}_\Omega(x) =I\quad \forall x\in\partial\Omega\,,
\end{equation}
where $I$ is the identity matrix. We now extend the domain $\Omega$ to the domain 
\begin{equation}\label{defenlargeddom}
\widehat\Omega:=\Omega\cup U_{\delta_\Omega}=\overline{\Omega}\cup U_{\delta_\Omega}^{\rm ex} \,,
\end{equation}
and we simplify the notation by setting 
$$U:=U_{\delta_\Omega}\,,\quad U^{\rm ex}:=U_{\delta_\Omega}^{\rm ex}\,, \quad U^{\rm in}:=U_{\delta_\Omega}^{\rm in}\,.$$ 
On the extended domain $\widehat\Omega$, we consider the Lipschitz continuous field of symmetric $3\times3$-matrices
\begin{equation}\label{matrixfieldextform}
A(x)=\Big(a_{kl}(x)\Big)_{k,l=1}^3:= 
\begin{cases}
\big|J(\boldsymbol{\sigma}_\Omega(x))\big|\,D\boldsymbol{\sigma}_\Omega(\boldsymbol{\sigma}_\Omega(x))\big(D\boldsymbol{\sigma}_\Omega(\boldsymbol{\sigma}_\Omega(x))\big)^\trans & \text{if $x\in \widehat\Omega\setminus\Omega$}\,,\\
I & \text{otherwise}\,,
\end{cases}
\end{equation}
where $J(\boldsymbol{\sigma}_\Omega)$ denotes the Jacobian determinant of $\boldsymbol{\sigma}_\Omega$. Note that the continuity of $A$ across $\partial\Omega$ follows from \eqref{diffsigmbdident}. In addition, \eqref{invdiffsigma} implies that $A$ is uniformly elliptic, i.e., 
$$m_\Omega I\leq A(x) \leq M_\Omega I \quad\forall x\in\widehat \Omega$$
in the sense of quadratic forms for some constants $m_\Omega>0$ and $M_\Omega>0$ depending only on $\Omega$. 
\vskip3pt

Let us now consider for any given $(Q_1,Q_2) \in \mathcal{S}_0\times\mathcal{S}_0$ their tensor product $Q_1\otimes Q_2$ as the linear mapping  $Q_1 \otimes Q_2 \colon \mathcal{S}_0\to \mathcal{S}_0$ defined by
$$(Q_1\otimes Q_2)P:=(P:Q_2)Q_1$$ 
for any $P\in\mathcal{S}_0$. The {\sl geodesic reflection} on $\mathbb{S}^4 \subset \mathcal{S}_0$ with respect to a point $N\in\mathbb{S}^4$ is given by the linear mapping $(2N\otimes N-{\rm id})$, where ${\rm id}$ denotes the identity map on $\mathcal{S}_0$. Note that $(2N\otimes N-{\rm id})$ is simply the orthogonal symmetry with respect to $\langle N\rangle$ which is the identity along $\langle N\rangle$ and minus the identity along any orthogonal direction to $N$. In particular, it is involutive, isometric, and symmetric. 
Given a boundary data $Q_{\rm b}\in C^{1,1}(\partial\Omega;\mathbb{S}^4)$, we consider the mapping $\boldsymbol{\Sigma}:U\to{\rm  GL}(\mathcal{S}_0)$ of class $C^{1,1}$ given by 
$$\boldsymbol{\Sigma}(x):=2Q_{\rm b}\big({\boldsymbol \pi}_\Omega(x)\big)\otimes Q_{\rm b}\big({\boldsymbol \pi}_\Omega(x)\big) -{\rm id} \,.$$

\noindent Notice that by construction $\partial_\nu \Sigma\equiv 0$ on $\partial \Omega$, as $\partial_\nu {\boldsymbol \pi}_{\Omega}(x)=0$ for any $x\in \partial \Omega$.

 With the help of $\boldsymbol{\Sigma}$, we define the extension procedure of maps in $\mathcal{A}_{Q_{\rm b}}(\Omega)$ to the domain $\widehat\Omega$ as follows: to a map $Q\in \mathcal{A}_{Q_{\rm b}}(\Omega)$ we associate $\widehat Q\in W^{1,2}(\widehat\Omega;\mathbb{S}^4)$ given by 
\begin{equation}\label{extprocedure}
\widehat Q(x):=\begin{cases}
Q(x) & \text{if $x\in \Omega$}\,,\\
\boldsymbol{\Sigma}(x)Q(\boldsymbol{\sigma}_\Omega(x)) & \text{if $x\in\widehat\Omega\setminus\Omega$}\,.
\end{cases}
\end{equation}
Note that $\widehat Q$ indeed belongs to $W^{1,2}(\widehat\Omega)$ since $\boldsymbol{\Sigma}Q\circ \boldsymbol{\sigma}_\Omega=\boldsymbol{\Sigma}Q=\boldsymbol{\Sigma}Q_{\rm b}=Q_{\rm b}$ on $\partial\Omega$. 
\vskip3pt

{\fontencoding{U}\fontfamily{futs}\selectfont\char 66\relax} If no confusion arises, {\sl we shall simply write $Q$ instead of $\widehat Q$} the extension of a map $Q$. 
\vskip3pt

\noindent In what follows, we also denote for $P,Q\in W^{1,2}(\widehat\Omega;\mathcal{S}_0)$, 
$$\langle \nabla P,\nabla Q\rangle_A:=\sum_{i,j=1}^3\big(A\nabla P_{ij}\big)\cdot\nabla Q_{ij}=\sum_{k,l=1}^3a_{kl} \partial_kP:\partial_l Q\quad\text{and}\quad|\nabla Q|_A^2:=\langle \nabla Q,\nabla Q\rangle_A\,,$$
where $A$ is the matrix field defined in \eqref{matrixfieldextform}.
\vskip5pt

We are now in position to present the equation satisfied by the extension to $\widehat\Omega$ of a critical point of $\mathcal{E}_\lambda$ in the class $\mathcal{A}_{Q_{\rm b}}(\Omega)$.

\begin{proposition}\label{ELeqExt}
Assume that $\partial\Omega$ is of class $C^{3}$ and $Q_{\rm b}\in C^{1,1}(\partial\Omega;\mathbb{S}^4)$. If $Q_\lambda\in \mathcal{A}_{Q_{\rm b}}(\Omega)$ is a critical point of $\mathcal{E}_\lambda$, then
\begin{equation}\label{ELeqAfterReflect}
-{\rm div}(A\nabla Q_\lambda)=|\nabla Q_\lambda|_A^2Q_\lambda +G_\lambda(x,Q_\lambda,\nabla Q_\lambda)\quad\text{in $\mathscr{D}^\prime(\widehat\Omega)$}\,,
\end{equation}
where $G_\lambda:\widehat\Omega\times \mathbb{S}^4\times(\mathcal{S}_0)^3\to \mathcal{S}_0$  is a Carath\'eodory\footnote{$G(\cdot,Q,\xi)$ is measurable for every $(Q,\xi)\in \mathbb{S}^4\times (\mathcal{S}_0)^3$, and $G(x,\cdot,\cdot)$ is continuous for a.e. $x\in \widehat\Omega$.} map, and
\begin{equation}\label{growthGlambdaext}
|G_\lambda(x,Q,\xi)|\leq C_{Q_{\rm b}}\big(1+\lambda+|\xi|\big) \quad \forall (x,Q,\xi)\in \widehat\Omega\times\mathbb{S}^4\times (\mathcal{S}_0)^3\,,
\end{equation}
for a constant $C_{Q_{\rm b}}>0$ depending only on $\Omega$ and $Q_{\rm b}$. 
\end{proposition}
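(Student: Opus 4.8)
The plan is to derive \eqref{ELeqAfterReflect} by testing the weak formulation of the Euler--Lagrange equation \eqref{distribELeq} (equivalently \eqref{varELeqtangfields}) against test functions supported in $\widehat\Omega$, splitting any such test function into its part supported in $\Omega$ and its part supported in $\widehat\Omega\setminus\Omega$, and transporting the latter back to $\Omega$ via the change of variables $y=\boldsymbol{\sigma}_\Omega(x)$. First I would fix $\Psi\in C^1_c(\widehat\Omega;\mathcal S_0)$ and compute $\int_{\widehat\Omega}\langle\nabla Q_\lambda,\nabla\Psi\rangle_A\,dx$ by writing it as $\int_\Omega \nabla Q_\lambda:\nabla\Psi\,dx + \int_{\widehat\Omega\setminus\Omega}\langle\nabla Q_\lambda,\nabla\Psi\rangle_A\,dx$. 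In the second integral, using $Q_\lambda(x)=\boldsymbol{\Sigma}(x)Q_\lambda(\boldsymbol{\sigma}_\Omega(x))$ on $\widehat\Omega\setminus\Omega$ and the definition \eqref{matrixfieldextform} of $A$ in terms of $D\boldsymbol{\sigma}_\Omega$ and its Jacobian, the change of variables $y=\boldsymbol{\sigma}_\Omega(x)$ turns the $A$-weighted Dirichlet form on $\widehat\Omega\setminus\Omega$ into the flat Dirichlet form on $\Omega$ — this is precisely why $A$ was defined with that Jacobian factor and that conjugation by $D\boldsymbol{\sigma}_\Omega$. The map $\Psi$ pulls back to a test function $\widetilde\Psi(y):=\boldsymbol{\Sigma}(\boldsymbol{\sigma}_\Omega(y))^\trans\Psi(\boldsymbol{\sigma}_\Omega(y))=\boldsymbol{\Sigma}(\boldsymbol{\sigma}_\Omega(y))\Psi(\boldsymbol{\sigma}_\Omega(y))$ on $\Omega$ (using that $\boldsymbol{\Sigma}$ is symmetric and involutive), and one must carefully account for the derivatives hitting the $x$-dependent matrix $\boldsymbol{\Sigma}$, which produce zeroth- and first-order terms in $Q_\lambda$.

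The key computation is then to collect all contributions and recognize the structure $|\nabla Q_\lambda|_A^2 Q_\lambda + G_\lambda(x,Q_\lambda,\nabla Q_\lambda)$. On $\Omega$ the tension-field term $|\nabla Q_\lambda|^2 Q_\lambda$ and the potential term $\lambda\nabla_{\mathrm{tan}}W(Q_\lambda)$ come directly from \eqref{distribELeq}; on $\widehat\Omega\setminus\Omega$, after the change of variables, the pulled-back tension field becomes $|\nabla Q_\lambda|^2_A Q_\lambda$ conjugated by $\boldsymbol{\Sigma}$ — and here one uses that $\boldsymbol{\Sigma}(x)$ is a linear isometry of $\mathcal S_0$ fixing the point $Q_{\mathrm b}(\boldsymbol{\pi}_\Omega(x))\in\mathbb S^4$, so that it maps $\mathbb S^4$ to $\mathbb S^4$ and intertwines the normal spaces, whence $\boldsymbol{\Sigma}(x)\big(|w|^2 \boldsymbol{\Sigma}(x)^{-1}v\big)$ with $v=Q_\lambda(x)$ unit-normed reproduces $|\nabla Q_\lambda|^2_A Q_\lambda$ up to lower-order terms absorbed into $G_\lambda$. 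The terms where a derivative falls on $\boldsymbol{\Sigma}$, on $D\boldsymbol{\sigma}_\Omega$, or on the Jacobian are all of the form (bounded coefficient)$\times(1+|\nabla Q_\lambda|)$ since $\boldsymbol{\Sigma}\in C^{1,1}$, $\boldsymbol{\sigma}_\Omega\in C^2$, $|Q_\lambda|=1$, and these yield the Carathéodory map $G_\lambda$ with the growth bound \eqref{growthGlambdaext}; the constant depends only on $\|\boldsymbol{\Sigma}\|_{C^{1,1}}$, $\|\boldsymbol{\sigma}_\Omega\|_{C^2}$, and $\lambda$ through the potential, hence only on $\Omega$ and $Q_{\mathrm b}$ (the $\lambda$-dependence being made explicit as the additive $\lambda$ in the bound). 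One must also check that no distributional jump term appears across $\partial\Omega$: this follows because $\widehat Q_\lambda\in W^{1,2}(\widehat\Omega)$ with no singular part in its gradient, and because the co-normal derivative $A\nabla\widehat Q_\lambda\cdot\nu$ matches from both sides — here the identity $\partial_\nu\boldsymbol{\Sigma}\equiv0$ on $\partial\Omega$ (noted in the excerpt, from $\partial_\nu\boldsymbol{\pi}_\Omega=0$) together with \eqref{diffsigmbdident} is exactly what guarantees the reflected map solves a genuine divergence-form equation with no interface contribution.

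The main obstacle I expect is the bookkeeping of the terms arising when derivatives hit $\boldsymbol{\Sigma}(x)$ and $D\boldsymbol{\sigma}_\Omega(x)$ in the change of variables — i.e., verifying that, after all cancellations, the leading second-order part is exactly $-\mathrm{div}(A\nabla Q_\lambda)$ on the reflected side and that every remaining term genuinely has the claimed affine-in-$|\xi|$ growth with a constant independent of $Q_\lambda$. In particular, the cross terms of the form $A\nabla Q_\lambda\cdot(\nabla\boldsymbol{\Sigma})(\ldots)$ and $(\nabla\boldsymbol{\Sigma})\nabla Q_\lambda$ are only bounded (not better) because $\boldsymbol{\Sigma}\in C^{1,1}$, so one must be content with $G_\lambda$ being merely Carathéodory rather than continuous in $x$; checking that $C^{1,1}$ regularity of $Q_{\mathrm b}$ (rather than $C^{2,\alpha}$) suffices for this — which is the improvement over \cite{Scheven} claimed in the excerpt — is the delicate point, and it hinges on the fact that only first derivatives of $\boldsymbol{\Sigma}$ (hence second derivatives of $Q_{\mathrm b}$ would be needed only if we wanted $A\nabla Q_\lambda\in C^{0,\alpha}$, which we do not at this stage) enter the right-hand side.
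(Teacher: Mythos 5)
Your plan correctly identifies the three main ingredients — the change of variables $y=\boldsymbol{\sigma}_\Omega(x)$ converting the $A$-weighted form to the flat Dirichlet form, the role of $\boldsymbol{\Sigma}$ as a pointwise isometry of $\mathcal{S}_0$, and the origin of the affine-in-$|\xi|$ growth from derivatives hitting $\boldsymbol{\Sigma}$ and $A$. However, there is a genuine gap at precisely the point you flag as the most delicate: the treatment of the interface $\partial\Omega$.

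Your argument tests with an arbitrary $\Psi\in C^1_c(\widehat\Omega;\mathcal S_0)$, splits the integral, and pulls the exterior part back to $\Omega$. But the pulled-back $\widetilde\Psi$ is not compactly supported in $\Omega$ (it reaches $\partial\Omega$), and neither is $\Psi|_\Omega$, so you cannot directly invoke the Euler--Lagrange equation \eqref{varELeqtangfields}, which is only available against tangent fields vanishing on $\partial\Omega$ (by approximation, in $W^{1,2}_0$). Your ``no distributional jump because the co-normal derivative matches'' is exactly what must be proved, not assumed; moreover for a $W^{1,2}$ map $Q_\lambda$ the normal derivative does not have a pointwise trace, so the pointwise flux-matching picture has no direct meaning here. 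The mechanism that actually makes the fluxes compatible — that $\partial_\nu Q_\lambda\perp Q_{\rm b}$ on $\partial\Omega$ (since $|Q_\lambda|\equiv 1$), so $\boldsymbol{\Sigma}$ acts as $-\mathrm{id}$ on $\partial_\nu Q_\lambda$ and the sign flip from the reflection is cancelled — is never invoked, and without it the argument does not close.

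The paper's proof (via Lemma~\ref{eqextalongtangfield}) avoids all of this by decomposing the test field into its equivariant and anti-equivariant parts under the joint reflection $\Phi(x)\mapsto\boldsymbol{\Sigma}(x)\Phi(\boldsymbol{\sigma}_\Omega(x))$. This is done first for \emph{tangent} test fields, not arbitrary ones, which is essential: tangency along $Q_\lambda$ is exactly what guarantees that the equivariant part $\Phi^{\rm e}$ has zero trace on $\partial\Omega$ (because $Q_\lambda:\Phi^{\rm e}=0$ and $\boldsymbol{\Sigma}\Phi^{\rm e}=\Phi^{\rm e}$ force $\Phi^{\rm e}=0$ on $\partial\Omega$), so $\Phi^{\rm e}\in W^{1,2}_0(U^{\rm in};Q_\lambda^*T\mathbb{S}^4)$ and the EL equation applies. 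For the anti-equivariant part $\Phi^{\rm a}$, the change of variables produces a minus sign that cancels the interior integral, so the EL equation is never needed and the boundary trace is irrelevant. Then, to pass from tangent fields to arbitrary $\Psi$, the paper projects $\Psi$ onto the tangent bundle and this projection is what produces the $|\nabla Q_\lambda|_A^2 Q_\lambda$ term. You merge these steps, which loses the tangency structure exactly where the boundary terms need to cancel. Notice that your pulled-back $\widetilde\Psi=\boldsymbol{\Sigma}\Psi\circ\boldsymbol{\sigma}_\Omega$ is one half of the decomposition; the missing idea is to take the sum and difference with $\Psi$ and exploit the sign structure.
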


The proof of Proposition \ref{ELeqExt} essentially rests on the following lemma. 

\begin{lemma}\label{eqextalongtangfield}
Assume that $\partial\Omega$ is of class $C^3$ and $Q_{\rm b}\in C^{1,1}(\partial\Omega;\mathbb{S}^4)$. 
If $Q_\lambda\in \mathcal{A}_{Q_{\rm b}}(\Omega)$ is a critical point of $\mathcal{E}_\lambda$, then 
\begin{multline}\label{eqextenstangfield}
\int_{\widehat \Omega} \langle \nabla  Q_\lambda,\nabla \Phi\rangle_A\,dx=\lambda \int_{\Omega} Q^2_\lambda :\Phi\,dx\\
+\lambda\int_{U^{\rm ex}} \big((Q_\lambda \boldsymbol{\Sigma} Q_\lambda):\Phi\big) f(x)\,dx+\int_{U^{\rm ex}}F(x, Q_\lambda,\nabla  Q_\lambda):\Phi\,dx
\end{multline}
for every $\Phi \in W^{1,2}(\widehat\Omega;{Q_\lambda}^{\!*}T\mathbb{S}^4)$ compactly supported in $\widehat\Omega$, 
where the function $f:U^{\rm ex}\to \R$ is continuous, the map $F:U^{\rm ex}\times\mathbb{S}^4\times (\mathcal{S}_0)^3\to \mathcal{S}_0$ is Carath\'eodory, and 
$$0\leq f(x)\leq C_\Omega\quad\text{and}\quad \big|F(x,Q,\xi)\big|\leq C_{Q_{\rm b}}(1+|\xi|) \quad\forall (x,Q,\xi)\in U^{\rm ex}\times\mathbb{S}^4\times (\mathcal{S}_0)^3\,, $$
for some constants $C_\Omega>0$ (depending only on $\Omega$) and $C_{Q_{\rm b}}>0$ (depending only on $\Omega$ and $Q_{\rm b}$). 
\end{lemma}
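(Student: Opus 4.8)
The strategy is to transport the weak Euler--Lagrange identity \eqref{varELeqtangfields} from $\Omega$ to the reflected domain $\widehat\Omega$ by a change of variables through $\boldsymbol{\sigma}_\Omega$, keeping careful track of how the conjugation by $\boldsymbol{\Sigma}(x)$ interacts with differentiation. Fix a test field $\Phi\in W^{1,2}(\widehat\Omega;{Q_\lambda}^{\!*}T\mathbb{S}^4)$ compactly supported in $\widehat\Omega$. Split the left-hand side as $\int_{\Omega}\nabla Q_\lambda:\nabla\Phi\,dx+\int_{U^{\rm ex}}\langle\nabla Q_\lambda,\nabla\Phi\rangle_A\,dx$. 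On $\Omega$, the matrix $A$ is the identity, so that piece is already in the desired form and will be handled by \eqref{varELeqtangfields} once we check the relevant test field (the restriction to $\Omega$ of $\Phi$, which lies in $W^{1,2}(\Omega;{Q_\lambda}^{\!*}T\mathbb{S}^4)$ but is in general \emph{not} compactly supported in $\Omega$, so a boundary term survives). On $U^{\rm ex}$, I substitute $y=\boldsymbol{\sigma}_\Omega(x)$; since $Q_\lambda(x)=\boldsymbol{\Sigma}(x)Q_\lambda(\boldsymbol{\sigma}_\Omega(x))$, the chain rule produces $\nabla Q_\lambda(x)=\boldsymbol{\Sigma}(x)\big(D\boldsymbol{\sigma}_\Omega(x)^\trans\,(\nabla Q_\lambda)(\boldsymbol{\sigma}_\Omega(x))\big)+\big(\nabla\boldsymbol{\Sigma}(x)\big)Q_\lambda(\boldsymbol{\sigma}_\Omega(x))$. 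The first term, after contraction with $A(x)$ and the change of variables, is exactly designed (via the definition \eqref{matrixfieldextform} of $A$, with the Jacobian factor $|J(\boldsymbol{\sigma}_\Omega(x))|$ absorbing $dy$) to reproduce $\int_{\Omega}\nabla Q_\lambda:\nabla\big(\boldsymbol{\Sigma}\circ\boldsymbol{\sigma}_\Omega\,\cdot\,\Phi\circ\boldsymbol{\sigma}_\Omega\big)$-type expressions; the $\nabla\boldsymbol{\Sigma}$ terms (only $C^{0,1}$, hence bounded) together with the mixed terms collect into a lower-order remainder that is linear in $\nabla Q_\lambda$ with $C_{Q_{\rm b}}$-controlled coefficients, giving the $F(x,Q_\lambda,\nabla Q_\lambda)$ contribution.

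Next I must identify the pulled-back test field. The natural candidate is $\Psi(y):=\boldsymbol{\Sigma}(\boldsymbol{\sigma}_\Omega^{-1}(y))\,\Phi(\boldsymbol{\sigma}_\Omega^{-1}(y))=\boldsymbol{\Sigma}(\boldsymbol{\sigma}_\Omega(y))\Phi(\boldsymbol{\sigma}_\Omega(y))$ for $y\in\Omega\cap U$, extended by $\Phi$ itself on the part of $\Omega$ outside $U$. Because $\boldsymbol{\Sigma}(x)$ is the geodesic reflection of $\mathbb{S}^4$ about $Q_{\rm b}({\boldsymbol\pi}_\Omega(x))$, and $\widehat Q_\lambda(x)$ lies in $\mathbb{S}^4$ with $\boldsymbol{\Sigma}(x)$ an isometry fixing the relevant fibre, $\Psi$ is again tangent: $\Psi(y)\in T_{Q_\lambda(y)}\mathbb{S}^4$ a.e. One checks $\Psi\in W^{1,2}$, and — crucially — that $\Psi$ need not be compactly supported away from $\partial\Omega$, so applying \eqref{varELeqtangfields} is not directly legitimate. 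The fix is the standard one: the interior identity \eqref{varELeqtangfields} holds for \emph{all} compactly supported tangent fields in $\Omega$, and by the regularity of $Q_\lambda$ up to $\partial\Omega$ (a priori we only know $Q_\lambda\in W^{1,2}$ here, so one argues by an interior cutoff $\chi_k\uparrow\ind_{\Omega}$ and controls the boundary commutator $\int_{\Omega}\nabla Q_\lambda:(\Phi\otimes\nabla\chi_k)$). In fact the cleaner route: the surviving boundary term $\int_{\partial\Omega}\frac{\partial Q_\lambda}{\partial\nu}:\Phi\,d\mathcal{H}^2$ from the $\Omega$-integration and the matching boundary term from the $U^{\rm ex}$-integration \emph{cancel}, precisely because $\partial_\nu\boldsymbol{\Sigma}\equiv0$ on $\partial\Omega$ (noted in the text) and $D\boldsymbol{\sigma}_\Omega(x)$ is the tangential reflection there, so the normal derivatives match up with opposite signs. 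This cancellation is the heart of the construction and is exactly why the reflected field solves an equation with \emph{no} distributional mass on $\partial\Omega$.

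Finally I assemble the bound. On $U^{\rm ex}$ the genuinely quadratic-in-gradient term comes only from the $\boldsymbol{\Sigma}D\boldsymbol{\sigma}_\Omega^\trans\nabla Q_\lambda$ contribution squared under $A$; one computes that after the change of variables this is $\lambda\int_{U^{\rm ex}}\big((Q_\lambda\boldsymbol{\Sigma}Q_\lambda):\Phi\big)f(x)\,dx$ with $f(x):=|J(\boldsymbol{\sigma}_\Omega(x))|$ (continuous, positive, bounded by a dimensional/$\Omega$-constant since $\boldsymbol{\sigma}_\Omega\in C^2$), reflecting that $\lambda Q_\lambda^2:(\,\cdot\,)$ is the nonlinearity on the $\Omega$-side transported over; all remaining terms involve at most one derivative of $Q_\lambda$ and bounded ($C_{Q_{\rm b}}$-controlled) coefficients built from $D\boldsymbol{\sigma}_\Omega$, $D^2\boldsymbol{\sigma}_\Omega$, $\nabla\boldsymbol{\Sigma}$, hence bundle into $F(x,Q_\lambda,\nabla Q_\lambda)$ with $|F(x,Q,\xi)|\leq C_{Q_{\rm b}}(1+|\xi|)$. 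The Carathéodory property of $F$ is immediate from its explicit algebraic form in $(x,Q,\xi)$. I expect the main obstacle to be bookkeeping: getting the cancellation of the two $\partial\Omega$-boundary integrals exactly right requires being meticulous about orientations, the sign of $(x-{\boldsymbol\pi}_\Omega(x))\cdot\nu$, and the fact that $\Phi$ (hence $\Psi$) is merely $W^{1,2}$, so the boundary traces must be interpreted correctly and the cutoff argument run carefully; once that is pinned down, the rest is a change of variables and collecting terms.
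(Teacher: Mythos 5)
Your plan has the right structural ingredients (the change of variables through $\boldsymbol{\sigma}_\Omega$, the role of the conjugation by $\boldsymbol{\Sigma}$, the identification $f=|J(\boldsymbol{\sigma}_\Omega)|$, the observation that the $\boldsymbol{\Sigma}$-derivative terms are first-order and produce the $F$ contribution), but the central step is not carried out and, as phrased, cannot be: you want to integrate by parts in $\Omega$ and in $U^{\rm ex}$ separately and let two boundary integrals $\pm\int_{\partial\Omega}\partial_\nu Q_\lambda:\Phi\,d\mathcal{H}^2$ cancel. For $Q_\lambda$ merely in $W^{1,2}(\Omega;\mathbb{S}^4)$ there is no well-defined normal-derivative trace, so neither integral exists individually, and the cutoff argument you gesture at does not repair this: the commutator $\int_\Omega \nabla Q_\lambda:(\Phi\otimes\nabla\chi_k)\,dx$ has no limit on its own. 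What is needed is an arrangement in which the two $\partial\Omega$-contributions are never written separately, and the only way to invoke the Euler--Lagrange identity \eqref{varELeqtangfields} is with a test field that genuinely vanishes on $\partial\Omega$.

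This is precisely what the paper does, and it is a different decomposition from yours. Rather than splitting the domain $\widehat\Omega=\Omega\cup U^{\rm ex}$, the paper splits the test field $\Phi=\Phi^{\rm e}+\Phi^{\rm a}$ into equivariant and anti-equivariant parts under the involution $\Phi\mapsto\boldsymbol{\Sigma}(\cdot)\,\Phi\circ\boldsymbol{\sigma}_\Omega$. For $\Phi^{\rm a}$, the identity \eqref{eqreflecanteq} follows from the reflection structure and a change of variables alone, with \emph{no} use of the Euler--Lagrange equation and hence no boundary term involving $\partial_\nu Q_\lambda$ to worry about; the only integration by parts needed (in the term $II$) has its boundary piece killed by $A=I$ and $\partial_\nu\boldsymbol{\Sigma}=0$ on $\partial\Omega$. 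For $\Phi^{\rm e}$, one first checks that its trace on $\partial\Omega$ vanishes: since $\Phi$ is tangent to $\mathbb{S}^4$ along $Q_\lambda$ and $Q_\lambda=Q_{\rm b}$ on $\partial\Omega$, the reflection $\boldsymbol{\Sigma}$ about $Q_{\rm b}$ sends $\Phi\mapsto-\Phi$ there, so $\Phi^{\rm e}\in W^{1,2}_0(U^{\rm in};{Q_\lambda}^{\!*}T\mathbb{S}^4)$ and \eqref{varELeqtangfields} applies legitimately. You should adopt this decomposition; the ``cancellation'' you intuit is real, but it can only be exhibited by grouping the $\Omega$- and $U^{\rm ex}$-pieces into combinations whose traces vanish, which is exactly what $\Phi^{\rm e}$ and $\Phi^{\rm a}$ accomplish.
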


\begin{proof}
If $\Phi \in W^{1,2}(\widehat\Omega;{Q_\lambda}^{\!*}T\mathbb{S}^4)$ is compactly supported in $\Omega$, then \eqref{eqextenstangfield} reduces to \eqref{varELeqtangfields}. Therefore, it suffices to consider the case where $\Phi$ is compactly supported in $U$. Following the argument in \cite{Scheven}, we decompose $\Phi$ into its equivariant and anti-equivariant parts with respect to the involution $\Phi(x) \to \Sigma(x)\Phi(\sigma_\Omega(x))$, defined for $x\in U$ by 
$$\Phi^{\rm e}(x):=\frac{1}{2}\Big(\Phi(x)+\boldsymbol{\Sigma}(x)\Phi(\boldsymbol{\sigma}_\Omega(x))\Big)\quad \text{and}\quad \Phi^{\rm a}(x):=\frac{1}{2}\Big(\Phi(x)-\boldsymbol{\Sigma}(x)\Phi(\boldsymbol{\sigma}_\Omega(x))\Big) \,.$$
Here equivariance is understood in terms of the joint reflections across the boundary and on $\mathbb{S}^4$. Thus, one simply obtains 
$$\Phi^{\rm e}(\boldsymbol{\sigma}_\Omega(x))=\boldsymbol{\Sigma}(x)\Phi^{\rm e}(x) \quad\text{and} \quad \Phi^{\rm a}(\boldsymbol{\sigma}_\Omega(x))=-\boldsymbol{\Sigma}(x)\Phi^{\rm a}(x) \quad \forall x\in U\,.$$ 
We shall prove  \eqref{eqextenstangfield} for $\Phi^{\rm e}$ and $\Phi^{\rm a}$ separately, starting with $\Phi^{\rm a}$. To  this purpose, we consider $Q_\lambda$ as extended to the whole $U$ as in  \eqref{extprocedure} and we also introduce for $x\in U$, 
$$Q_\lambda^*(x):=Q_\lambda(\boldsymbol{\sigma}_\Omega(x))=\boldsymbol{\Sigma}(x)Q_\lambda(x)\,.$$ 
We start from the identity 
\begin{align}
\nonumber\int_{U^{\rm ex}} \langle \nabla  Q_\lambda,\nabla \Phi^{\rm a}\rangle_A\,dx 
&=\sum_{k,l=1}^3 \int_{U^{\rm ex}} a_{kl}\partial_k (\boldsymbol{\Sigma} Q^*_\lambda) : \partial_l\Phi^{\rm a}\,dx\\
\nonumber &=\sum_{k,l=1}^3 \int_{U^{\rm ex}} a_{kl}( \boldsymbol{\Sigma}\partial_k Q^*_\lambda) : \partial_l\Phi^{\rm a}\,dx
+\sum_{k,l=1}^3 \int_{U^{\rm ex}} a_{kl} \big((\partial_k\boldsymbol{\Sigma}) Q^*_\lambda \big): \partial_l\Phi^{\rm a}\,dx\\
\label{computreflect1}&=: I + II\,.
\end{align}
To compute the $II$-term, we integrate by parts. Since $A$ is the identity matrix on $\partial\Omega$ and $\partial_\nu \boldsymbol{\Sigma}=0$ on $\partial\Omega$, the boundary term vanishes, and we are left with 
\begin{multline}\label{computreflect2}
II=- \sum_{k,l=1}^3 \int_{U^{\rm ex}} \partial_l\big[a_{kl} (\partial_k\boldsymbol{\Sigma}) Q^*_\lambda \big] : \Phi^{\rm a}\,dx=- \sum_{k,l=1}^3 \int_{U^{\rm ex}} \partial_l\big[a_{kl} (\partial_k\boldsymbol{\Sigma}) \boldsymbol{\Sigma}  Q_\lambda \big] : \Phi^{\rm a}\,dx\\
=- \sum_{k,l=1}^3 \int_{U^{\rm ex}} (\partial_l a_{kl}) \big((\partial_k\boldsymbol{\Sigma})\boldsymbol{\Sigma} Q_\lambda \big) : \Phi^{\rm a}\,dx-\sum_{k,l=1}^3 \int_{U^{\rm ex}} a_{kl}\big((\partial^2_{kl}\boldsymbol{\Sigma})\boldsymbol{\Sigma} Q_\lambda \big) : \Phi^{\rm a}\,dx\\
-\sum_{k,l=1}^3 \int_{U^{\rm ex}} a_{kl}\big((\partial_{k}\boldsymbol{\Sigma})(\partial_{l}\boldsymbol{\Sigma}) Q_\lambda \big) : \Phi^{\rm a}\,dx-\sum_{k,l=1}^3 \int_{U^{\rm ex}} a_{kl}\big((\partial_{k}\boldsymbol{\Sigma})\boldsymbol{\Sigma}\partial_{l} Q_\lambda \big) : \Phi^{\rm a}\,dx\,.
\end{multline} 
Concerning the $I$-term, we use the anti-equivariance of $\Phi^{\rm a}$ to derive 
\begin{align}
\nonumber I = &\sum_{k,l=1}^3 \int_{U^{\rm ex}} a_{kl}\partial_k Q^*_\lambda : ( \boldsymbol{\Sigma}\partial_l\Phi^{\rm a})\,dx\\
\nonumber =&\sum_{k,l=1}^3 \int_{U^{\rm ex}} a_{kl}\partial_k Q^*_\lambda : \partial_l( \boldsymbol{\Sigma}\Phi^{\rm a})\,dx
-\sum_{k,l=1}^3 \int_{U^{\rm ex}} a_{kl}\partial_k Q^*_\lambda : \big( (\partial_l\boldsymbol{\Sigma})\Phi^{\rm a}\big)\,dx\\
\nonumber =&-\sum_{k,l=1}^3 \int_{U^{\rm ex}} a_{kl}\partial_k (Q_\lambda\circ\boldsymbol{\sigma}_\Omega) : \partial_l( \Phi^{\rm a}\circ\boldsymbol{\sigma}_\Omega)\,dx
-\sum_{k,l=1}^3 \int_{U^{\rm ex}} a_{kl}\big((\partial_l\boldsymbol{\Sigma})\partial_k( \boldsymbol{\Sigma} Q_\lambda)\big) : \Phi^{\rm a}\,dx\\
\nonumber =&-\sum_{k,l=1}^3 \int_{U^{\rm ex}} a_{kl}\partial_k (Q_\lambda\circ\boldsymbol{\sigma}_\Omega) : \partial_l( \Phi^{\rm a}\circ\boldsymbol{\sigma}_\Omega)\,dx
-\sum_{k,l=1}^3 \int_{U^{\rm ex}} a_{kl}\big((\partial_l\boldsymbol{\Sigma})(\partial_k\boldsymbol{\Sigma}) Q_\lambda\big) : \Phi^{\rm a}\,dx \\
\label{computreflect3} &- \sum_{k,l=1}^3 \int_{U^{\rm ex}} a_{kl}\big((\partial_l\boldsymbol{\Sigma})\boldsymbol{\Sigma}\partial_k Q_\lambda\big) : \Phi^{\rm a}\,dx\,.
\end{align} 
Next we change variables  in the first term of the last identity, and by \eqref{invdiffsigma} we obtain 
\begin{align}
\nonumber -\sum_{k,l=1}^3 \int_{U^{\rm ex}} a_{kl}\partial_k &(Q_\lambda\circ\boldsymbol{\sigma}_\Omega) : \partial_l( \Phi^{\rm a}\circ\boldsymbol{\sigma}_\Omega)\,dx =-\sum_{i,j=1}^3  \int_{U^{\rm ex}} A\nabla (Q_{\lambda,ij}\circ\boldsymbol{\sigma}_\Omega) \cdot \nabla( \Phi^{\rm a}_{ij}\circ\boldsymbol{\sigma}_\Omega)\,dx\\
\nonumber &= -\sum_{i,j=1}^3\int_{U^{\rm ex}} \big[D\boldsymbol{\sigma}_\Omega(x) A(x)(D\boldsymbol{\sigma}_\Omega(x))^\trans\big] \nabla Q_{\lambda,ij}(\boldsymbol{\sigma}_\Omega(x)) \cdot \nabla\Phi^{\rm a}_{ij}(\boldsymbol{\sigma}_\Omega(x))\,dx\\
\nonumber &= -\sum_{i,j=1}^3\int_{U^{\rm ex}}  \nabla Q_{\lambda,ij}(\boldsymbol{\sigma}_\Omega(x)) \cdot \nabla\Phi^{\rm a}_{ij}(\boldsymbol{\sigma}_\Omega(x)) \big|J(\boldsymbol{\sigma}_\Omega(x))\big|\,dx\\
\nonumber &= - \sum_{i,j=1}^3\int_{U^{\rm in}}  \nabla Q_{\lambda,ij} \cdot \nabla\Phi^{\rm a}_{ij}\,dx\\
\label{computreflect4} &=- \int_{U^{\rm in}}\langle \nabla  Q_\lambda,\nabla \Phi^{\rm a}\rangle_A\,dx\,.
\end{align}
Since $\boldsymbol{\Sigma}^2={\rm id}$, we have the identities everywhere (resp. a.e.) in $U$, 
$$(\partial_k\boldsymbol{\Sigma})\boldsymbol{\Sigma}+\boldsymbol{\Sigma}(\partial_k\boldsymbol{\Sigma})=0 \text{ and }(\partial^2_{kl}\boldsymbol{\Sigma})\boldsymbol{\Sigma}+(\partial_k\boldsymbol{\Sigma})(\partial_l\boldsymbol{\Sigma})+(\partial_l\boldsymbol{\Sigma})(\partial_k\boldsymbol{\Sigma})+\boldsymbol{\Sigma}(\partial^2_{kl}\boldsymbol{\Sigma})=0\,,  $$
so that gathering \eqref{computreflect1}, \eqref{computreflect2}, \eqref{computreflect3}, and \eqref{computreflect4} yields 
\begin{multline*}
\nonumber\int_{U^{\rm ex}} \langle \nabla  Q_\lambda,\nabla \Phi^{\rm a}\rangle_A\,dx =- \int_{U^{\rm in}}\langle \nabla  Q_\lambda,\nabla \Phi^{\rm a}\rangle_A\,dx\\
+ \sum_{k,l=1}^3\int_{U^{\rm ex}} \boldsymbol{\Sigma}\Big(\big(a_{kl}\partial^2_{kl}\boldsymbol{\Sigma}+\partial_la_{kl}\partial_k\boldsymbol{\Sigma}\big)Q_\lambda+2a_{kl}(\partial_k\boldsymbol{\Sigma})\partial_l Q_\lambda \Big):\Phi^{\rm a}\,dx\,.
\end{multline*}
Consequently, 
\begin{equation}\label{eqreflecanteq}
\int_{U} \langle \nabla  Q_\lambda,\nabla \Phi^{\rm a}\rangle_A\,dx= \int_{U^{\rm ex}} F(x, Q_\lambda,\nabla  Q_\lambda):\Phi^{\rm a}\,dx
\end{equation}
with 
$$F(x, Q_\lambda,\nabla  Q_\lambda):=\sum_{k,l=1}^3\boldsymbol{\Sigma}(x)\Big(\big(a_{kl}(x)\partial^2_{kl}\boldsymbol{\Sigma}(x)+\partial_la_{kl}(x)\partial_k\boldsymbol{\Sigma}(x)\big) Q_\lambda +2a_{kl}(x)\partial_k\boldsymbol{\Sigma}(x)\partial_l Q_\lambda\Big)\,. $$
Clearly, $F:U^{\rm ex}\times\mathbb{S}^4\times (\mathcal{S}_0)^3\to \mathcal{S}_0$ is Carath\'eodory and it is sublinear in its third argument because $\Sigma \in C^{1,1}$ and $|Q_\lambda|\leq 1$ in $U$. 
\vskip3pt

It now remains to perform the computations with the equivariant part $\Phi^{\rm e}$. First, we observe that $\Phi^{\rm e}=0$ on $\partial\Omega$. Indeed, since the function $( Q_\lambda:\Phi)$ belongs to $W^{1,1}(U)$, it has a trace on $\partial\Omega$, and this trace is  equal to the inner product of the traces on $\partial\Omega$.  Since $(Q_\lambda:\Phi)=0$ in $U$, and $Q_\lambda=Q_{\rm b}$ on~$\partial\Omega$, we infer that  $(Q_{\rm b}:\Phi)=0$ on $\partial\Omega$. Hence $\boldsymbol{\Sigma}\Phi=-\Phi$ on $\partial\Omega$, which yields $\Phi^{\rm e}=0$ on~$\partial\Omega$. As a consequence, $\Phi^{\rm e}\in W^{1,2}_0(U^{\rm in};\mathcal{S}_0)$.  Moreover, for a.e. $x\in U^{\rm in}$,  
$$\Phi^{\rm e}(x):Q_\lambda(x)=\frac{1}{2}\Phi\big(\boldsymbol{\sigma}_\Omega(x)\big):\big(\boldsymbol{\Sigma}(x)Q_\lambda(x)\Big)= \frac{1}{2}\Phi\big(\boldsymbol{\sigma}_\Omega(x)\big):Q_\lambda\big(\boldsymbol{\sigma}_\Omega(x)\big)=0\,,$$
and thus $\Phi^{\rm e}\in W^{1,2}_0(U^{\rm in};{Q_\lambda}^{\!*}T\mathbb{S}^4)$.
Thanks to the regularity of $\partial\Omega$, \eqref{varELeqtangfields} holds for every test function in $W^{1,2}_0(\Omega;{Q_\lambda}^{\!*}T\mathbb{S}^4)$ by approximation. Therefore,  
\begin{equation}\label{eqtangeq}
\int_{U^{\rm in}}\langle \nabla  Q_\lambda,\nabla \Phi^{\rm e}\rangle_A\,dx=\int_{U^{\rm in}}\nabla Q_\lambda :\nabla \Phi^{\rm e} \,dx
=\lambda \int_{U^{\rm in}} Q^2_\lambda: \Phi^{\rm e}\,dx\,.
\end{equation}
Next, from the definition of $Q^*_\lambda$ we have an identity analogous to \eqref{computreflect1}, namely
\begin{align}
\nonumber \int_{U^{\rm ex}} \langle \nabla  Q_\lambda,\nabla \Phi^{\rm e}\rangle_A\,dx 
&=\sum_{k,l=1}^3 \int_{U^{\rm ex}} a_{kl}( \boldsymbol{\Sigma}\partial_k Q^*_\lambda) : \partial_l\Phi^{\rm e}\,dx
+\sum_{k,l=1}^3 \int_{U^{\rm ex}} a_{kl} \big((\partial_k\boldsymbol{\Sigma}) Q^*_\lambda \big): \partial_l\Phi^{\rm e}\,dx\\
\label{partIII+IV}&=: III + IV\,.
\end{align} 
The computations of $IV$ are identical to the ones of $II$ in \eqref{computreflect2}, with $\Phi^{\rm e}$ instead of $\Phi^{\rm a}$. Similarly, we can compute $III$ in a way similar to \eqref{computreflect3}, thus using the equivariance of $\Phi^{\rm e}$ and the change of variable as in \eqref{computreflect4} we obtain
\begin{align}
\nonumber III =&\sum_{k,l=1}^3 \int_{U^{\rm ex}} a_{kl}\partial_k (Q_\lambda\circ\boldsymbol{\sigma}_\Omega) : \partial_l( \Phi^{\rm e}\circ\boldsymbol{\sigma}_\Omega)\,dx
-\sum_{k,l=1}^3 \int_{U^{\rm ex}} a_{kl}\big((\partial_l\boldsymbol{\Sigma})(\partial_k\boldsymbol{\Sigma}) Q_\lambda\big) : \Phi^{\rm e}\,dx \\
\nonumber &- \sum_{k,l=1}^3 \int_{U^{\rm ex}} a_{kl}\big((\partial_l\boldsymbol{\Sigma})\boldsymbol{\Sigma}\partial_k Q_\lambda\big) : \Phi^{\rm e}\,dx\, \\
 =& \int_{U^{\rm in}}\langle \nabla  Q_\lambda,\nabla \Phi^{\rm e}\rangle_A\,dx\, 
-\sum_{k,l=1}^3 \int_{U^{\rm ex}} a_{kl}\big((\partial_l\boldsymbol{\Sigma})(\partial_k\boldsymbol{\Sigma}) Q_\lambda\big) : \Phi^{\rm e}\,dx \\
\label{computreflect5} &- \sum_{k,l=1}^3 \int_{U^{\rm ex}} a_{kl}\big((\partial_l\boldsymbol{\Sigma})\boldsymbol{\Sigma}\partial_k Q_\lambda\big) : \Phi^{\rm e}\,dx\,
\end{align} 

Summing up the contributions for $III$ and $IV$, in view of the identities for $\Sigma$ and its derivatives we infer
\begin{equation}
\label{computeqfin}
\int_{U^{\rm ex}} \langle \nabla  Q_\lambda,\nabla \Phi^{\rm e}\rangle_A\,dx =\int_{U^{\rm in}}\langle \nabla  Q_\lambda,\nabla \Phi^{\rm e}\rangle_A\,dx+ \int_{U^{\rm ex}} F(x, Q_\lambda,\nabla  Q_\lambda):\Phi^{\rm e}\,dx\,,
\end{equation}
with the same $F$ as in \eqref{eqreflecanteq}.

Combining \eqref{eqtangeq} and \eqref{computeqfin} leads to 
\begin{equation}\label{eqrefleceq}
\int_{U}\langle \nabla  Q_\lambda,\nabla \Phi^{\rm e}\rangle_A\,dx=2\lambda \int_{U^{\rm in}} Q^2_\lambda: \Phi^{\rm e}\,dx+ \int_{U^{\rm ex}} F(x, Q_\lambda,\nabla  Q_\lambda):\Phi^{\rm e}\,dx\,.
\end{equation}
Finally, summing \eqref{eqreflecanteq} with \eqref{eqrefleceq}, we are led to 
\begin{multline}\label{reflectangfield1}
\int_{U}\langle \nabla  Q_\lambda,\nabla \Phi\rangle_A\,dx=2\lambda \int_{U^{\rm in}} Q^2_\lambda: \Phi^{\rm e}\,dx+ \int_{U^{\rm ex}} F(x, Q_\lambda,\nabla  Q_\lambda):\Phi\,dx\\
=\lambda \int_{U^{\rm in}} Q^2_\lambda: \Phi\,dx+\lambda  \int_{U^{\rm in}} Q^2_\lambda: (\boldsymbol{\Sigma}\Phi\circ\boldsymbol{\sigma}_\Omega)\,dx\\
+\int_{U^{\rm ex}} F(x, Q_\lambda,\nabla  Q_\lambda):\Phi\,dx\,.
\end{multline}
Changing variables once again, we derive 
\begin{align}
\nonumber\int_{U^{\rm in}} Q^2_\lambda: (\boldsymbol{\Sigma}\Phi\circ\boldsymbol{\sigma}_\Omega)\,dx=&\int_{U^{\rm in}}\big[  \boldsymbol{\Sigma} Q^2_\lambda(\boldsymbol{\sigma}_\Omega(x))\big]: \Phi(\boldsymbol{\sigma}_\Omega(x))\,dx\\
\label{reflectangfield2}=& \int_{U^{\rm ex}}\big( ( \boldsymbol{\Sigma} Q^2_\lambda): \Phi\big)f(x)\,dx\,,
\end{align}
with $f:=|J(\boldsymbol{\sigma}_\Omega)|$. Combining \eqref{reflectangfield1} and \eqref{reflectangfield2}, the conclusion follows. 
\end{proof}

\begin{proof}[Proof of Proposition \ref{ELeqExt}]
Starting from Lemma \ref{eqextalongtangfield}, we proceed as in the proof of Proposition~\ref{propstartvarEL}. Given $\Phi\in C^\infty_c\big(\widehat \Omega;\mathscr{M}^{\rm sym}_{3\times 3}(\R)\big)$, we consider $\Phi_0:=\Phi-\frac{1}{3}(\Phi:I) I\in C^\infty_c\big(\widehat \Omega;\mathcal{S}_0\big)$ and 
$$\Phi_*:=\Phi_0-( Q_\lambda:\Phi_0) Q_\lambda \in W^{1,2}(\widehat \Omega;{Q_\lambda}^{\!*}T\mathbb{S}^4)\,. $$
Since $\Phi_*$ is compactly supported in $\widehat \Omega$,  \eqref{eqextenstangfield} applies. On the other hand, direct computations yield
\begin{align}
\nonumber\int_{\widehat \Omega}\langle \nabla Q_\lambda,\nabla \Phi_*\rangle_A\,dx&= \int_{\widehat \Omega}\langle \nabla  Q_\lambda,\nabla \Phi_0\rangle_A\,dx- \int_{\widehat \Omega}|\nabla Q_\lambda|^2_AQ_\lambda:\Phi_0\,dx\\
\label{computtensfield}&= \int_{\widehat \Omega}\langle \nabla  Q_\lambda,\nabla \Phi\rangle_A\,dx- \int_{\widehat \Omega}|\nabla Q_\lambda|^2_AQ_\lambda:\Phi\,dx\,,
\end{align}
and 
\begin{multline}\label{computsourceext}
\lambda \int_{\Omega} Q^2_\lambda :\Phi_*\,dx
+\lambda\int_{U^{\rm ex}} \big((\boldsymbol{\Sigma} Q^2_\lambda):\Phi_*\big) f(x)\,dx+\int_{U^{\rm ex}}F(x, Q_\lambda,\nabla  Q_\lambda):\Phi_*\,dx\\
= \int_{\hat \Omega} G_\lambda(x,Q_\lambda,\nabla Q_\lambda):\Phi\,dx\,,
\end{multline}
with 
\begin{multline*}
G_\lambda(x, Q_\lambda,\nabla Q_\lambda) :=\lambda\chi_\Omega(x)\Big[  Q_\lambda^2 -\frac{1}{3} I - {\rm tr}( Q_\lambda^3)  Q_\lambda\Big]\\
+\lambda\chi_{U^{\rm ex}}(x)f(x)\Big[  \boldsymbol{\Sigma} Q^2_\lambda-\frac{1}{3}{\rm tr}( \boldsymbol{\Sigma}  Q^2_\lambda) I - {\rm tr}\big(\boldsymbol\Sigma Q^3_\lambda \big) Q_\lambda\Big]\\
+\chi_{U^{\rm ex}}(x)\Big[F(x, Q_\lambda,\nabla  Q_\lambda)-\frac{1}{3}{\rm tr}\big(F(x, Q_\lambda,\nabla  Q_\lambda)\big)I -{\rm tr}\big(F(x, Q_\lambda,\nabla  Q_\lambda) Q_\lambda\big)  Q_\lambda\Big]\,.
\end{multline*}
Combining \eqref{eqextenstangfield}, \eqref{computtensfield}, and \eqref{computsourceext} leads to the conclusion. 
\end{proof}

Before closing the subsection, we provide a counterpart to Lemma \ref{corolmonotform} for reflected maps. 

\begin{lemma}\label{lemmacontrolscalenergext}
Assume that $\partial\Omega$ is of class $C^3$ and $Q_{\rm b}\in C^{1,1}(\partial\Omega;\mathbb{S}^4)$. Let $Q_{\rm ref}\in\mathcal{A}_{Q_{\rm b}}(\Omega)$ satisfying conclusion \eqref{conclcormonotformbdry} in Lemma~\ref{corolmonotform}. There exist two constants ${\bf r}^{(2)}_\Omega>0$ and $\boldsymbol{\kappa}=\boldsymbol{\kappa}_\Omega\in(0,1)$ depending only on  $\Omega$ such that 
 for every $x_0\in\partial\Omega$ and $r\in(0, {\bf r}^{(2)}_\Omega)$, 
\begin{equation}\label{contrenergboulext}
\mathop{\sup}\limits_{B_\rho(x)\subset B_{\boldsymbol{\kappa} r}(x_0)} \frac{1}{\rho}\int_{B_\rho(x)}|\nabla \widehat Q_{\rm ref}|^2\,dx\leq \frac{C_\Omega}{r}\mathcal{E}_\lambda\big(Q_{\rm ref},B_r(x_0)\cap\Omega\big) +C^\lambda_{Q_{\rm b}} r\,,
\end{equation}
where $C_\Omega>0$ only depends on $\Omega$, and $C^\lambda_{Q_{\rm b}}>0$ only depends on $\Omega$, $Q_{\rm b}$,  $\lambda\|W(Q_{\rm b})\|_{L^1(\partial\Omega)}$, and $\mathcal{E}_\lambda(\bar{Q}_{\rm b})$. 
\end{lemma}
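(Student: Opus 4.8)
The plan is to transfer the scale-energy bound \eqref{conclcormonotformbdry} from $Q_{\rm ref}$ on $\Omega$ to its reflected extension $\widehat Q_{\rm ref}$ on $\widehat\Omega$ by exploiting that the geodesic reflection $\boldsymbol{\sigma}_\Omega$ is a bi-Lipschitz $C^2$-diffeomorphism with controlled distortion. First I would fix $x_0\in\partial\Omega$ and a ball $B_\rho(x)\subset B_{\boldsymbol{\kappa} r}(x_0)$, and split the energy of $\widehat Q_{\rm ref}$ over $B_\rho(x)$ into its part in $\Omega$ and its part in $U^{\rm ex}=U_{\delta_\Omega}^{\rm ex}$. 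On $B_\rho(x)\cap\Omega$ the integrand is just $|\nabla Q_{\rm ref}|^2$, which is directly controlled by \eqref{conclcormonotformbdry}; on $B_\rho(x)\cap U^{\rm ex}$, using the explicit form \eqref{extprocedure}, $\nabla\widehat Q_{\rm ref}$ is a bounded combination of $(\nabla Q_{\rm ref})\circ\boldsymbol{\sigma}_\Omega$ composed with $D\boldsymbol{\sigma}_\Omega$ and of $(D\boldsymbol{\Sigma})\,Q_{\rm ref}\circ\boldsymbol{\sigma}_\Omega$, the latter being pointwise bounded since $\boldsymbol{\Sigma}\in C^{1,1}$ and $|Q_{\rm ref}|\equiv1$. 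Hence $|\nabla\widehat Q_{\rm ref}(y)|^2\leq C_\Omega\big(|\nabla Q_{\rm ref}(\boldsymbol{\sigma}_\Omega(y))|^2+1\big)$ for $y\in U^{\rm ex}$, and a change of variables $z=\boldsymbol{\sigma}_\Omega(y)$ (with Jacobian bounded above and below by constants depending only on $\Omega$) turns $\int_{B_\rho(x)\cap U^{\rm ex}}|\nabla\widehat Q_{\rm ref}|^2\,dy$ into $\leq C_\Omega\int_{\boldsymbol{\sigma}_\Omega(B_\rho(x)\cap U^{\rm ex})}\big(|\nabla Q_{\rm ref}(z)|^2+1\big)\,dz$.

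The second key point is a geometric containment: because $\boldsymbol{\sigma}_\Omega$ is $L$-Lipschitz with $L=L_\Omega$ and fixes $\partial\Omega$ pointwise, the image $\boldsymbol{\sigma}_\Omega(B_\rho(x)\cap U^{\rm ex})$ is contained in $B_{L\rho}(x')\cap\Omega$ for a suitable $x'\in\partial\Omega$ (one may take $x'=\boldsymbol{\sigma}_\Omega(x)$ if $x\in U^{\rm ex}$, or $x'=\boldsymbol{\pi}_\Omega(x)$, adjusting constants). Combined with $x\in B_{\boldsymbol{\kappa} r}(x_0)$, this places $\boldsymbol{\sigma}_\Omega(B_\rho(x)\cap U^{\rm ex})$ inside $B_{CL\rho}(x_0)\cap\Omega$ with $CL\rho\leq r/6$ provided $\boldsymbol{\kappa}=\boldsymbol{\kappa}_\Omega$ is chosen small enough (this is what fixes $\boldsymbol{\kappa}$), so that $B_{CL\rho}(x_0)\subset B_{r/6}(x_0)$ is admissible in \eqref{conclcormonotformbdry}. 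Then I would apply \eqref{conclcormonotformbdry} at the ball $B_{CL\rho}(x_0)\cap\Omega$ to bound $\frac{1}{CL\rho}\mathcal{E}_\lambda(Q_{\rm ref},B_{CL\rho}(x_0)\cap\Omega)\leq\frac{4}{r}\mathcal{E}_\lambda(Q_{\rm ref},B_r(x_0)\cap\Omega)+C^\lambda_{Q_{\rm b}}r$, hence $\int_{B_{CL\rho}(x_0)\cap\Omega}|\nabla Q_{\rm ref}|^2\,dz\leq CL\rho\big(\tfrac{4}{r}\mathcal{E}_\lambda(Q_{\rm ref},B_r(x_0)\cap\Omega)+C^\lambda_{Q_{\rm b}}r\big)$, while the extra $+1$ term contributes $\leq C\rho^3\leq C\rho r^2\leq C r\cdot(\rho r)$, absorbed into the $C^\lambda_{Q_{\rm b}}r$ term after dividing by $\rho$. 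Adding the $\Omega$-part (controlled the same way using that $B_\rho(x)\cap\Omega\subset B_{2\rho}(x_0')\cap\Omega$ for an appropriate boundary point, or directly if the ball is interior, by the interior bound in Lemma~\ref{corolmonotform}(1) after a further harmless enlargement), dividing by $\rho$, and taking the supremum over admissible $B_\rho(x)$ yields \eqref{contrenergboulext}.

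The main technical obstacle I anticipate is the bookkeeping of the geometric inclusions near the boundary: one must verify uniformly (over all $x_0\in\partial\Omega$, all $r<{\bf r}^{(2)}_\Omega$, and all sub-balls $B_\rho(x)\subset B_{\boldsymbol{\kappa} r}(x_0)$) that $\boldsymbol{\sigma}_\Omega$ maps the relevant piece of $U^{\rm ex}$ into a ball centered at a boundary point of radius comparable to $\rho$ that still lies within $B_{r/6}(x_0)$, which forces the correct choice of ${\bf r}^{(2)}_\Omega$ (so that $B_{\boldsymbol{\kappa} r}(x_0)\subset U_{\delta_\Omega}$ and $\boldsymbol{\sigma}_\Omega$ is defined and $C^2$ there) and of $\boldsymbol{\kappa}$. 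A secondary point to handle carefully is the case where $B_\rho(x)$ straddles $\partial\Omega$ versus lies entirely in $U^{\rm ex}$ or entirely in $\Omega$; in each case one reduces to \eqref{conclcormonotformbdry} (or its interior analogue) after at most a bounded dilation, with all constants depending only on the Lipschitz norm of $\boldsymbol{\sigma}_\Omega$, the ellipticity constants $m_\Omega,M_\Omega$ of $A$, and $\|\boldsymbol{\Sigma}\|_{C^{1,1}}$, the latter depending on $\Omega$ and $Q_{\rm b}$, consistent with the stated dependence of $C_\Omega$ and $C^\lambda_{Q_{\rm b}}$.
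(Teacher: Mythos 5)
Your approach is essentially the paper's: split the reflected energy over $B_\rho(x)$ into the $\Omega$-part and the $U^{\rm ex}$-part, use that $\boldsymbol{\Sigma}$ is an isometry and $|Q_{\rm ref}|\equiv 1$ to bound $|\nabla\widehat Q_{\rm ref}|^2$ by $C_\Omega(|\nabla(Q_{\rm ref}\circ\boldsymbol{\sigma}_\Omega)|^2+1)$ on $U^{\rm ex}$, change variables, and then invoke \eqref{conclcormonotformbdry} on the image ball. The dependence of $\boldsymbol{\kappa}$ on $\|D\boldsymbol{\sigma}_\Omega\|_{L^\infty(U)}$ and of ${\bf r}^{(2)}_\Omega$ on $\delta_\Omega$, as well as the treatment of the extra $O(\rho^3)$ term, all match the paper.

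There is, however, a concrete slip in your second paragraph. You claim $\boldsymbol{\sigma}_\Omega(B_\rho(x)\cap U^{\rm ex})\subset B_{CL\rho}(x_0)$ and then apply \eqref{conclcormonotformbdry} to the ball $B_{CL\rho}(x_0)$. This containment fails whenever $\rho$ is much smaller than $|x-x_0|$: the image $\boldsymbol{\sigma}_\Omega(B_\rho(x))$ is a set of diameter $\lesssim L\rho$ centered near $\boldsymbol{\sigma}_\Omega(x)$, which can lie at distance up to $L\boldsymbol{\kappa} r\gg CL\rho$ from $x_0$. The correct inclusion --- and the one used in the paper --- is
\[
\boldsymbol{\sigma}_\Omega\big(B_\rho(x)\big)\subset B_{\rho/(6\boldsymbol{\kappa})}\big(\boldsymbol{\sigma}_\Omega(x)\big)\subset B_{r/6}(x_0)\,,
\]
the second inclusion following from $|\boldsymbol{\sigma}_\Omega(x)-x_0|+\rho/(6\boldsymbol{\kappa})\leq L(|x-x_0|+\rho)\leq L\boldsymbol{\kappa} r\leq r/6$. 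One then applies \eqref{conclcormonotformbdry} to the ball $B_{\rho/(6\boldsymbol{\kappa})}(\boldsymbol{\sigma}_\Omega(x))$ (which sits in the supremum, not a ball centered at $x_0$ of radius $\sim\rho$), so that after multiplying by its radius and dividing by $\rho$ the correct bound $\frac{C_\Omega}{r}\mathcal{E}_\lambda(Q_{\rm ref},B_r(x_0)\cap\Omega)+C^\lambda_{Q_{\rm b}}r$ appears. Your third paragraph does describe the right fact to check (``a ball of radius comparable to $\rho$ that still lies within $B_{r/6}(x_0)$''), so this is a fixable inconsistency rather than a wrong strategy; note also that the center of this ball need not be on $\partial\Omega$, since \eqref{conclcormonotformbdry} is a supremum over all $B_\rho(x)\subset B_{r/6}(x_0)$, not only boundary-centered ones.
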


\begin{proof}
Set $\boldsymbol{\kappa}:=\frac{1}{6}\min(\|D\boldsymbol{\sigma}_\Omega\|^{-1}_{L^\infty(U)},1)$, and  ${\bf r}^{(2)}_\Omega:=\min({\bf r}^{(1)}_\Omega, \delta_\Omega)$, where ${\bf r}^{(1)}_\Omega>0$ is given by Lemma \ref{corolmonotform}. Given a point $x_0\in \partial\Omega$ and a radius $r\in(0,{\bf r}^{(2)}_\Omega)$, we apply  \eqref{conclcormonotformbdry} to estimate in a ball  $B_\rho(x)\subset B_{\boldsymbol{\kappa} r}(x_0)$,  
\begin{align}
\nonumber\frac{1}{\rho}\int_{B_\rho(x)}|\nabla \widehat Q_{\rm ref}|^2\,dx&=\frac{1}{\rho}\int_{B_\rho(x)\cap\Omega}|\nabla Q_{\rm ref}|^2\,dx+\frac{1}{\rho}\int_{B_\rho(x)\cap U^{\rm ex}}|\nabla \widehat Q_{\rm ref}|^2\,dx\\
\label{varenergext1}&\leq \frac{4}{r}\mathcal{E}_\lambda(Q_{\rm ref},B_r(x_0)\cap\Omega)+ \frac{1}{\rho}\int_{B_\rho(x)\cap U^{\rm ex}}|\nabla \widehat Q_{\rm ref}|^2\,dx+C^\lambda_{Q_{\rm b}} r\,.
\end{align}
Using the facts that $\boldsymbol{\Sigma}(x)$ is isometric for every $x\in U$ and $|Q_{\rm ref}|=1$, we estimate 
\begin{multline*}
\int_{B_\rho(x)\cap U^{\rm ex}}|\nabla \widehat Q_{\rm ref}|^2\,dx=\int_{B_\rho(x)\cap U^{\rm ex}} \big|\nabla ({\boldsymbol\Sigma}Q_{\rm ref}\circ \boldsymbol{\sigma}_\Omega)\big|^2\,dx\\
 \leq 2 \int_{B_\rho(x)\cap U^{\rm ex}} \big|\nabla (Q_{\rm ref}\circ \boldsymbol{\sigma}_\Omega)\big|^2\,dx+C_{Q_{\rm b}}\rho^3\leq C_\Omega \int_{\boldsymbol{\sigma}_\Omega(B_\rho(x))\cap U^{\rm in}} |\nabla Q_{\rm ref}|^2\,dx+C_{Q_{\rm b}}\rho^3\,,
\end{multline*}
where the last inequality follows from a change of variables. Setting $y:=\boldsymbol{\sigma}_\Omega(x)$, we observe that  $\boldsymbol{\sigma}_\Omega(B_\rho(x))\cap U^{\rm in}\subset B_{\rho/(6\boldsymbol{\kappa})}(y)\cap U^{\rm in}$ and $B_{\rho/(6\boldsymbol{\kappa})}(y)\subset B_{r/6}(x_0)$, and consequently 
\begin{multline}\label{varenergext2}
\frac{1}{\rho} \int_{B_\rho(x)\cap U^{\rm ex}}|\nabla \widehat Q_{\rm ref}|^2\,dx\leq \frac{C_\Omega}{\rho} \int_{B_{\rho/(6\boldsymbol{\kappa})}(y)\cap \Omega} |\nabla Q_{\rm ref}|^2\,dx+C_{Q_{\rm b}}\rho^2\\
\leq \frac{C_\Omega}{r}\mathcal{E}_\lambda(Q_{\rm ref},B_r(x_0)\cap\Omega)+C^\lambda_{Q_{\rm b}}r\,,
\end{multline}
thanks again to \eqref{conclcormonotformbdry}. The result now follows from \eqref{varenergext1} and \eqref{varenergext2}. 
\end{proof}

\begin{remark}[Specific geometry \cite{DMP2}]\label{specifgeomreflec}
Recall from Remark \ref{furtherremflatbdrycase} that we shall consider in \cite{DMP2} the  following situation:  $0\in\partial\Omega$, $B_1\cap\Omega=B_1\cap\{x_3>0\}$, and $Q_{\rm b}$  is constant on $B_1\cap\partial\Omega=B_1\cap\{x_3=0\}$. In this case,  $\boldsymbol{\Sigma}$ is constant in $B_1$, and $\boldsymbol{\sigma}_\Omega(x)=(x_1,x_2,-x_3)=:\bar{x}$ for every $x=(x_1,x_1,x_3)\in B_1$. Hence $|\nabla \widehat Q_{\rm ref}(x)|^2=|\nabla Q_{\rm ref}(\bar{x})|^2$ for every $x\in B_1\cap\{x_3<0\}$.  As a consequence, if $Q_{\rm ref}$ satisfies conclusion \eqref{conclfurtherremflatbdrycase} in  Remark \ref{furtherremflatbdrycase}, then 
$$\mathop{\sup}\limits_{B_\rho(x)\subset B_{1/6}} \frac{1}{\rho}\mathcal{E}_\lambda\big(\widehat Q_{\rm ref},B_\rho(x)\big)\leq 8\mathcal{E}_\lambda\big(Q_{\rm ref},B_1\cap\Omega\big)\,,$$
instead of \eqref{contrenergboulext} (with $x_0=0$ and $r=1$). 
\end{remark}

\subsection{The $\varepsilon$-regularity theorem} In this subsection, we present the main regularity estimate which provides local H\"older regularity for weak solutions of \eqref{distribELeq} under a smallness assumption on the energy. To treat interior and boundary estimates in a unified way, we consider the case of a general system with diagonal principal part, corresponding to the scalar operator  $Lv=- {\rm div}(A\nabla v)$, as it appears in Proposition \ref{ELeqExt}.

\begin{theorem}\label{epsregthm}
Let $r_0\in(0,1]$ and $A:B_{r_0}\to\mathscr{M}^{\rm sym}_{3\times3}(\R)$ be a Lipschitz field of  symmetric matrices, and assume that $A$ is uniformly elliptic (i.e., $m I\leq A\leq M I$ for some constants $m>0$ and $M>0$). Let $Q\in W^{1,2}(B_{r_0};\mathbb{S}^4)$ and $G\in L^2(B_{r_0};\mathcal{S}_0)$ be such that 
\begin{equation}\label{eqepsregthm}
-{\rm div}(A\nabla Q)=|\nabla Q|^2_AQ+G \quad\text{in $\mathscr{D}^\prime(B_{r_0})$} \,.
\end{equation}
There exist two constants $\boldsymbol{\eps}_A>0$ and $C_A>0$, and an exponent $\alpha=\alpha(A)\in(0,1)$ depending only on the Lipschitz norm of $A$ in $B_{r_0}$ and the ellipticity bounds $m$ and $M$ such that the condition 
\begin{equation}\label{epscondthm}
 \mathop{\sup}\limits_{B_r(\bar{x})\subset B_{r_0}} \left(\frac{1}{r}\int_{B_r(\bar{x})} |\nabla Q|^2\,dx +r\int_{B_r(\bar{x})}|G|^2\,dx\right) \leq \boldsymbol{\eps}_A
 \end{equation}
implies $Q\in C^{0,\alpha}(\overline B_{r_0/2})$ with  $[Q]_{C^{0,\alpha}(B_{r_0/2})}\leq C_Ar_0^{-\alpha}$. 
\end{theorem}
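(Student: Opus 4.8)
The plan is to adapt the elementary iteration method of \cite{CWY} (in the form used in \cite{PaPi}) to the variable-coefficient operator $L v = -\operatorname{div}(A\nabla v)$, exploiting the fact that the right-hand side in \eqref{eqepsregthm} enjoys the natural structure $|\nabla Q|_A^2 Q + G$ with $|Q|=1$. The key structural observation, as usual for harmonic maps into spheres, is that since $Q$ takes values in $\mathbb{S}^4$ one has $Q:\partial_k Q = \tfrac12\partial_k|Q|^2 = 0$, hence $Q:\operatorname{div}(A\nabla Q)$ is a lower-order quantity. More precisely I would first record the pointwise/distributional identity obtained by testing against $Q\,\varphi$: for any fixed unit matrix, the ``nonlinear'' term $|\nabla Q|_A^2 Q$ is orthogonal to $Q^\perp$, and one rewrites the equation in the Hélein-type divergence form. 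Concretely, expanding $\operatorname{div}(A\nabla(Q_{ij}Q_{kl} - \ldots))$ one can exhibit the quadratic term $|\nabla Q|_A^2 Q_{ij}$ as a sum of ``div-curl''-type products $\sum_{k,l} a_{kl}(\partial_k Q_{mn}\,\partial_l Q_{pq})$ contracted suitably against $Q$; since $Q$ is bounded and $A$ is Lipschitz, these behave like products of an $L^2$ gradient with a bounded BMO-ish factor. Rather than invoking Hardy--BMO duality, I follow \cite{CWY}: compare $Q$ on each small ball $B_\rho(y)$ with the $A$-harmonic replacement.

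The main iteration goes as follows. Fix $B_{2r}(y)\subset B_{r_0}$ with $r$ small. Let $h\in W^{1,2}(B_{2r}(y);\mathcal{S}_0)$ solve $\operatorname{div}(A\nabla h)=0$ in $B_{2r}(y)$ with $h=Q$ on $\partial B_{2r}(y)$. By standard energy estimates for divergence-form elliptic systems with Lipschitz coefficients, $h$ satisfies the decay estimate $\int_{B_{\theta r}(y)}|\nabla h|^2 \le C\theta^{3-2\sigma}\int_{B_{2r}(y)}|\nabla h|^2$ for a small $\sigma=\sigma(m,M,[A]_{\mathrm{Lip}})>0$ coming from De Giorgi--Nash--Moser / Schauder-type interior estimates for the scalar operator $L$ (applied componentwise), together with $\int_{B_{2r}(y)}|\nabla h|^2 \le \int_{B_{2r}(y)}|\nabla Q|^2$. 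For the difference $w := Q-h\in W^{1,2}_0(B_{2r}(y))$ one has $\operatorname{div}(A\nabla w) = -|\nabla Q|_A^2 Q - G$, so testing with $w$ itself and using the orthogonality $Q:w = Q:(Q-h) = 1 - Q:h = \tfrac12|Q-h|^2 = \tfrac12|w|^2$ (since $|Q|=1$ and, on $\partial B_{2r}(y)$, $|h|=1$ too — here one needs a maximum-principle/truncation remark to keep $|h|\le 1$, or simply that $Q:h = 1 - \tfrac12|w|^2$ pointwise regardless), one converts $\int a_{kl}\partial_k w:\partial_l w$ into $\int |\nabla Q|_A^2 \cdot \tfrac12|w|^2 + \int G:w$. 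The first term is controlled by $\|\nabla Q\|_{L^2(B_{2r}(y))}^2 \cdot \|w\|_{L^\infty}^2$ — but $w$ is not bounded — so instead one uses the sharper route of Hélein: rewrite $|\nabla Q|_A^2 Q$ in divergence form and integrate by parts once more against $w$, producing $\int (A\nabla Q \cdot \nabla(\text{something}\cdot w))$ with the ``something'' bounded; then Cauchy--Schwarz and Sobolev $\|w\|_{L^6}\lesssim \|\nabla w\|_{L^2}$ on a ball yield $\int_{B_{2r}(y)}|\nabla w|^2 \le C\big(\|\nabla Q\|_{L^2(B_{2r}(y))}^2 + r\|G\|_{L^2(B_{2r}(y))}^2\big)\cdot\|\nabla Q\|_{L^2(B_{2r}(y))}^2 \cdot r^{-1}\cdot(\ldots)$. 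The point is that, under the smallness hypothesis \eqref{epscondthm}, the prefactor $\tfrac1r\int_{B_{2r}(y)}|\nabla Q|^2 + r\int|G|^2$ is $\le C\boldsymbol{\eps}_A$, so $\int_{B_{2r}(y)}|\nabla w|^2 \le C\boldsymbol{\eps}_A \int_{B_{2r}(y)}|\nabla Q|^2 + C r^2\int_{B_{2r}(y)}|G|^2$.

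Combining the $h$-decay with the $w$-bound gives, writing $\phi(y,\rho):=\tfrac1\rho\int_{B_\rho(y)}|\nabla Q|^2 dx$, an inequality of the form
\begin{equation*}
\phi(y,\theta r) \le C\big(\theta^{2-2\sigma} + \theta^{-1}\boldsymbol{\eps}_A\big)\,\phi(y,2r) + C\theta^{-1} r^2 \int_{B_{2r}(y)}|G|^2\,dx .
\end{equation*}
Choosing $\theta$ small to make $C\theta^{2-2\sigma}\le \tfrac14\theta^{2\alpha}$ for a suitable $\alpha<1-\sigma$, then $\boldsymbol{\eps}_A$ small (depending only on $m,M,[A]_{\mathrm{Lip}}$) so that $C\theta^{-1}\boldsymbol{\eps}_A\le\tfrac14\theta^{2\alpha}$, and noting $r^2\int|G|^2 \le r^2 \cdot \tfrac1r\cdot(r\int|G|^2) \le \boldsymbol{\eps}_A r$ is a genuinely lower-order (positive power of $r$) term, a standard iteration lemma on the discrete scales $\rho_j = \theta^j r$ (e.g. \cite[Lemma]{GiaqMart}) yields $\phi(y,\rho)\le C(\rho/r_0)^{2\alpha}\big(\phi(y_0,r_0/2)\cdot r_0^{-?}+ \ldots\big)$, i.e. a Morrey-type decay $\int_{B_\rho(y)}|\nabla Q|^2 \le C r_0^{-2\alpha}\rho^{1+2\alpha}$ uniformly for $B_\rho(y)\subset B_{r_0/2}$. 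By Morrey's embedding (Dirichlet growth theorem) this gives $Q\in C^{0,\alpha}(\overline B_{r_0/2})$ with $[Q]_{C^{0,\alpha}(B_{r_0/2})}\le C_A r_0^{-\alpha}$, which is exactly the assertion. I expect the main obstacle to be the careful treatment of the quadratic term $|\nabla Q|_A^2 Q$ against the unbounded test function $w$: one must either put it into a genuine divergence form adapted to the matrix $A$ (so that integration by parts is legitimate and only Lipschitz regularity of $A$ is consumed) or, following \cite{CWY} more literally, first establish a preliminary small-energy $L^\infty$/oscillation bound on $Q-\text{const}$ that makes $w$ effectively bounded; keeping track that all constants depend only on $m$, $M$ and $[A]_{\mathrm{Lip}}$ (and not on $\lambda$ or the particular $G$) requires a little bookkeeping but is otherwise routine.
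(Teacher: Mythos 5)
Your overall plan is correct at the level of ingredients (Hélein divergence-form rewriting, comparison with the $A$-harmonic replacement, Campanato-type iteration), and you correctly flag the delicate point: the quadratic term tested against the unbounded difference $w=Q-h$. But the iteration inequality you write down does not follow from the steps you describe, and the two fixes you propose do not close the gap. Here is the problem concretely. After rewriting $|\nabla Q|_A^2 Q_{ij}=\operatorname{div}((Q-T):B_{ij})+F_{ij}$ with $B_{ij}^{kl}=Q_{ij}(A\nabla Q_{kl})-Q_{kl}(A\nabla Q_{ij})$, testing $-\operatorname{div}(A\nabla w)=\operatorname{div}((Q-T):B_{ij})+F_{ij}$ against $w\in W^{1,2}_0$ gives, after one integration by parts,
\[
\int \langle\nabla w,\nabla w\rangle_A = -\sum_{ij}\int (Q-T):B_{ij}\cdot\nabla w_{ij} + \sum_{ij}\int F_{ij}\,w_{ij}\,.
\]
The ``something bounded'' in your argument is $(Q-T)$, and $|Q-T|\le 2$ — bounded, but \emph{not small}. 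Cauchy--Schwarz therefore only gives
\[
\Big|\int (Q-T):B_{ij}\cdot\nabla w_{ij}\Big| \le C\,\|\nabla Q\|_{L^2}\,\|\nabla w\|_{L^2}\,,
\]
which after absorption yields $\|\nabla w\|_{L^2}\le C\|\nabla Q\|_{L^2}+Cr\|G\|_{L^2}$ — with a constant $C$ of order one, not of order $\boldsymbol{\eps}_A^{1/2}$. Consequently the asserted estimate $\int_{B_{2r}}|\nabla w|^2\le C\boldsymbol{\eps}_A\int_{B_{2r}}|\nabla Q|^2+\ldots$ is not obtained, the recursion $\phi(\theta r)\le C(\theta^{2-2\sigma}+\theta^{-1}\boldsymbol{\eps}_A)\phi(2r)+\ldots$ is not proved, and the Morrey decay does not follow. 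To manufacture the missing small factor one would need $\|Q-T\|_{L^\infty}$ small on $B_{2r}$, which is essentially the conclusion of the theorem and hence circular; the ``literal CWY'' alternative you float (a preliminary small-oscillation bound) is not how the Chang--Wang--Yang argument actually operates, so this route is not available either.

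The paper's proof resolves exactly this point by iterating a different quantity. Instead of comparing Dirichlet energies in $L^2$, it estimates $\nabla(Q-H)$ in $L^q$ for a \emph{sub-quadratic} exponent $q\in(3/2,2)$ (Lemma \ref{linearestiepsreg}), so that the two factors in the divergence datum split as
\[
\|(Q-T):B_{ij}\|_{L^q}\le \|B_{ij}\|_{L^2}\,\|Q-T\|_{L^{2q/(2-q)}}\,,
\]
and now it is $\|B_{ij}\|_{L^2}\le C\|\nabla Q\|_{L^2(B_r)}\le C(\boldsymbol{\eps}_A r)^{1/2}$ that carries the smallness, while $\|Q-T\|_{L^{2q/(2-q)}}$ (with $T$ the mean of $Q$) is controlled by the BMO seminorm of $Q$ via the John--Nirenberg inequality. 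Feeding this through the Sobolev embedding $W^{1,q}_0\hookrightarrow L^{q^*}$ and the decay of the $A$-harmonic replacement (Lemma \ref{lemsupgradAharmfct}), one obtains a genuine contraction for $\|Q\|_{\mathrm{BMO}}$ on shrinking balls, $\|Q\|_{\mathrm{BMO}(B_{\sigma t})}\le C_A(\sigma^{-3/p}\boldsymbol{\eps}_A^{1/2}+\sigma)\|Q\|_{\mathrm{BMO}(B_t)}$, and Hölder continuity then follows by Campanato's criterion. In short: the quantity to be iterated is the BMO seminorm, not the scaled Dirichlet energy, and the sub-critical $L^q$ estimate is the device that lets the small factor $\boldsymbol{\eps}_A^{1/2}$ enter the recursion without a circular oscillation bound. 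You would need to replace your Morrey-decay iteration with this BMO iteration (or supply a different mechanism producing the small factor) for the argument to close.
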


We postpone the proof of this theorem as we require some preliminary lemmas. To this purpose, let us first recall the notion of {\sl function of bounded mean oscillation}. Given an open ball $B\subset \R^d$, a function $u\in L^1(B)$ belongs to the space ${\rm BMO}(B)$ if  
$$\|u\|_{{\rm BMO}(B)}:=\sup_{ \overline B_\rho(y)\subset B} \dashint_{B_\rho(y)}\Big| u -\dashint_{B_\rho(y)} u\, \Big|\,dx<+\infty\,, $$
where the supremum is taken over closed balls $\overline B_\rho(y)$ as above.  Analogously, for $p>1$ a function $u\in L^p(B)$ belongs to the space ${\rm BMO}^p(B)$ if  
$$\|u\|^p_{{\rm BMO}^p(B)}:=\sup_{\overline B_\rho(y)\subset B} \dashint_{B_\rho(y)}\Big| u -\dashint_{B_\rho(y)} u\, \Big|^p\,dx<+\infty\,, $$
where as above the supremum is taken over closed balls $\overline B_\rho(y)$. It is well known that taking closed cubes inside $B$ or closed balls $\overline B_\rho(y)$ such that $B_{2\rho}(y) \subset B$ gives equivalent definitions where the previous quantities are equivalent norms (see \cite{Sta}).

A first ingredient coming into play is the  classical John-Nirenberg inequality, see e.g. \cite[Chapter~19]{HKM}.  

\begin{lemma}[John-Nirenberg inequality]\label{Johnirlem}
For every $1<p<\infty$, there exists a constant $C_p>1$ depending only on $p$ and the dimension such that 
$$\frac{1}{C_p}\|u\|^p_{{\rm BMO}(B)} \leq \|u\|^p_{{\rm BMO^p}(B)}
\leq C_p\|u\|^p_{{\rm BMO}(B)} $$
for every $u\in {\rm BMO}(B)$. 
\end{lemma}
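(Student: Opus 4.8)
The statement to prove is the John–Nirenberg inequality (Lemma \ref{Johnirlem}), which asserts the equivalence of the $\mathrm{BMO}$ and $\mathrm{BMO}^p$ norms on a ball $B \subset \mathbb{R}^d$. Since this is a classical result, I would give a short self-contained proof rather than merely cite it.

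\medskip

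\textbf{Approach.} One inequality, $\|u\|_{\mathrm{BMO}} \le \|u\|_{\mathrm{BMO}^p}$ up to a dimensional constant, is immediate from Jensen's inequality applied on each ball: $\dashint_{B_\rho(y)}|u - \dashint u| \le \big(\dashint_{B_\rho(y)}|u - \dashint u|^p\big)^{1/p}$, so in fact $\|u\|_{\mathrm{BMO}} \le \|u\|_{\mathrm{BMO}^p}$ with constant $1$ (here it suffices to take $C_p \ge 1$). The content is the reverse inequality $\|u\|_{\mathrm{BMO}^p} \le C_p^{1/p}\|u\|_{\mathrm{BMO}}$, equivalently the fact that $\mathrm{BMO}$ functions have exponentially decaying distribution of oscillation. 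The plan is: first reduce to the statement that there are dimensional constants $c_1, c_2 > 0$ such that for every ball $B_\rho(y)$ with (say) $B_{2\rho}(y) \subset B$ and every $t > 0$,
\[
\big|\{ x \in B_\rho(y) : |u(x) - u_{B_\rho(y)}| > t \}\big| \le c_1 e^{-c_2 t / \|u\|_{\mathrm{BMO}(B)}} \, |B_\rho(y)|,
\]
where $u_{B_\rho(y)} := \dashint_{B_\rho(y)} u$; once this exponential bound is in hand, integrating the tail $\int_0^\infty p t^{p-1} |\{\dots\}|\,dt$ gives $\dashint_{B_\rho(y)}|u - u_{B_\rho(y)}|^p \le C_p \|u\|_{\mathrm{BMO}(B)}^p$, hence the claim after taking $p$-th roots (and using the remark in the text that the supremum over such balls gives an equivalent norm).

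\medskip

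\textbf{Proof of the exponential bound.} Normalize $\|u\|_{\mathrm{BMO}(B)} = 1$. Fix a ball $B_0 = B_\rho(y)$ with $B_{2\rho}(y) \subset B$ and run a Calderón–Zygmund stopping-time argument (via dyadic cubes inscribed in $B_0$, or via a Vitali/Besicovitch covering of super-level sets by balls). For a threshold $s > 1$ (a large dimensional constant to be fixed), decompose the set $\{|u - u_{B_0}| > s\}$ into maximal stopping balls $\{B_j^{(1)}\}$ on which the mean of $|u - u_{B_0}|$ first exceeds $s$; by maximality and the $\mathrm{BMO}$ bound, $|u_{B_j^{(1)}} - u_{B_0}| \le C_d s$, the $B_j^{(1)}$ cover $\{|u - u_{B_0}| > C_d s\}$ up to a null set, and $\sum_j |B_j^{(1)}| \le s^{-1}|B_0|$. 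Iterating the construction inside each $B_j^{(1)}$ produces, at the $k$-th generation, balls of total measure $\le s^{-k}|B_0|$ outside of which $|u - u_{B_0}| \le k C_d s$. This yields $|\{|u - u_{B_0}| > k C_d s\}| \le s^{-k}|B_0|$, i.e. geometric decay with rate $s^{-1} < 1$; interpolating between consecutive levels converts this into the stated exponential estimate $|\{|u - u_{B_0}| > t\}| \le c_1 e^{-c_2 t}|B_0|$ with dimensional $c_1, c_2$. Finally one checks that restricting to balls with $B_{2\rho}(y) \subset B$ rather than merely $\overline{B_\rho(y)} \subset B$ changes the constants only by a factor depending on $d$, which is the content of the ``equivalent norms'' remark cited from \cite{Sta}.

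\medskip

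\textbf{Main obstacle.} The only genuinely delicate point is the stopping-time/covering step on a \emph{ball} rather than on all of $\mathbb{R}^d$ or on a cube: one must be careful that the stopping balls at each generation stay inside the parent ball (or inside $B$), so that the $\mathrm{BMO}(B)$ norm can legitimately be used to control $|u_{B_j^{(k+1)}} - u_{B_j^{(k)}}|$, and that the measure-summation $\sum_j |B_j^{(k)}| \le s^{-1}\sum |B_j^{(k-1)}|$ survives the bounded-overlap loss in the covering lemma (this is where the dimensional constant $C_d$ and the choice of $s$ large enough compared to $C_d$ enter). Everything else — Jensen for the easy direction, and the layer-cake integration converting exponential decay into the $L^p$ bound — is routine.
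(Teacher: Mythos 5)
The paper does not prove this lemma; it is stated as a classical result with the citation to \cite[Chapter~19]{HKM} given immediately before the statement, so there is no in-text proof to compare against. Your sketch is the standard Calder\'on--Zygmund stopping-time proof of John--Nirenberg, and it is correct in substance: the easy direction via Jensen's inequality, the exponential distributional estimate obtained by iterating the stopping-time decomposition (geometric decay of measure together with arithmetic growth of the oscillation threshold, giving $|\{|u-u_{B_0}|>kC_d s\}|\le s^{-k}|B_0|$), and the layer-cake integration converting that decay into the $\mathrm{BMO}^p$ bound are all in order. The one genuinely delicate point is exactly the one you flag in your ``main obstacle'' paragraph, namely running the stopping-time construction on a \emph{ball} while keeping every stopping region inside $B$ and controlling the overlap constant in a Vitali/Besicovitch covering; the cleanest resolution, consistent with what you propose, is to carry out the argument on dyadic cubes (where the decomposition is exact and overlap-free) and then pass between cube- and ball-based $\mathrm{BMO}$ seminorms using the norm-equivalence remark stated just above the lemma and attributed to \cite{Sta}.
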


The second result is a standard scaling-invariant local regularity estimate for solutions of linear elliptic PDE's. Since the result is standard but we were not able to find a reference in the literature we sketch the proof for the reader's convenience.

\begin{lemma}\label{linearestiepsreg}
For $d\geq 3$, let $A: \widetilde{\Omega} \subset\R^d\to\mathscr{M}_{d\times d}(\R)$ be a Lipschitz field of  symmetric matrices, and assume that $A$ is uniformly elliptic (i.e., $m I\leq A\leq M I$ in $\widetilde{\Omega}$ for some constants $m>0$ and $M>0$). Let $f\in L^2(\widetilde{\Omega};\R^d)$, $g\in L^2(\widetilde{\Omega})$ and for each $B_r \subset \widetilde{\Omega}$, $0<r\leq 1$, consider $u\in W_0^{1,2}(B_{r})$   the (unique) weak solution of 
$$\begin{cases}
-{\rm div}(A\nabla u)={\rm div}\,f + g & \text{in $B_{r}$}\,,\\
u=0 & \text{on $\partial B_{r}$}\,.
\end{cases}$$
For every $q\in(\frac{d}{d-1},2)$, there exists a constant $C_A=C_A(q)$ depending only on $q$, $d$ and the Lipschitz norm of $A$ in $\widetilde{\Omega}$ (i.e.,  not on the radius $r$) such that 
$$\|\nabla u\|_{L^q(B_{ r})} \leq C_A\Big(\|f\|_{L^q(B_{ r})} + \|g\|_{L^{\frac{dq}{d+q}}(B_{r})}\Big)\,.$$
\begin{proof}(Sketch) Since all the norms in the inequality have the same scaling properties and the Lipschitz norm of $A$ is decreasing under scaling with factor $r\leq 1$ we may assume $r=1$. Then the estimate for $q=2$ just follows testing with $u$, integrating by parts and using Sobolev inequality. The case $q\in (2,d)$ follows from the case $q=2$ and the combination of \cite[Theorem 9.15]{GilbTrud} for the case $f\equiv0$ with \cite[Theorem 10.17]{Giusti} for the case $g\equiv0$. Finally, standard duality arguments give the desired conclusion in the dual range of exponents $q\in(\frac{d}{d-1},2)$. 
\end{proof}
\end{lemma}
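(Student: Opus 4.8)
The plan is to reduce everything to the unit ball by scaling, then treat the three ranges of $q$ separately. First I would observe that all four quantities in the inequality scale the same way under $u(x)\mapsto u(rx)$: writing $u_r(x):=u(rx)$ one has $\nabla u_r(x)=r(\nabla u)(rx)$, and a direct change of variables shows $\|\nabla u_r\|_{L^q(B_1)}=r^{1-d/q}\|\nabla u\|_{L^q(B_r)}$ (up to the obvious power of $r$), while $f$ rescales to $r f(r\cdot)$ and $g$ to $r^2 g(r\cdot)$; checking exponents confirms that $\|f_r\|_{L^q(B_1)}$ and $\|g_r\|_{L^{dq/(d+q)}(B_1)}$ carry exactly the matching powers of $r$. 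The rescaled coefficient field $A_r(x):=A(rx)$ has Lipschitz constant $r\,\mathrm{Lip}(A)\leq \mathrm{Lip}(A)$ since $r\leq 1$, so the constant $C_A$ we obtain on $B_1$ is uniform in $r$. Hence it suffices to prove the estimate for $r=1$.

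Next, for the base case $q=2$: test the equation with $u\in W^{1,2}_0(B_1)$, integrate by parts to get $\int_{B_1}A\nabla u\cdot\nabla u=-\int f\cdot\nabla u+\int g\,u$, use ellipticity $m\|\nabla u\|_{L^2}^2\leq\cdots$, Cauchy--Schwarz on the $f$-term, and Hölder followed by the Sobolev inequality $\|u\|_{L^{2d/(d-2)}}\leq C_d\|\nabla u\|_{L^2}$ on the $g$-term (note $2d/(d+2)$ is the conjugate Sobolev exponent), then absorb. This gives $\|\nabla u\|_{L^2(B_1)}\leq C(\|f\|_{L^2}+\|g\|_{L^{2d/(d+2)}})$. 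For the super-range $q\in(2,d)$, I would split $u=u_1+u_2$ where $u_1$ solves the equation with $g\equiv 0$ and $u_2$ with $f\equiv 0$; the $u_1$-part is the $L^q$ Calderón--Zygmund estimate for divergence-form data, \cite[Theorem 10.17]{Giusti}, and the $u_2$-part is the interior $W^{2,p}$/gradient estimate \cite[Theorem 9.15]{GilbTrud} combined with Sobolev embedding to convert the $L^{dq/(d+q)}$ norm of $g$ into an $L^q$ gradient bound. Finally, for the dual range $q\in(\tfrac{d}{d-1},2)$, I would use duality: given $\varphi\in C_c^\infty(B_1;\R^d)$, solve the adjoint problem $-\mathrm{div}(A^\trans\nabla w)=\mathrm{div}\,\varphi$ in $B_1$ with $w\in W_0^{1,2}$; since $A$ is symmetric $A^\trans=A$, and the conjugate exponent $q'\in(2,d)$ lies in the already-treated range, so $\|\nabla w\|_{L^{q'}}\leq C_A\|\varphi\|_{L^{q'}}$. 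Pairing the equation for $u$ against $w$ (and the equation for $w$ against $u$) yields $\int\nabla u\cdot\varphi = -\int f\cdot\nabla w+\int g\,w$, and estimating by Hölder plus Sobolev ($\|w\|_{L^{q'^*}}\lesssim\|\nabla w\|_{L^{q'}}$, which requires $q'<d$, i.e. $q>d/(d-1)$, exactly the hypothesis) and taking the supremum over $\varphi$ with $\|\varphi\|_{L^{q'}}\leq 1$ gives the claimed bound on $\|\nabla u\|_{L^q}$.

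The main obstacle is the duality step in the subcritical range: one must make sure the dual exponent $q'$ genuinely falls into the range where the Calderón--Zygmund estimate is available, which is why the hypothesis $q>\tfrac{d}{d-1}$ (equivalently $q'<d$) is sharp here, and one must be careful that the $g$-term is handled by Sobolev embedding rather than by the $L^p$ theory directly. Since the statement only asks for a sketch, I would present the scaling reduction and the $q=2$ computation in a line or two, then cite \cite[Theorem 9.15]{GilbTrud} and \cite[Theorem 10.17]{Giusti} for $q\in(2,d)$, and indicate the duality argument for the remaining range without writing out every Hölder exponent.
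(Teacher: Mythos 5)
Your proposal follows the same route as the paper's sketch — scaling reduction to $r=1$, energy estimate for $q=2$, decomposition into $f\equiv 0$ and $g\equiv 0$ pieces handled by \cite[Theorem~9.15]{GilbTrud} and \cite[Theorem~10.17]{Giusti} for $q\in(2,d)$, and duality for $q\in(\tfrac{d}{d-1},2)$ — and you correctly identify that $q'<d$ (i.e.\ $q>\tfrac{d}{d-1}$) is precisely what the Sobolev embedding for $w$ requires, as well as that $(dq/(d+q))'=q'^*$, so the exponents close. The argument is sound; the only slip is a sign in the identity $\int\nabla u\cdot\varphi=\int f\cdot\nabla w-\int g\,w$, which of course does not affect the estimate.
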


The final ingredient is the following local gradient estimate for $A-$harmonic functions.

\begin{lemma}\label{lemsupgradAharmfct}
For $d\geq 2$,  let $A: \widetilde{\Omega} \subset\R^d\to\mathscr{M}_{d\times d}(\R)$ be a Lipschitz field of  symmetric matrices, and assume that $A$ is uniformly elliptic (i.e., $m I\leq A\leq M I$ in $\widetilde{\Omega}$ for some constants $m>0$ and $M>0$). If $B_r \subset \widetilde{\Omega}$, $0<r\leq 1$, and $u\in  W^{1,2}(B_r)$ satisfies in the weak sense 
\begin{equation}\label{cpasAharmfctesti}
-{\rm div}(A\nabla u)=0 \quad\text{in $B_r$}\,,
\end{equation}
then $u\in C^1(B_r)$ and  
$$\sup_{B_{r/4}}|\nabla u|^2\leq \frac{C_A}{r^2}\,\dashint_{\partial B_r} |u-\xi|^2\,dx \qquad \forall \xi\in\R\,,$$
for some constant $C_A>0$ depending only on $d$ and the Lipschitz norm of $A$ in $\widetilde{\Omega}$ (i.e.,  not on the radius $r$). 
\end{lemma}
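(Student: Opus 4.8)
The statement to prove is \textbf{Lemma \ref{lemsupgradAharmfct}}: a scaling-invariant interior gradient bound for weak solutions of $-\mathrm{div}(A\nabla u)=0$ with Lipschitz coefficients. Here is the plan.

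\medskip

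\textbf{Reduction to the unit ball.} First I would rescale: set $v(y):=u(ry)$ on $B_1$, so that $v$ solves $-\mathrm{div}(A_r\nabla v)=0$ with $A_r(y):=A(ry)$. Since $r\le 1$, the Lipschitz seminorm of $A_r$ is $r\,[A]_{\mathrm{Lip}}\le [A]_{\mathrm{Lip}}$, and the ellipticity constants $m,M$ are unchanged; also $\dashint_{\partial B_1}|v-\xi|^2 = \dashint_{\partial B_r}|u-\xi|^2$ after the obvious change of variables on the sphere, and $\sup_{B_{1/4}}|\nabla v|^2 = r^2\sup_{B_{r/4}}|\nabla u|^2$. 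Thus it suffices to prove the estimate for $r=1$ with a constant depending only on $d$, $m$, $M$, and $[A]_{\mathrm{Lip}(\widetilde\Omega)}$.

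\medskip

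\textbf{Interior $C^{1,\alpha}$ regularity and the $L^\infty$ gradient bound.} For $r=1$: since $A$ is Lipschitz (in particular continuous, and its oscillation can be controlled), Schauder-type / De Giorgi--Nash--Moser theory applies. More precisely, by the classical interior estimates for divergence-form equations with Hölder-continuous coefficients (see e.g.\ \cite[Theorem 8.32]{GilbTrud} together with the $L^\infty$ bound \cite[Theorem 8.17]{GilbTrud}), any weak solution $u\in W^{1,2}(B_1)$ of \eqref{cpasAharmfctesti} lies in $C^{1,\alpha}_{\mathrm{loc}}(B_1)$ and satisfies, on concentric balls, $\sup_{B_{1/2}}|\nabla u|\le C\,\|u\|_{L^2(B_{3/4})}$, with $C$ depending only on $d$, the ellipticity constants and the Hölder (hence Lipschitz) norm of $A$. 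Since equation \eqref{cpasAharmfctesti} is invariant under adding a constant to $u$, the same bound holds with $u$ replaced by $u-\xi$ for any $\xi\in\R$: $\sup_{B_{1/4}}|\nabla u|^2\le C\,\|u-\xi\|_{L^2(B_{3/4})}^2$ (the factor $1/4$ instead of $1/2$ only helps). It remains to upgrade the $L^2$ norm on the solid ball $B_{3/4}$ to the $L^2$ norm on the sphere $\partial B_1$.

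\medskip

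\textbf{From solid-ball to boundary-sphere control.} This is the only step requiring a small argument, and I expect it to be the mild technical obstacle. The point is a reverse-type / Caccioppoli-plus-Poincaré estimate for solutions: for $w:=u-\xi$, which again solves $-\mathrm{div}(A\nabla w)=0$, the Caccioppoli inequality gives $\int_{B_{7/8}}|\nabla w|^2\le C\int_{B_1}|w|^2$ for any fixed choice of $\xi$; but we want to bound $\|w\|_{L^2(B_{3/4})}$ by $\|w\|_{L^2(\partial B_1)}$, not by $\|w\|_{L^2(B_1)}$. One clean route: fix the specific constant $\xi:=\dashint_{\partial B_1} u$ and use that the trace operator together with the Poincaré inequality on the annulus $B_1\setminus \overline{B_{3/4}}$ controls the $L^2$ norm of $w$ on an intermediate sphere, then combine with interior mean-value/Harnack-type oscillation decay. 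Alternatively — and more robustly — one invokes the maximum principle: $\sup_{B_1}|w|$ is controlled by boundary data only in the scalar case, but here the cleanest path is to note that by the De Giorgi--Nash--Moser $L^\infty$ bound applied on the annulus and interior estimates, $\|w\|_{L^\infty(B_{3/4})}\le C\,\dashint_{\partial B_1}|w|$, whence $\|w\|_{L^2(B_{3/4})}^2\le C\big(\dashint_{\partial B_1}|w|\big)^2\le C\dashint_{\partial B_1}|w|^2$ by Jensen. Finally, the statement is required for \emph{all} $\xi\in\R$, not just the mean value; but since the left-hand side $\sup_{B_{1/4}}|\nabla u|^2$ does not depend on $\xi$ while the right-hand side $\dashint_{\partial B_1}|u-\xi|^2$ is minimized exactly at $\xi=\dashint_{\partial B_1}u$, proving the estimate for that particular $\xi$ immediately yields it for every $\xi\in\R$. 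Undoing the rescaling then completes the proof.
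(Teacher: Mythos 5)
Your overall strategy (rescale to $r=1$, reduce to $\xi=\dashint_{\partial B_1}u$, interior gradient bound via Schauder/Caccioppoli in terms of $\|u-\xi\|_{L^2}$ on a solid interior ball, then pass to the $L^2$ norm on the boundary sphere) is reasonable, and steps one and two are fine; the paper follows essentially the same first two steps, citing a Schauder-type interior gradient bound and Caccioppoli's inequality. The genuine gap is in the third step — exactly the one you flag as ``the mild technical obstacle'' — and the argument you sketch there does not close it.

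You claim that the De Giorgi--Nash--Moser $L^\infty$ bound ``applied on the annulus,'' together with interior estimates, yields $\|w\|_{L^\infty(B_{3/4})}\le C\,\dashint_{\partial B_1}|w|$. It does not. Local boundedness controls the sup on a smaller ball by the $L^2$ average on a larger \emph{solid} ball, not by any boundary norm; and the weak maximum principle (which does apply here — this is a scalar equation, so your parenthetical about ``only in the scalar case'' is a red herring) gives only an $L^\infty$-on-$L^\infty$ bound, $\sup_{B_1}|w|\le\sup_{\partial B_1}|w|$, which cannot be converted to an $L^1$ or $L^2$ boundary bound without further input. The statement $\|w\|_{L^\infty(B_{3/4})}\le C\dashint_{\partial B_1}|w|$ is true, but it encodes a uniform bound on the Poisson kernel of $-\mathrm{div}(A\nabla\cdot)$ on $B_1$ at interior points, and establishing that (for merely Lipschitz $A$) is precisely the nontrivial content. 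No combination of DGNM local boundedness and the standard maximum principle produces it by itself.

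The paper's proof closes this step with an elementary and self-contained device that is absent from your sketch: it observes that $|u|^2$ is a subsolution, $-\mathrm{div}(A\nabla|u|^2)=-2(A\nabla u)\cdot\nabla u\le 0$, introduces the auxiliary function $\varphi\ge0$ solving $-\mathrm{div}(A\nabla\varphi)=1$ in $B_r$ with $\varphi=0$ on $\partial B_r$ (which satisfies $\|\nabla\varphi\|_{L^\infty(B_r)}\le C_A r$ by $W^{2,p}$ theory and scaling), and writes $\int_{B_r}|u|^2=-\int_{B_r}|u|^2\,\mathrm{div}(A\nabla\varphi)$, integrating by parts twice. The bulk term has the right sign by the subsolution property and $\varphi\ge0$, while the boundary term is bounded by $C_A r\int_{\partial B_r}|u|^2$. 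This gives exactly the $L^2(B_r)\le C_A r\, L^2(\partial B_r)$ transfer that your argument needs but does not prove. If you wish to retain your route through $\|w\|_{L^\infty(B_{3/4})}\le C\dashint_{\partial B_1}|w|$, the cleanest way to actually justify that inequality is again via this $\varphi$-trick (applied to the nonnegative solutions $w^\pm$ obtained by solving the Dirichlet problem with data $(w|_{\partial B_1})_\pm$), or else by invoking genuine Poisson-kernel/elliptic-measure estimates, neither of which appears in your proposal.
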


\begin{proof}
Since $u-\xi$ also solves \eqref{cpasAharmfctesti}, we may assume that $\xi=0$. By standard elliptic regularity theory,  $u$ is of class $C^{1,\alpha}$ locally inside $B_r$, and the following estimate holds (see e.g. 
\cite[Theorem~5.19]{GiaqMart})
$$ \sup_{B_{r/4}}|\nabla u|^2 \leq C_A\dashint_{B_{r/2}}|\nabla u|^2\,dx\,.$$
On the other hand, Caccioppoli's inequality (see e.g. \cite[Theorem 4.4]{GiaqMart}) yields 
$$\int_{B_{r/2}}|\nabla u|^2\,dx\leq \frac{C_A}{r^2} \int_{B_r}|u|^2\,dx\,,$$
so that 
\begin{equation}\label{supgradl2vol}
 \sup_{B_{r/4}}|\nabla u|^2 \leq \frac{C_A}{r^2} \dashint_{B_r}|u|^2\,dx\,. 
 \end{equation}
Next we observe that $|u|^2\in W^{1,1}(B_r)$ satisfies (in the $W^{-1,1}$-sense)
\begin{equation}\label{subsolmodusq}
-{\rm div}(A\nabla |u|^2)= -2(A\nabla u)\cdot\nabla u\leq 0\quad\text{in $B_r$}\,.
\end{equation}
According to \cite[Theorem 9.15]{GilbTrud}, there exists a unique strong solution $\varphi$ of 
$$\begin{cases} 
-{\rm div}(A\nabla \varphi)=1 & \text{in $B_r$}\,,\\
\varphi=0 & \text{on $\partial B_r$}\,,
\end{cases}$$
which belongs to $W^{2,p}(B_r)$ for every $p<\infty$. In particular, $\varphi \in C^1(\overline B_r)$ by Sobolev embedding whenever $p>d$, and an elementary scaling argument (using $r\leq 1$) leads to  
\begin{equation}\label{esticapaphir}
\|\nabla\varphi\|_{L^\infty(B_r)}\leq C_A r\,,
\end{equation}
for some constant $C_A>0$ depending only on $d$ and the Lipschitz norm of $A$ in $\widetilde{\Omega}$ (and independent of $r$). Moreover, $\varphi\geq 0$ in $B_r$ by the maximum principle.

Next we write $|u|^2=-|u|^2{\rm div}(A\nabla \varphi)$, and we integrate by parts  over $B_r$ to obtain 
\begin{equation}\label{l2voltoL2surf}
\int_{B_r}|u|^2\,dx=\int_{B_r}(A\nabla|u|^2)\cdot\nabla\varphi\,dx- \int_{\partial B_r}|u|^2(A\nabla\varphi)\cdot\nu\,dx\leq C_A r\int_{\partial B_r}|u|^2\,dx\,, 
\end{equation}
thanks to \eqref{subsolmodusq} and \eqref{esticapaphir}. Gathering \eqref{supgradl2vol} and \eqref{l2voltoL2surf} yields the announced conclusion. 
\end{proof}

\begin{proof}[Proof of Theorem \ref{epsregthm}]
 We start with some useful pointwise identities which hold a.e. in the domain and which allow to perform the so-called H\'{e}lein's trick and rewrite the quadratic term in the right hand side of \eqref{eqepsregthm} in divergence form. 
 
 From the identity $|Q|^2=1$, we first infer that $Q:\partial_k Q=0$ for each $k\in\{1,2,3\}$. As a consequence,  
$$\sum_{k,l=1}^3Q_{kl}(A\nabla Q_{ij})\cdot\nabla Q_{kl} =0 \quad\forall i,j\in\{1,2,3\}\,,$$
which in turn implies that 
$$|\nabla Q|_A^2Q_{ij}= \sum_{k,l=1}^3Q_{ij} (A\nabla Q_{kl})\cdot\nabla Q_{kl}= \sum_{k,l=1}^3B^{kl}_{ij}\cdot\nabla Q_{kl}\,,$$
with the vector fields
\begin{equation}
\label{defBijkl}
B^{kl}_{ij}:= Q_{ij} (A\nabla Q_{kl})-Q_{kl}(A\nabla Q_{ij})  \in L^2(B_{r_0};\mathbb{R}^3) \, ,\quad i,j,k,l\in\{1,2,3\}\,.
\end{equation}
We now claim that in view of the previous pointwise identities for every $i,j,k,l\in\{1,2,3\}$, 
\begin{equation}
\label{divBequation}
{\rm div} \,B_{ij}^{kl}=G_{kl}Q_{ij}-G_{ij}Q_{kl}\quad \text{in $\mathscr{D}^\prime(B_{r_0})$}\,.
\end{equation}
 Indeed, given a test function $\varphi\in \mathscr{D}(B_{r_0})$, we integrate by parts using equation \eqref{eqepsregthm}  to obtain   
\begin{align*}
\int_{B_1} B_{ij}^{kl}\cdot\nabla\varphi\,dx=&\int_{B_1} (A\nabla Q_{kl})\cdot\nabla(Q_{ij}\varphi)\,dx -\int_{B_1} (A\nabla Q_{ij})\cdot\nabla(Q_{kl}\varphi)\,dx\\
=&\int_{B_1} G_{kl}Q_{ij}\varphi\,dx -\int_{B_1} G_{ij}Q_{kl}\varphi\,dx\,,
\end{align*}
and the claim follows. 

We may now write in the sense of distributions
$$B^{kl}_{ij}\cdot\nabla Q_{kl}={\rm div}\big(Q_{kl}B^{kl}_{ij} \big)+Q^2_{kl}G_{ij}-G_{kl}Q_{kl}Q_{ij} \, ,$$
in such a way that for each $i,j\in\{1,2,3\}$
$$-{\rm div}(A\nabla Q_{ij})={\rm div}(Q:B_{ij}) +(Q:Q)G_{ij}-(G:Q)Q_{ij} \quad \text{in $W^{-1,2}(B_{r_0})$}\,, $$
where  $B_{ij}\in L^2(B_{r_0};(\mathcal{S}_0)^3)$ are matrix-valued vector fields given by $B_{ij}:=(B_{ij}^{kl})_{k,l=1}^3$ as defined in \eqref{defBijkl}. 
\vskip3pt

Finally, if $T\in \mathcal{S}_0$ is a constant matrix, we have for every $ i,j\in\{1,2,3\}$, 
\begin{equation}\label{reweqepsregthm}
-{\rm div}(A\nabla Q_{ij})={\rm div}\big((Q-T):B_{ij}\big) +F_{ij} \quad \text{in $W^{-1,2}(B_{r_0})$}\,, 
\end{equation}
with $F_{ij}:=(Q:(Q-T))G_{ij}-(G:(Q-T))Q_{ij}\in L^2(B_{r_0})$. 
\vskip5pt

Let $\sigma\in(0,1/8]$ be a constant to be specified later. 
We fix $x_0\in B_{r_0/2}$ and $t\in(0,r_0/2)$ such that $B_t(x_0)\subset B_{r_0}$, and then arbitrary $\bar x\in B_{\sigma t}(x_0)$ and $r\in (0,t)$ such that $B_{\sigma r}(\bar x)\subset B_{\sigma t}(x_0)$. Note that $B_r(\bar x)\subset B_t(x_0)\subset B_{r_0}$, and thus assumption \eqref{epscondthm} yields   
\begin{equation}\label{newcondepsregthm}
\sup_{0<\rho\leq r}   \left(\frac{1}{\rho}\int_{B_\rho(\bar x)} |\nabla Q|^2\,dx +\rho \int_{B_\rho(\bar x)}|G|^2\,dx\right) \leq \boldsymbol{\eps}_A\,.
\end{equation}
Define 
$$T:=\dashint_{B_{r}(\bar x)}Q\,dx\in\mathcal{S}_0\,.$$
By a standard average argument based on Fubini's theorem, we can find a good radius $\bar r\in(r/2,r)$ for which 
\begin{equation}\label{condgoodrad}
\int_{\partial B_{\bar r}(\bar x)} |Q-T|^2\,d \mathcal{H}^2\leq \frac{4}{r}\int_{B_r(\bar x)}|Q-T|^2\,dx\,.
\end{equation}
Since $Q\in W^{1/2,2}\big(\partial B_{\bar r}(\bar x);\mathcal{S}_0\big)$, there exists a unique $H\in W^{1,2}( B_{\bar r}(\bar x);\mathcal{S}_0)$ satisfying
\begin{equation}\label{Aharmscaleq}
\begin{cases}
-{\rm div}(A\nabla H)=0 & \text{in $B_{\bar r }(\bar x)$}\,,\\
H=Q & \text{on $\partial B_{\bar r }(\bar x)$}\,. 
\end{cases}
\end{equation}
In addition, applying Lemma \ref{lemsupgradAharmfct} with $\widetilde{\Omega}=B_{r_0}$we infer that $H$ belongs to $ C^1(B_{\bar r }(\bar x))$ and that  
\begin{equation}\label{estigradAharmfct}
\sup_{B_{\bar r/4 }(\bar x)}|\nabla H|^2\leq \frac{C_A}{\bar r^2} \dashint_{\partial B_{\bar r}(\bar x)}|H-T|^2\,d\mathcal{H}^2= \frac{C_A}{\bar r^2} \dashint_{\partial B_{\bar r}(\bar x)}|Q-T|^2\,d\mathcal{H}^2\leq \frac{C_A}{r^2} \dashint_{B_{r}(\bar x)}|Q-T|^2\,dx\,,
\end{equation}
thanks to our choice of $\bar r$ made in \eqref{condgoodrad}. 
\vskip3pt

By \eqref{reweqepsregthm} and \eqref{Aharmscaleq}, the map $Q-H$ has components which solve 
$$\begin{cases}
-{\rm div}(A\nabla (Q_{ij}-H_{ij}))={\rm div}\big((Q-T):B_{ij}\big) +F_{ij} & \text{in $W^{-1,2}(B_{\bar r }(\bar x))$}\,,\\
Q_{ij}-H_{ij}=0 & \text{on $\partial B_{\bar r }(\bar x)$}\,,
\end{cases}$$  
and our aim now is to apply Lemma \ref{linearestiepsreg}. To this purpose, let us fix the exponents  
$$q\in(3/2,2)\quad\text{and}\quad s:=\frac{3q}{3+q}\in (1,6/5)\,. $$
(One can choose for instance $q=7/4$.)
Using the identity $|Q|=1$ and H\"older's inequality, we estimate with the help of \eqref{newcondepsregthm}, 
\begin{align*}
\|(Q-T):B_{ij}\|_{L^q(B_{\bar r }(\bar x))}& \leq \|B_{ij}\|_{L^2(B_{\bar r }(\bar x))}\|Q-T\|_{L^{\frac{2q}{2-q}}(B_{\bar r }(\bar x))}\\
&\leq C_A\|\nabla Q\|_{L^2(B_{\bar r }(\bar x))}\|Q-T\|_{L^{\frac{2q}{2-q}}(B_{\bar r }(\bar x))}\\
& \leq C_A(\boldsymbol{\eps}_A \bar{r})^{1/2} \|Q-T\|_{L^{\frac{2q}{2-q}}(B_{\bar r }(\bar x))}\,,
\end{align*}
as well as 
\begin{align*}
\|F_{ij}\|_{L^s(B_{\bar r }(\bar x))}
&\leq  C\|G\|_{L^2(B_{\bar r }(\bar x))}\|Q-T\|_{L^{\frac{6q}{6-q}}(B_{\bar r }(\bar x))}\\
&\leq C(\boldsymbol{\eps}_A /{\bar r})^{1/2}\|Q-T\|_{L^{\frac{6q}{6-q}}(B_{\bar r }(\bar x))} \\
&\leq C (\boldsymbol{\eps}_A {\bar r})^{1/2} \|Q-T\|_{L^{\frac{2q}{2-q}}(B_{\bar r }(\bar x))}\,.
\end{align*}
According to Lemma \ref{linearestiepsreg}, we thus have 
$$\|\nabla(Q-H)\|_{L^q(B_{\bar r }(\bar x))}\leq C_A (\boldsymbol{\eps}_A \bar{r})^{1/2}\|Q-T\|_{L^{\frac{2q}{2-q}}(B_{\bar r }(\bar x))}\,.  $$
Since $\bar{r}\in (r/2,r)$, the previous estimate and the Sobolev inequality in $W^{1,p}_0(B_{\bar r}({\bar x}))$ yield 
\begin{multline}\label{estdistQH}
\left(\dashint_{B_{\bar r }(\bar x)} |Q-H|^p\,dx\right)^{1/p}\leq \frac{C}{{\bar r}^{3/p}} \|\nabla(Q-H)\|_{L^q(B_{\bar r }(\bar x))}\\
\leq C_A\boldsymbol{\eps}_A^{1/2}\left(\dashint_{B_{r}(\bar x)}
|Q-T|^{\frac{2q}{2-q}}\,dx\right)^{\frac{2-q}{2q}}\,,
\end{multline}
where $p:=q^*=\frac{3q}{3-q} >2$  
is the Sobolev exponent. Next we set  
$$\overline H:=\dashint_{B_{\sigma r}(\bar x)} H\,dx\quad \text{and}\quad \overline Q:=\dashint_{B_{\sigma r}(\bar x)} Q\,dx\,, $$
and we infer from \eqref{estigradAharmfct} and H\"older's inequality, as $\bar{r}\in (r/2,r)$ and $\frac{2q}{2-q} >2$, that 
\begin{equation}\label{estdistQbarH}
\left(\dashint_{B_{\sigma r}(\bar x)}|H-\overline H|^p\,dx\right)^{1/p}\leq C\sigma r\sup_{B_{\bar r/4}(\bar x)} |\nabla H|\leq C_A\sigma\left(\dashint_{B_r(\bar x)}|Q-T|^{\frac{2q}{2-q}}\,dx\right)^{\frac{2-q}{2q}}\,. 
\end{equation}
In view of \eqref{estdistQH} and \eqref{estdistQbarH}, as $\bar{r}\in (r/2,r)$ we may now deduce from Minkowski's inequality and the John-Nirenberg inequality in Lemma \ref{Johnirlem} that 
\begin{align}
\nonumber \left(\dashint_{B_{\sigma  r }(\bar x)} |Q-\overline H|^p\,dx\right)^{1/p}& \leq  C\sigma^{-3/p}\left(\dashint_{B_{\bar r }(\bar x)} |Q-H|^p\,dx\right)^{1/p}+\left(\dashint_{B_{\sigma r}(\bar x)}|H-\overline H|^p\,dx\right)^{1/p}\\
\nonumber &\leq  C_A\big(\sigma^{-3/p}\boldsymbol{\eps}_A^{1/2}+ \sigma\big)\left(\dashint_{B_ r(\bar x)}|Q-T|^{\frac{2q}{2-q}}\,dx\right)^{\frac{2-q}{2q}}\\
\label{estdistQbarQ} &\leq  C_A\big(\sigma^{-3/p}\boldsymbol{\eps}_A^{1/2}+ \sigma\big) \|Q\|_{{\rm BMO}(B_t(x_0))}\,.
\end{align}
It now follows from \eqref{estdistQH} and \eqref{estdistQbarQ}   together with H\"older's inequality and  the John-Nirenberg inequality again that 
\begin{align*}
\dashint_{B_{\sigma  r }(\bar x)} |Q-\overline Q|\,dx & \leq \dashint_{B_{\sigma  r }(\bar x)} |Q-\overline H|\,dx + |\overline H-\overline Q|  \\
& \leq \dashint_{B_{\sigma  r }(\bar x)} |Q-\overline H|\,dx +\dashint_{B_{\sigma  r }(\bar x)} |Q-H|\,dx \\
& \leq \left(\dashint_{B_{\sigma  r }(\bar x)} |Q-\overline H|^p\,dx\right)^{1/p}+C\sigma^{-3/p}\left(\dashint_{B_{\bar r }(\bar x)} |Q-H|^p\,dx\right)^{1/p}\\
& \leq C_A\big(\sigma^{-3/p}\boldsymbol{\eps}_A^{1/2}+ \sigma\big) \|Q\|_{{\rm BMO}(B_t(x_0))}\,.
\end{align*}
Finally, taking the supremum over $\bar x$ and $r$, we conclude that 
$$ \|Q\|_{{\rm BMO}(B_{\sigma t}(x_0))}\leq C_A\big(\sigma^{-3/p}\boldsymbol{\eps}_A^{1/2}+ \sigma\big) \|Q\|_{{\rm BMO}(B_t(x_0))}\,.$$
We then choose $\sigma\in(0,1/8]$ and $\boldsymbol{\eps}_A>0$ small enough (depending only on $A$) in such a way that 
$$ \|Q\|_{{\rm BMO}(B_{\sigma t}(x_0))}\leq \frac{1}{2} \|Q\|_{{\rm BMO}(B_t(x_0))}\,.$$
In view of the arbitrariness of $t\in(0,r_0/2)$, the inequality above holds for every $t\in(0,r_0/2)$. A classical iteration argument on the function $t\mapsto\|Q\|_{{\rm BMO}(B_t(x_0))}$ then shows that 
\begin{equation}\label{bmodecay}
\|Q\|_{{\rm BMO}(B_t(x_0))}\leq \|Q\|_{{\rm BMO}(B_{r_0/2}(x_0))} 2^\alpha r_0^{-\alpha}t^\alpha\leq 2^{\alpha+1} r_0^{-\alpha}t^{\alpha} \quad \forall t\in(0,r_0/2)\,, 
\end{equation}
where $\alpha\in(0,1/3)$ is determined by $\sigma^\alpha=1/2$ (note that we have used the fact that $|Q|=1$ in the second inequality). In particular, \eqref{bmodecay} leads to   
$$\dashint_{B_t(x_0)} \Big|Q-\dashint_{B_t(x_0)}Q\,dy \Big|\,dx\leq Cr_0^{-\alpha}t^{\alpha} \qquad \forall t\in(0,r_0/2)\,. $$
In view of the arbitrariness of $x_0\in B_{r_0/2}$, it implies that $Q\in C^{0,\alpha}(\overline B_{r_0/2})$ with the announced estimate by Campanato's criterion, see e.g. \cite[Theorem 6.1]{Maggi}.
\end{proof}

Applying Theorem  \ref{epsregthm} to our main equation \eqref{distribELeq} yields the following interior regularity estimate. 

\begin{corollary}\label{corolholderint}
Let $Q_\lambda\in W^{1,2}(B_{r_0};\mathbb{S}^4)$ be such that 
$$-\Delta Q_\lambda=|\nabla Q_\lambda|^2Q_\lambda+\lambda\Big(Q_\lambda^2-\frac{1}{3}I-{\rm tr}(Q^3_\lambda)Q_\lambda\Big) \quad\text{in $\mathscr{D}^\prime(B_{r_0})$}\,.$$
There exist two universal constants  $\boldsymbol{\eps}_{\rm in}>0$ and ${\bf r}_{\rm in}>0$ such that for every ball $B_r(x_0)\subset B_{r_0}$ of  radius 
$0<r<{\bf r}_{\rm in}(1+\lambda)^{-1/2}$, the condition
$$\sup_{B_\rho(x)\subset B_{r}(x_0)} \frac{1}{\rho}\int_{B_\rho(x)}|\nabla Q_\lambda|^2\,dx \leq \boldsymbol{\eps}_{\rm in}$$
implies $Q_\lambda\in C^{0,\alpha}(\overline B_{r/2}(x_0))$ with $[Q_\lambda]_{C^{0,\alpha}(B_{r/2}(x_0))}\leq Cr^{-\alpha}$ for some constants $\alpha\in(0,1)$ and $C>0$ independent of $\lambda$.  
\end{corollary}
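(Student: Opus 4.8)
The plan is to deduce the corollary directly from Theorem \ref{epsregthm} applied with $A=I$, after an elliptic rescaling that absorbs the $\lambda$–dependence into a small forcing term. First I would specialize Theorem \ref{epsregthm} to the constant–coefficient case $A=I$: then $\mathrm{div}(A\nabla Q)=\Delta Q$ and $|\nabla Q|_A^2=|\nabla Q|^2$, the Lipschitz norm of $A$ vanishes, and the ellipticity constants are $m=M=1$, so the resulting $\boldsymbol{\eps}_I:=\boldsymbol{\eps}_A$, $C_I:=C_A$ and $\alpha\in(0,1)$ are \emph{universal} constants.

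\textbf{Rescaling.} Fix a ball $B_r(x_0)\subset B_{r_0}$ with $r<{\bf r}_{\rm in}(1+\lambda)^{-1/2}$, where the threshold ${\bf r}_{\rm in}\in(0,1]$ will be chosen at the end, and set $\widetilde Q(y):=Q_\lambda(x_0+ry)$ for $y\in B_1$, which belongs to $W^{1,2}(B_1;\mathbb{S}^4)$ since $B_r(x_0)\subset B_{r_0}$. A direct computation shows that $\widetilde Q$ solves
\[
-\Delta\widetilde Q=|\nabla\widetilde Q|^2\widetilde Q+\widetilde G\quad\text{in }\mathscr{D}^\prime(B_1),\qquad \widetilde G(y):=\lambda r^2\Big(\widetilde Q^2-\tfrac13 I-{\rm tr}(\widetilde Q^3)\widetilde Q\Big).
\]
Because $|\widetilde Q|=1$ a.e., the elementary spectral inequalities ${\rm tr}(\widetilde Q^4)\leq({\rm tr}\,\widetilde Q^2)^2$ and $|{\rm tr}(\widetilde Q^3)|\leq({\rm tr}\,\widetilde Q^2)^{3/2}$ yield a universal pointwise bound $|\widetilde G|\leq C_0\,\lambda r^2$, so $\widetilde G\in L^2(B_1;\mathcal{S}_0)$.

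\textbf{Checking the smallness condition.} Next I would verify \eqref{epscondthm} for $\widetilde Q$ on $B_1$ (i.e.\ with $r_0=1$). For the Dirichlet part, a change of variables turns $\frac1\rho\int_{B_\rho(\bar y)}|\nabla\widetilde Q|^2\,dy$, with $B_\rho(\bar y)\subset B_1$, into $\frac{1}{r\rho}\int_{B_{r\rho}(x_0+r\bar y)}|\nabla Q_\lambda|^2\,dx$, where $B_{r\rho}(x_0+r\bar y)\subset B_r(x_0)$; by the hypothesis of the corollary this is $\leq\boldsymbol{\eps}_{\rm in}$. For the forcing part, using $\rho<1$ and $\lambda r^2\leq(1+\lambda)r^2<{\bf r}_{\rm in}^2$,
\[
\rho\int_{B_\rho(\bar y)}|\widetilde G|^2\,dy\leq C_0^2\,|B_1|\,\rho^4(\lambda r^2)^2\leq C_1\,{\bf r}_{\rm in}^4,
\]
with $C_1$ universal. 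Hence, choosing ${\bf r}_{\rm in}\in(0,1]$ so small that $C_1{\bf r}_{\rm in}^4\leq\boldsymbol{\eps}_I/2$ and setting $\boldsymbol{\eps}_{\rm in}:=\boldsymbol{\eps}_I/2$, condition \eqref{epscondthm} holds for $\widetilde Q$ with constant $\boldsymbol{\eps}_I$. Theorem \ref{epsregthm} then gives $\widetilde Q\in C^{0,\alpha}(\overline B_{1/2})$ with $[\widetilde Q]_{C^{0,\alpha}(B_{1/2})}\leq C_I$, and undoing the dilation (the $C^{0,\alpha}$ seminorm picks up the factor $r^{-\alpha}$) yields $Q_\lambda\in C^{0,\alpha}(\overline B_{r/2}(x_0))$ with $[Q_\lambda]_{C^{0,\alpha}(B_{r/2}(x_0))}\leq C_I r^{-\alpha}$, as claimed.

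\textbf{Main difficulty.} There is no genuine obstacle here, the statement being a corollary: the only points requiring care are the bookkeeping of how the scaled Dirichlet energy and the forcing term $\widetilde G$ transform under $y\mapsto x_0+ry$, and the order of quantification — one must fix $\boldsymbol{\eps}_I$ first and only afterwards select ${\bf r}_{\rm in}$ so that the $\lambda$–dependent term $\lambda r^2\lesssim{\bf r}_{\rm in}^2$ is controlled uniformly in $\lambda$. The mildly delicate (but elementary) ingredient is the pointwise bound $|\widetilde Q^2-\tfrac13 I-{\rm tr}(\widetilde Q^3)\widetilde Q|\leq C_0$ for traceless symmetric matrices of unit Frobenius norm.
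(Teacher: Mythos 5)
Your proof is correct and follows essentially the same route as the paper: apply Theorem \ref{epsregthm} with $A=I$, take $G$ to be the $\lambda$-term, use $|Q_\lambda|=1$ to bound $|G|\leq C\lambda$ pointwise, and then show $\rho\int_{B_\rho}|G|^2 \leq C\rho^4\lambda^2 \leq C{\bf r}_{\rm in}^4$ using $\rho < r < {\bf r}_{\rm in}(1+\lambda)^{-1/2}$. The only cosmetic difference is your explicit rescaling to $B_1$; the paper skips this step by invoking Theorem \ref{epsregthm} directly on $B_r(x_0)$ (permitted since $r<{\bf r}_{\rm in}\leq 1$), whose conclusion already carries the correct $r^{-\alpha}$ dependence.
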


\begin{proof}
Since $Q_\lambda$ is a weak solution of \eqref{MasterEq}, it solves \eqref{eqepsregthm} in $B_r(x_0)$ with the matrix $A=I$, and 
$G:=\lambda\big( Q_\lambda^2 -\frac{1}{3} I - {\rm tr}( Q_\lambda^3)  Q_\lambda\big)$. The map $Q_\lambda$ being $\mathbb{S}^4$-valued, we have 
$$\sup_{B_\rho(x)\subset B_{r}(x_0)} \rho \int_{B_\rho(x)}|G|^2\,dx\leq C \frac{{\bf r}^{4}_{\rm in}\lambda^2}{(1+\lambda)^{2}} \leq C{\bf r}^{4}_{\rm in}\,,  $$
for some universal constant $C>0$. Hence, we can choose $\boldsymbol{\eps}_{\rm in}$ and ${\bf r}_{\rm in}$ small enough in such a way that \eqref{epscondthm} holds 
(with $\boldsymbol{\eps}_A=\boldsymbol{\eps}_I$), and the conclusion follows from Theorem~\ref{epsregthm}. 
\end{proof}

Concerning boundary regularity estimates under a Dirichlet boundary condition, we apply the refection procedure of the previous subsection, and then Theorem  \ref{epsregthm} to equation \eqref{ELeqAfterReflect}.

\begin{corollary}\label{corolholderbdry}
Assume that $\partial\Omega$ is of class $C^3$ and $Q_{\rm b}\in C^{1,1}(\partial\Omega;\mathbb{S}^4)$. Let $Q_\lambda\in \mathcal{A}_{Q_{\rm b}}(\Omega)$ be a critical point of $\mathcal{E}_\lambda$, and $\widehat Q_\lambda$ its extension to $\widehat\Omega$ given by \eqref{extprocedure}. There exist two constants $\boldsymbol{\eps}_{\rm bd}>0$ and ${\bf r}_{\rm bd}>0$ depending only on $\Omega$ and $Q_{\rm b}$ such that for every ball $B_r(x_0)\subset\widehat \Omega$ with $x_0\in\partial\Omega$ and $0<r<{\bf r}_{\rm bd}(1+\lambda)^{-1/2}$, the condition
 $$\sup_{B_\rho(x)\subset B_{r}(x_0)} \frac{1}{\rho}\int_{B_\rho(x)}|\nabla \widehat Q_\lambda|^2\,dx\leq \boldsymbol{\eps}_{\rm bd}$$
 implies $\widehat Q_\lambda\in C^{0,\alpha}(\overline B_{r/2}(x_0))$ with $[\widehat Q_\lambda]_{C^{0,\alpha}(B_{r/2}(x_0))}\leq C_{Q_{\rm b}} r^{-\alpha}$ for some constants $\alpha\in(0,1)$ and $C_{Q_{\rm b}} >0$ depending only on $\Omega$ and~$Q_{\rm b}$ (and not on $\lambda$). 
\end{corollary}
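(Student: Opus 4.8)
The plan is to deduce this from the $\varepsilon$-regularity theorem (Theorem \ref{epsregthm}) applied to the reflected map $\widehat Q_\lambda$, in exactly the same spirit as Corollary \ref{corolholderint} was obtained in the interior. By Proposition \ref{ELeqExt}, the extension $\widehat Q_\lambda\in W^{1,2}(\widehat\Omega;\mathbb{S}^4)$ solves
\[
-{\rm div}(A\nabla \widehat Q_\lambda)=|\nabla \widehat Q_\lambda|_A^2\,\widehat Q_\lambda + G\quad\text{in $\mathscr{D}^\prime(\widehat\Omega)$}\,,
\]
where $G:=G_\lambda(\cdot,\widehat Q_\lambda,\nabla \widehat Q_\lambda)$; here $A$ is the matrix field \eqref{matrixfieldextform}, which is Lipschitz and uniformly elliptic with ellipticity constants $m_\Omega,M_\Omega$ and Lipschitz norm depending only on $\Omega$, while $|G|\leq C_{Q_{\rm b}}(1+\lambda+|\nabla \widehat Q_\lambda|)$ by \eqref{growthGlambdaext}, so in particular $G\in L^2(\widehat\Omega;\mathcal{S}_0)$. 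Hence on any ball $B_r(x_0)\subset\widehat\Omega$ the hypotheses of Theorem \ref{epsregthm} are met; rescaling $B_r(x_0)$ to the unit ball via $y\mapsto x_0+ry$ --- which, since $r\leq{\bf r}_{\rm bd}\leq 1$, leaves the ellipticity constants unchanged and does not increase the Lipschitz norm of the matrix field --- Theorem \ref{epsregthm} provides constants $\boldsymbol{\eps}_A>0$, $C_A>0$ and an exponent $\alpha\in(0,1)$ depending only on $\Omega$.

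It then remains to verify the smallness hypothesis \eqref{epscondthm}, which upon undoing the rescaling amounts to
\[
\sup_{B_\rho(x)\subset B_r(x_0)}\bigg(\frac{1}{\rho}\int_{B_\rho(x)}|\nabla \widehat Q_\lambda|^2\,dx+\rho\int_{B_\rho(x)}|G|^2\,dx\bigg)\leq \boldsymbol{\eps}_A\,.
\]
The first term is at most $\boldsymbol{\eps}_{\rm bd}$ by hypothesis. For the second, using $|G|^2\leq 2C_{Q_{\rm b}}^2\big((1+\lambda)^2+|\nabla \widehat Q_\lambda|^2\big)$ together with $|B_\rho(x)|=\frac{4}{3}\pi\rho^3$ and $\rho<r<{\bf r}_{\rm bd}(1+\lambda)^{-1/2}$ (so that $(1+\lambda)\rho^2<{\bf r}_{\rm bd}^2$ and $\rho<{\bf r}_{\rm bd}$),
\[
\rho\int_{B_\rho(x)}|G|^2\,dx\leq C_{Q_{\rm b}}\big((1+\lambda)\rho^2\big)^2+C_{Q_{\rm b}}\,\rho^2\cdot\frac{1}{\rho}\int_{B_\rho(x)}|\nabla \widehat Q_\lambda|^2\,dx\leq C_{Q_{\rm b}}{\bf r}_{\rm bd}^4+C_{Q_{\rm b}}{\bf r}_{\rm bd}^2\,\boldsymbol{\eps}_{\rm bd}\,.
\]
One therefore first fixes ${\bf r}_{\rm bd}>0$ small (depending only on $\Omega$ and $Q_{\rm b}$) so that $C_{Q_{\rm b}}{\bf r}_{\rm bd}^4\leq\boldsymbol{\eps}_A/4$, and then $\boldsymbol{\eps}_{\rm bd}>0$ small so that $\boldsymbol{\eps}_{\rm bd}+C_{Q_{\rm b}}{\bf r}_{\rm bd}^2\,\boldsymbol{\eps}_{\rm bd}\leq\boldsymbol{\eps}_A/2$; the displayed condition then holds.

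Applying Theorem \ref{epsregthm} and undoing the rescaling (the Hölder seminorm transforms as $[\widehat Q_\lambda]_{C^{0,\alpha}(B_{r/2}(x_0))}=r^{-\alpha}\,[v]_{C^{0,\alpha}(B_{1/2})}$ for the rescaled map $v$) yields $\widehat Q_\lambda\in C^{0,\alpha}(\overline B_{r/2}(x_0))$ with $[\widehat Q_\lambda]_{C^{0,\alpha}(B_{r/2}(x_0))}\leq C_A r^{-\alpha}$, which is the claim with $C_{Q_{\rm b}}:=C_A$ (depending only on $\Omega$ and $Q_{\rm b}$, in particular not on $\lambda$). I do not anticipate a substantial obstacle: this is the boundary analogue of Corollary \ref{corolholderint}, and the one delicate point is to keep all constants independent of $\lambda$ --- precisely what the restriction $r<{\bf r}_{\rm bd}(1+\lambda)^{-1/2}$ achieves, by converting the $\lambda$-dependence of the source term $G$ into the harmless bound $\big((1+\lambda)\rho^2\big)^2\leq{\bf r}_{\rm bd}^4$.
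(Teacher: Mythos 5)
Your proposal matches the paper's proof: both invoke Proposition \ref{ELeqExt} to put the reflected map $\widehat Q_\lambda$ into the framework of Theorem \ref{epsregthm}, verify the smallness hypothesis \eqref{epscondthm} by splitting the source term estimate into a $(1+\lambda)^2\rho^4 \leq {\bf r}_{\rm bd}^4$ piece (which is exactly what the constraint $r<{\bf r}_{\rm bd}(1+\lambda)^{-1/2}$ controls) and a $\rho^2\boldsymbol{\eps}_{\rm bd}$ piece, then fix ${\bf r}_{\rm bd},\boldsymbol{\eps}_{\rm bd}$ small. Your extra rescaling to the unit ball is harmless but not needed, since Theorem \ref{epsregthm} is already stated for an arbitrary radius $r_0\in(0,1]$ with constants depending only on the Lipschitz norm and ellipticity bounds of $A$ (uniform over $\widehat\Omega$); the paper applies it directly with $r_0=r$.
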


\begin{proof}
By Proposition \ref{ELeqExt}, $\widehat Q_\lambda$ solves \eqref{eqepsregthm} in $B_r(x_0)$ with the matrix field $A$ given by \eqref{matrixfieldextform}, and the map $G$ given by 
$G:=G_\lambda(\cdot,\widehat Q_\lambda,\nabla \widehat Q_\lambda)$  where $G_\lambda$ satisfies the growth condition \eqref{growthGlambdaext}. In particular,
\begin{multline*}
\sup_{B_\rho(x)\subset B_{r}(x_0)} \rho \int_{B_\rho(x)}|G|^2\,dx\leq  C_{Q_{\rm b}} \sup_{B_\rho(x)\subset B_{r}(x_0)} \rho \int_{B_\rho(x)}\big((1+\lambda)^2+|\nabla\widehat Q_\lambda|^2\big)\,dx\\
 \leq  C_{Q_{\rm b}} {\bf r}_{\rm bd}^2 \big( {\bf r}_{\rm bd}^2+ \boldsymbol{\eps}_{\rm bd}\big)\,,
 \end{multline*}
 for a constant $C_{Q_{\rm b}} >0$ depending only on $\Omega$ and~$Q_{\rm b}$. Hence, we can choose $\boldsymbol{\eps}_{\rm bd}$ and ${\bf r}_{\rm bd}$ small enough in such a way that \eqref{epscondthm} holds, and the conclusion follows from Theorem~\ref{epsregthm}. 
\end{proof}

\subsection{Higher order regularity}\label{sechigherreg}
In this subsection, we improve H\"{o}lder continuity estimates from the previous one into Lipschitz estimates. Finally, we deduce analytic regularity both in the interior and at the boundary, whenever boundary data permit.
\begin{proposition}\label{propLipcont}
Let $r\in (0,1]$ and let $A:B_{r}\to\mathscr{M}^{\rm sym}_{3\times3}(\R)$ be a Lipschitz field of  symmetric matrices. Assume that $A$ is uniformly elliptic, i.e., $m I\leq A\leq M I$ for some constants $m>0$ and $M>1$. Let $G:B_{r}\times \mathbb{S}^4 \times (\mathcal{S}_0)^3\to\mathcal{S}_0$ be a Carath\'eodory map satisfying
\begin{equation}\label{hypGlipprop}
|G(x,q,\xi)|\leq C_*(\Lambda+|\xi|^2) \qquad \forall (x,q,\xi)\in B_{r}\times \mathbb{S}^4 \times (\mathcal{S}_0)^3\,,
\end{equation}
for some constants $\Lambda>0$ and $C_*>0$. 
Let $Q\in W^{1,2}(B_{r};\mathbb{S}^4)$ be such that 
$$-{\rm div}(A\nabla Q)=G(x,Q,\nabla Q) \quad \text{in $\mathscr{D}^\prime(B_{r})$}\,.$$
If $Q\in C^{0,\alpha}(\overline B_{r})$ for some $\alpha\in(0,1)$ and $[Q]_{C^{0,\alpha}(B_{r})}\leq \boldsymbol{\kappa} r^{-\alpha}$, then $Q\in W^{1,\infty}(B_{r/2})$ and 
$$r^2\|\nabla Q\|^2_{L^\infty(B_{r/2})}\leq C\bigg(\frac{1}{r}\int_{B_{r}}|\nabla Q|^2\,dx +\Lambda r^2\bigg)\,, $$
for some constant $C>0$ depending only on $\|A\|_{Lip(B_r)}$, $m$, $M$, $C_*$, $\alpha$, and ${\boldsymbol \kappa}$. 
\end{proposition}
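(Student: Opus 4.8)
The plan is to follow the classical Schoen-type harmonic replacement scheme, converting the already-established $C^{0,\alpha}$ control into a Morrey-type decay of the Dirichlet energy, and then into a Lipschitz bound. First I would reduce to the case $r=1$ by the usual scaling $Q_r(y):=Q(ry)$, noting that the $C^{0,\alpha}$-seminorm hypothesis, the ellipticity bounds, and the Lipschitz norm of $A$ all behave well under dilation with factor $r\leq 1$, and the quadratic growth constant $C_*$ is scale-invariant while $\Lambda$ scales like $r^2\Lambda$. So it suffices to prove, under $[Q]_{C^{0,\alpha}(B_1)}\leq\boldsymbol\kappa$, that $\|\nabla Q\|_{L^\infty(B_{1/2})}^2\leq C\big(\int_{B_1}|\nabla Q|^2+\Lambda\big)$.

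The heart of the argument is a decay estimate: I would show there are $\theta\in(0,1/2)$ and $C$ such that for every $B_\rho(z)\subset B_{3/4}$ with $\rho$ small,
\begin{equation*}
\int_{B_{\theta\rho}(z)}|\nabla Q|^2\,dx\leq \tfrac12\int_{B_\rho(z)}|\nabla Q|^2\,dx+C\Lambda\rho^3+C\boldsymbol\kappa\rho^{\alpha}\int_{B_\rho(z)}|\nabla Q|^2\,dx\,.
\end{equation*}
To get this, on a ball $B_\rho(z)$ let $H$ solve $-\operatorname{div}(A\nabla H)=0$ in $B_\rho(z)$ with $H=Q$ on $\partial B_\rho(z)$. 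Standard $A$-harmonic decay (the Campanato-type estimate, as in Lemma~\ref{lemsupgradAharmfct} combined with Caccioppoli) gives $\int_{B_{\theta\rho}(z)}|\nabla H|^2\leq C\theta^3\int_{B_\rho(z)}|\nabla H|^2\leq C\theta^3\int_{B_\rho(z)}|\nabla Q|^2$. For the difference $w:=Q-H\in W^{1,2}_0(B_\rho(z))$, testing the equation for $w$ against $w$ and using $|G(x,Q,\nabla Q)|\leq C_*(\Lambda+|\nabla Q|^2)$ together with the key pointwise identity $Q:\nabla Q=0$ (from $|Q|^2=1$, exactly H\'elein's trick already used in the proof of Theorem~\ref{epsregthm}) allows one to write the troublesome $|\nabla Q|^2$ term as $(Q-Q_z):(\text{div-form field})$ with a factor $[Q]_{C^{0,\alpha}}\rho^\alpha$ in front after integrating by parts; this produces the $C\boldsymbol\kappa\rho^\alpha\int|\nabla Q|^2$ and the $C\Lambda\rho^3$ terms. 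Choosing $\theta$ so that $C\theta^3\leq1/4$ and then $\rho$ small enough (absorbing $C\boldsymbol\kappa\rho^\alpha$) yields the displayed decay.

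From the decay inequality a routine iteration lemma (e.g. the one in Giaquinta's book) gives $\int_{B_\rho(z)}|\nabla Q|^2\leq C\rho^{3-2\gamma}\big(\int_{B_1}|\nabla Q|^2+\Lambda\big)$ for $\rho\leq\rho_0$, for any $\gamma<1$; taking $\gamma$ close to $1$ puts $|\nabla Q|^2$ in a Morrey space $L^{1,\tau}$ with $\tau$ close to $1$, i.e. $\nabla Q\in L^p$ for some $p>3$ by Morrey's embedding. Then I would bootstrap once more: rewriting the equation via the H\'elein divergence structure as $-\operatorname{div}(A\nabla Q_{ij})=\operatorname{div}((Q-T):B_{ij})+F_{ij}$ with $B_{ij}\in L^p$ (since $B_{ij}$ is linear in $\nabla Q$) and $F_{ij}\in L^{p/2}$, elliptic $L^p$-estimates (Lemma~\ref{linearestiepsreg} in the $p>3$ range, or directly \cite[Theorem 10.17]{Giusti}) upgrade $\nabla Q$ to $L^\infty$ on interior balls, with the stated scale-invariant bound. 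The main obstacle I anticipate is getting the decay constant in front of $\int|\nabla Q|^2$ strictly below one uniformly in $\rho$: this is precisely where the $C^{0,\alpha}$ smallness $[Q]\leq\boldsymbol\kappa\rho^\alpha$ is indispensable, since without it the quadratic nonlinearity $|\nabla Q|^2$ cannot be absorbed and one only gets borderline Morrey control; care must also be taken that all constants depend only on the listed quantities and not on $\Lambda$ or the solution itself.
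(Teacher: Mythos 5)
Your plan begins soundly: harmonic replacement, oscillation control through the $C^{0,\alpha}$ hypothesis, and testing the equation against $Q-H$ using the quadratic growth of $G$ are all ingredients the paper also exploits. However, there are several linked gaps in the second half that I don't see how to close.

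First, the decay inequality you set up, after choosing $\theta$ with $C\theta^{3}\leq 1/4$ and absorbing the $C\boldsymbol\kappa\rho^{\alpha}$ term, reads $\phi(\theta\rho)\leq \tfrac{3}{4}\phi(\rho)+C\Lambda\rho^{3}$ with $\phi(\rho)=\int_{B_\rho}|\nabla Q|^{2}$. Iterating this produces $\phi(\rho)\lesssim\rho^{\mu}$ only for $\mu\leq\log(3/4)/\log\theta$, which is bounded well away from $3$ (and is in fact below $1$ for the $\theta$ forced by $C\theta^{3}\leq 1/4$). You cannot send $\mu\to 3$ because the multiplicative constant in front of $\phi(\rho)$ is a fixed number strictly less than $1$ rather than $\theta^{3}(1+o(1))$; the Campanato-style iteration lemma you invoke needs the leading coefficient to track $\theta^{3}$ precisely, and that you can't arrange once you've replaced it by $1/2$. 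Second, even granting a strong Morrey decay, the step ``$|\nabla Q|^{2}\in L^{1,\tau}$, hence $\nabla Q\in L^{p}$ for some $p>3$ by Morrey's embedding'' is not a valid implication: Morrey spaces do not embed into Lebesgue spaces, and the relevant Morrey lemma only converts a decay of $\int_{B_\rho}|\nabla Q|^{2}$ into H\"{o}lder continuity of $Q$, which you already have. Third, the final bootstrap via the H\'{e}lein divergence form is circular in the $\operatorname{div}\bigl((Q-T):B_{ij}\bigr)$ term, since $B_{ij}$ is linear in $\nabla Q$ and one needs $\nabla Q\in L^{p}$ to estimate it in $L^{p}$; the smallness factor $|Q-T|\leq C\rho^{\alpha}$ helps absorb, but it does not by itself raise the integrability exponent toward $\infty$.

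The paper sidesteps all of this by freezing the coefficient $A$ at a point, rescaling so that $\bar A(0)=I$, and running the harmonic replacement at the level of \emph{averaged} Dirichlet integrals $\dashint_{B_{\rho}}|\nabla \bar Q|^{2}_{\bar A}$ on dyadic scales $\rho_{k}=2^{-k}r_{1}$. The monotonicity of $\rho\mapsto\dashint_{B_{\rho}}|\nabla H|^{2}$ for harmonic $H$, combined with Dirichlet minimality and the oscillation bound from H\"{o}lder continuity, yields a one-step inequality of the form $y_{k+1}\leq(1+C2^{-\alpha k/2})y_{k}+C\sqrt{\Lambda}\,2^{-\alpha k/2}$ in the square roots of the averaged integrals. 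The crucial point, which your plan misses, is that this multiplicative constant is allowed to exceed $1$: since the infinite product $\prod_{k}(1+C2^{-\alpha k/2})$ converges, the sequence is uniformly bounded, and sending $k\to\infty$ at a Lebesgue point of $|\nabla Q|^{2}$ gives the pointwise Lipschitz bound directly, with no Morrey-space or $L^{p}$ bootstrap required. This is exactly how the endpoint exponent $3$ (i.e.\ true Lipschitz, not just $C^{0,1-\varepsilon}$) is achieved, and it is the conceptual step your proposal would need to recover.
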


\begin{proof}
 Let us fix an arbitrary point $x_0\in B_{r/2}$, and set $A_0:=A(x_0)$, $r_1:=r/(2\sqrt{M})<1$. We change variables by setting for $x\in B_{r_1}$ (so that $A_0^{1/2}x+x_0\in B_{r/2}(x_0)$), 
$$\bar Q(x):=Q\bigg(A_0^{1/2}x+x_0\bigg)\,. $$
Then $\bar Q\in W^{1,2}(B_{r_1};\mathbb{S}^4)\cap C^{0,\alpha}(\overline B_{r_1})$ satisfies $[\bar Q]_{C^{0,\alpha}(B_{r_1})}\leq M^{\alpha/2}\boldsymbol{\kappa} r_1^{-\alpha}$, and it solves 
\begin{equation}\label{eqgenchangvar}
-{\rm div}\big(\bar A\nabla \bar Q\big)= \bar G(x,\bar Q,\nabla \bar Q)  \quad \text{in $\mathscr{D}^\prime(B_{r_1})$}\,,
\end{equation}
with 
$$\bar A(x):=A_0^{-1/2}A\big(A_0^{1/2}x+x_0\big)A_0^{-1/2}$$
and 
$$\bar G(x,q,\xi) := G\big(A_0^{1/2}x+x_0,q,A_0^{-1/2}\xi\big)\,. $$
We observe that $\bar A$ is Lipschitz continuous in $B_{r_1}$,  and 
$$\frac{m}{M}\, I\leq \bar A \leq \frac{M}{m}I\quad\text{and}\quad \bar A(0)=I\,. $$
Concerning $\bar G$, it satisfies
\begin{equation}\label{controlbarG}
 |\bar G(x,q,\xi)|\leq \widetilde C_*(\Lambda+|\xi|^2) \qquad \forall (x,q,\xi)\in B_{r_1}\times \mathbb{S}^4 \times (\mathcal{S}_0)^3\,,
 \end{equation}
for some constant $\widetilde C_*>0$ depending only on $C_*$ and $A$. 
\vskip3pt

We now fix an arbitrary radius $\rho\in(0,r_1]$, and we consider $H\in W^{1,2}(B_\rho;\mathcal{S}_0)\cap C^0(\overline B_\rho)$ the (unique) solution of 
$$\begin{cases}
-\Delta H=0 & \text{in $B_\rho$}\,,\\
H=\bar Q & \text{in $\partial B_\rho$}\,.
\end{cases}$$
Representing $H$ through the Poisson integral formula, one easily obtains  
$$\mathop{{\rm osc}}\limits_{B_\rho} H=  \mathop{{\rm osc}}\limits_{\partial B_\rho}\bar Q\leq Cr_1^{-\alpha}\rho^\alpha\,,$$
for some constant $C>0$ depending only $A$ and ${\boldsymbol \kappa}$ (and ${\rm osc}$ is meant for oscillation).  Since $H-\bar Q=0$ on $\partial B_\rho$, we deduce that  
\begin{equation}\label{estunifharmreplac}
\sup_{B_\rho}|\bar Q-H|\leq  \mathop{{\rm osc}}\limits_{B_\rho}  \bar Q +  \mathop{{\rm osc}}\limits_{B_\rho} H \leq Cr_1^{-\alpha}\rho^\alpha\,,
\end{equation}
with  $C>0$ depending only $A$ and ${\boldsymbol \kappa}$. 

On the other hand, concerning the harmonic function $H$, we have $H\in C^\infty(B_\rho)$ and also $\Delta |\nabla H|^2=2 |D^2 H|^2 \geq 0$. Hence the function $\rho \to \rho^{-2} \int_{|x|=\rho} |\nabla H|^2 d\mathcal{H}^2$ is nondecreasing, and in turn $\rho \to \rho^{-3} \int_{B_\rho} |\nabla H|^2 dx$ is nondecreasing as well. As a consequence,  since $H$ is equal to $\bar Q$ on $\partial B_\rho$, it satisfies 
\begin{equation}\label{monotharmfunct}
\dashint_{B_{\rho^\prime}} |\nabla H|^2\,dx\leq \dashint_{B_{\rho}} |\nabla H|^2\,dx\leq \dashint_{B_{\rho}} |\nabla \bar Q|^2\,dx \qquad\forall \rho^\prime\in(0,\rho)\,.
\end{equation}

We are now ready to estimate 
\begin{equation}\label{minklipest}
\left(\dashint_{B_{\rho/2}}|\nabla \bar Q|^2_{\bar A}\,dx\right)^{1/2}\leq \left(\dashint_{B_{\rho/2}}|\nabla H|^2_{\bar A}\,dx\right)^{1/2}+C\left(\dashint_{B_{\rho}}|\nabla(\bar Q-H)|^2_{\bar A}\,dx\right)^{1/2}=:I^{1/2}+CII^{1/2}\,, 
\end{equation}
and we shall treat separately the two terms $I$ and $II$. Since $A$ is Lipschitz and $\bar A(0)=I$, we have $|\bar A - I|\leq C_A\rho$ in $B_\rho$, and we infer from \eqref{monotharmfunct} that
\[
I\leq (1+C_A\rho)\dashint_{B_{\rho/2}}|\nabla H|^2\,dx\leq (1+C_A\rho)\dashint_{B_{\rho}}|\nabla H|^2\,dx\leq (1+C_Ar_1^{-\alpha}\rho^{\alpha})\dashint_{B_{\rho}}|\nabla \bar Q|^2\,dx\,, 
\]
where we have used that $0<\rho \leq r_1\leq 1$. Using again this property together with the ellipticity bounds on $A$ and $|\bar A - I|\leq C_A\rho$ in $B_\rho$ we conclude, 
\begin{equation}\label{estiIIlip1}
\sqrt{I}\leq (1+C_Ar_1^{-\alpha/2}\rho^{\alpha/2}) \left(\dashint_{B_{\rho}}|\nabla \bar Q|_{\bar A}^2\,dx\, \right)^{1/2} \, . 
\end{equation} 
Next we write
\begin{equation}\label{decoupleIIlip}
II= \dashint_{B_{\rho}}\langle \nabla \bar Q, \nabla(\bar Q-H)\rangle_{\bar A}\,dx + \dashint_{B_{\rho}}\langle \nabla  H,\nabla (H-\bar Q)\rangle_{\bar A}\,dx\,.
\end{equation}
Since $\bar Q-H\in W^{1,2}_0(B_\rho)\cap L^\infty$, we can apply \eqref{eqgenchangvar} and then deduce from \eqref{controlbarG} and \eqref{estunifharmreplac} that 
\begin{equation}\label{useeqlipest}
\dashint_{B_{\rho}}\langle \nabla \bar Q,
\nabla (\bar Q-H)\rangle_{\bar A}\,dx =  \dashint_{B_{\rho}} \bar G(x,\bar Q,\nabla\bar Q):(\bar Q-H)\,dx\leq Cr_1^{-\alpha}\rho^{\alpha}\left(\dashint_{B_\rho} |\nabla \bar Q|^2\,dx+\Lambda\right)\,.
\end{equation}
Since $H$ is harmonic and $\bar Q-H=0$ on $\partial B_\rho$, we have $\int_{B_\rho}\nabla H : \nabla(\bar Q-H)\,dx=0$, and consequently  
\begin{multline}\label{useeqlipest2}
\dashint_{B_{\rho}}\langle \nabla  H,\nabla (H-\bar Q)\rangle_{\bar A}\,dx\leq \dashint_{B_{\rho}} |\bar A-I |\,|\nabla  H| |\nabla(H-\bar Q)|\,dx\\
\leq C_A \rho\left(  \dashint_{B_{\rho}} |\nabla  H|^2\,dx+ \dashint_{B_{\rho}} |\nabla\bar Q|^2\,dx\right)\leq Cr_1^{-\alpha}\rho^{\alpha} \dashint_{B_{\rho}} |\nabla\bar Q|^2\,dx\,,
\end{multline}
where we have used again $|\bar A -I|\leq C_A\rho$ in $B_\rho$, \eqref{monotharmfunct}, and $0<\rho\leq r_1\leq 1$. Combining now \eqref{decoupleIIlip}, \eqref{useeqlipest}, and \eqref{useeqlipest2}  
leads to 
\[
II\leq C_Ar_1^{-\alpha}\rho^\alpha \left(\dashint_{B_{\rho}} |\nabla\bar Q|^2\,dx+\Lambda\right)\,.
\]
As $0<\rho\leq r_1\leq 1$, in view of the ellipticity bounds of $A$ and $|\bar A -I|\leq C_A\rho$ in $B_\rho$ we conclude
\begin{equation}\label{estiIIlip2}
\sqrt{II}\leq C_Ar_1^{-\alpha/2}\rho^{\alpha/2} \left(\dashint_{B_{\rho}} |\nabla\bar Q|_{\bar A}^2\,dx+\Lambda\right)^{1/2}\,.
\end{equation}

Combining \eqref{minklipest} with \eqref{estiIIlip1} and \eqref{estiIIlip2}, we obtain
\begin{equation}\label{decrlipestim}
\left(\dashint_{B_{\rho/2}}|\nabla \bar Q|^2_{\bar A}\,dx \right)^{1/2}\leq \big(1+C_Ar_1^{-\alpha/2}\rho^{\alpha/2}\big) \left(\dashint_{B_{\rho}}|\nabla \bar Q|^2_{\bar A}\,dx \right)^{1/2}+ C_A\sqrt{\Lambda} r_1^{-\alpha/2}\rho^{\alpha/2}\,,
\end{equation}
for a constant $C_A>0$ depending only on $A$, $C_*$, and ${\boldsymbol \kappa}$ and for all $0<\rho\leq r_1\leq 1$.
\vskip3pt

In view of the arbitrariness of $\rho$, we can apply \eqref{decrlipestim} with $\rho_k:=2^{-k}r_1$ and $k\in\mathbb{N}$. It leads to 
$$\left(\dashint_{B_{\rho_{k+1}}}|\nabla \bar Q|^2_{\bar A}\,dx \right)^{1/2}\leq \big(1+C_A2^{-\alpha k/2}\big)\left(\dashint_{B_{\rho_k}}|\nabla \bar Q|^2_{\bar A}\,dx\right)^{1/2}+ C_A\sqrt{\Lambda} 2^{-\alpha k/2}\quad\forall k\in\mathbb{N}\,.$$
Now if $\{\theta_k\} \subset (1,\infty)$, $\theta=\Pi_{k=0}^\infty \theta_k<\infty$, $\{\sigma_k\} \subset (0,\infty)$, $\sigma=\Sigma_{k=0}^\infty \sigma_k<\infty$, and $\{y_k\} \subset [0,\infty)$ satisfy $y_{k+1}\leq \theta_k y_k+\sigma_k$ for each $k\geq 0$, then a simple induction argument gives $y_{k+1}\leq \theta (y_0+\sigma)$ for each $k\geq 0$. As a consequence, if we let
\[ y_k= \left(\dashint_{B_{\rho_k}}|\nabla \bar Q|^2_{\bar A}\,dx\right)^{1/2} \, , \quad \theta_k=\big(1+C_A2^{-\alpha k/2}\big) \, , \quad \sigma_k= C_A\sqrt{\Lambda} 2^{-\alpha k/2} \, , \]
then we obtain
\begin{equation}\label{leblipest}
\left(\dashint_{B_{\rho_{k}}}|\nabla \bar Q|^2_{\bar A}\,dx\right)^{1/2}\leq C\left[ \left(\dashint_{B_{r_1}}|\nabla \bar Q|^2_{\bar A}\,dx\right)^{1/2}+ \sqrt{\Lambda} \right] \quad\forall k\in\mathbb{N}\,,
\end{equation} 
for some constant $C>0$ depending only on $A$, $C_*$,  ${\boldsymbol \kappa}$, and $\alpha$.

Finally, if $x_0$ was chosen to be a Lebesgue point of $|\nabla Q|^2$ (which holds for a.e. $x_0\in B_{r_0/2}$ by the Lebesgue differentiation theorem), then $0$ is a Lebesgue point for $|\nabla \bar Q|^2_{\bar A}$, and letting $k\to\infty$ in \eqref{leblipest} yields (recall that $\bar A(0)=I$)
$$|\nabla \bar Q(0)|^2\leq C\left(\dashint_{B_{r_1}}|\nabla \bar Q|^2_{\bar A}\,dx+\Lambda\right)\,. $$
Changing variables again and using the uniform ellipticity of $A$, we deduce from the definition of $r_1$ that
$$|\nabla Q(x_0)|^2\leq  C'\left(\frac{1}{r_1^3}\int_{B_{r/2}(x_0)}|\nabla Q|^2\,dx+\Lambda\right)\leq C\left(\frac{1}{r^3}\int_{B_{r}}|\nabla Q|^2\,dx+\Lambda\right)\,,$$
 for some constants $C>0$ and $\Lambda>0$ depending only on $A$, $C_*$,  ${\boldsymbol \kappa}$, and $\alpha$ and the conclusion follows. 
\end{proof}
Once Lipschitz continuity is obtained, one can derive higher regularity from linear elliptic theory.
\begin{corollary}
\label{corolhigherreg}
Let $Q_\lambda\in W^{1,2}(B_{r}(x_0);\mathbb{S}^4)$ be such that 
$$-\Delta Q_\lambda=|\nabla Q_\lambda|^2Q_\lambda+\lambda\Big(Q_\lambda^2-\frac{1}{3}I-{\rm tr}(Q^3_\lambda)Q_\lambda\Big) \quad\text{in $\mathscr{D}^\prime(B_{r}(x_0))$}\,.$$
If $0<r<{\bf r}_{\rm in}(1+\lambda)^{-{1/2}}$ and 
 $$\sup_{B_\rho(x)\subset B_{r}(x_0)} \frac{1}{\rho}\int_{B_\rho(x)}|\nabla  Q_\lambda|^2\,dx\leq \boldsymbol{\eps}_{\rm in}\,,$$
 where  ${\bf r}_{\rm in}$ and $\boldsymbol{\eps}_{\rm in}$ are given by Corollary \ref{corolholderint}, then $Q_\lambda\in C^\omega(B_{r/4}(x_0))$. 
In  addition, $Q_\lambda$ satisfies  for each $k\in\mathbb{N}$, 
\begin{equation}\label{controlatcenterallderiv}
\|\nabla^k Q_\lambda\|_{L^\infty(B_{r/8}(x_0))}\leq C_k r^{-k}\,,
 \end{equation}
for a  constant $C_k>0$ depending only on $k$.
\end{corollary}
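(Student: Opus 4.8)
The plan is to first establish Lipschitz continuity of $Q_\lambda$ near $x_0$ via Proposition~\ref{propLipcont}, and then to bootstrap to analyticity through a standard linear elliptic regularity ladder, keeping all constants scale-invariant. First I would invoke Corollary~\ref{corolholderint}: under the stated smallness of the scaled energy at scale $r<\mathbf{r}_{\rm in}(1+\lambda)^{-1/2}$, we obtain $Q_\lambda\in C^{0,\alpha}(\overline B_{r/2}(x_0))$ with $[Q_\lambda]_{C^{0,\alpha}(B_{r/2}(x_0))}\le C r^{-\alpha}$ for a universal $\alpha$ and $C$. Next, I would apply Proposition~\ref{propLipcont} on the ball $B_{r/2}(x_0)$ with $A=I$ (so $m=M=1$, trivially Lipschitz), and with the nonlinearity $G(x,q,\xi):=|\xi|^2 q+\lambda(q^2-\frac13 I-\mathrm{tr}(q^3)q)$, which satisfies $|G(x,q,\xi)|\le C_*(\Lambda+|\xi|^2)$ for $q\in\mathbb{S}^4$ with $\Lambda:=\lambda$ (bounded by a constant times $r^{-2}$ since $r^2\le \mathbf{r}_{\rm in}^2(1+\lambda)^{-1}$, hence $\Lambda r^2\le\mathbf{r}_{\rm in}^2$). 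This yields $Q_\lambda\in W^{1,\infty}(B_{r/4}(x_0))$ together with the estimate $r^2\|\nabla Q_\lambda\|_{L^\infty(B_{r/4}(x_0))}^2\le C\big(\frac1r\int_{B_{r/2}(x_0)}|\nabla Q_\lambda|^2\,dx+\Lambda r^2\big)\le C'$, which already gives \eqref{controlatcenterallderiv} for $k=1$ on $B_{r/4}(x_0)$ (a fortiori on $B_{r/8}(x_0)$).

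Once $Q_\lambda$ is Lipschitz on $B_{r/4}(x_0)$, I would run a Schauder bootstrap. Rescale to the unit ball by setting $\tilde Q(y):=Q_\lambda(x_0+(r/4)y)$, which solves $-\Delta\tilde Q=(r/4)^2 G((4/r)\,\cdot,\tilde Q,(4/r)\nabla\tilde Q)=|\nabla\tilde Q|^2\tilde Q+\lambda(r/4)^2(\tilde Q^2-\frac13 I-\mathrm{tr}(\tilde Q^3)\tilde Q)$ on $B_1$, with $\|\nabla\tilde Q\|_{L^\infty(B_1)}\le C$ independent of $r$ (and $\lambda(r/4)^2\le C$). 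The right-hand side is then in $C^{0,\alpha}_{\rm loc}$ for every $\alpha\in(0,1)$ (since $\tilde Q\in C^{0,1}$ makes each term $C^{0,\alpha}_{\rm loc}$), so interior Schauder estimates (\cite[Theorem~6.2]{GilbTrud} type, applied componentwise) give $\tilde Q\in C^{2,\alpha}_{\rm loc}(B_1)$. Differentiating the equation, the right-hand side becomes $C^{0,\alpha}_{\rm loc}$ again (products and compositions of $C^{1,\alpha}$ functions), so $\tilde Q\in C^{3,\alpha}_{\rm loc}$, and inductively $\tilde Q\in C^{k,\alpha}_{\rm loc}(B_1)$ for every $k$, with quantitative interior estimates $\|\nabla^k\tilde Q\|_{L^\infty(B_{1/2})}\le C_k$. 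Scaling back gives precisely $\|\nabla^k Q_\lambda\|_{L^\infty(B_{r/8}(x_0))}\le C_k r^{-k}$. For the analyticity claim $Q_\lambda\in C^\omega(B_{r/4}(x_0))$, since the nonlinearity $(x,q,\xi)\mapsto |\xi|^2 q+\lambda(q^2-\frac13 I-\mathrm{tr}(q^3)q)$ is a real-analytic (indeed polynomial) function of $(q,\xi)$ and is elliptic around the now-known smooth solution, I would invoke the classical analyticity theorem for solutions of nonlinear elliptic systems with analytic coefficients (Morrey's theorem, see \cite[Theorem~6.7.6]{Mos} or the references in \cite{Morrey}), applied on any ball compactly contained in $B_{r/4}(x_0)$; this upgrades $C^\infty$ to $C^\omega$ in the interior.

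The main obstacle I expect is purely bookkeeping rather than conceptual: verifying that the hypotheses of Proposition~\ref{propLipcont} are met with the correct constants — specifically that $\Lambda=\lambda$ enters only through the scale-invariant combination $\Lambda r^2\lesssim 1$, so that the final Lipschitz bound and all higher derivative bounds have the clean homogeneity $r^{-k}$ with constants independent of $\lambda$ (this is why the radius restriction $r<\mathbf{r}_{\rm in}(1+\lambda)^{-1/2}$ is imposed). The only other point requiring a little care is ensuring that at each step of the Schauder ladder the differentiated equation retains diagonal principal part and the source term has the claimed Hölder regularity; since the principal part is just $-\Delta$ applied componentwise and the coupling is through lower-order analytic terms, this is routine. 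Interior analyticity then follows from the classical theory since everything in sight — the operator, the nonlinearity, and (trivially) the coefficients — is real-analytic.
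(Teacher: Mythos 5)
Your proposal is correct and follows essentially the same route as the paper's proof: Corollary~\ref{corolholderint} for $C^{0,\alpha}$, Proposition~\ref{propLipcont} with $A=I$ and $\Lambda\sim\lambda$ for the Lipschitz bound (noting that the radius restriction $r<\mathbf{r}_{\rm in}(1+\lambda)^{-1/2}$ is exactly what makes $\Lambda r^2\lesssim1$), and then rescaling to unit scale followed by elliptic bootstrap and Morrey's analyticity theorem. The only cosmetic difference is in the bootstrap for $k\ge2$: you rescale once around the fixed center $x_0$ by the factor $r/4$ and run a Schauder ladder on $B_1$, whereas the paper rescales around each $y\in B_{r/8}(x_0)$ by $r$, differentiates the rescaled equation, and reads off $L^\infty$ gradient bounds for the derivatives from the resulting linear elliptic systems with bounded coefficients — two standard and interchangeable implementations of the same idea, both yielding the scale-invariant bounds $\|\nabla^k Q_\lambda\|_{L^\infty(B_{r/8}(x_0))}\le C_k r^{-k}$ with $C_k$ independent of $\lambda$.
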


\begin{proof} {\it Step 1.} 
By Corollary \ref{corolholderint}, $Q_\lambda\in C^{0,\alpha}(B_{r/2}(x_0))$ with $[Q_\lambda]_{C^{0,\alpha}(B_{r/2}(x_0))}\leq Cr^{-\alpha}$ for some $\alpha\in(0,1)$ and $C>0$ independent of $\lambda$. Applying Proposition \ref{propLipcont} with $A=I$ and 
$$G(x,Q,\nabla Q):=|\nabla Q|^2Q+\lambda\big( Q^2 -\frac{1}{3} I - {\rm tr}( Q^3)  Q\big)$$ 
(so that $G$ satisfies \eqref{hypGlipprop} with $\Lambda:=\lambda+1$) yields $Q_\lambda\in W^{1,\infty}(B_{r/4}(x_0))$ and 
\begin{multline*}
 r^2\|\nabla Q_\lambda\|^2_{L^\infty(B_{r/4}(x_0))} \leq C\bigg(\frac{1}{r}\int_{B_{r}(x_0)}|\nabla Q_\lambda|^2\,dx +(1+\lambda)r^2\bigg)\\
 \leq C\bigg(\frac{1}{r}\int_{B_{r}(x_0)}|\nabla Q_\lambda|^2\,dx +1\bigg)\leq C\,,
 \end{multline*}
for some universal constant $C>0$. As a consequence, we have $\Delta Q_\lambda \in L^\infty(B_{r/4}(x_0))$. By linear elliptic regularity theory (see e.g. \cite[Theorem 3.13]{HanLin}), it follows that $Q_\lambda\in C^{1,\alpha}_{\rm loc}(B_{r/4}(x_0))$ for every $\alpha\in(0,1)$. A classical bootstrap argument based on Schauder estimates then shows that  $Q_\lambda\in C^\infty(B_{r/4}(x_0))$ (see e.g. \cite[Chapters 6 \& 8]{GilbTrud}), and standard results in \cite[Chapter 6]{Morrey}  give analytic regularity. 
\vskip5pt

\noindent{\it Step 2.} In this second step, our aim is to prove the remaining estimate \eqref{controlatcenterallderiv} for $k\geq 2$. Let us fix a point $y\in B_{r/8}(x_0)$, and 
rescale variables setting $\widetilde Q(x):=Q_\lambda(y+r x)$. Then, 
\begin{equation}\label{rescaleqestimderiv}
-\Delta \widetilde Q=|\nabla \widetilde Q|^2\widetilde Q+\widetilde \lambda\Big(\widetilde Q^2-\frac{1}{3}I-{\rm tr}(\widetilde Q^3)\widetilde Q\Big) \quad\text{in $B_{1/8}$}\,,
\end{equation}
with $\widetilde\lambda:=r^2\lambda\in(0,{\bf r}_{\rm in})$. Let us fix $j\in\{1,2,3\}$, and set $v:=\partial_j\widetilde Q$. Differentiating \eqref{rescaleqestimderiv} with respect to the $j$-th variable, we obtain that $v$ satisfies a linear system of the form  
$$-\Delta v+b\cdot\nabla v + c\cdot v=d\quad\text{in $B_{1/8}$}\,,$$
where the coefficients $b$, $c$, and $d$ satisfy 
$$\|b\|_{L^\infty(B_{1/8})}+\|c\|_{L^\infty(B_{1/8})}+\|d\|_{L^\infty(B_{1/8})}\leq C$$  
since $|\widetilde Q|=1$ and $\|\nabla \widetilde Q\|_{L^\infty(B_{1/8})}\leq C$. By elliptic regularity (see e.g. \cite[Chapter 8, Section~8.11]{GilbTrud}), $v$ satisfies the estimate 
$$\sup_{B_{1/16}}|\nabla v|\leq C\big(\|v\|_{L^\infty(B_{1/8})}+ \|d\|_{L^\infty(B_{1/8})}\big)\leq C\,. $$
From the arbitrariness of $j$, we conclude that $\|\nabla^2 \widetilde Q\|_{L^\infty(B_{1/16})}\leq C$. Now we can proceed by induction on $k$ following the same strategy (differentiating $(k-1)$-times equation \eqref{rescaleqestimderiv}) to prove that $\|\nabla^k \widetilde Q\|_{L^\infty(B_{2^{-(k+2)}})}\leq C_k$ for a constant $C_k$ depending only on $k$. Scaling variables back, we obtain that $|\nabla^kQ_\lambda(y)|\leq C_kr^{-k}$, 
and \eqref{controlatcenterallderiv}  follows from the arbitrariness of $y$.
\end{proof}

A similar argument then yields higher regularity near the boundary when the boundary data are sufficiently regular.
\begin{corollary}\label{corolhigherregdbry}
Assume that $\partial\Omega$ is of class $C^3$ and $Q_{\rm b}\in C^{1,1}(\partial\Omega;\mathbb{S}^4)$. 
Let $Q_\lambda\in \mathcal{A}_{Q_{\rm b}}(\Omega)$ be a critical point of $\mathcal{E}_\lambda$,  $\widehat Q_\lambda$ its extension to $\widehat\Omega$ given by \eqref{extprocedure}, and $B_r(x_0)\subset\widehat \Omega$ with $x_0\in\partial\Omega$. If $0<r<{\bf r}_{\rm bd}(1+\lambda)^{-1/2}$ and 
 $$\sup_{B_\rho(x)\subset B_{r}(x_0)} \frac{1}{\rho}\int_{B_\rho(x)}|\nabla \widehat Q_\lambda|^2\,dx\leq \boldsymbol{\eps}_{\rm bd}\,,$$
 where  ${\bf r}_{\rm bd}$ and $\boldsymbol{\eps}_{\rm bd}$ are given by Corollary \ref{corolholderbdry},
 then $\|\nabla \widehat Q_\lambda\|_{L^\infty(B_{r/4}(x_0))}\leq C_{Q_{\rm b}}r^{-1}$ for some constant $C_{Q_{\rm b}}>0$ depending only on $\Omega$ and $Q_{\rm b}$. As a consequence $Q_\lambda\in C^\omega(B_{r/4}(x_0)\cap\Omega)\cap C_{\rm loc}^{1,\alpha}(B_{r/4}(x_0)\cap\overline\Omega)$ for every $\alpha\in(0,1)$.
 
In addition, 
\begin{enumerate}
\item[(i)] if $\partial \Omega$ is of class $C^{k,\beta}$ and $Q_{\rm b}\in C^{k,\beta}(\partial\Omega;\mathbb{S}^4)$ with $k\geq 2$, then $Q_\lambda\in C_{\rm loc}^{k,\beta}(B_{r/4}(x_0)\cap\overline\Omega)$; 
\item[(ii)] if $\partial \Omega$ is real-analytic and $Q_{\rm b}\in C^{\omega}(\partial\Omega;\mathbb{S}^4)$, then $Q_\lambda\in C^{\omega}(B_{r/4}(x_0)\cap\overline\Omega)$. 
\end{enumerate}
\end{corollary}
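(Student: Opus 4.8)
The plan is to mirror the interior argument of Corollary \ref{corolhigherreg}, but carried out on the reflected equation \eqref{ELeqAfterReflect} in the enlarged domain $\widehat\Omega$, so that all boundary estimates become interior estimates for a uniformly elliptic system with Lipschitz principal part. First I would observe that by Corollary \ref{corolholderbdry} the reflected map $\widehat Q_\lambda$ is $C^{0,\alpha}(\overline B_{r/2}(x_0))$ with $[\widehat Q_\lambda]_{C^{0,\alpha}(B_{r/2}(x_0))}\le C_{Q_{\rm b}}r^{-\alpha}$, and that by Proposition \ref{ELeqExt} it solves $-{\rm div}(A\nabla \widehat Q_\lambda)=|\nabla \widehat Q_\lambda|_A^2\widehat Q_\lambda+G_\lambda(x,\widehat Q_\lambda,\nabla \widehat Q_\lambda)$ with $A$ uniformly elliptic and Lipschitz and $|G_\lambda|\le C_{Q_{\rm b}}(1+\lambda+|\nabla \widehat Q_\lambda|)$; in particular the full right-hand side has the quadratic-gradient growth $|G(x,q,\xi)|\le C_*(\Lambda+|\xi|^2)$ required by Proposition \ref{propLipcont}, with $\Lambda\sim 1+\lambda$. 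Applying Proposition \ref{propLipcont} on a ball of radius comparable to $r$ (and using $r<{\bf r}_{\rm bd}(1+\lambda)^{-1/2}$ so that $\Lambda r^2\le C$ and $\frac1r\int|\nabla \widehat Q_\lambda|^2\le C_{Q_{\rm b}}\boldsymbol{\eps}_{\rm bd}$ by the energy monotonicity packaged in Lemma \ref{lemmacontrolscalenergext}) gives $\|\nabla \widehat Q_\lambda\|_{L^\infty(B_{r/4}(x_0))}\le C_{Q_{\rm b}}r^{-1}$. Restricting back to $\Omega$ then yields $Q_\lambda\in W^{1,\infty}$ near $x_0$, hence $\Delta Q_\lambda\in L^\infty$ locally, hence $Q_\lambda\in C^{1,\alpha}_{\rm loc}(B_{r/4}(x_0)\cap\overline\Omega)$ for all $\alpha\in(0,1)$ by linear Schauder theory with $C^{1,1}\subset C^{0,1}$ boundary data.

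For the higher regularity statements (i) and (ii), the point is that once Lipschitz continuity is in hand the semilinear equation \eqref{distribELeq} becomes, locally, a system whose nonlinearity is a smooth (indeed analytic) function of $(Q_\lambda,\nabla Q_\lambda)$ with $L^\infty$ coefficients, so one runs the standard bootstrap. I would first flatten the boundary by a $C^{k,\beta}$ (resp. $C^\omega$) diffeomorphism, under which \eqref{distribELeq} transforms into an equation of the form $-{\rm div}(\widetilde A\nabla \widetilde Q)=\widetilde G(x,\widetilde Q,\nabla \widetilde Q)$ on a half-ball with $\widetilde A\in C^{k-1,\beta}$ (resp. $C^\omega$), Dirichlet data $\widetilde Q_{\rm b}\in C^{k,\beta}$ (resp. $C^\omega$), and $\widetilde G$ a polynomial in $\nabla\widetilde Q$ with $C^{k-1,\beta}$ (resp. $C^\omega$) coefficients. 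Knowing $\widetilde Q\in C^{1,\alpha}$, the right-hand side is in $C^{0,\alpha}$ up to the boundary, so boundary Schauder estimates (e.g. \cite[Chapters 6 \& 8]{GilbTrud}) give $\widetilde Q\in C^{2,\alpha}$; iterating raises the regularity one derivative at a time up to $C^{k,\beta}$ when $\partial\Omega$ and $Q_{\rm b}$ are $C^{k,\beta}$. For (ii), when everything is real-analytic one replaces the Schauder bootstrap at the final stage by the analytic regularity theory for elliptic systems up to an analytic boundary with analytic data, as in \cite[Chapter 6]{Morrey}, to conclude $Q_\lambda\in C^\omega(B_{r/4}(x_0)\cap\overline\Omega)$; the interior analyticity on $B_{r/4}(x_0)\cap\Omega$ is already contained in Corollary \ref{corolhigherreg}.

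There is one subtlety I should flag regarding the reflection: the reflected equation is genuinely useful only for deducing $C^{1,\alpha}$ up to the boundary, because the extension map $\widehat Q_\lambda$ is in general no better than $C^{1,1}$-compatible across $\partial\Omega$ — the matrix field $A$ in \eqref{matrixfieldextform} is merely Lipschitz (its construction uses one derivative of $\boldsymbol{\sigma}_\Omega$, hence $C^3$ regularity of $\partial\Omega$ buys $C^2$ for $\boldsymbol\pi_\Omega$ and only $C^{0,1}$ for the Jacobian factor), and $\boldsymbol{\Sigma}$ is only $C^{1,1}$. So for the higher-order statements (i)–(ii) I would \emph{not} use the reflected equation but rather work directly with $Q_\lambda$ on $\Omega$ via the boundary-flattening bootstrap described above, using the reflection argument only to get the initial Lipschitz/$C^{1,\alpha}$ bound. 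The main obstacle, then, is not any single deep estimate but the careful bookkeeping at the interface: one must check that $\nabla_{\rm tan}$-data of $Q_{\rm b}$ and the normal derivative behaviour ($\partial_\nu\boldsymbol\Sigma\equiv0$ on $\partial\Omega$, exploited already in Lemma \ref{eqextalongtangfield}) are exactly what is needed for the reflected $\widehat Q_\lambda$ to solve a bona fide divergence-form system across $\partial\Omega$ with the claimed quadratic growth, and that the constants in Proposition \ref{propLipcont} depend only on $\Omega$, $Q_{\rm b}$ and not on $\lambda$ once $r<{\bf r}_{\rm bd}(1+\lambda)^{-1/2}$. Everything else is the routine Schauder/analytic bootstrap, which I would state rather than grind through.
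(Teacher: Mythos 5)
Your proposal matches the paper's proof: invoke Corollary \ref{corolholderbdry} for the scale-invariant H\"older seminorm bound, apply Proposition \ref{propLipcont} to the reflected equation of Proposition \ref{ELeqExt} with $\Lambda\sim 1+\lambda$ (so that $r<{\bf r}_{\rm bd}(1+\lambda)^{-1/2}$ makes $\Lambda r^2$ uniformly bounded) to get the Lipschitz estimate and then $C^{1,\alpha}$, and finally bootstrap via standard boundary Schauder/analytic regularity applied directly to \eqref{distribELeq} on $\Omega$ for (i)--(ii), exactly as the paper does. Your one small redundancy is citing Lemma \ref{lemmacontrolscalenergext} for the scaled-energy smallness, which is already the stated hypothesis of the corollary; otherwise the argument and the flag about $A$ and $\boldsymbol\Sigma$ being only Lipschitz/$C^{1,1}$ (forcing the switch from the reflected equation to a direct boundary bootstrap for higher derivatives) are precisely the paper's route.
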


\begin{proof}
By Corollary \ref{corolholderbdry}, $\widehat Q_\lambda\in C^{0,\alpha}(B_{r/2}(x_0))$  with $[\widehat Q_\lambda]_{C^{0,\alpha}(B_{r/2}(x_0))}\leq C_{Q_{\rm b}} r^{-\alpha}$ for some exponent $\alpha\in(0,1)$ and a constant $C_{Q_{\rm b}}>0$ independent of $\lambda$. By Proposition \ref{ELeqExt}, we can apply Proposition \ref{propLipcont}  with the matrix field $A$ given by \eqref{matrixfieldextform}, and $G(x,Q,\nabla Q)$ given by the right-hand side of \eqref{ELeqAfterReflect}  (once again, $G$ satisfies \eqref{hypGlipprop} with $\Lambda:=\lambda+1$). It yields $\widehat Q_\lambda\in W^{1,\infty}(B_{r/4}(x_0))$ and $ r^2\|\nabla \widehat Q_\lambda\|^2_{L^\infty(B_{r/4}(x_0))} \leq C_{Q_{\rm b}}$ (as in the proof of Corollary \ref{corolhigherreg}, Step 1). From the equation \eqref{ELeqAfterReflect} satisfied by $\widehat Q_\lambda$, we deduce that ${\rm div}(A\nabla\widehat Q_\lambda)\in L^\infty(B_{r/4}(x_0))$. By elliptic regularity (see e.g. \cite[Theorem 3.13]{HanLin}), it implies that 
$\widehat Q_\lambda\in C^{1,\alpha}_{\rm loc}(B_{r/4}(x_0))$ for every $\alpha\in(0,1)$, and consequently $Q_\lambda\in C_{\rm loc}^{1,\alpha}(B_{r/4}(x_0)\cap\overline\Omega)$ for every $\alpha\in(0,1)$.  
Since $|\nabla Q_\lambda|\in L^\infty(B_{r/4}(x_0)\cap\Omega)$, we can argue as in the proof of Corollary \ref{corolhigherreg}, Step 1, to show that $Q_\lambda\in C^{\omega}(B_{r/4}(x_0)\cap\Omega)$. 

Finally, under the assumption that $\partial\Omega$ is of class $C^{k,\beta}$ and $Q_{\rm b}\in C^{k,\beta}(\partial\Omega;\mathbb{S}^4)$ with $k\geq 2$, the fact that  $Q_\lambda\in C^{k,\beta}_{\rm loc}(B_{r/4}(x_0)\cap\overline\Omega)$ now follows from equation \eqref{distribELeq} and standard elliptic regularity at the boundary, see e.g. \cite[Chapter 6]{GilbTrud}. The corresponding conclusion within the analytic class follows again from the results in e.g. \cite[Chapter 6]{Morrey}. 
\end{proof}

\subsection{Bochner inequality and uniform regularity estimates}
In this subsection, we refine the previous analysis and clarify the dependence of the regularity estimates for the smooth solutions $Q_\lambda$ of \eqref{MasterEq} on the parameter $\lambda$. The results of this subsection are not used in the present paper but they will be a fundamental tool in the subsequent paper \cite{DMP2} of our series where we will study (axially symmetric) minimizers in the asymptotic limit $\lambda \to \infty$.

\begin{proposition}\label{epsregunifesti}
Let $Q_\lambda\in W^{1,2}(B_{r};\mathbb{S}^4)$ be a smooth solution of \eqref{MasterEq}  in $B_{r}$. There exists a universal constant $\boldsymbol{\eps}_{\rm reg}>0$ such that the condition 
$$\frac{1}{r}\mathcal{E}_\lambda(Q_\lambda,B_{r})\leq  \boldsymbol{\eps}_{\rm reg}$$
implies 
$$\sup_{B_{r/4}}\bigg(\frac{1}{2}|\nabla Q_\lambda|^2+\lambda W(Q_\lambda)\bigg)\leq  C r^{-2} \,,$$
for a further universal constant $C>0$. 
\end{proposition}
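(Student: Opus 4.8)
The plan is to derive an $\varepsilon$-regularity statement phrased in terms of the scaled \emph{energy} $\frac1r\mathcal{E}_\lambda(Q_\lambda,B_r)$ (rather than the scaled Dirichlet energy of Corollary~\ref{corolhigherreg}) and then upgrade the resulting $W^{1,\infty}$-bound on $Q_\lambda$ to the full $L^\infty$ bound on the energy density $e_\lambda(Q_\lambda):=\tfrac12|\nabla Q_\lambda|^2+\lambda W(Q_\lambda)$ by a Bochner-type differential inequality combined with a mean-value inequality. First I would exploit the interior monotonicity formula \eqref{IntMonForm} (valid here since a smooth solution of \eqref{MasterEq} is automatically approximable, or more directly since for smooth solutions one may use inner variations): it gives, for $B_\rho(x)\subset B_{r/2}$,
$$\frac1\rho\mathcal{E}_\lambda(Q_\lambda,B_\rho(x))\le \frac2r\mathcal{E}_\lambda(Q_\lambda,B_r)\le 2\boldsymbol{\eps}_{\rm reg},$$
so that in particular $\sup_{B_\rho(x)\subset B_{r/2}}\frac1\rho\int_{B_\rho(x)}|\nabla Q_\lambda|^2\,dx\le 2\boldsymbol{\eps}_{\rm reg}$. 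Choosing $\boldsymbol{\eps}_{\rm reg}$ small enough that $2\boldsymbol{\eps}_{\rm reg}\le\boldsymbol{\eps}_{\rm in}$ and, simultaneously, so small that the smallness of $\frac1r\mathcal{E}_\lambda$ forces $r<{\bf r}_{\rm in}(1+\lambda)^{-1/2}$ (this is where one uses $\lambda W\ge0$ together with the fact that, since $W\sim\tfrac1{6\sqrt6}(1-\widetilde\beta)$ is bounded below away from its zero set only, one must instead argue: either $\lambda r^2$ is already small — in which case Corollary~\ref{corolhigherreg} applies on a ball of radius comparable to $r$ — or else $\lambda r^2$ is bounded below, but then $\frac1r\mathcal{E}_\lambda(Q_\lambda,B_r)\ge \lambda r^2\cdot\frac1{r^3}\int_{B_r}W$ would be large unless $Q_\lambda$ is very close to $\mathbb{R}P^2$ throughout $B_r$, which one handles by a separate compactness/Łojasiewicz argument), I would then invoke Corollary~\ref{corolhigherreg} to conclude $Q_\lambda\in C^\omega$ with $\|\nabla Q_\lambda\|_{L^\infty(B_{r/8})}\le C r^{-1}$ and, crucially, $\|\nabla^2 Q_\lambda\|_{L^\infty(B_{r/16})}\le C r^{-2}$, \emph{uniformly in $\lambda$}.

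Next I would establish a Bochner inequality for the energy density. Writing $e:=\tfrac12|\nabla Q_\lambda|^2+\lambda W(Q_\lambda)$ and using the equation \eqref{MasterEq}, a standard computation (differentiating the equation, pairing with $\nabla Q_\lambda$, and using $|Q_\lambda|=1$, $|\nabla Q_\lambda|^2 = -Q_\lambda:\Delta Q_\lambda - \lambda\,Q_\lambda:\nabla_{\rm tan}W$, together with boundedness of the second derivatives of $W$ restricted to $\mathbb{S}^4$) yields an inequality of the form
$$-\Delta e\le C_0\,e^2 + C_0\lambda\,e + C_0\lambda^2$$
on $B_{r/16}$, for a universal constant $C_0$; here the curvature of $\mathbb{S}^4$ produces the $|\nabla Q_\lambda|^4$ term, controlled by $e^2$, and the potential terms are controlled by Lipschitz/second-derivative bounds on $W$ along $\mathbb{S}^4$. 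On the scale $r$, after the change of variables $\widetilde Q(x)=Q_\lambda(y+rx)$ used in the proof of Corollary~\ref{corolhigherreg} (with $\widetilde\lambda=r^2\lambda\in(0,{\bf r}_{\rm in})$ bounded), the rescaled density $\widetilde e=r^2 e$ satisfies $-\Delta\widetilde e\le C_0\widetilde e^2+C_0\widetilde\lambda\widetilde e+C_0\widetilde\lambda^2$ on $B_{1/16}$, and we already know $\|\widetilde e\|_{L^1(B_{1/16})}$ and $\|\widetilde e\|_{L^\infty(B_{1/16})}$ are bounded by universal constants from the previous paragraph. Thus $-\Delta\widetilde e\le b(x)$ with $\|b\|_{L^\infty}\le C$ universal, so by the mean value inequality for subsolutions (e.g. \cite[Theorem~8.17]{GilbTrud} or the elementary estimate $\sup_{B_{1/32}}\widetilde e\le C(\|\widetilde e\|_{L^1(B_{1/16})}+\|b\|_{L^\infty(B_{1/16})})$) one obtains $\sup_{B_{1/32}}\widetilde e\le C$ universal. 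Scaling back gives $\sup_{B_{r/32}(y)}e\le C r^{-2}$ for every $y$, hence $\sup_{B_{r/4}}e\le Cr^{-2}$ after a trivial covering argument (adjusting the fraction in the statement, which is harmless), which is the claim.

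The main obstacle is the $\lambda$-dependence: the $\varepsilon$-regularity of Corollary~\ref{corolholderint} and Corollary~\ref{corolhigherreg} is only available on balls of radius $r<{\bf r}_{\rm in}(1+\lambda)^{-1/2}$, whereas here $r$ is arbitrary. The resolution — and the delicate point of the proof — is that smallness of $\frac1r\mathcal{E}_\lambda(Q_\lambda,B_r)$ \emph{together with} the monotonicity formula forces one of two favourable alternatives: either $\lambda r^2\lesssim 1$, in which case one is directly in the admissible range and the argument above runs verbatim; or $\lambda r^2\gg1$, in which case $\frac1r\mathcal{E}_\lambda(Q_\lambda,B_r)\ge 2\lambda\int_0^r t^{-2}\!\int_{B_t}W\,dx\,dt$ being small forces the scaled potential energy on \emph{all} interior subballs to be tiny, so $Q_\lambda$ lands in a small $L^2$- (hence, by interpolation with the gradient bound, small $C^0$-) neighbourhood of the smooth compact manifold $\mathbb{R}P^2$; a Łojasiewicz-type estimate $|\nabla_{\rm tan}W(Q)|^2\ge c\,W(Q)$ near $\mathbb{R}P^2$ then allows one to absorb the potential into a slightly modified Bochner inequality and to run the mean-value argument with $\lambda W(Q_\lambda)$ treated as a bounded perturbation. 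I would organize this as a short preliminary lemma isolating the Bochner inequality and the dichotomy, then assemble the pieces as above. The remaining computations (the exact form of the Bochner inequality, the constants in the mean-value inequality, the covering) are routine and I would not belabour them.
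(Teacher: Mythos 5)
Your plan has a genuine gap at its core, and it also diverges structurally from the paper's proof. The paper's argument is a direct Schoen-type scaling argument (as in \cite{Sch,ChenStruwe}): pick a point $x_\lambda$ and a radius $\sigma_\lambda$ nearly maximizing the weighted quantity $(\tfrac r2-\sigma)^2\sup_{B_\sigma}{\rm e}_\lambda$, rescale by $\sqrt{\overline{\bf e}_\lambda}$ so that the rescaled density equals $1$ at the origin and is $\leq 8$ on a ball, then apply Moser's Harnack inequality and conclude by contradiction that the scaled radius is $\leq 1$. It does \emph{not} pass through Corollaries~\ref{corolholderint} or~\ref{corolhigherreg}, so the restriction $r<{\bf r}_{\rm in}(1+\lambda)^{-1/2}$ that you correctly identify as a problem simply never arises.

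The decisive ingredient that makes this possible is Lemma~\ref{Bochineq}, which establishes a \emph{scale-invariant} Bochner inequality $-\Delta{\rm e}_\lambda\leq C\,{\rm e}_\lambda^2$ with a \emph{universal} constant $C$ and no separate $\lambda$-dependent lower-order terms. This is where your proposed Bochner inequality $-\Delta e\leq C_0e^2+C_0\lambda e+C_0\lambda^2$ falls short: in the Schoen-type rescaling $\widetilde Q(x)=Q_\lambda(x_\lambda+x/\sqrt{\overline{\bf e}_\lambda})$ the effective parameter becomes $\widetilde\lambda=\lambda/\overline{\bf e}_\lambda$, which is \emph{not} under control (it can be arbitrarily large), so the extra $\lambda$-terms in your inequality do not rescale into bounded quantities and Moser's Harnack inequality cannot be applied with a universal constant. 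The reason the paper can avoid these terms is the sharp spectral estimate of Lemma~\ref{lemmaspectrboch}: for $Q\in\mathbb{S}^4$ and $T:Q=0$ one has $2\,{\rm tr}(TQT)\leq(\tfrac1{\sqrt 6}+{\bf c}_\star\sqrt{W(Q)})|T|^2$. The extra $\sqrt{W(Q)}$ term is exactly what is needed so that, in the Bochner computation, the cross terms ${\bf c}_\star\lambda\sqrt{W(Q_\lambda)}|\nabla Q_\lambda|^2$ can be absorbed via Young's inequality into $|\nabla Q_\lambda|^4+\lambda^2W(Q_\lambda)^2\lesssim{\rm e}_\lambda^2$, and the dangerous term $+\lambda^2\sqrt 6\,W$ generated by Young is precisely cancelled by the negative term $-\lambda^2\sqrt 6\,W(Q_\lambda)$ coming from $-\Delta(\lambda W(Q_\lambda))$. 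Without this spectral refinement, the naive Bochner inequality retains a $C\lambda|\nabla Q_\lambda|^2$-type term that is linear in $\lambda$ and only quadratic in the gradient, and there is no way to bound it by ${\rm e}_\lambda^2$ uniformly in $\lambda$.

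Your dichotomy/\L ojasiewicz workaround for the regime $\lambda r^2\gg1$ is not carried out and would require substantial additional work to turn into a proof (one would need to show uniform $C^0$-closeness to $\mathbb{R}P^2$ from smallness of the scaled potential energy, and one would need gradient bounds to do the interpolation, which is circular since those bounds are what you are trying to prove). Even if it could be completed, it is a significantly more complicated route than the paper's: once the global Bochner inequality is in hand, the rest of the proof is a half page. I would suggest revisiting the computation of $-\Delta{\rm e}_\lambda$ with an eye to isolating the term $\sum_k 2\,{\rm tr}((\partial_kQ_\lambda)Q_\lambda\partial_kQ_\lambda)-\tfrac1{\sqrt6}|\partial_kQ_\lambda|^2$ (this appears once in the $|\nabla Q_\lambda|^2$ part and once in the $W(Q_\lambda)$ part) and seeking a bound of the type in Lemma~\ref{lemmaspectrboch}, rather than trying to combine $\varepsilon$-regularity with a weaker Bochner inequality.
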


The result presented in Proposition \ref{epsregunifesti} is reminiscent from Ginzburg-Landau theories, where the main ingredient is a Bochner type inequality on the energy density in the spirit of the classical inequality for harmonic maps (see e.g. \cite{ChenStruwe,Sch}). In general, Bochner inequality is available as soon as the potential  is not degenerate and the solution of the GL-equation under consideration takes values in a sufficiently small neighborhood of the well. In the Landau-de Gennes context, this is precisely the path adopted in \cite[Lemma 6]{MaZa}. In our context, half way between Landau-de Gennes and harmonic maps, we are able to prove a new {\sl global Bochner inequality} with no restrictions, which is the main ingredient in the proof of Proposition \ref{epsregunifesti}.

\begin{lemma}[Bochner inequality]\label{Bochineq}
Let $Q_\lambda$ be a smooth solution of \eqref{MasterEq}  in $B_{r}$. Setting ${\rm e}_\lambda:=\frac{1}{2}|\nabla Q_\lambda|^2+\lambda W(Q_\lambda)$, we have 
$$-\Delta {\rm e}_\lambda\leq Ce^2_\lambda \quad\text{in $B_{r}$}$$
for some universal constant $C>0$.
\end{lemma}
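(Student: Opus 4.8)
The plan is to compute $\Delta\mathrm{e}_\lambda$ pointwise, using the equation \eqref{MasterEq} to control all the terms that appear, and to exploit the fact that $Q_\lambda$ is $\mathbb{S}^4$-valued together with the polynomial nature of $W$ so that everything is ultimately bounded by $\mathrm{e}_\lambda^2$. First I would expand $-\Delta\mathrm{e}_\lambda = -\tfrac12\Delta|\nabla Q_\lambda|^2 - \lambda\Delta W(Q_\lambda)$. The Dirichlet part is handled by the standard Bochner identity $\tfrac12\Delta|\nabla Q_\lambda|^2 = |\nabla^2 Q_\lambda|^2 + \nabla Q_\lambda:\nabla(\Delta Q_\lambda)$, and then $\Delta Q_\lambda$ is substituted from \eqref{MasterEq}, i.e. $\Delta Q_\lambda = -|\nabla Q_\lambda|^2 Q_\lambda + \lambda\nabla_{\rm tan}W(Q_\lambda)$. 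Differentiating this and pairing with $\nabla Q_\lambda$ produces: a term $-\nabla Q_\lambda:\nabla(|\nabla Q_\lambda|^2 Q_\lambda)$, which after using $Q_\lambda:\partial_k Q_\lambda = 0$ (from $|Q_\lambda|^2\equiv1$) simplifies to $-|\nabla Q_\lambda|^4 + (\text{terms involving }|\nabla Q_\lambda|^2 Q_\lambda:\partial_k^2 Q_\lambda)$, the latter being controlled by $|\nabla^2 Q_\lambda|\,|\nabla Q_\lambda|^2$ and absorbed into $|\nabla^2 Q_\lambda|^2$ by Young's inequality; and a term $\lambda\,\nabla Q_\lambda:\nabla(\nabla_{\rm tan}W(Q_\lambda))$ coming from the source.

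For the potential part, since $W$ restricted near $\mathbb{S}^4$ is a smooth (indeed polynomial, by \eqref{Newpotential2}) function and $Q_\lambda$ stays on the compact manifold $\mathbb{S}^4$, one has the chain-rule expansion $\Delta W(Q_\lambda) = \nabla_{\mathcal{S}_0}W(Q_\lambda):\Delta Q_\lambda + \nabla^2_{\mathcal{S}_0}W(Q_\lambda)[\nabla Q_\lambda,\nabla Q_\lambda]$, where all derivatives of $W$ are bounded on $\mathbb{S}^4$ by a universal constant. Substituting \eqref{MasterEq} for $\Delta Q_\lambda$ here yields $\lambda\nabla W(Q_\lambda):(-|\nabla Q_\lambda|^2 Q_\lambda) + \lambda^2\nabla W(Q_\lambda):\nabla_{\rm tan}W(Q_\lambda) + \lambda\,\nabla^2 W(Q_\lambda)[\nabla Q_\lambda,\nabla Q_\lambda]$. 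The key structural observation — as in \cite[Lemma 6]{MaZa} — is that the dangerous positive term $\lambda^2|\nabla_{\rm tan}W(Q_\lambda)|^2$, which would only be bounded by $\lambda^2$ and not by $\mathrm{e}_\lambda^2$, in fact appears with the favourable sign: differentiating the Dirichlet-Bochner source term and the potential term, the $\lambda^2$-contribution is $-\lambda^2|\nabla_{\rm tan}W(Q_\lambda)|^2 \le 0$ (this is where one uses that $\nabla_{\rm tan}W$ is the gradient of $W$ along $\mathbb{S}^4$ and \eqref{MasterEq} aligns the source with the tension field), so it can be discarded. All remaining terms are then of the form $\lambda W(Q_\lambda)\cdot|\nabla Q_\lambda|^2$, $\lambda|\nabla W|\cdot|\nabla Q_\lambda|^2$, or $|\nabla Q_\lambda|^4$, each of which is $\le C\,\mathrm{e}_\lambda^2$ once one notes — crucially — that $|\nabla_{\rm tan}W(Q_\lambda)|^2 \le C\,W(Q_\lambda)$ near $\mathbb{S}^4$ (a \L{}ojasiewicz-type/Taylor bound valid because $W\ge0$ vanishes with its tangential gradient precisely on $\mathbb{R}P^2$), so that $\lambda|\nabla W(Q_\lambda)|\,|\nabla Q_\lambda|^2 \le C(\lambda W(Q_\lambda))^{1/2}\lambda^{1/2}|\nabla Q_\lambda|^2 \le C\,\mathrm{e}_\lambda^2$.

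Collecting everything, one arrives at $-\Delta\mathrm{e}_\lambda \le -|\nabla^2 Q_\lambda|^2 + C\mathrm{e}_\lambda^2 + (\text{cross terms}) \le C\mathrm{e}_\lambda^2$, after using Young's inequality to absorb the cross terms $\lesssim |\nabla^2 Q_\lambda|\,\mathrm{e}_\lambda$ into $\tfrac12|\nabla^2 Q_\lambda|^2 + C\mathrm{e}_\lambda^2$. The main obstacle — and the heart of the argument — is the careful bookkeeping of the sign of the $\lambda^2$-order term: one must verify that it genuinely cancels (equivalently, that the worst term is $-\lambda^2|\nabla_{\rm tan}W(Q_\lambda)|^2$ and not $+\lambda^2|\nabla_{\rm tan}W(Q_\lambda)|^2$), which requires using the Euler--Lagrange equation in the precise form \eqref{MasterEq}, not merely its structure. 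A secondary technical point is establishing the inequality $|\nabla_{\rm tan}W(Q)|^2 \le C\,W(Q)$ uniformly on $\mathbb{S}^4$; this follows from $W\ge 0$, the explicit formula \eqref{redpotential} giving $W = \tfrac{1}{3\sqrt6}(1-\widetilde\beta)$ on $\mathbb{S}^4$, and smoothness, since $1-\widetilde\beta$ controls its own gradient squared by a standard argument on the compact manifold $\mathbb{S}^4$.
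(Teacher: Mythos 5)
Your high-level plan (Bochner identity for $\tfrac12|\nabla Q_\lambda|^2$, chain rule for $\Delta W(Q_\lambda)$, substitute \eqref{MasterEq}, collect terms) is the same as the paper's, and you correctly spot that the $\lambda^2$-order contribution has the favourable sign $-\lambda^2|\nabla_{\rm tan}W(Q_\lambda)|^2\leq 0$. But the estimate breaks down at precisely the point you flag as the secondary technical issue, and the fix requires \emph{not} discarding that negative term.

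Concretely, after dropping $-\lvert\nabla^2Q_\lambda\rvert^2$, the surviving terms are $|\nabla Q_\lambda|^4$, $\lambda W|\nabla Q_\lambda|^2$, the negative term $-\lambda^2|\nabla_{\rm tan}W(Q_\lambda)|^2$, and a Hessian contribution $-2\lambda\sum_k\nabla^2W(Q_\lambda)[\partial_kQ_\lambda,\partial_kQ_\lambda]$. On $\mathbb S^4$ with $T\perp Q$ one has $\nabla^2W(Q)[T,T]=\tfrac1{\sqrt6}|T|^2-2\,\mathrm{tr}(TQT)$, and the nontrivial ingredient is the spectral estimate of Lemma~\ref{lemmaspectrboch}: $2\,\mathrm{tr}(TQT)\leq\big(\tfrac1{\sqrt6}+\mathbf c_\star\sqrt{W(Q)}\,\big)|T|^2$, equivalently $-\nabla^2W(Q)[T,T]\leq\mathbf c_\star\sqrt{W(Q)}\,|T|^2$. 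Your placeholder ``$\lambda|\nabla W|\cdot|\nabla Q_\lambda|^2$'' corresponds to this term, i.e. to $\lambda\sqrt{W(Q_\lambda)}\,|\nabla Q_\lambda|^2$. Now your proposed bound
\[
\lambda\sqrt{W(Q_\lambda)}\,|\nabla Q_\lambda|^2\;\leq\;C(\lambda W)^{1/2}\lambda^{1/2}|\nabla Q_\lambda|^2\;\leq\;C\,\mathrm e_\lambda^2
\]
fails: the middle quantity is $\lesssim\lambda^{1/2}\mathrm e_\lambda^{3/2}$, and the leftover $\lambda^{1/2}$ cannot be absorbed (take $Q_\lambda$ nearly $\mathbb R P^2$-valued: $\mathrm e_\lambda$ can be arbitrarily small while $\lambda$ is large). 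The only way to control this term is exactly the one you threw away. On $\mathbb S^4$ one has the identity $|\nabla_{\rm tan}W(Q)|^2=\sqrt6\,W(Q)-9W^2(Q)$, so $-\lambda^2|\nabla_{\rm tan}W(Q_\lambda)|^2=-\sqrt6\,\lambda^2W(Q_\lambda)+9\lambda^2W^2(Q_\lambda)$, and the piece $-\sqrt6\,\lambda^2W$ must be \emph{kept}: by Young's inequality,
\[
2\mathbf c_\star\lambda\sqrt{W}\,|\nabla Q_\lambda|^2\;\leq\;\sqrt6\,\lambda^2 W+\frac{\mathbf c_\star^2}{\sqrt6}|\nabla Q_\lambda|^4,
\]
and the $\sqrt6\,\lambda^2W$ cancels against $-\sqrt6\,\lambda^2W$. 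What remains is a combination of $|\nabla Q_\lambda|^4$, $\lambda W|\nabla Q_\lambda|^2$ and $\lambda^2W^2$, each genuinely $\leq C\,\mathrm e_\lambda^2$. In short: the \L{}ojasiewicz-type bound $|\nabla_{\rm tan}W|^2\leq CW$ is correct (it is in fact an identity) but is not the hard part; the hard part is the $\sqrt W$-improved spectral bound on the Hessian (Lemma~\ref{lemmaspectrboch}), and the $-\lambda^2\sqrt6\,W$ term is essential to the bookkeeping and cannot be discarded.

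Two minor slips for completeness: (i) $-\nabla Q_\lambda:\nabla(|\nabla Q_\lambda|^2Q_\lambda)=-|\nabla Q_\lambda|^4$ exactly, using $Q_\lambda:\partial_kQ_\lambda=0$; no terms involving $Q_\lambda:\partial^2_kQ_\lambda$ arise, and you do not need Young's inequality against $|\nabla^2Q_\lambda|^2$ — that negative term is simply dropped. (ii) The bound $\lambda^2W\leq C\mathrm e_\lambda^2$ is also false ($\mathrm e_\lambda^2\geq\lambda^2W^2$, not $\lambda^2W$), so you also cannot afford to throw away $9\lambda^2W^2$ and replace it by $\lambda^2W$.
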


To prove  Lemma \ref{Bochineq}, we first need to establish the following elementary but quite tricky (new) estimate. 

\begin{lemma}\label{lemmaspectrboch}
There exists a universal constant ${\bf c}_\star>0$ such that for every $Q\in\mathbb{S}^4$ and $T\in \mathcal{S}_0$ satisfying $T:Q=0$,  
$$ 2\,{\rm tr}(TQT) \leq  \bigg(\frac{1}{\sqrt{6}}+ {\bf c}_\star \sqrt{W(Q)}\bigg)|T|^2\,.$$
\end{lemma}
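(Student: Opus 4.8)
The inequality to be proved is a purely algebraic (pointwise) statement about the quadratic form $T\mapsto \mathrm{tr}(TQT)$ on the subspace $\{T\in\mathcal S_0: T:Q=0\}=T_Q\mathbb S^4$, and it says that the largest eigenvalue of this form on that subspace is at most $\frac{1}{\sqrt 6}$ plus an error controlled by $\sqrt{W(Q)}$, which vanishes on $\mathbb{R}P^2$. The strategy is to diagonalize. First I would use the $O(3)$-invariance of everything in sight: since $W$, the constraint $T:Q=0$, and $\mathrm{tr}(TQT)$ are all invariant under $Q\mapsto RQR^t$, $T\mapsto RTR^t$, I may assume $Q=\mathrm{diag}(q_1,q_2,q_3)$ with $q_1+q_2+q_3=0$ and $q_1^2+q_2^2+q_3^2=1$. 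Then $\mathrm{tr}(TQT)=\sum_{i,j} q_i T_{ij}^2$, and I want to bound this subject to $\sum_i q_i T_{ii}=0$ and $|T|^2=\sum_{ij}T_{ij}^2=1$.

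**Key steps.** Split $T$ into its diagonal part $D$ (with entries $d_i$, $\sum_i d_i=0$ since $T\in\mathcal S_0$, and additionally $\sum_i q_i d_i=0$) and off-diagonal part, whose contribution to $\mathrm{tr}(TQT)$ is $\sum_{i<j}(q_i+q_j)T_{ij}^2 = \sum_{i<j}(-q_k)(2T_{ij}^2)$ where $k$ is the complementary index. For the off-diagonal part the relevant bound is $\max_i(-q_i)\le \frac{1}{\sqrt6}+O(\sqrt{W})$, which I'd get by relating $\min_i q_i$ to $\widetilde\beta(Q)=\sqrt6\,\mathrm{tr}(Q^3)$ and using $W(Q)=\frac{1}{3\sqrt6}(1-\widetilde\beta(Q))$ on $\mathbb S^4$ from \eqref{redpotential}: when $\widetilde\beta(Q)=1$ the spectrum is $\{-\tfrac1{\sqrt6},-\tfrac1{\sqrt6},\tfrac{2}{\sqrt6}\}$ so $\min q_i=-\tfrac1{\sqrt6}$, and a Lipschitz/perturbation estimate gives $\min_i q_i \ge -\tfrac1{\sqrt6} - C\sqrt{1-\widetilde\beta(Q)}=-\tfrac1{\sqrt6}-C'\sqrt{W(Q)}$. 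For the diagonal part I need $\sum_i q_i d_i^2 \le \big(\tfrac1{\sqrt6}+O(\sqrt W)\big)\sum_i d_i^2$ subject to the two linear constraints $\sum d_i=0$, $\sum q_i d_i=0$; this is a two-constraint eigenvalue problem in $\mathbb R^3$, hence effectively one-dimensional, and I'd solve it explicitly (the admissible $d$ is, up to scale, $d_i = q_j - q_k$ cyclically, or one computes the single relevant Rayleigh quotient directly in terms of the symmetric functions of the $q_i$), again checking the bound is saturated exactly on $\mathbb{R}P^2$ and degrades like $\sqrt{W}$ off it.

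**Main obstacle.** The genuinely delicate point — flagged by the authors as ``tricky'' — is getting the \emph{square-root} rate in $W$ uniformly, and in particular handling the diagonal contribution: near $\mathbb{R}P^2$ two of the $q_i$ coincide (say $q_1=q_2=-\tfrac1{\sqrt6}$), the constraint subspace for $d$ degenerates, and one must check that the Rayleigh quotient $\sum q_i d_i^2/\sum d_i^2$ on the one remaining admissible direction stays $\le \tfrac1{\sqrt6}+C\sqrt{W}$ rather than only $\le \tfrac1{\sqrt6}+CW$ or worse. I expect this to come down to a careful Taylor expansion of the relevant explicit rational function of $(q_1,q_2,q_3)$ near the positive-uniaxial point, parametrizing a neighborhood of $\mathbb{R}P^2$ in $\mathbb S^4$ (e.g. by $\widetilde\beta$ and an angular variable) and verifying the worst-case direction yields exactly the $\sqrt{\cdot}$ behavior; a compactness argument on the compact set $\mathbb S^4$ can then upgrade the local estimates to the global constant ${\bf c}_\star$, since away from $\mathbb{R}P^2$ the bound $2\,\mathrm{tr}(TQT)\le \tfrac2{\sqrt6}|T|^2$ (itself needing a short argument, or the crude $\le 2|Q|_{op}|T|^2$ with $|Q|_{op}\le 1$) already has room to absorb a constant times $\sqrt{W(Q)}\ge \delta>0$.
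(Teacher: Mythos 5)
Your approach is correct and shares the paper's essential ingredients (diagonalize $Q$, order the eigenvalues, Taylor-expand near $\mathbb{R}P^2$ so that $\sqrt{W(Q)}\gtrsim \mu_2-\mu_3$, crude bound plus a compactness constant away from $\mathbb{R}P^2$), but you organize the $T$-side of the quadratic form differently. The paper expands $\mathrm{tr}(\widetilde T D\widetilde T)$ in the six independent entries of $\widetilde T$, discards the negative-coefficient terms using the sign pattern of the eigenvalues, and then controls the single remaining dangerous diagonal coefficient $2\mu_1$ by showing that the two linear constraints force $x_1^2\le (\mu_2-\mu_3)^2 x_3^2$. Your split of $T$ into diagonal and off-diagonal parts is cleaner at one point: the constrained diagonal subspace $\{d:\sum_i d_i=\sum_i q_i d_i=0\}$ is one-dimensional and spanned by $d_i=q_j-q_k$ (cyclic), and working with symmetric functions gives $\sum_i q_i d_i^2=-3\,\mathrm{tr}(Q^3)$ and $\sum_i d_i^2=3|Q|^2=3$, so the constrained Rayleigh quotient is exactly $-\mathrm{tr}(Q^3)$; hence $2\sum_i q_id_i^2/\sum_i d_i^2 = -2\,\mathrm{tr}(Q^3) = -\tfrac{2}{\sqrt{6}}+6W(Q)$ on $\mathbb{S}^4$, which sits comfortably below $\tfrac{1}{\sqrt{6}}+\mathbf{c}_\star\sqrt{W(Q)}$ for any suitable $\mathbf{c}_\star$ since $W$ is bounded and $6W\le C\sqrt{W}$. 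In other words, the ``main obstacle'' you flag — a possible degeneration of the diagonal part near $\mathbb{R}P^2$ — does not in fact arise; the binding term is the off-diagonal one, where the bound $-\min_i q_i\le\tfrac{1}{\sqrt{6}}+C\sqrt{W(Q)}$ is exactly the eigenvalue perturbation estimate you already outline and the one the paper proves quantitatively.
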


\begin{proof}
Let $\mu_3\leq \mu_2\leq \mu_1$ be the eigenvalues of $Q$. Using that $\mu_1+\mu_2+\mu_3={\rm tr}(Q)=0$ and $\mu^2_1+\mu^2_2+\mu^2_3=|Q|^2=1$, we deduce that 
$0<\mu_1\leq \frac{2}{\sqrt{6}}$ and $-\frac{2}{\sqrt{6}}\leq \mu_3<0$. We now consider  a matrix $P\in SO(3)$ such that $Q=PDP^\trans$ with $D={\rm diag}(\mu_1,\mu_2,\mu_3)\in\mathbb{S}^4$. Setting $\widetilde T:=P^\trans TP$, we observe that $\widetilde T:D=T:Q=0$, $|\widetilde T|=|T|$, ${\rm tr}(\widetilde TD\widetilde T)={\rm tr}(TQT)$, and $W(Q)=W(D)$.  Hence, it suffices to show that 
\begin{equation}\label{spectrineqmatr}
2\,{\rm tr}(\widetilde TD\widetilde T) \leq  \bigg(\frac{1}{\sqrt{6}}+ {\bf c}_\star \sqrt{W(D)}\bigg)|\widetilde T|^2\,,
\end{equation}
for some universal constant ${\bf c}_\star>0$, i.e., that the claim holds when $Q=D$ is a diagonal matrix. To this purpose, let us first recall that 
$$W(D)=0 \quad\Longleftrightarrow\quad \mu_2=\mu_3\quad \Longleftrightarrow\quad \mu_1=\frac{2}{\sqrt{6}}\;\text{ and }\;\mu_2=\mu_3=\frac{-1}{\sqrt{6}}\,.$$
Let us fix a small constant $0<{\bf t}_0<1$ to be choosen later, and set 
$$\boldsymbol{\ell}_0:= \min\bigg\{ W\big({\rm diag}(\nu_1,\nu_2,\nu_3)\big) : \nu_1\geq \nu_2\geq \nu_3+{\bf t}_0\,,\;\nu_1+\nu_2+\nu_3=0\,,\nu_1^2+\nu_2^2+\nu_3^2=1 \bigg\}>0\,.$$
If $\mu_2-\mu_3\geq {\bf t}_0$, then \eqref{spectrineqmatr} clearly holds for ${\bf c}_\star\geq 2\boldsymbol{\ell}_0^{-1/2}$ since $|D|=1$. Hence it remains to prove the inequality in the case $\mu_2-\mu_3< {\bf t}_0$. To this purpose let us set 
$t:=\mu_2-\mu_3\in[0, {\bf t}_0)$. Choosing ${\bf t}_0$ small enough ensures that $\mu_2<0$, and direct computations yield 
$$\mu_1=\frac{2}{\sqrt{6}}(1-t^2/2)^{1/2}\,,\quad\mu_2=\frac{t}{2}-\frac{1}{\sqrt{6}}(1-t^2/2)^{1/2} \,,\quad \mu_3=-\frac{t}{2}-\frac{1}{\sqrt{6}}(1-t^2/2)^{1/2}\,,$$
and, as $t\to 0$,  
$$W(D)=\frac{1-(1-t^2/2)^{3/2}}{3\sqrt{6}} +\frac{t^2}{2\sqrt{6}} (1-t^2/2)^{1/2}=\frac{3}{4\sqrt{6}}t^2+o(t^2)\,.$$
In particular, if ${\bf t}_0$ is sufficiently small, then $t\in[0, {\bf t}_0)$ yields
$$\sqrt{W(D)}\geq \frac{t}{4}\,. $$
Let us now write 
$$\widetilde T= \begin{pmatrix} x_1 & x_4 & x_6 \\ x_4 & x_2 & x_5 \\ x_6 & x_5 & x_3 \end{pmatrix}\,,$$
so that $|\widetilde T|^2= x_1^2+x_2^2+x_3^3+2x_4^2+2x_5^2+2x_6^2$, 
and 
\[2\,{\rm tr}(\widetilde TD\widetilde T) =2\mu_1x_1^2+2\mu_2x_2^2+2\mu_3x_3^2+ 2(\mu_1+\mu_2)x_4^2+2(\mu_2+\mu_3)x_5^2+ 2 (\mu_1+\mu_3) x_6^2 \, .\]
Since  $\mu_1+\mu_2+\mu_3=0$, $\mu_3\leq \mu_2<0$ and $\mu_1 \leq \frac{2}{\sqrt{6}}$, from the previous formulas for the eigenvalues we easily get $-2\mu_2 \leq \frac{2}{\sqrt{6}}$,  $-2\mu_3 \leq t+ \frac{2}{\sqrt{6}}$ and
\begin{align}
\nonumber 2\,{\rm tr}(\widetilde TD\widetilde T) &\leq  2\mu_1x_1^2+2(\mu_1+\mu_2)x_4^2+ 2(\mu_1+\mu_3) x_6^2 =  2\mu_1 x_1^2-2\mu_3 x_4^2 
-2\mu_2 \,x_6^2\\
\label{esttrTDT}&\leq \frac{4}{\sqrt{6}}\,x_1^2+2\big(\frac{1}{\sqrt{6}}+2\sqrt{W(D)}\big)x_4^2+\frac{2}{\sqrt{6}}\,x_6^2\,.
\end{align}
On the other hand $x_1+x_2+x_3=0$ since ${\rm tr}(\widetilde T)=0$, and $\mu_1x_1+\mu_2x_2+\mu_3x_3=0$ since $\widetilde T:D=0$. It implies that 
$$\bigg(\frac{3}{\sqrt{6}}(1-t^2/2)^{1/2}-\frac{t}{2}\bigg) x_1= tx_3\,,$$
and consequently, $x_1^2\leq t^2x_3^2\leq \frac{1}{4}x_3^2$ for ${\bf t}_0$ small enough. Back to \eqref{esttrTDT}, we conclude that 
$$2\,{\rm tr}(\widetilde TD\widetilde T)\leq \frac{1}{\sqrt{6}}\,x_3^2+2\big(\frac{1}{\sqrt{6}}+2\sqrt{W(D)}\big)x_4^2+\frac{2}{\sqrt{6}}\,x_6^2\leq  \bigg(\frac{1}{\sqrt{6}}+ 2\sqrt{W(D)}\bigg)|\widetilde T|^2\,, $$
which completes the proof for a (small) universal constant ${\bf t}_0>0$ and ${\bf c}_\star=\max \{2,2\boldsymbol{\ell}_0^{-1/2}\}$. 
\end{proof}

\begin{proof}[Proof of Lemma \ref{Bochineq}]
First compute
$$-\Delta \left(\frac{1}{2}|\nabla Q_\lambda|^2\right)= -|\nabla^2Q_\lambda|^2 + \nabla Q_\lambda :\nabla(-\Delta Q_\lambda) \,.$$
From \eqref{MasterEq}, we derive that  
\begin{multline*}
\partial_k(-\Delta Q_\lambda)= 2(\nabla Q_\lambda:\nabla (\partial_k Q_\lambda))Q_\lambda+|\nabla Q_\lambda|^2\partial_k Q_\lambda\\
+\lambda\bigg ((\partial_k Q_\lambda) Q_\lambda+Q_\lambda\partial_kQ_\lambda-3{\rm tr}(Q_\lambda^2\partial_kQ_\lambda)Q_\lambda -{\rm tr}(Q_\lambda^3)\partial_kQ_\lambda\bigg)\,. 
\end{multline*}
Since $Q_\lambda:\partial_kQ_\lambda=0$ and ${\rm tr}(Q_\lambda^3)= -3W(Q_\lambda)+1/\sqrt{6}$, we obtain 
$$-\Delta \left(\frac{1}{2}|\nabla Q_\lambda|^2\right)\leq |\nabla Q_\lambda|^4 +3\lambda W(Q_\lambda)|\nabla Q_\lambda|^2+\lambda \sum_{k=1}^3\bigg( 2{\rm tr}\big((\partial_kQ_\lambda)Q_\lambda\partial_kQ_\lambda\big) -\frac{1}{\sqrt{6}}|\partial_k Q_\lambda|^2\bigg)\,.$$
It then follows from Lemma \ref{lemmaspectrboch} (applied to $Q=Q_\lambda$ and $T=\partial_k Q_\lambda$) that 
\begin{equation}\label{prebochineq}
-\Delta \left(\frac{1}{2}|\nabla Q_\lambda|^2\right)\leq |\nabla Q_\lambda|^4+3\lambda W(Q_\lambda)|\nabla Q_\lambda|^2+ {\bf c}_\star\lambda\sqrt{W(Q_\lambda)} \,|\nabla Q_\lambda|^2\,. 
\end{equation}
Next, we compute
$$-\Delta\big(W(Q_\lambda)\big) =-{\rm tr}\big(Q_\lambda^2(-\Delta Q_\lambda)\big)+\sum_{k=1}^32{\rm tr}\big((\partial_kQ_\lambda )Q_\lambda\partial_k Q_\lambda\big)\,, $$
and it  follows from  \eqref{MasterEq}  that 
$$-\Delta\big(W(Q_\lambda)\big) =-|\nabla Q_\lambda|^2{\rm tr} (Q_\lambda^3)-\lambda\bigg({\rm tr} \, Q^4-\frac{1}{3}-\big({\rm tr} (Q_\lambda^3)\big)^2\bigg)+\sum_{k=1}^32{\rm tr}\big((\partial_kQ_\lambda )Q_\lambda\partial_k Q_\lambda\big) \,.$$
Noticing that ${\rm tr}\, Q^4 =1/2$, we obtain from Lemma \ref{lemmaspectrboch}, 
\begin{align}
\nonumber -\Delta\big(W(Q_\lambda)\big)& =3W(Q_\lambda)|\nabla Q_\lambda|^2+9\lambda W^2(Q_\lambda)-\lambda\sqrt{6}\,W(Q_\lambda)+\sum_{k=1}^3\bigg( 2{\rm tr}\big((\partial_kQ_\lambda)Q_\lambda\partial_kQ_\lambda\big) -\frac{1}{\sqrt{6}}|\partial_k Q_\lambda|^2\bigg) \\
\label{lastineqproofboch}&\leq  3W(Q_\lambda)|\nabla Q_\lambda|^2+9\lambda W^2(Q_\lambda)-\lambda\sqrt{6}\,W(Q_\lambda)+ {\bf c}_\star\sqrt{W(Q_\lambda)} \,|\nabla Q_\lambda|^2\,.
\end{align}
Combining \eqref{prebochineq} and \eqref{lastineqproofboch}, we are led to 
\begin{align*}
-\Delta {\rm e}_\lambda & \leq  |\nabla Q_\lambda|^4+6\lambda W(Q_\lambda)|\nabla Q_\lambda|^2+9\lambda^2 W^2(Q_\lambda)-\lambda^2\sqrt{6}\,W(Q_\lambda)+2{\bf c}_\star\lambda\sqrt{W(Q_\lambda)} \,|\nabla Q_\lambda|^2\\
&\leq (1+{\bf c}^2_\star/\sqrt{6}) |\nabla Q_\lambda|^4+6\lambda W(Q_\lambda)|\nabla Q_\lambda|^2+9\lambda^2 W^2(Q_\lambda)\\
&\leq C {\rm e}^2_\lambda
\end{align*}
for a universal constant $C>0$. 
\end{proof}

\begin{remark}\label{remmonotsmoothsol}
If $Q_\lambda$ is a smooth solution of \eqref{MasterEq}  in $B_{r}$, then $Q_\lambda$ satisfies the interior monotonicity formula \eqref{IntMonForm} in the ball $B_r$ (see the proof of Proposition \ref{monotonprop}, Step 2, or  \cite[Chapter 2, Sections 2.2 and 2.4]{Simon}). As a consequence, $Q_\lambda$ satisfies 
$$\mathop{\sup}\limits_{B_\rho(x)\subset B_{r/2}} \frac{1}{\rho}\mathcal{E}_\lambda\big(Q_\lambda,B_\rho(x)\big)\leq \frac{2}{r}\mathcal{E}_\lambda\big(Q_\lambda,B_r\big)\,,$$
exactly as in Lemma \ref{corolmonotform}. 
\end{remark}

With Lemma \ref{Bochineq} at hand, Proposition \ref{epsregunifesti} follows from the original argument in \cite{ChenStruwe,Sch}  that we provide for completeness. 

\begin{proof}[Proof of Proposition \ref{epsregunifesti}]
We argue as in \cite{ChenStruwe}, where the scaling argument first presented in \cite{Sch} for harmonic maps is adapted to the harmonic heat flow. Since $Q_\lambda$ is smooth, we can find  $\sigma_\lambda\in(0,r/2)$ such that 
$$\Big(\frac{r}{2}-\sigma_\lambda\Big)^2\sup_{B_{\sigma_\lambda}} {\rm e}_\lambda\geq \frac{1}{2}\sup_{0< \sigma < r/2} \Big(\frac{r}{2}-\sigma\Big)^2\sup_{B_{\sigma}} {\rm e}_\lambda\,. $$
In addition, by continuity we can find $x_\lambda\in \overline B_{\sigma_\lambda}$ such that 
$$\sup_{B_{\sigma_\lambda}} {\rm e}_\lambda={\rm e}_\lambda(x_\lambda):=\overline{\bf e}_\lambda\,. $$
Set $\rho_\lambda:=(\frac{r}{2}-\sigma_\lambda)/2>0$, and notice that $B_{\rho_\lambda}(x_\lambda)\subset B_{\sigma_\lambda+\rho_\lambda}\subset B_{r/2}$. Since $\sigma=\rho_\lambda+\sigma_\lambda<r/2$ and $r/2-\sigma=\frac12(r/2-\sigma_\lambda)$, by definition of $\sigma_\lambda$ we have 
$$\sup_{B_{\rho_\lambda}(x_\lambda)}  {\rm e}_\lambda\leq \sup_{B_{\sigma_{\lambda}+\rho_\lambda}}  {\rm e}_\lambda\leq 8\,\overline{\bf e}_\lambda\,.$$
We define $r_\lambda:=\rho_\lambda\sqrt{\overline{\bf e}_\lambda} $, and, as $B_{\rho_\lambda}(x_\lambda) \subset B_{r/2}$, we also define  
$$\widetilde Q(x):=Q_\lambda \Big(x_\lambda+\frac{x}{\sqrt{\overline{\bf e}_\lambda} }\Big) \quad\text{for $x\in B_{r_\lambda}$}\,.$$
Then $\widetilde Q$ is smooth in $B_{r_\lambda}$, and it solves \eqref{MasterEq}  in $B_{r_\lambda}$ with $\widetilde\lambda:=\lambda/\overline{\bf e}_\lambda$ in place of $\lambda$. Setting
$$\widetilde {\rm e}_{\widetilde\lambda}:= \frac{1}{2}|\nabla \widetilde Q|^2+\widetilde\lambda W(\widetilde Q)\,, $$
we infer from our choice of $\sigma_\lambda$ and $x_\lambda$ that $\widetilde {\rm e}_{\widetilde\lambda}(0)=e_\lambda(x_\lambda)/\overline{\bf e}_\lambda= 1$,  and $\widetilde {\rm e}_{\widetilde\lambda}\leq 8$  in $B_{r_\lambda}$. 
We now claim that $r_\lambda\leq 1$. Indeed, assume by contradiction that $r_\lambda>1$. Then we infer from Lemma \ref{Bochineq}  that 
$$-\Delta\widetilde {\rm e}_{\widetilde\lambda}\leq C\,\widetilde {\rm e}_{\widetilde\lambda}^2\leq 8C\,\widetilde {\rm e}_{\widetilde\lambda}\quad\text{in $B_1$}\,,$$
for a universal constant $C>0$. By Moser's Harnack inequality (see e.g. \cite[Theorem 4.1]{HanLin}) and Remark \ref{remmonotsmoothsol}, we have 
$$1=\widetilde {\rm e}_{\widetilde\lambda}(0)\leq C\int_{B_1}\widetilde {\rm e}_{\widetilde\lambda}\,dx = C\sqrt{\overline{\bf e}_\lambda}\int_{B_{1/\sqrt{\overline{\bf e}_\lambda}}(x_\lambda)} {\rm e}_\lambda\,dx\leq 2 C \boldsymbol{\eps}_{\rm reg}\,,$$
for a universal constant $C>0$. Here we have used that $B_{1/\sqrt{\overline{\bf e}_\lambda}}(x_\lambda)\subset B_{r/2}$ since $1/\sqrt{\overline{\bf e}_\lambda}<\rho_\lambda$.  Therefore, $1\leq 2C \boldsymbol{\eps}_{\rm reg}$ which is clearly a contradiction if $ \boldsymbol{\eps}_{\rm reg}$ is small enough. 

Knowing that $r_\lambda\leq 1$, we may now deduce from our choice of $\sigma_\lambda$ and the definition of $\rho_\lambda$ that 
$$\sup_{0< \sigma < r/2} \Big(\frac{r}{2}-\sigma\Big)^2\sup_{B_{\sigma}} {\rm e}_\lambda\leq 8\rho_\lambda^2\overline{\bf e}_\lambda =8 r_\lambda^2 \leq 8\,.$$
Choosing $\sigma=r/4$ now yields ${\rm e}_\lambda\leq 128 r^{-2}$ in $B_{r/4}$, and the proof is complete. 
\end{proof}

%%%%%%%%%%%%%%%%%%%%%%%%%%%%%%%%%%%%%%%%%%%%%%%%%%%%%%%
%%%%%%%%%%%%%%%%%%%%%%%%%%%%%%%%%%%%%%%%%%%%%%%%%%%%%%%

\section{Regularity of minimizers under norm constraint}
\label{sec:regularity}

%%%%%%%%%%%%%%%%%%%%%%%%%%%%%%%%%%%%%%%%%%%%%%%%%%%%%%%
%%%%%%%%%%%%%%%%%%%%%%%%%%%%%%%%%%%%%%%%%%%%%%%%%%%%%%%

The aim of this section is to prove Theorem \ref{thm:full-regularity}, and the proof is divided according to the following subsections. Recall that in the statement of Theorem \ref{thm:full-regularity}, we assume that the boundary~$\partial\Omega$ is of class $C^3$ and $Q_{\rm b}\in C^{1,1}(\partial\Omega;\mathbb{S}^4)$.

\subsection{Monotonicity formulae}
We start establishing the monotonicity formulae for minimizers of $\mathcal{E}_\lambda$ over $\mathcal{A}_{Q_{\rm b}}(\Omega)$ applying the general principle in Proposition \ref{monotonprop}. First, let us recall that $\bar{Q}_{\rm b}\in\mathcal{A}_{Q_{\rm b}}(\Omega)$ is a given ($\mathbb{S}^4$-valued) reference extension to $\Omega$ of the boundary condition~$Q_{\rm b}$. 

\begin{proposition}\label{corintmonotform}
If $Q_\lambda$ is a minimizer of $\mathcal{E}_\lambda$ over $\mathcal{A}_{Q_{\rm b}}(\Omega)$, then $Q_\lambda$ satisfies the Interior Monotonicity Formula \eqref{IntMonForm} and the Boundary Monotonicity Inequality \eqref{BdMonForm}. Moreover the quantity $K_\lambda(Q_{\rm b},Q_{\lambda})$ in \eqref{BdMonForm} satisfies 
\begin{equation}\label{contrKOm}
K_\lambda(Q_{\rm b},Q_{\lambda})\leq C_\Omega\bigg(\|\nabla_{\rm tan} Q_{\rm b}\|^2_{L^\infty(\partial\Omega)}+\lambda\|W(Q_{\rm b})\|_{L^1(\partial\Omega)}+\mathcal{E}_\lambda(\bar{Q}_{\rm b})\bigg)\,.
\end{equation}
\end{proposition}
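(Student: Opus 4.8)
\textbf{Proof strategy for Proposition \ref{corintmonotform}.}
The plan is to produce, for a given minimizer $Q_\lambda$ of $\mathcal{E}_\lambda$ over $\mathcal{A}_{Q_{\rm b}}(\Omega)$, an approximating sequence of critical points of the relaxed Ginzburg--Landau functionals $\mathcal{GL}_\varepsilon(Q_\lambda;\cdot)$ satisfying the hypotheses \eqref{condconvmonotform} of Proposition~\ref{monotonprop}, and then simply invoke that proposition with $Q_{\rm ref}=Q_\lambda$. So the heart of the matter is: (i)~for each $\varepsilon>0$ small, minimize $\mathcal{GL}_\varepsilon(Q_\lambda;\cdot)$ over $W^{1,2}_{Q_{\rm b}}(\Omega;\mathcal{S}_0)$ to get a minimizer $Q_\varepsilon$ (existence by the direct method, since the functional is coercive, bounded below and weakly lower semicontinuous on the affine space $Q_\lambda + W^{1,2}_0(\Omega;\mathcal{S}_0)$); such a minimizer is in particular a critical point of $\mathcal{GL}_\varepsilon(Q_\lambda;\cdot)$; and (ii)~verify the two convergences in \eqref{condconvmonotform}.

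For step (ii) the key input is that $Q_\lambda$ itself is an admissible competitor for $\mathcal{GL}_\varepsilon(Q_\lambda;\cdot)$: since $|Q_\lambda|\equiv 1$ a.e.\ and $Q_\lambda - Q_\lambda \equiv 0$, we have $\mathcal{GL}_\varepsilon(Q_\lambda;Q_\lambda)=\mathcal{E}_\lambda(Q_\lambda)$, so by minimality $\mathcal{GL}_\varepsilon(Q_\lambda;Q_\varepsilon)\leq \mathcal{E}_\lambda(Q_\lambda)$ for every $\varepsilon>0$. This uniform bound forces $\{Q_\varepsilon\}$ bounded in $W^{1,2}$, the potential terms $\lambda\int_\Omega W(Q_\varepsilon)$ bounded, $\tfrac1{4\varepsilon^2}\int_\Omega(1-|Q_\varepsilon|^2)^2\to 0$ so that $|Q_\varepsilon|\to 1$ in $L^4$, and $\tfrac12\int_\Omega |Q_\varepsilon - Q_\lambda|^2\to 0$, i.e.\ $Q_\varepsilon\to Q_\lambda$ in $L^2(\Omega)$. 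It remains to check the energy convergence $\mathcal{GL}_\varepsilon(Q_\lambda;Q_\varepsilon)\to\mathcal{E}_\lambda(Q_\lambda)$: the $\limsup\leq$ is the bound just obtained; for the $\liminf\geq$, along any subsequence $Q_\varepsilon\rightharpoonup Q_\lambda$ weakly in $W^{1,2}$ (the $L^2$-limit identifies the weak limit), the Dirichlet part is weakly lower semicontinuous and $\int_\Omega W(Q_\varepsilon)\to\int_\Omega W(Q_\lambda)$ by the $L^4$ (hence a.e.\ up to subsequence, with equi-integrability from the $W^{1,2}$ bound and the quartic growth of $W$) convergence, while the remaining two terms are nonnegative; hence $\liminf \mathcal{GL}_\varepsilon(Q_\lambda;Q_\varepsilon)\geq \mathcal{E}_\lambda(Q_\lambda)$. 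This yields \eqref{condconvmonotform}, so Proposition~\ref{monotonprop} applies and gives \eqref{IntMonForm} and \eqref{BdMonForm}.

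Finally, for the bound \eqref{contrKOm} on $K_\lambda(Q_{\rm b},Q_\lambda)$: by definition in Proposition~\ref{monotonprop}, $K_\lambda(Q_{\rm b},Q_\lambda)= C_\Omega\big(\|\nabla_{\rm tan}Q_{\rm b}\|^2_{L^\infty(\partial\Omega)}+\lambda\|W(Q_{\rm b})\|_{L^1(\partial\Omega)}+\|\nabla Q_\lambda\|^2_{L^2(\Omega)}\big)$, so it suffices to bound $\|\nabla Q_\lambda\|^2_{L^2(\Omega)}$ by $C\,\mathcal{E}_\lambda(\bar Q_{\rm b})$. This is immediate from minimality: $\tfrac12\|\nabla Q_\lambda\|^2_{L^2(\Omega)}\leq \mathcal{E}_\lambda(Q_\lambda)\leq \mathcal{E}_\lambda(\bar Q_{\rm b})$ since $\bar Q_{\rm b}\in\mathcal{A}_{Q_{\rm b}}(\Omega)$ is an admissible competitor and $W\geq 0$. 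Absorbing the factor $2$ into $C_\Omega$ gives \eqref{contrKOm}.

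\textbf{Main obstacle.} Nothing here is genuinely deep once Proposition~\ref{monotonprop} is in hand; the only point requiring a little care is the energy convergence in \eqref{condconvmonotform}, specifically passing $\int_\Omega W(Q_\varepsilon)\to\int_\Omega W(Q_\lambda)$, where one must use that $W$ has at most quartic growth and $\{Q_\varepsilon\}$ is bounded in $W^{1,2}\hookrightarrow L^6$ (hence $\{W(Q_\varepsilon)\}$ is equi-integrable and compactly convergent in $L^1$ along the subsequence where $Q_\varepsilon\to Q_\lambda$ a.e.). One should also note that critical points of $\mathcal{GL}_\varepsilon(Q_\lambda;\cdot)$ in the sense required by Proposition~\ref{monotonprop} are exactly weak solutions of \eqref{ELeqGL}, which minimizers of the (smooth, coercive) relaxed functional certainly are, so no subtlety arises there.
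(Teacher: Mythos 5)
Your overall strategy is exactly the paper's: approximate $Q_\lambda$ by minimizers $Q_\varepsilon$ of $\mathcal{GL}_\varepsilon(Q_\lambda;\cdot)$ over $W^{1,2}_{Q_{\rm b}}(\Omega;\mathcal{S}_0)$, verify hypothesis \eqref{condconvmonotform}, and invoke Proposition~\ref{monotonprop}; the bound \eqref{contrKOm} via $\|\nabla Q_\lambda\|^2_{L^2}\leq 2\mathcal{E}_\lambda(Q_\lambda)\leq 2\mathcal{E}_\lambda(\bar Q_{\rm b})$ is also the paper's argument. The proof is correct in outline and in the treatment of the potential, but there is a genuine gap in the verification of \eqref{condconvmonotform}.

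The gap: you assert that the uniform energy bound $\mathcal{GL}_\varepsilon(Q_\lambda;Q_\varepsilon)\leq\mathcal{E}_\lambda(Q_\lambda)$ ``forces'' $\tfrac12\int_\Omega|Q_\varepsilon-Q_\lambda|^2\to 0$ (and also $\tfrac1{4\varepsilon^2}\int_\Omega(1-|Q_\varepsilon|^2)^2\to 0$). It does not: the bound only gives that both quantities are bounded by $\mathcal{E}_\lambda(Q_\lambda)$. What you can extract directly is $\int_\Omega(1-|Q_\varepsilon|^2)^2\leq 4\varepsilon^2\mathcal{E}_\lambda(Q_\lambda)\to 0$, hence $|Q_\varepsilon|\to1$ in $L^4$, but not that $Q_\varepsilon\to Q_\lambda$ in $L^2$. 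Your later step, ``along any subsequence $Q_\varepsilon\rightharpoonup Q_\lambda$ weakly in $W^{1,2}$ (the $L^2$-limit identifies the weak limit)'', is therefore circular, since the $L^2$ limit has not been established. The missing ingredient is the \emph{minimality of $Q_\lambda$ over $\mathcal{A}_{Q_{\rm b}}(\Omega)$}, which you only use at the very end for \eqref{contrKOm}. The correct chain (the paper's): extract $Q_{\varepsilon_n}\rightharpoonup Q_*$ weakly in $W^{1,2}$, get $Q_{\varepsilon_n}\to Q_*$ in $L^4$ by compact embedding, deduce $|Q_*|=1$ a.e.\ so $Q_*\in\mathcal{A}_{Q_{\rm b}}(\Omega)$, and then sandwich
\begin{equation*}
\mathcal{E}_\lambda(Q_\lambda)\leq\mathcal{E}_\lambda(Q_*)+\tfrac12\textstyle\int_\Omega|Q_*-Q_\lambda|^2\,dx\leq\liminf_n\mathcal{GL}_{\varepsilon_n}(Q_\lambda;Q_{\varepsilon_n})\leq\mathcal{E}_\lambda(Q_\lambda)\,,
\end{equation*}
where the first inequality uses $\mathcal{E}_\lambda(Q_\lambda)\leq\mathcal{E}_\lambda(Q_*)$ by minimality. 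Equality throughout forces $Q_*=Q_\lambda$ and $\lim_n\mathcal{GL}_{\varepsilon_n}(Q_\lambda;Q_{\varepsilon_n})=\mathcal{E}_\lambda(Q_\lambda)$, i.e.\ \eqref{condconvmonotform}. You also misidentify the ``subtle point'': the convergence $\int_\Omega W(Q_\varepsilon)\to\int_\Omega W(Q_*)$ is routine once the weak limit is named (compact $W^{1,2}\hookrightarrow L^4$ suffices, $W$ being quartic); the genuine work is showing $Q_*=Q_\lambda$, and that is precisely where minimality enters.
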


\begin{proof}
We first notice that, due to \eqref{Newpotential1} and \eqref{signedbiaxiality}, the potential $W$ is nonnegative for every $Q\in \mathcal{S}_0$. 
 Hence, for each $\varepsilon >0$ the functional $\mathcal{GL}_{\varepsilon}(Q_{\lambda}; \cdot)$ defined in \eqref{defGLepsQref}  is well defined and coercive on $W^{1,2}(\Omega;\mathcal{S}_0)$. Moreover, using the compact Sobolev embedding $W^{1,2}(\Omega;\mathcal{S}_0)\hookrightarrow L^4(\Omega)$, we easily obtain that $\mathcal{GL}_{\varepsilon}(Q_{\lambda}; \cdot)$ is lower semi-continuous with respect to the weak $W^{1,2}$-convergence since all the terms not containig derivatives of $Q$ are weakly continuous. It then follows from the direct method  of calculus of variations that $\mathcal{GL}_{\varepsilon}(Q_{\lambda}; \cdot)$ admits at least one minimizer $Q_\eps$ over $W^{1,2}_{Q_{\rm b}}(\Omega;\mathcal{S}_0)$. 

By Proposition \ref{monotonprop}, it now suffices to show that $Q_\eps$ satisfies \eqref{condconvmonotform} (with $Q_\lambda$ in place of $Q_{\rm ref}$). In addition, observe that \eqref{contrKOm} follows from the minimality of $Q_\lambda$. Indeed, since $\bar{Q}_{\rm b}\in \mathcal{A}_{Q_{\rm b}}(\Omega)$ is an admissible competitor, we have $\|\nabla Q_\lambda\|^2_{L^2(\Omega)}\leq 2\mathcal{E}_\lambda(Q_\lambda)\leq 2\mathcal{E}_\lambda(\bar{Q}_{\rm b})$. 

Now, let us consider an arbitrary sequence $\eps_n\to 0$ satisfying $\eps_n\in(0,\lambda^{-1/2})$. First, we infer from the minimality of $Q_{\eps_n}$ that 
\begin{equation}\label{firstestglapprox}
\frac{1}{2}\int_\Omega|\nabla Q_{\eps_n}|^2+|Q_{\eps_n}-Q_\lambda|^2\,dx\leq \mathcal{GL}_{\varepsilon_n}(Q_{\lambda}; Q_{\eps_n}) \leq \mathcal{GL}_{\varepsilon_n}(Q_{\lambda}; Q_\lambda)=\mathcal{E}_\lambda(Q_\lambda)\,.
\end{equation}
Hence, the sequence $\{Q_{\eps_n}\}$ is bounded in $W^{1,2}_{Q_{\rm b}}(\Omega;\mathcal{S}_0)$, and we can extract a (not relabelled) subsequence such that $Q_{\eps_n}\rightharpoonup Q_*$ weakly in  $W^{1,2}(\Omega)$ for some $Q_*\in W^{1,2}_{Q_{\rm b}}(\Omega;\mathcal{S}_0)$. Up to a further subsequence, we can assume that $Q_{\eps_n}\to Q_*$ strongly in $L^4(\Omega)$ (and therefore in $L^2(\Omega)$) since the embedding $W^{1,2}(\Omega)\hookrightarrow L^4(\Omega)$ is compact. As a consequence, $\int_\Omega W(Q_{\eps_n})\,dx\to \int_\Omega W(Q_*)\,dx$
which, combined with \eqref{firstestglapprox}, implies that  $\int_\Omega(1-|Q_{\eps_n}|^2)^2\,dx\to 0$. 
Therefore, $|Q_*|=1$ a.e. in $\Omega$, and thus $Q_*\in \mathcal{A}_{Q_{\rm b}}(\Omega)$. Now we infer from the minimality of $Q_\lambda$, the weak lower semicontinuity of $\mathcal{E}_\lambda$, the $L^2$-convergence and \eqref{firstestglapprox} that 
\begin{multline*}
\mathcal{E}_\lambda(Q_{\lambda})\leq\mathcal{E}_\lambda(Q_{*})+\frac{1}{2}\int_{\Omega}|Q_*-Q_\lambda|^2\,dx\leq  \liminf_{n\to\infty}\Big(\mathcal{E}_\lambda(Q_{\eps_n})+\frac{1}{2}\int_{\Omega}|Q_{\eps_n}-Q_\lambda|^2\,dx\Big) \\
\leq \liminf_{n\to\infty}\mathcal{GL}_{\varepsilon_n}(Q_{\lambda}; Q_{\eps_n}) \leq \limsup_{n\to\infty} \mathcal{GL}_{\varepsilon_n}(Q_{\lambda}; Q_{\eps_n}) \leq \mathcal{E}_\lambda(Q_\lambda)\,.
\end{multline*}
Consequently, $Q_*=Q_\lambda$ and $\lim_{n} \mathcal{GL}_{\varepsilon_n}(Q_{\lambda}; Q_{\eps_n})=\mathcal{E}_\lambda(Q_{\lambda})$, which completes the proof. 
\end{proof}

\subsection{Compactness of blow-ups and smallness of the scaled energy}
When proving regularity the main issue is to analyse the asymptotic behavior of minimizers at small scales, and the key property is the compactness of rescaled maps. When rescaling around an interior point, we have the following statement.  

\begin{proposition}\label{interiorblowup}
Let $Q_\lambda$ be a minimizer of $\mathcal{E}_\lambda$ over  $\mathcal{A}_{Q_{\rm b}}(\Omega)$. Given $x_0\in \Omega$ and $0<r\leq r_0$ such that $\overline{B_{r_0}(x_0)} \subset \Omega$, consider the rescaled map $Q_{\lambda,r} \in W^{1,2}(B_{r_0/r};\mathbb{S}^4)$ defined by 
$$Q_{\lambda,r}(x):=Q_{\lambda}(x_0+r x)\,.$$
 For every sequence $r_n\to0$, there exist a (not relabeled subsequence) and $Q_* \in W^{1,2}_{\rm{loc}}(\mathbb{R}^3;\mathbb{S}^4)$ such that $Q_{\lambda,r_n} \to Q_*$ strongly in $W^{1,2}_{\rm{loc}}(\R^3)$. 
In addition, $Q_*$ is a degree-zero homogeneous energy minimizing harmonic map into $\mathbb{S}^4$. 
\end{proposition}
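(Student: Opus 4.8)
The plan is to follow the classical two-step blow-up scheme for minimizing harmonic maps, adapted to the perturbed equation \eqref{MasterEq}: first establish strong $W^{1,2}_{\rm loc}$ compactness of the rescaled maps $Q_{\lambda,r_n}$, then identify the limit as a minimizing harmonic map, and finally exploit the interior monotonicity formula \eqref{IntMonForm} to show that the limit is homogeneous of degree zero.

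\textbf{Step 1: Uniform energy bounds and weak compactness.} First I would use the interior monotonicity formula \eqref{IntMonForm} for $Q_\lambda$ (available by Proposition \ref{corintmonotform}) to get, for every fixed $R>0$ and $n$ large (so that $R r_n \leq r_0$),
$$\frac{1}{R}\mathcal{E}_\lambda\big(Q_{\lambda,r_n}, B_R\big) = \frac{1}{Rr_n}\mathcal{E}_\lambda\big(Q_\lambda, B_{Rr_n}(x_0)\big) \leq \frac{1}{r_0}\mathcal{E}_\lambda\big(Q_\lambda, B_{r_0}(x_0)\big) =: \Theta_0\,,$$
where I have rescaled the energy, noting that the rescaled map satisfies $\mathcal{E}_{\lambda r_n^2}(Q_{\lambda,r_n}, B_R) = \tfrac{1}{r_n}\mathcal{E}_\lambda(Q_\lambda, B_{Rr_n}(x_0))$ and $\lambda r_n^2 \to 0$. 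Thus $\{Q_{\lambda,r_n}\}$ is bounded in $W^{1,2}(B_R)$ for every $R$; by a diagonal argument we extract a subsequence with $Q_{\lambda,r_n} \rightharpoonup Q_*$ weakly in $W^{1,2}_{\rm loc}(\R^3)$, strongly in $L^2_{\rm loc}$, and a.e., so that $|Q_*|=1$ a.e. and $Q_* \in W^{1,2}_{\rm loc}(\R^3;\mathbb{S}^4)$. Moreover the potential term $\lambda r_n^2 \int_{B_R} W(Q_{\lambda,r_n})\,dx \to 0$ since $W$ is bounded on $\mathbb{S}^4$.

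\textbf{Step 2: Strong $W^{1,2}_{\rm loc}$ convergence and minimality of the limit.} This is the heart of the argument and the main obstacle. The standard tool is the construction of comparison maps via the Luckhaus interpolation lemma \cite{Lu} (cf. \cite{Simon}). Fix $R>0$ and a good radius $\rho \in (R/2, R)$ along which $Q_{\lambda,r_n} \to Q_*$ strongly in $W^{1,2}(\partial B_\rho)$ and the traces are controlled. Using Luckhaus' lemma one interpolates, in a thin annulus $B_\rho \setminus B_{(1-\delta)\rho}$, between the boundary datum $Q_{\lambda,r_n}|_{\partial B_\rho}$ and $Q_*|_{\partial B_{(1-\delta)\rho}}$ by an $\mathbb{S}^4$-valued map whose energy is controlled by the $L^2$ difference and the boundary energies (the nearest-point projection onto $\mathbb{S}^4$ is available since the interpolant stays close to $\mathbb{S}^4$); inside $B_{(1-\delta)\rho}$ one plugs in (a minimizing harmonic extension of, or directly) $Q_*$. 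Testing the minimality of $Q_{\lambda,r_n}$ against this competitor, letting $n\to\infty$ and then $\delta \to 0$, one obtains $\limsup_n \mathcal{E}_\lambda(Q_{\lambda,r_n}, B_\rho) \leq \int_{B_\rho} \tfrac12|\nabla Q_*|^2\,dx$; combined with weak lower semicontinuity (and the vanishing of the potential terms) this forces $\int_{B_\rho}|\nabla Q_*|^2 = \lim_n \int_{B_\rho}|\nabla Q_{\lambda,r_n}|^2$, hence strong $W^{1,2}(B_\rho)$ convergence, and simultaneously shows $Q_*$ is minimizing for the Dirichlet energy among $\mathbb{S}^4$-valued maps with its own boundary data on $\partial B_\rho$. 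Since $R$ was arbitrary, $Q_*$ is an energy-minimizing harmonic map $\R^3 \to \mathbb{S}^4$, and $Q_{\lambda,r_n}\to Q_*$ strongly in $W^{1,2}_{\rm loc}(\R^3)$.

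\textbf{Step 3: Homogeneity of the limit.} Finally I would pass to the limit in the interior monotonicity formula. For $0<\sigma<\tau$, applying \eqref{IntMonForm} to $Q_{\lambda,r_n}$ (with the parameter $\lambda r_n^2$) between radii $\sigma$ and $\tau$ and using the strong $W^{1,2}$ convergence together with $\lambda r_n^2 \to 0$, one gets
$$\frac{1}{\tau}\int_{B_\tau}\tfrac12|\nabla Q_*|^2\,dx - \frac{1}{\sigma}\int_{B_\sigma}\tfrac12|\nabla Q_*|^2\,dx = \int_{B_\tau\setminus B_\sigma}\frac{1}{|x|}\Big|\frac{\partial Q_*}{\partial|x|}\Big|^2\,dx\,.$$
On the other hand, the monotone function $t\mapsto \tfrac1t \mathcal{E}_\lambda(Q_\lambda, B_t(x_0))$ has a limit $\Theta$ as $t\to 0^+$, and by the rescaling identity $\tfrac1R\int_{B_R}\tfrac12|\nabla Q_*|^2 = \lim_n \tfrac1{Rr_n}\mathcal{E}_\lambda(Q_\lambda, B_{Rr_n}(x_0)) = \Theta$ for every $R$, so the left-hand side above vanishes. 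Hence $\partial Q_*/\partial|x| = 0$ a.e., i.e. $Q_*$ is homogeneous of degree zero. This completes the proof.
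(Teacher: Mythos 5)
Your proposal is correct and follows essentially the same scheme as the paper's proof: uniform bounds and weak compactness from the interior monotonicity formula, Luckhaus interpolation to build $\mathbb{S}^4$-valued competitors and deduce both strong $W^{1,2}_{\rm loc}$ convergence and minimality of $Q_*$, and homogeneity from the monotonicity formula in the limit. The only cosmetic difference is the order: the paper extracts homogeneity of $Q_*$ directly after weak compactness (letting $n\to\infty$ in the rescaled monotonicity inequality and using weak lower semicontinuity on the radial-derivative term), while you do it last after strong convergence; both are valid, and in Step 2 one should be careful to test minimality against an arbitrary competitor $\bar Q$ agreeing with $Q_*$ near $\partial B_\rho$ (not merely $Q_*$ itself) so that the same chain of inequalities yields both strong convergence (taking $\bar Q = Q_*$) and local minimality of $Q_*$.
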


To prove Proposition \ref{interiorblowup}, we need two auxiliary lemmata.

\begin{lemma}\label{H1energycompetitor}
Let $Q_{\lambda,r_n}$ be as in Proposition \ref{interiorblowup} and $\rho>0$. For each $n\in\mathbb{N}$ such that $\rho r_n<r_0$, let $v_n\in W^{1,2}(B_\rho; \mathbb{S}^4)$ be such that $v_n=Q_{\lambda, r_n}$ on $\partial B_\rho$ in the sense of traces. Then, 
\[ \limsup_{n \to \infty} \int_{B_\rho} |\nabla Q_{\lambda,r_n}|^2 \, dx \leq  \limsup_{n\to \infty} \int_{B_\rho} |\nabla v_n|^2 \, dx \, .\]
\end{lemma}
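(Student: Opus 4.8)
The statement is the standard ``energy comparison'' lemma that feeds into the strong compactness of blow-ups, and the natural route is a direct competitor argument exploiting the minimality of $Q_\lambda$. The key point is to build, out of the given competitors $v_n$ on $B_\rho$, admissible competitors for $Q_\lambda$ on the original domain $\Omega$ --- obtained by first rescaling back to the ball $B_{\rho r_n}(x_0)$ and then gluing to $Q_\lambda$ outside. The only difficulty is that $v_n$ and $Q_{\lambda,r_n}$ need not match \emph{exactly} in $W^{1,2}$-trace sense after rescaling unless we are careful, and more importantly the glued map must stay $\mathbb{S}^4$-valued; this is where the Luckhaus-type interpolation is invoked (as announced in the introduction). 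Let me spell this out.

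First I would set up the scaling. Fix $\rho>0$ and restrict to $n$ large so that $\rho r_n<r_0$. Define on $B_{\rho r_n}(x_0)$ the rescaled competitor $\widetilde v_n(y):=v_n\big((y-x_0)/r_n\big)$, which lies in $W^{1,2}(B_{\rho r_n}(x_0);\mathbb{S}^4)$ and equals $Q_\lambda$ on $\partial B_{\rho r_n}(x_0)$ in the trace sense (since $v_n=Q_{\lambda,r_n}$ on $\partial B_\rho$). Then the map
\[
\widehat Q_n:=\begin{cases} \widetilde v_n & \text{in } B_{\rho r_n}(x_0),\\ Q_\lambda & \text{in } \Omega\setminus B_{\rho r_n}(x_0),\end{cases}
\]
belongs to $\mathcal{A}_{Q_{\rm b}}(\Omega)$: it is in $W^{1,2}$ because the traces agree on the gluing sphere, it is $\mathbb{S}^4$-valued, and it has the right boundary datum on $\partial\Omega$ since it coincides with $Q_\lambda$ near $\partial\Omega$. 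By minimality of $Q_\lambda$ and the fact that $\widehat Q_n=Q_\lambda$ outside $B_{\rho r_n}(x_0)$, we get
\[
\mathcal{E}_\lambda\big(Q_\lambda,B_{\rho r_n}(x_0)\big)\leq \mathcal{E}_\lambda\big(\widehat Q_n,B_{\rho r_n}(x_0)\big)=\mathcal{E}_\lambda\big(\widetilde v_n,B_{\rho r_n}(x_0)\big).
\]
Writing this out, $\tfrac12\int_{B_{\rho r_n}(x_0)}|\nabla Q_\lambda|^2 + \lambda\int_{B_{\rho r_n}(x_0)} W(Q_\lambda) \leq \tfrac12\int_{B_{\rho r_n}(x_0)}|\nabla\widetilde v_n|^2 + \lambda\int_{B_{\rho r_n}(x_0)} W(\widetilde v_n)$, and since $W\geq 0$ the potential term on the left can be dropped.

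Next I would change variables $y=x_0+r_nx$ in both Dirichlet integrals: this is scale-invariant in dimension $3$ only up to the factor $r_n$ coming from $dx\mapsto r_n^3\,dx$ and $|\nabla_y|^2\mapsto r_n^{-2}|\nabla_x|^2$, i.e. $\int_{B_{\rho r_n}(x_0)}|\nabla_y Q_\lambda|^2\,dy = r_n\int_{B_\rho}|\nabla Q_{\lambda,r_n}|^2\,dx$ and similarly $\int_{B_{\rho r_n}(x_0)}|\nabla_y\widetilde v_n|^2\,dy=r_n\int_{B_\rho}|\nabla v_n|^2\,dx$. Dividing through by $r_n$ gives
\[
\int_{B_\rho}|\nabla Q_{\lambda,r_n}|^2\,dx\ \leq\ \int_{B_\rho}|\nabla v_n|^2\,dx\ +\ 2\lambda\int_{B_{\rho r_n}(x_0)} W(Q_{\lambda})\,dy\Big/ r_n \ \leq\ \int_{B_\rho}|\nabla v_n|^2\,dx\ +\ 2\lambda\,\omega_3(\rho r_n)^3\|W(Q_\lambda)\|_{L^\infty}\Big/r_n,
\]
where I use that $Q_\lambda$ is smooth (Theorem~\ref{thm:full-regularity}) hence $W(Q_\lambda)$ is bounded on $\overline{B_{r_0}(x_0)}$; the error term is $O(r_n^2)\to 0$. (Alternatively, without invoking smoothness, one bounds $\int_{B_{\rho r_n}(x_0)}W(Q_\lambda)\leq \lambda^{-1}\mathcal{E}_\lambda(Q_\lambda)$, so the error is $O(r_n)$; either way it vanishes.) Taking $\limsup_{n\to\infty}$ on both sides and discarding the infinitesimal error yields exactly $\limsup_n\int_{B_\rho}|\nabla Q_{\lambda,r_n}|^2\leq\limsup_n\int_{B_\rho}|\nabla v_n|^2$, which is the claim.

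\textbf{Main obstacle.} The one genuinely delicate point is the $W^{1,2}$-gluing: for $\widehat Q_n$ to lie in $W^{1,2}$ we need the interior trace of $\widetilde v_n$ on $\partial B_{\rho r_n}(x_0)$ to coincide with that of $Q_\lambda$, which holds here by hypothesis since we only assumed $v_n=Q_{\lambda,r_n}$ on $\partial B_\rho$ \emph{in the sense of traces} --- so matching traces transfers correctly under the smooth change of variables, and there is in fact no need for a Luckhaus correction in this particular lemma (that device enters later when $v_n$ is only prescribed to be \emph{close} to $Q_{\lambda,r_n}$ on the sphere, not equal). Thus the proof is essentially bookkeeping of scaling factors plus the observation that $W\geq0$ allows the potential term to be thrown away on the minimizer's side while its contribution on the competitor's side is a vanishing error. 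If anything, the point to be careful about is ensuring $\rho r_n<r_0$ so that the construction makes sense, and that is exactly why the statement restricts to such $n$.
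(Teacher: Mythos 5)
Your argument is, at heart, the same as the paper's: undo the rescaling, use minimality of $Q_\lambda$ in the small ball $B_{\rho r_n}(x_0)$ against the glued competitor, drop the $W$-term on the minimizer's side using $W\ge 0$, bound the $W$-term on the competitor's side, rescale back, and let $n\to\infty$. The paper just phrases this more compactly by observing that $Q_{\lambda,r_n}$ directly minimizes the rescaled energy $\mathcal{E}_{\lambda r_n^2}(\cdot\,,B_\rho)$ in its own trace class, so $v_n$ is a competitor without any explicit gluing. Your remark that no Luckhaus correction is needed here (because the traces coincide exactly) is correct and matches the paper's intent.

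Two slips, though, one cosmetic and one substantive. Cosmetically, after you drop the potential on the left, the remaining potential term is $\lambda\int_{B_{\rho r_n}(x_0)}W(\widetilde v_n)$, not $\lambda\int_{B_{\rho r_n}(x_0)}W(Q_\lambda)$; your error term has the wrong map inside $W$. The bound goes through regardless because $\widetilde v_n$ is $\mathbb{S}^4$-valued and $W$ is continuous, hence bounded, on the compact set $\mathbb{S}^4$ — which is exactly the point the paper uses (``the potential $W$ is bounded on $\mathbb{S}^4$''). The substantive issue is that you invoke Theorem~\ref{thm:full-regularity} to assert boundedness of $W(Q_\lambda)$. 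That is circular: Lemma~\ref{H1energycompetitor} is used in Proposition~\ref{interiorblowup}, which feeds Proposition~\ref{interiorsmallenergy}, which is precisely what the proof of Theorem~\ref{thm:full-regularity} relies on. Fortunately the invocation is also unnecessary — no smoothness is needed anywhere in this lemma, only that the maps take values in $\mathbb{S}^4$ where $W$ is bounded — so the circularity is removable, but as written the proof would not stand inside the paper's logical order. Replace the appeal to smoothness by the compactness of $\mathbb{S}^4$ and correct the identity of the map inside $W$, and the argument is clean.
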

\begin{proof}
By minimality of $Q_\lambda$ and a change of variables, $Q_{\lambda,r_n}$ is minimizing $\mathcal{E}_{\lambda r_n^2}(\cdot, B_\rho)$ among all maps in $W^{1,2}(B_\rho;\mathbb{S}^4)$ having the same trace $Q_{\lambda,r_n}$ on $\partial B_\rho$.  Since $v_n$ is an admissible competitor and the potential $W$ is bounded on $\mathbb{S}^4$, we have
\[ \frac{1}{2}\int_{B_\rho} |\nabla Q_{\lambda,r_n}|^2 \, dx \leq \mathcal{E}_{\lambda r_n^2}(Q_{\lambda,r_n}, B_\rho) \leq \frac{1}{2}\int_{B_\rho} |\nabla v_n|^2 \, dx +C\lambda\rho^3  r_n^2 \, , \]
for a constant $C$ depending only on $W$. Then the claim follows letting $n\to\infty$. 
\end{proof}

The following interpolation lemma is due to S. Luckhaus \cite{Lu}.

\begin{lemma}\label{luckhaus}
Let $u, v \in W^{1,2}(\mathbb{S}^2;\mathbb{S}^4)$. For each $\sigma \in (0,1)$, there exists $w \in W^{1,2}(\mathbb{S}^2 \times (1-\sigma, 1);\mathcal{S}_0)$ such that ${w_|}_{\mathbb{S}^2\times \{ 1-\sigma\}}=v$,  ${w_|}_{\mathbb{S}^2\times \{ 1\}}=u$,
\begin{equation}
\label{lukbound1}
\int_{\mathbb{S}^2\times (1-\sigma,1)} |\nabla w|^2 \, dx \leq C \sigma \int_{\mathbb{S}^2}  \left( |\nabla_{\rm tan} u|^2+ |\nabla_{\rm tan} v|^2 \right) \, d\mathcal{H}^2 +C \sigma^{-1} \int_{\mathbb{S}^2}|u-v|^2 \, d\mathcal{H}^2\,,   
\end{equation}
and
\begin{multline}
\label{lukbound2}
{\rm dist}^2(w(x),\mathbb{S}^4) \leq C \sigma^{-2} \left( \int_{\mathbb{S}^2}  \left( |\nabla_{\rm tan} u|^2+ |\nabla_{\rm tan} v|^2 \right) \, d\mathcal{H}^2\right)^{\frac12} \left( \int_{\mathbb{S}^2}|u-v|^2 \, d\mathcal{H}^2 \right)^{\frac12} \\
+  C \sigma^{-3} \int_{\mathbb{S}^2}|u-v|^2 \, d\mathcal{H}^2 
\end{multline}
for a.e. $x\in\mathbb{S}^2\times(1-\sigma,1)$, and a universal constant $C>0$. 
\end{lemma}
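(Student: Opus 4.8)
\textbf{Proof proposal for Lemma \ref{luckhaus} (Luckhaus interpolation).}

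The plan is to follow Luckhaus's original dyadic decomposition argument (as presented e.g. in Simon's book \cite{Simon}), adapted to the present setting where the domain cross-section is $\mathbb{S}^2$ and the target is $\mathbb{S}^4 \subset \mathcal{S}_0$. First I would reduce to the case of a flat domain: cover $\mathbb{S}^2$ by finitely many coordinate charts and work in each chart, so that it suffices to construct an interpolation $w$ on $\Gamma \times (1-\sigma,1)$ where $\Gamma$ is a cube in $\R^2$ and the estimates \eqref{lukbound1}, \eqref{lukbound2} are asked with $\Gamma$ in place of $\mathbb{S}^2$; gluing the local constructions across chart overlaps introduces only controlled errors (of the same homogeneity), because on overlaps one can again interpolate. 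The core of the argument is therefore purely Euclidean.

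The key steps in the Euclidean model are as follows. Fix a scale $h \in (0, \sigma)$ (to be optimized, roughly $h \sim \sigma$, in fact one typically just takes $h=\sigma$ after rescaling the cube) and consider the dyadic grid on $\Gamma$ at scale $h$. On the $2$-skeleton (the cubes themselves at the coarsest level) one cannot in general have both $u$ and $v$ of finite energy on every face, but by an averaging/Fubini argument over translations of the grid one may choose the grid so that the restrictions of $u$ and $v$ to the $1$-skeleton (the edges) are in $W^{1,2}$ with energy controlled by $h^{-1}$ times the bulk energies, and likewise $\int |u-v|^2$ over the edges is controlled by $h^{-1}\int_\Gamma |u-v|^2$. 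One then builds $w$ on $\Gamma \times (1-\sigma,1)$ skeleton by skeleton, increasing dimension: on $(0\text{-skeleton}) \times (1-\sigma,1)$ set $w$ to linearly interpolate in the vertical variable between the vertex values of $v$ and $u$; extend to $(1\text{-skeleton})\times(1-\sigma,1)$ by solving a Dirichlet-type problem / taking the harmonic (or just Lipschitz) extension of the already-defined boundary data on each $2$-cell $= (\text{edge})\times(1-\sigma,1)$; finally extend to $(\Gamma)\times(1-\sigma,1)$ face by face in the same manner. Each extension step is a standard one-dimensional-fiber-bundle argument: one extends a map defined on the boundary of a cube, keeping track of the $L^2$ and Dirichlet bounds via trace/Poincaré inequalities, and this is exactly where the factors $\sigma$, $\sigma^{-1}$ in \eqref{lukbound1} and $\sigma^{-2}$, $\sigma^{-3}$ in \eqref{lukbound2} appear — a one-dimensional extension in a fiber of length $\sigma$ contributes a factor $\sigma$ to the energy and a factor $\sigma^{-1}$ from differentiating, while the pointwise distance estimate loses additional powers of $\sigma^{-1}$ because $L^\infty$ on a face is controlled by $W^{1,2}$ on that face only in dimension $\le 1$, and above that one must iterate the one-dimensional Sobolev embedding. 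The pointwise bound \eqref{lukbound2} is obtained by tracking, at each skeleton level, the $L^\infty$-distance of $w$ to $\mathbb{S}^4$ in terms of the oscillation of the boundary data, using that $u, v$ take values in $\mathbb{S}^4$ and that the construction never moves points further than (a constant times) the local oscillation of $u-v$ plus the energy; a Cauchy–Schwarz splitting between the "mixed" term (energy$^{1/2}$ times $L^2$-difference$^{1/2}$) and the "pure" $L^2$-difference term gives the stated form.

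The main obstacle — and the only genuinely delicate point — is the simultaneous control of the pointwise estimate \eqref{lukbound2} and the energy estimate \eqref{lukbound1}: the naive skeleton-by-skeleton extension controls energy easily but can produce large pointwise deviations near the vertices, so one must use the good-grid selection (averaging over grid translations) carefully to ensure that the edge data for $u$ and $v$ are not only finite-energy but also close in $L^2$ \emph{with the right scaling}, and then propagate this closeness through each extension step without losing more than the claimed powers of $\sigma$. A secondary technical nuisance is the passage from the flat model to $\mathbb{S}^2$: one must check that the finitely many chart transition maps are bi-Lipschitz with constants depending only on the fixed atlas, so all constants remain universal, and that the overlap gluing (itself a small interpolation) respects both \eqref{lukbound1} and \eqref{lukbound2}. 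Since $\mathbb{S}^2$ is two-dimensional and the target $\mathbb{S}^4$ is smooth and compact, no topological obstruction arises and the construction goes through with universal constants; I would cite \cite[Section 2.6]{Simon} for the Euclidean lemma and indicate only the modifications needed for the spherical cross-section.
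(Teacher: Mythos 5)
The paper does not prove this lemma. It states it as ``due to S.~Luckhaus \cite{Lu}'' and immediately proceeds to the proof of Proposition~\ref{interiorblowup}; the reference to \cite[Section~2.6]{Simon} that you invoke is indeed the standard source. Your sketch therefore attempts more than the paper does: it outlines the proof rather than citing it.

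The outline is broadly the right one (averaging over grid translations to select a good skeleton, skeleton-by-skeleton extension in the product direction, one power of $\sigma$ or $\sigma^{-1}$ per fiber extension, and an $L^\infty$ estimate degrading by a further $\sigma^{-1}$ per dimension because $L^\infty$ control from $W^{1,2}$ is only available in dimension one). However, there is a genuine gap in your reduction to the Euclidean model: you propose to build $w$ independently in finitely many \emph{overlapping} coordinate charts $U_i$ of $\mathbb{S}^2$ and then ``glue by again interpolating on overlaps.'' The independent constructions agree on $(U_i \cap U_j)\times\{1\}$ and $(U_i\cap U_j)\times\{1-\sigma\}$ (there they equal $u$ and $v$), but throughout the interior of $(U_i\cap U_j)\times(1-\sigma,1)$ they will generally differ, and a horizontal partition-of-unity gluing produces error terms in $w_i - w_j$ over which you have no a priori control --- these need not be ``of the same homogeneity'' and can be of the order of the full energy, ruining both \eqref{lukbound1} and \eqref{lukbound2}. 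The standard Luckhaus/Simon argument avoids this entirely by using a single, non-overlapping cellular decomposition of the cross-section (for instance, radially projecting the faces of the boundary of a cube onto $\mathbb{S}^2$), applying the Fubini/averaging step to translations or rotations of that one grid, and running the skeleton extension once, globally. If you replace the chart-and-glue step with a global cell complex on $\mathbb{S}^2$, the rest of your sketch does follow the Luckhaus strategy, although it would still need the skeleton extensions made precise (e.g.\ linear interpolation on the lowest-dimensional cells and homogeneous extension from cell centers only where the fiber dimension makes that a bounded $W^{1,2}$-operation) before it could stand as a complete proof.
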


\begin{proof}[Proof of Proposition \ref{interiorblowup}]
We essentially follow the proof of \cite[Lemma 2.2.13]{LiWa2} with minor modifications. By Proposition \ref{corintmonotform}, $Q_\lambda$ satisfies the interior monotonicity formula \eqref{IntMonForm}. Rescaling this formula yields
\begin{equation}\label{diffofvacmardi}
\frac{1}{R_2}\mathcal{E}_{\lambda r_n^2}(Q_{\lambda,r_n},B_{R_2})-\frac{1}{R_1}\mathcal{E}_{\lambda r_n^2}(Q_{\lambda,r_n},B_{R_1})\geq \int_{B_{R_2}\setminus B_{R_1}} \frac{1}{|x|}\bigg|\frac{\partial Q_{\lambda,r_n}}{\partial |x|}\bigg|^2\,dx 
\end{equation}
for every $0<R_1<R_2\leq r_0/r_n$. As a consequence, for every  $0<R<r_0/r_n$, we have
$$\frac{1}{R}\mathcal{E}_{\lambda r_n^2}(Q_{\lambda,r_n},B_R)\leq \frac{r_n}{r_0}\mathcal{E}_{\lambda r_n^2}(Q_{\lambda,r_n},B_{r_0/r_n})= \frac{1}{r_0}\mathcal{E}_{\lambda}(Q_{\lambda},B_{r_0}(x_0))\,.$$
Consequently, we can find a (not relabeled) subsequence such that  $Q_{\lambda,r_n}$ converges to a map $Q_*$ weakly in $W^{1,2}_{\rm{loc}}(\mathbb{R}^3)$ and strongly in $L^2_{\rm loc}(\R^3)$. Up to a further subsequence, $Q_{\lambda,r_n}\to Q_*$ a.e. in~$\R^3$, and thus $Q_* \in W^{1,2}_{\rm{loc}}(\mathbb{R}^3;\mathbb{S}^4)$. By the monotonicity formula  \eqref{IntMonForm} satisfied by $Q_\lambda$, we have 
$$\lim_{n\to\infty}\frac{1}{R}\mathcal{E}_{\lambda r_n^2}(Q_{\lambda,r_n},B_{R})
= \lim_{n\to\infty}\frac{1}{Rr_n}\mathcal{E}_{\lambda}(Q_{\lambda},B_{Rr_n}(x_0))
= \lim_{r\to 0}\frac{1}{r}\mathcal{E}_{\lambda}(Q_{\lambda},B_{r}(x_0))$$
 for every $R>0$. Consequently, letting $n\to\infty$ in \eqref{diffofvacmardi} yields by  $W^{1,2}$-weak convergence and lower semicontinuity, 
$$\int_{B_{R_2}\setminus B_{R_1}} \frac{1}{|x|}\bigg|\frac{\partial Q_{*}}{\partial |x|}\bigg|^2\,dx=0 $$
for every $0<R_1<R_2$, which shows that $Q_*$ is $0$-homogeneous. 

Now we aim to prove that, for every radius $R>0$,  $Q_{\lambda,r_n}\to Q_*$ strongly in $W^{1,2}(B_R)$, and that 
$$\int_{B_R}|\nabla Q_*|^2\,dx\leq  \int_{B_R}|\nabla \bar{Q}|^2\,dx$$
for every competitor  $\bar{Q}\in W^{1,2}(B_R;\mathbb{S}^4)$ such that $\bar{Q}-Q_*$ is compactly supported in $B_R$ (i.e., $Q_*$ is a minimizing harmonic map into $\mathbb{S}^4$ on the whole space $\R^3$ w.r.to compactly supported perturbations). By homogeneity of $Q_*$, the value of the radius $R$ does not play a role, and it is enough to show strong $W^{1,2}$-convergence and energy minimality in a ball $B_\rho$ for some radius $\rho\in(0,1)$.  

We fix a competitor $\bar{Q} \in W^{1,2}(B_1; \mathbb{S}^4)$ and $\delta \in (0,1)$ such that $\bar{Q}\equiv Q_*$ a.e. in $B_1\setminus B_{1-\delta}$. Extracting a further subsequence if necessary, by Fatou's lemma and Fubini's theorem, we can select a radius $\rho \in (1-\delta,1)$ and a constant $C>0$ such that 
\begin{equation}
\label{choicerho} \lim_{n\to \infty} \int_{\partial B_\rho} |Q_{\lambda,r_n} -Q_*|^2 \, d\mathcal{H}^2 =0\quad\text{and}\quad \int_{\partial B_\rho} \left( |\nabla Q_{\lambda,r_n}|^2+ |\nabla Q_*|^2 \right) \, d\mathcal{H}^2\leq C \, .
\end{equation} 
We apply Lemma \ref{luckhaus} with a choice $\sigma=\sigma_n\in(0,\delta)$, $u(x)=Q_{\lambda,r_n}(\rho x)$ and $v(x)=Q_*(\rho x)$, $x\in \mathbb{S}^2$, for a sequence of numbers $\sigma_n\to 0$ to interpolate between $Q_{\lambda,r_n}$ and $Q_*$. For $n$ sufficiently large, we choose $\sigma_n:=\|Q_{\lambda,r_n} -Q_*\|^{1/3}_{L^2(\partial B_\rho)}<\delta$, and in this way, 
we obtain $w_n \in W^{1,2}(B_\rho; \mathcal{S}_0)$ satisfying
\[ w_n(x)=
\begin{cases}
\displaystyle\bar{Q}\left( \frac{x}{1-\sigma_n} \right)  & \text{ for $|x| \leq \rho (1-\sigma_n)$} \, , \\[8pt]
Q_{\lambda,r_n}(x)  & \text{for $|x|=\rho$} \,, 
\end{cases}
\]
with the estimate
\begin{multline}
\label{H1competitorbound}
 \int_{B_\rho \setminus B_{\rho(1-\sigma_n)}} |\nabla w_n|^2 \, dx \leq C \Big( \sigma_n \int_{\partial B_\rho}  \big( |\nabla_{\rm tan} Q_{\lambda,r_n}|^2+ |\nabla_{\rm tan} Q_*|^2 \big) \, d\mathcal{H}^2\\ 
 + \frac{1}{\sigma_n} \int_{\partial B_\rho}|Q_{\lambda,r_n}-Q_*|^2 \, d\mathcal{H}^2   \Big) \mathop{\longrightarrow}\limits_{n\to \infty} 0 \,, 
 \end{multline}
and ${\rm dist}(w_n, \mathbb{S}^4) =\mathcal{O}(\sigma_n) \to 0$ uniformly on $B_\rho \setminus B_{\rho(1-\sigma_n)}$ as $n \to \infty$ because of \eqref{choicerho}, \eqref{lukbound2} and our choice of $\sigma_n$.

For $n$ large enough we have $|w_n|\geq 1/2$ on $B_\rho$, hence we can define a sequence of comparison maps $v_n \in W^{1,2}(B_\rho; \mathbb{S}^4)$, so that $v_n=Q_{\lambda,r_n}$ on $\partial B_\rho$, by setting
\begin{equation}
\label{energycompetitor}
v_n(x):=
\begin{cases}
\displaystyle \bar{Q}\left( \frac{x}{1-\sigma_n} \right)  & \text{if $|x| \leq \rho (1-\sigma_n)$} \, , \\[10pt]
\displaystyle \frac{w_n(x)}{|w_n(x)|} &  \text{if $\rho(1-\sigma_n) \leq |x|\leq \rho$} \, .
\end{cases}
\end{equation}
Notice that, since $|w_n|\geq 1/2$, we have $|\nabla v_n|\leq C|\nabla w_n|$ a.e. in the annulus $\{\rho(1-\sigma_n) \leq |x|\leq \rho \}$. 
In view of Lemma \ref{H1energycompetitor}, combining \eqref{H1competitorbound} and \eqref{energycompetitor} together with the weak $W^{1,2}$-convergence of $Q_{\lambda,r_n}$ towards $Q_*$, we obtain
\begin{multline*}
\int_{B_\rho}  |\nabla Q_*|^2 \, dx \leq  \liminf_{n\to \infty} \int_{B_\rho} |\nabla Q_{\lambda,r_n}|^2 \, dx  \leq \limsup_{n\to \infty} \int_{B_\rho} |\nabla Q_{\lambda,r_n}|^2 \, dx \\
\leq \limsup_{n\to \infty} \int_{B_\rho} |\nabla v_n|^2 \, dx 
=  \limsup_{n\to \infty} \left[(1-\sigma_n) \int_{B_\rho}  |\nabla \bar{Q}|^2 \, dx+\int_{B_\rho \setminus B_\rho(1-\sigma_n)}  \big|\nabla v_n\big|^2 \, dx \right]  \\
\leq  \lim_{n \to \infty} \left[ (1-\sigma_n) \int_{B_\rho}  |\nabla \bar{Q} |^2 \, dx+ C \int_{B_\rho \setminus B_{\rho(1-\sigma_n)}} |\nabla w_n|^2 \, dx  \right] =\int_{B_\rho}  |\nabla \bar{Q} |^2 \, dx \, .
\end{multline*}
Since $\bar{Q}$ and $\delta$  are arbitrary, this chain of inequalities provides both the strong $W^{1,2}$-convergence $Q_{\lambda,r_n} \to Q_*$ (using $\bar{Q}=Q_*$) and the energy minimality of $Q_*$ in the ball $B_\rho$. 
\end{proof}

We now aim to perform a similar blow-up analysis around a boundary point. To this purpose, let us recall that $\partial\Omega$ is assumed to be of class $C^3$, and $Q_{\rm b}\in C^{1,1}(\partial\Omega;\mathbb{S}^4)$. We consider the enlarged domain $\widehat\Omega$ defined in \eqref{defenlargeddom}, and we extend $Q_{\rm b}$ to $\widehat\Omega\setminus\Omega$ by setting $\widehat Q_{\rm b}(x):=Q_{\rm b}(\boldsymbol{\pi}_\Omega(x))$ for $x\in\widehat\Omega\setminus\Omega$, where $\boldsymbol{\pi}_\Omega$ is the nearest point projection on $\partial\Omega$. By the regularity assumption on $\partial\Omega$ and $Q_{\rm b}$, we have $\widehat Q_{\rm b}\in C^{1,1}(\widehat\Omega\setminus\Omega)$.

\begin{proposition}\label{boundaryblowup}
Let $Q_\lambda$ be a minimizer of $\mathcal{E}_\lambda$ over $\mathcal{A}_{Q_{\rm b}}(\Omega)$, and denote by $\widehat{Q}_\lambda$ the extension of $Q_\lambda$ to  $\widehat{\Omega}$ given by $\widehat Q_\lambda=\widehat{Q}_{\rm b}$ in $\widehat\Omega\setminus\Omega$. Given ${x}_0\in \partial \Omega$ and $0<r\leq r_0$ such that $\overline{B_{r_0}(x_0)} \subset \widehat{\Omega}$,   consider the rescaled map $\widehat{Q}_{\lambda,r}\in W^{1,2}(B_{r_0/r};\mathbb{S}^4)$ defined by 
$$\widehat{Q}_{\lambda,r}(x)=\widehat{Q}_{\lambda}(x_0+r x)\,.$$  
For every sequence $r_n\to 0$, there exist a (not relabeled) subsequence and $Q_*\in W^{1,2}_{\rm{loc}}(\mathbb{R}^3; \mathbb{S}^4)$ such that $\widehat Q_{\lambda,r_n} \to Q_*$ strongly in $W^{1,2}_{\rm{loc}}(\R^3)$. In addition, $Q_*$ is homogeneous of degree zero, and up to a rotation of coordinates, $Q_*$ is a minimizing harmonic map in the upper half space $\{ x_3 >0 \}$ and $Q_* \equiv Q_{\rm b}(x_0)$ in $\{ x_3<0 \}$.
\end{proposition}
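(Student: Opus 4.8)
The plan is to adapt the interior blow-up argument of Proposition \ref{interiorblowup} to a boundary point, using the reflection machinery of Subsection \ref{subsecextension} and the boundary monotonicity inequality \eqref{BdMonForm}. First I would invoke Proposition \ref{corintmonotform} so that $Q_\lambda$ satisfies the boundary monotonicity inequality \eqref{BdMonForm} with the controlled constant $K_\lambda(Q_{\rm b},Q_\lambda)$ as in \eqref{contrKOm}. After centering at $x_0\in\partial\Omega$ and rescaling, the map $\widehat Q_{\lambda,r}$ satisfies (in the extended domain, or equivalently up to the rescaled boundary) a monotonicity inequality of the form
\begin{equation*}
\frac{1}{R_2}\mathcal{E}_{\lambda r_n^2}(\widehat Q_{\lambda,r_n},B_{R_2}\cap\Omega_{r_n}) - \frac{1}{R_1}\mathcal{E}_{\lambda r_n^2}(\widehat Q_{\lambda,r_n},B_{R_1}\cap\Omega_{r_n}) \geq -C r_n(R_2-R_1) + (\text{radial term}) + (\text{potential term})\,,
\end{equation*}
where $\Omega_{r_n}=r_n^{-1}(\Omega-x_0)$. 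Since $K_\lambda(Q_{\rm b},Q_\lambda)$ does not blow up, the error term $C r_n(R_2-R_1)\to 0$; combined with Lemma \ref{corolmonotform} and Lemma \ref{lemmacontrolscalenergext} this gives a uniform bound $\frac1R\int_{B_R}|\nabla\widehat Q_{\lambda,r_n}|^2\,dx\leq C$ on the \emph{reflected} maps, hence weak $W^{1,2}_{\rm loc}$ compactness and a.e. convergence to some $Q_*\in W^{1,2}_{\rm loc}(\R^3;\mathbb{S}^4)$. Passing to the limit in the monotonicity inequality (using that the rescaled domains $\Omega_{r_n}$ exhaust a half-space, after rotating so that $\nu(x_0)=-e_3$, and that $\widehat Q_{\lambda,r_n}\equiv \widehat Q_{\rm b}(x_0+r_n\cdot)\to Q_{\rm b}(x_0)$ uniformly on the complementary side since $\widehat Q_{\rm b}\in C^{1,1}$) kills the radial term in the limit, so $Q_*$ is $0$-homogeneous, constant equal to $Q_{\rm b}(x_0)$ on $\{x_3<0\}$.

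Next I would upgrade weak to strong $W^{1,2}_{\rm loc}$ convergence and establish minimality of $Q_*$ as a harmonic map on the half-space. This is the boundary analogue of the comparison-map construction in the proof of Proposition \ref{interiorblowup}: fix a competitor $\bar Q\in W^{1,2}(B_1\cap\{x_3>0\};\mathbb{S}^4)$ with $\bar Q=Q_{\rm b}(x_0)$ on $B_1\cap\{x_3<0\}$ and $\bar Q=Q_*$ near $\partial B_1$, choose (by Fatou/Fubini) a good radius $\rho\in(1-\delta,1)$ on which $\widehat Q_{\lambda,r_n}\to Q_*$ in $L^2(\partial B_\rho)$ with bounded Dirichlet energy, interpolate between $\widehat Q_{\lambda,r_n}(\rho\cdot)$ and $Q_*(\rho\cdot)$ on the sphere using the Luckhaus Lemma \ref{luckhaus} with $\sigma_n=\|\widehat Q_{\lambda,r_n}-Q_*\|_{L^2(\partial B_\rho)}^{1/3}\to0$, project onto $\mathbb{S}^4$ (legitimate since ${\rm dist}(w_n,\mathbb{S}^4)=\mathcal{O}(\sigma_n)$), and feed the result into the boundary version of Lemma \ref{H1energycompetitor}. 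The only new point compared to the interior case is that the competitor must respect the Dirichlet boundary datum: since on the rescaled boundary $B_\rho\cap\partial\Omega_{r_n}$ the trace of $\widehat Q_{\lambda,r_n}$ equals $Q_{\rm b}(x_0+r_n\cdot)$ which converges \emph{uniformly} (in $C^1$) to the constant $Q_{\rm b}(x_0)$, one can absorb the discrepancy between $Q_{\rm b}(x_0+r_n\cdot)$ and $Q_{\rm b}(x_0)$ into a boundary layer of width $o(1)$ costing $o(1)$ in energy, exactly as one absorbs the $C\lambda\rho^3 r_n^2$ potential term. The resulting chain of inequalities
\begin{equation*}
\int_{B_\rho\cap\{x_3>0\}}|\nabla Q_*|^2\,dx \leq \liminf_n \int \cdots \leq \limsup_n \int \cdots \leq \int_{B_\rho\cap\{x_3>0\}}|\nabla\bar Q|^2\,dx
\end{equation*}
simultaneously yields strong convergence (take $\bar Q=Q_*$) and minimality of $Q_*$ on the half-space with its boundary datum.

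The main obstacle I anticipate is the bookkeeping around the boundary: making precise that, after centering and rescaling at $x_0$, the reflected maps $\widehat Q_{\lambda,r_n}$ (defined via $\boldsymbol{\Sigma}$ and $\boldsymbol{\sigma}_\Omega$ in \eqref{extprocedure}) converge to the flat reflection of a half-space harmonic map, so that the limit $Q_*$ is genuinely characterized as ``minimizing on $\{x_3>0\}$, constant on $\{x_3<0\}$'' rather than as a solution of some limiting reflected equation whose boundary behaviour is unclear. This requires: (a) the $C^2$-regularity of $\boldsymbol{\pi}_\Omega$ so that $\boldsymbol{\sigma}_\Omega$ rescaled converges in $C^1_{\rm loc}$ to the Euclidean reflection across $T_{x_0}\partial\Omega$; (b) the $C^{1,1}$-regularity of $Q_{\rm b}$ so that $\boldsymbol{\Sigma}$ rescaled converges uniformly to the constant involution $2Q_{\rm b}(x_0)\otimes Q_{\rm b}(x_0)-{\rm id}$; and (c) the uniform energy bound from Lemma \ref{lemmacontrolscalenergext} ensuring no energy concentrates on the reflected side in the limit. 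Granting these, the coercivity/lower-semicontinuity arguments are routine, and the degree-zero homogeneity together with the half-space minimality of $Q_*$ then serve (in the subsequent regularity argument) as the input to the Liouville-type constancy result of \cite{Le} at boundary points.
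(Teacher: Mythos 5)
The overall architecture of your proposal is right — boundary monotonicity, weak compactness of rescaled maps, identification of the half-space limit, and a Luckhaus-based comparison argument to upgrade to strong convergence and minimality. But there are two substantive problems.

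\textbf{1. You misidentify the extension $\widehat Q_\lambda$.} In the statement and throughout the paper's proof, $\widehat Q_\lambda$ is extended outside $\Omega$ by the \emph{normal projection} of the boundary data, $\widehat Q_\lambda = \widehat Q_{\rm b}$ with $\widehat Q_{\rm b}(x)=Q_{\rm b}(\boldsymbol{\pi}_\Omega(x))$ — \emph{not} by the reflection $\boldsymbol{\Sigma}(x)Q_\lambda(\boldsymbol{\sigma}_\Omega(x))$ of \eqref{extprocedure}. This matters: the projection extension is independent of $Q_\lambda$ in $\widehat\Omega\setminus\Omega$, so its blow-up in $\{x_3<0\}$ is immediately the constant $Q_{\rm b}(x_0)$ (by continuity of $\widehat Q_{\rm b}$ at $x_0$). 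If instead you rescale the \emph{reflection} extension, its limit in $\{x_3<0\}$ is $\boldsymbol{\Sigma}_0 Q_*(\bar{x})$ — a reflected copy of $Q_*$, generally \emph{not} constant — which contradicts the proposition's conclusion. Your ``main obstacle'' paragraph and items (a)--(c) are all aimed at analyzing the rescaled $\boldsymbol{\sigma}_\Omega$ and $\boldsymbol{\Sigma}$, and you invoke Lemma \ref{lemmacontrolscalenergext} (which is about the reflected map). None of this is needed here, and if pursued literally it proves a different statement in the lower half-space. The $W^{1,2}_{\rm loc}$ bound for the projection extension follows directly from the rescaled boundary monotonicity plus the $C^{1,1}$ bound on $\widehat Q_{\rm b}$, with no recourse to Lemma \ref{lemmacontrolscalenergext}.

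\textbf{2. The Luckhaus step does not respect the Dirichlet condition.} You propose to apply Lemma \ref{luckhaus} on the full sphere $\partial B_\rho$ and then restrict the resulting comparison map to $\Omega_{r_n}\cap B_\rho$. The Luckhaus interpolation controls traces on $\partial B_\rho$ and $\partial B_{\rho(1-\sigma_n)}$, but it leaves the trace on the intermediate surface $\partial\Omega_{r_n}\cap B_\rho$ completely uncontrolled; and in the inner ball the competitor $\bar Q(x/(1-\sigma_n))$ does not match $Q_{\rm b}(x_0+r_n\cdot)$ on $\partial\Omega_{r_n}\cap B_{\rho(1-\sigma_n)}$ either (only approximately). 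Hence the restriction is not an admissible competitor. Your suggested ``boundary layer'' correction is not a small perturbation to an almost-admissible map — it is a second gluing problem that must be solved exactly, and writing it out would essentially reconstruct the paper's device. The paper avoids the difficulty by (i) flattening $\Omega_n\cap B_\rho$ to the model half-ball $B_\rho^+$ via diffeomorphisms $\Phi_n$ converging to the identity (see \eqref{localstraightening}), and then (ii) introducing a biLipschitz homeomorphism $\Psi\colon\overline{B_1}\to\overline{B_1^+}$ so that Luckhaus is run on maps $u,v$ on $\mathbb{S}^2$ that parametrize the \emph{full} half-ball boundary $\partial B_\rho^+$ — including the flat disc $B_\rho\cap\{x_3=0\}$. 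Because the interpolation layer is then a collar of $\partial B_\rho^+$ in the half-ball, the pulled-back comparison map agrees with $\widehat Q_{\lambda,r_n}\circ\Phi_n$ on the flat disc, i.e., with the Dirichlet data after undoing the flattening. This is the step missing from your construction.
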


\begin{proof}
 Up to a  translation and a rotation, we may assume that $\{x_3=0\}$ is the tangent plane to $\partial \Omega$ at $x_0$ and the vector $(0,0,-1)$ is the outward unit normal. By Proposition \ref{corintmonotform}, $Q_\lambda$ satisfies the Boundary Monotonicity Inequality \eqref{BdMonForm}, and by rescaling variables, 
\begin{multline}\label{rescmonotbdryblwupprop}
\frac{1}{R_2}\mathcal{E}_{\lambda r_n^2}(\widehat Q_{\lambda,r_n},B_{R_2}\cap\Omega_n)-\frac{1}{R_1}\mathcal{E}_{\lambda r_n^2}(\widehat Q_{\lambda,r_n},B_{R_1}\cap\Omega_n)\geq \int_{(B_{R_2}\setminus B_{R_1})\cap\Omega_n} \frac{1}{|x|}\bigg|\frac{\partial \widehat Q_{\lambda,r_n}}{\partial |x|}\bigg|^2\,dx\\ 
-r_n(R_2-R_1)K_\lambda(Q_{\rm b},Q_\lambda)
\end{multline}
for every $0<R_1<R_2\leq r_0/r_n$, where we have set $\Omega_n:=r_n^{-1}(\Omega-x_0)$. As a consequence, 
$$ \frac{1}{R}\mathcal{E}_{\lambda r_n^2}(\widehat Q_{\lambda,r_n},B_{R}\cap\Omega_n)\leq \frac{1}{r_0}\mathcal{E}_{\lambda}(Q_{\lambda},B_{r_0}(x_0)\cap\Omega)+r_0 K_\lambda(Q_{\rm b},Q_\lambda)$$
 for every $0<R<r_0/r_n$. Since $\widehat Q_{\rm b}\in C^{1,1}(\widehat\Omega\setminus\Omega)$ and $\widehat Q_{\lambda,r_n}(x)=\widehat Q_{\rm b}(x_0+r_nx)$ for $x\in B_R\setminus\Omega_n$ and $0<R<r_0/r_n$,   
 in view of \eqref{rescmonotbdryblwupprop} the sequence $\{ \widehat{Q}_{\lambda,r_n}\}$ is bounded in $W^{1,2}_{\rm{loc}}(\mathbb{R}^3)$. Consequently, there exists a (not relabeled) subsequence such that $\widehat Q_{\lambda,r_n}$ converges to 
 a map $Q_*$ weakly in $W^{1,2}_{\rm{loc}}(\mathbb{R}^3; \mathbb{S}^4)$ and strongly in $L^2_{\rm{loc}}(\R^3)$. Up to a further subsequence, $\widehat Q_{\lambda,r_n}\to Q_*$ a.e. in $\R^3$, and thus 
 $Q_*\in W^{1,2}_{\rm{loc}}(\mathbb{R}^3; \mathbb{S}^4)$. Now observe that $\Omega_n\to \{x_3>0\}$ locally in the Hausdorff metric. Since  $\widehat{Q}_{\rm b}$ is continuous at $x_0$, $ \widehat{Q}_{\lambda,r_n}\to {Q}_{\rm b}(x_0)$ locally uniformly in the open half space $\{x_3<0\}$. Therefore, 
 $Q_*(x) \equiv Q_{\rm b}(x_0)$ in $\{ x_3<0 \}$, and it has constant trace on the plane $\{x_3=0\}$. Arguing essentially as in the proof of Proposition \ref{interiorblowup}, we can let $n\to\infty$ in  \eqref{rescmonotbdryblwupprop} to infer that 
$$ \int_{(B_{R_2}\setminus B_{R_1})\cap\{x_3>0\}} \frac{1}{|x|}\bigg|\frac{\partial  Q_{*}}{\partial |x|}\bigg|^2\,dx=0$$
for every $0<R_1<R_2$. Since the map $Q_*$ is constant in $\{x_3<0\}$, it follows that $Q_*$ is $0$-homogeneous in the whole $\mathbb{R}^3$.  

Now it remains to show the strong convergence of $Q_{\lambda,r_n}$ in $W^{1,2}_{\rm loc}(\R^3)$, and the local energy minimality of $Q_*$ in $\{x_3>0\}$. As in the proof of Proposition \ref{interiorblowup}, by homogeneity, it is enough to show strong $W^{1,2}$-convergence in a ball $B_\rho\subset B_1$ (perhaps up to a subsequence), and energy minimality of $Q_*$ in $B_\rho\cap\{x_3>0\}$. 
  We first notice that,  $\widehat{Q}_{\rm b}$ being $C^{1,1}$ in $\widehat{\Omega}\setminus\Omega$, we have 
$$\int_{B_\rho \setminus \Omega_n } |\nabla \widehat{Q}_{\lambda,r_n}|^2 \, dx = \frac{1}{r_n}\int_{B_{\rho r_n}(x_0) \setminus \Omega } |\nabla \widehat{Q}_{\rm b}|^2 \, dx \mathop{\longrightarrow}\limits_{n\to\infty} 0=\int_{B_\rho \cap \{x_3 \leq 0 \}} |\nabla Q_*|^2 \, dx   \, , $$
and we only need to show that 
$$\int_{B_\rho \cap \Omega_n} |\nabla \widehat{Q}_{\lambda,r_n}|^2 \, dx \mathop{\longrightarrow}\limits_{n\to\infty} \int_{B_\rho\cap\{x_3>0\}} |\nabla Q_*|^2 \, dx$$ 
  to establish the strong convergence of $\widehat{Q}_{\lambda,r_n}$ in $W^{1,2}(B_\rho)$.  
The rest of the proof is quite similar to the one used for the interior case discussed in Proposition \ref{interiorblowup}. For this reason, we only sketch few differences in the construction of comparison maps when gluing different maps near the boundary.
 
The starting point of the construction is to flatten the boundary $\partial \Omega$ near $x_0$. Assuming $\{r_n\}$ suitably small (depending only on $x_0$ and the curvature of $\partial \Omega$ at $x_0$), there exists a sequence of diffeomorphisms $\{ \Phi_n \}\subset C^2(\overline{B_1};\mathbb{R}^3)$ satisfying the following properties: 
\begin{multline} \label{localstraightening}
   \Omega_n\cap B_{r}  =\Phi_n(B_{r}^+) \, , \, \, \partial \Omega_n\cap B_{r}  =\Phi_n(B_{r} \cap\{ x_3=0\}) \quad \forall 0< r\leq1\,,\\  \hbox{and} \quad \| \Phi_n- {\rm id}\|_{C^2(\overline{B_1})} \mathop{\longrightarrow}\limits_{n\to \infty}0 \, , 
  \end{multline}
 where we set $B_r^+:=B_r\cap\{x_3>0\}$, $0<r\leq 1$.  
 We fix $0<\delta<1/4$ and a competitor $\bar{Q} \in W^{1,2}_{\rm{loc}}( \mathbb{R}^3;\mathbb{S}^4)$ such that $\bar{Q}= Q_*$ a.e. in $\R^3\setminus B^+_{1-\delta}$. Notice that $\widehat{Q}_{\lambda,r_n} \circ \Phi_n \rightharpoonup Q_*$ weakly in $W^{1,2}(B_1^+;\mathbb{S}^4)$ as $n \to \infty$.  In addition, $\widehat{Q}_{\lambda,r_n}(\Phi_n(x))={Q}_{\rm b}(x_0+r_n\Phi_n(x))$ and $\bar{Q}(x)=Q_{\rm b}(x_0)$ for $x\in B_1\cap\{x_3=0\}$ because of \eqref{localstraightening}. Consequently, since $Q_{\rm b} \in C^{1,1}(\partial \Omega; \mathbb{S}^4)$ we get
\[ \lim_{n \to \infty} \int_{ B_1\cap\{x_3=0\} } |\widehat{Q}_{\lambda,r_n} \circ \Phi_n  -\bar{Q}|^2 \, d\mathcal{H}^2 =0 \quad\text{and}\quad \lim_{n\to\infty}\int_{ B_1\cap\{x_3=0\} }  |\nabla_{\rm tan}  (\widehat{Q}_{\lambda,r_n} \circ \Phi_n )|^2 \, d\mathcal{H}^2=0 \, .\] 
Hence we can argue as in the interior case: by Fatou's lemma and Fubini's theorem, extracting a further subsequence if necessary, we can select $\rho \in (1-\delta,1)$ and a constant $C>0$ such that 
\[ \lim_{n \to \infty} \int_{\partial B^+_\rho } |\widehat{Q}_{\lambda,r_n} \circ \Phi_n  -\bar{Q}|^2 \, d\mathcal{H}^2 =0 \quad\text{and}\quad \int_{\partial B^+_{\rho }} \left( |\nabla_{\rm tan}  (\widehat{Q}_{\lambda,r_n} \circ \Phi_n )|^2 +|\nabla_{\rm tan}  \bar{Q}|^2 \right) \, d\mathcal{H}^2  \leq C \, .\] 
We then choose the sequence $\sigma_n\to0$ with $0<\sigma_n<\delta$ as $\sigma_n:= \|\widehat{Q}_{\lambda,r_n} \circ \Phi_n  -\bar{Q}\|_{L^2(\partial B^+_\rho)}^{1/3}$.

Before going further, let us notice that we can argue as in Lemma  \ref{energycompetitor} (using the weak convergence of $\widehat{Q}_{\lambda,r_n}$, its energy minimality on $\Omega_n \cap B_\rho$ , and  \eqref{localstraightening}) to prove the following:  for any bounded sequence $\{v_n\} \subset W^{1,2}(B^+_\rho; \mathbb{S}^4)$ such that $v_n= \widehat{Q}_{\lambda, r_n}\circ \Phi_n$ on $\partial B^+_\rho$, we have
\begin{multline}\label{bdH1lowerbound}
\int_{B_\rho^+}  |\nabla Q_*|^2 \, dx \leq  \liminf_{n\to \infty} \int_{\Omega_n \cap B_\rho}  |\nabla \widehat{Q} _{\lambda,r_n} |^2 \, dx  \leq \limsup_{n\to \infty} \int_{\Omega_n \cap B_\rho} |\nabla \widehat{Q}_{\lambda,r_n}|^2 \, dx  \\  
\leq \limsup_{n\to \infty} \mathcal{E}_{\lambda r_n^2}( \widehat{Q}_{\lambda,r_n},\Omega_n \cap B_\rho) \leq  \limsup_{n\to \infty} \mathcal{E}_{\lambda r_n^2}( v_n \circ \Phi_n^{(-1)},\Omega_n \cap B_\rho)\\
=    \limsup_{n\to \infty} \int_{B_\rho^+}  |\nabla v_n |^2 \, dx \,,
\end{multline}
where the last equality follows from a change of variables and \eqref{localstraightening}.

Now, to construct an effective sequence of comparison maps, it is convenient to introduce a biLipschitz map $\Psi \colon \overline{B_1} \to \overline{B^+_1}$. By means of $\Psi$, the comparison maps can be constructed as in the interior case. 
More precisely, we apply Lemma \ref{luckhaus} to the pair of maps from the two-sphere $\mathbb{S}^2$, namely  $u(\cdot)=\widehat{Q}_{\lambda,r_n} \circ \Phi_n (\rho \Psi(\cdot))$ and $v(\cdot)=\bar{Q}(\rho \Psi(\cdot))$. As in the interior case, the lemma produces a sequence $\{ w_n  \} \subset W^{1,2}(B_1;\mathcal{S}_0)$ satisfying
 \[ w_n(x)=
\begin{cases}
\displaystyle \bar{Q}\bigg(\rho \Psi \big( \frac{x}{1-\sigma_n} \big)\bigg) & \text{if $|x| \leq  1-\sigma_n$} \, , \\[8pt]
\widehat{Q}_{\lambda,r_n} \circ \Phi_n \big(\rho \Psi(x)\big)  & \text{if $|x|=1$} \, , 
\end{cases}
\]
with the estimate
\begin{multline}\label{bdH1competitorbound}
\int_{B_1 \setminus B_{1-\sigma_n}} |\nabla w_n|^2 
 \, dx  \leq C \Big( \sigma_n  \int_{\partial B^+_\rho} 
 \big( |\nabla_{\rm tan}  (\widehat{Q}_{\lambda,r_n} \circ \Phi_n )|^2 +|\nabla_{\rm tan}  \bar{Q}|^2 \big) \, d\mathcal{H}^2\\ 
 + \frac{1}{\sigma_n} \int_{\partial B^+_\rho } |\widehat{Q}_{\lambda,r_n} \circ \Phi_n -\bar{Q}|^2 \, d\mathcal{H}^2 
 \Big) \mathop{\longrightarrow}\limits_{n\to\infty} 0 \,, 
 \end{multline}
and ${\rm dist}(w_n, \mathbb{S}^4) \to 0$ uniformly in $B_1 \setminus B_{1-\sigma_n}$ as $n \to \infty$.

Since $|w_n|\geq 1/2$ for $n$ large enough, we can define a sequence $\{\bar{v}_n\} \subset W^{1,2}(B_1; \mathbb{S}^4)$ by setting 
\begin{equation}
\label{bdenergycompetitor}
\bar{v}_n(x)=
\begin{cases}
\displaystyle \bar{Q}(\rho \Psi \left( \frac{x}{1-\sigma_n} \right)) & \text{if $|x| \leq 1-\sigma_n$} \, , \\[8pt]
\displaystyle \frac{w_n(x)}{|w_n(x)|}  &  \text{if $1-\sigma_n \leq |x|\leq 1$} \, , 
\end{cases}
\end{equation}
and it satisfies
\begin{equation}\label{bdannulusbound} 
 \int_{B_1 \setminus B_{1-\sigma_n}} |\nabla \bar{v}_n|^2 \, dx  \leq C  \int_{B_1 \setminus B_{1-\sigma_n}} |\nabla w_n|^2 \, dx\mathop{\longrightarrow}\limits_{n\to\infty} 0 \, . 
 \end{equation} 
Now we pull-back $\bar{v}_n$ on $B^+_\rho$ by setting $v_n(x)=\bar{v}_n(\Psi^{-1}(x/\rho))$, so that $v_n \in W^{1,2}(B_\rho^+;\mathbb{S}^4)$ and $v_n= \widehat{Q}_{\lambda, r_n}\circ \Phi_n$ on $\partial B^+_\rho$ in the sense of traces. Then, a simple computation using the biLipschitz property of $\Psi$ together with \eqref{bdenergycompetitor} and \eqref{bdannulusbound} yields 
 \begin{multline}\label{bdH1upperbound}
 \limsup_{n\to\infty} \int_{B_\rho^+} |\nabla v_n|^2 \, dx \leq  \limsup_{n\to \infty} \int_{B_\rho^+ \setminus(\rho \Psi(B_{1-\sigma_n})) } |\nabla {v}_n|^2 \, dx +\limsup_{n\to \infty} \int_{\rho\Psi(B_{1-\sigma_n}) } |\nabla {v}_n|^2 \, dx\\
  \leq     \limsup_{n\to \infty} C \int_{B_1 \setminus B_{1-\sigma_n} } |\nabla \bar{v}_n|^2 \, dx +\limsup_{n\to \infty} \int_{\rho \Psi(B_{1-\sigma_n}) } |\nabla \bar{Q}|^2 \, dx
 \leq \int_{B_\rho^+}  |\nabla \bar{Q} |^2 \, dx \, .
\end{multline}
Combining \eqref{bdH1lowerbound} and \eqref{bdH1upperbound} with $\bar{Q} \equiv Q_*$, we infer that  
   $\int_{\Omega_n\cap B_\rho} |\nabla \widehat{Q}_{\lambda,r_n}|^2 \, dx \to \int_{B_\rho^+} |\nabla Q_*|^2 \, dx$,  while for an arbitrary $\bar{Q}$,
it yields $\int_{B_\rho^+} |\nabla Q_*|^2 \, dx\leq \int_{B_\rho^+} |\nabla \bar{Q}|^2 \, dx$. The limiting map $Q_*$ is thus a minimizing harmonic map in  $B^+_\rho$, and the proof is  complete.
\end{proof}

All possible limiting maps $Q_*$ obtained by either Proposition \ref{interiorblowup} or Proposition \ref{boundaryblowup} are often referred to as {\em (minimizing) tangent maps} to $Q_\lambda$ at the given point $x_0$. By the monotonicity formulae and the strong compactness of rescaled maps, triviality (i.e., constancy) of all tangent maps implies smallness of the rescaled energy at sufficiently small scale. In our setting, triviality of tangent maps together with smallness of the scaled energy are established in the following propositions. 

\begin{proposition}
\label{interiorsmallenergy}
If $Q_\lambda$ is a minimizer of $\mathcal{E}_\lambda$ over $\mathcal{A}_{Q_{\rm b}}(\Omega)$, then 
$$\lim_{r\to 0}\frac{1}{r} \mathcal{E}_\lambda (Q_\lambda, B_r(x_0))=0 $$
for every $x_0\in \Omega$. 
\end{proposition}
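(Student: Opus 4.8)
The strategy is the classical one for minimizing harmonic maps: combine the interior monotonicity formula with the strong compactness of blow-ups (Proposition~\ref{interiorblowup}) and a Liouville-type property for $0$-homogeneous minimizing harmonic maps from $\R^3$ into $\mathbb{S}^4$. First I would fix $x_0\in\Omega$ and recall from Proposition~\ref{corintmonotform} that $Q_\lambda$ satisfies the Interior Monotonicity Formula~\eqref{IntMonForm}. In particular the function
\[
\Theta(r):=\frac{1}{r}\,\mathcal{E}_\lambda\big(Q_\lambda,B_r(x_0)\big)
\]
is monotone nondecreasing in $r$ (up to the $2\lambda\int_\rho^r t^{-2}\int_{B_t}W\,dx\,dt$ term, which is nonnegative), hence the limit $\Theta_0:=\lim_{r\to0}\Theta(r)$ exists and is finite and nonnegative. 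The goal is to show $\Theta_0=0$.

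Next I would argue by contradiction, assuming $\Theta_0>0$. Take any sequence $r_n\to0$ and apply Proposition~\ref{interiorblowup}: up to a subsequence, the rescaled maps $Q_{\lambda,r_n}(x)=Q_\lambda(x_0+r_n x)$ converge strongly in $W^{1,2}_{\mathrm{loc}}(\R^3)$ to a $0$-homogeneous energy minimizing harmonic map $Q_*:\R^3\to\mathbb{S}^4$. The strong $W^{1,2}_{\mathrm{loc}}$-convergence, together with the fact that $\lambda r_n^2\to0$ and $W$ is bounded on $\mathbb{S}^4$, gives
\[
\frac{1}{R}\int_{B_R}|\nabla Q_*|^2\,dx=\lim_{n\to\infty}\frac{1}{R}\mathcal{E}_{\lambda r_n^2}\big(Q_{\lambda,r_n},B_R\big)=2\,\Theta_0
\]
for every $R>0$ (the last equality using monotonicity of $\Theta$ and the scaling identity $\frac1R\mathcal{E}_{\lambda r_n^2}(Q_{\lambda,r_n},B_R)=\frac1{Rr_n}\mathcal{E}_\lambda(Q_\lambda,B_{Rr_n}(x_0))\to\Theta_0$, modulo the vanishing potential contribution). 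Thus $Q_*$ is a nonconstant $0$-homogeneous minimizing harmonic map into $\mathbb{S}^4$. Since $Q_*$ is $0$-homogeneous, it is determined by its restriction to $\mathbb{S}^2$, and the minimality forces this restriction $\varphi=Q_*|_{\mathbb{S}^2}:\mathbb{S}^2\to\mathbb{S}^4$ to be a minimizing harmonic map; moreover the cone construction shows $\varphi$ must be minimizing among \emph{all} maps with the same boundary data when extended radially — in particular, comparing with the constant extension and using the computation of the energy of a homogeneous extension in dimension $3$ (which is favourable for constants by the strict inequality $0<\Theta_0$), one reaches the desired contradiction. The cleanest route is to invoke the Liouville-type result of Schoen--Uhlenbeck \cite{SU3}: any $0$-homogeneous minimizing harmonic map from $\R^3$ into a sphere is constant. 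Since $Q_*$ is nonconstant, this is a contradiction, so $\Theta_0=0$.

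\textbf{Main obstacle.} The principal difficulty is the Liouville step: establishing that a $0$-homogeneous minimizing harmonic map $\R^3\to\mathbb{S}^4$ must be constant. For target spheres $\mathbb{S}^k$ with $k\geq 3$ this follows from the Schoen--Uhlenbeck reduction-of-dimension argument combined with the fact that a minimizing tangent map in dimension $2$ is constant (since minimizing harmonic maps from a $2$-manifold are smooth and an $\mathbb{S}^2$-to-$\mathbb{S}^4$ harmonic map of low energy must be trivial); the slightly delicate point is that our comparison class is $\mathbb{S}^4$-valued maps, so one must be careful that the competitor construction in Proposition~\ref{interiorblowup} (via the Luckhaus lemma) indeed realizes $Q_*$ as minimizing in the correct class, which is exactly what Proposition~\ref{interiorblowup} already guarantees. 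Once the Liouville property is in hand, the rest is a routine contradiction argument using monotonicity and strong compactness; no single computation is hard, but the bookkeeping of the rescaled energies and the vanishing of the potential term must be done carefully.
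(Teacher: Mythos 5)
Your proposal follows essentially the same route as the paper: blow up at $x_0$ using Proposition~\ref{interiorblowup}, identify the limit $Q_*$ as a degree-zero homogeneous minimizing tangent map into $\mathbb{S}^4$, invoke the Schoen--Uhlenbeck regularity/Liouville result from \cite{SU3} (Theorem 2.7 there gives full smoothness of minimizers into $\mathbb{S}^k$ for $k\geq 3$, which forces a homogeneous map to be constant), and then pass to the limit in the monotonicity formula. The paper argues directly ($Q_*$ is smooth, hence constant, hence the scaled energy converges to zero) rather than by contradiction, and skips the ``cone comparison'' digression you hedge about — but the key lemma and structure are identical.
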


\begin{proof}
Let us fix an arbitrary point $x_0\in\Omega$ and a sequence $r_n\to 0$. According to Proposition \ref{interiorblowup}, up to a subsequence, the rescaled maps satisfy $Q_{\lambda,r_n} \to Q_*$ strongly in $W^{1,2}_{\rm{loc}}(\R^3)$ as $n\to \infty$ for some $Q_* \in W^{1,2}_{\rm{loc}}(\mathbb{R}^3;\mathbb{S}^4)$. Moreover, $Q_*$ is a degree-zero homogeneous energy minimizing harmonic map, so that  
 there exists a smooth harmonic sphere $\omega: \bbS^2 \to \bbS^4$ such that $Q_*(x)=\omega \big( \frac{x}{|x|}\big)$. On the other hand, according to \cite[Theorem 2.7]{SU3} the map $Q_*$ is smooth. In particular, $Q_*$ is smooth at the origin which implies that $\omega$ must be constant, and thus $Q_*$ itself is a constant map. Then the interior monotonicity formula (see Proposition \ref{corintmonotform}) and the strong $W^{1,2}$-convergence  yield
 $$\lim_{r\to 0}\frac{1}{r} \mathcal{E}_\lambda (Q_\lambda, B_r(x_0))=\lim_{n\to\infty} \mathcal{E}_{\lambda r_n^2} (Q_{\lambda,r_n}, B_1)= \frac{1}{2}\int_{B_1}|\nabla Q_*|^2\,dx=0\,,$$
which completes the proof. 
\end{proof}

\begin{proposition}
\label{bdsmallenergy}
Let $\Omega \subset \mathbb{R}^3$ be a bounded open set with $\partial \Omega$ of class $C^3$ and $Q_{\rm b}\in C^{1,1}(\partial\Omega;\mathbb{S}^4)$. If $Q_\lambda$ is a minimizer of $\mathcal{E}_\lambda$ over $\mathcal{A}_{Q_{\rm b}}(\Omega)$ then  
$$\lim_{r\to 0}\frac{1}{r} \mathcal{E}_\lambda (Q_\lambda, B_r(x_0)\cap\Omega)=0 $$
for every $x_0\in\partial\Omega$. 
\end{proposition}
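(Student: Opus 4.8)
The plan is to mirror the interior argument in Proposition \ref{interiorsmallenergy}, replacing the interior blow-up by the boundary blow-up of Proposition \ref{boundaryblowup} and the interior Liouville theorem of \cite{SU3} by its half-space counterpart due to \cite{Le}. First I would fix $x_0\in\partial\Omega$ and an arbitrary sequence $r_n\to0$. By Proposition \ref{boundaryblowup}, after passing to a subsequence, the rescaled extensions $\widehat Q_{\lambda,r_n}$ converge strongly in $W^{1,2}_{\rm loc}(\R^3)$ to a map $Q_*\in W^{1,2}_{\rm loc}(\R^3;\mathbb{S}^4)$ which is homogeneous of degree zero, equals the constant $Q_{\rm b}(x_0)$ in $\{x_3<0\}$ (after a rotation), and is a minimizing harmonic map in the half space $\{x_3>0\}$ with constant boundary data $Q_{\rm b}(x_0)$ on $\{x_3=0\}$.

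The key step is to show that such a $Q_*$ must be constant. Here I would invoke the boundary regularity theory for minimizing harmonic maps with smooth (here constant) Dirichlet data, specifically the result of Lemaire \cite{Le} (as announced in the introduction): a minimizing harmonic map on a half space with constant boundary data is smooth up to the boundary portion $\{x_3=0\}$. Since $Q_*$ is smooth up to $\{x_3=0\}$ and $0$-homogeneous, writing $Q_*(x)=\omega(x/|x|)$ for a map $\omega$ from the closed hemisphere into $\mathbb{S}^4$ which is constant on the equator, smoothness at the origin (which lies on the flat boundary) forces $\omega$ to be constant; hence $Q_*$ is itself constant.

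With triviality of the tangent map in hand, I would conclude exactly as in Proposition \ref{interiorsmallenergy}: by the Boundary Monotonicity Inequality \eqref{BdMonForm} (valid for $Q_\lambda$ by Proposition \ref{corintmonotform}), the rescaled quantity $\tfrac1R\mathcal{E}_{\lambda r_n^2}(\widehat Q_{\lambda,r_n},B_R\cap\Omega_n)$ converges, as $n\to\infty$, to $\tfrac12\int_{B_R\cap\{x_3>0\}}|\nabla Q_*|^2\,dx=0$ by the strong $W^{1,2}_{\rm loc}$-convergence, while the error term $r_n(R_2-R_1)K_\lambda(Q_{\rm b},Q_\lambda)$ vanishes in the limit. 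Taking $R=1$ and recalling that the monotone function $r\mapsto \tfrac1r\mathcal{E}_\lambda(Q_\lambda,B_r(x_0)\cap\Omega)+rK_\lambda(Q_{\rm b},Q_\lambda)$ has a limit as $r\to0$ (so the full limit, not just along the subsequence, is $0$), we obtain $\lim_{r\to0}\tfrac1r\mathcal{E}_\lambda(Q_\lambda,B_r(x_0)\cap\Omega)=0$.

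The main obstacle I anticipate is the Liouville step at the boundary: one must be careful that the hypothesis of \cite{Le} (or whichever boundary Liouville/regularity statement is quoted) genuinely applies to a minimizing harmonic map on a half space with \emph{constant} boundary values, and that the reflection-type or direct boundary regularity argument indeed yields continuity (hence smoothness) of $Q_*$ at the origin. Once $Q_*$ is known to be continuous at $0$, the reduction of a $0$-homogeneous map to a constant is immediate, and the remainder is a verbatim repetition of the interior proof using the boundary monotonicity inequality rather than the interior monotonicity formula.
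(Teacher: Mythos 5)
Your overall strategy matches the paper: rescale at $x_0$, invoke Proposition \ref{boundaryblowup} for strong $W^{1,2}_{\rm loc}$ compactness, show the $0$-homogeneous tangent map $Q_*$ is constant, then conclude via the boundary monotonicity inequality. Your final paragraph (monotone quantity has a limit; the limit is detected along the subsequence; hence the full limit vanishes) is also exactly the paper's closing argument. However, the Liouville step as you have written it contains a misattribution that would leave a genuine gap if not repaired.

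Lemaire's theorem \cite{Le} is \emph{not} a boundary regularity result for minimizing harmonic maps on a half space. It is a \emph{rigidity} theorem for smooth harmonic maps \emph{from surfaces}: a smooth harmonic map from the closed half two-sphere $\overline{\mathbb{S}^2_+}$ which is constant on the equator $\partial\mathbb{S}^2_+$ must be constant. To invoke it, one first has to establish smoothness of the angular part $\omega$ of $Q_*$ up to $\partial \mathbb{S}^2_+$. The paper does this by a reflection argument across $\{x_3=0\}$ combined with the reflection $\boldsymbol\Sigma=2e_0\otimes e_0-\mathrm{id}$ on $\mathbb{S}^4$, producing a weakly harmonic map $\widehat Q_*$ on all of $\mathbb{R}^3$ whose homogeneous slice $\widehat\omega$ on $\mathbb{S}^2$ is smooth by H\'elein's theorem \cite{He} (an alternative would be to cite \cite{Qing} directly for boundary regularity of $\omega$); only then does Lemaire's rigidity apply. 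Your route — invoke smoothness of $Q_*$ up to the flat boundary near the origin, then conclude $\omega\equiv e_0$ from continuity of a $0$-homogeneous map at $0$ — is logically sound and actually bypasses Lemaire altogether, but the result you need is Schoen--Uhlenbeck \emph{boundary} regularity for energy minimizers with smooth boundary data \cite{SU2}, not \cite{Le}. The paper deliberately avoids treating \cite{SU2} as a black box in favor of the elementary 2D reduction (reflection $+$ H\'elein $+$ Lemaire) for self-containedness and in view of the symmetry-restricted companion papers where \cite{SU1,SU2,SU3} do not apply. So: correct strategy and correct final reduction, but the citation is wrong, and what the wrong citation conceals is a genuine regularity step that must be supplied either by \cite{SU2} (your 3D route) or by the paper's 2D reflection argument.
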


\begin{proof}
As in the previous proof, by the strong $W^{1,2}$-compactness of rescaled maps, it is enough to prove that any limiting map $Q_*$ obtained from Proposition \ref{boundaryblowup} applied at a point $x_0\in\partial\Omega$ is a constant map, i.e., $Q_*\equiv Q_{\rm b}(x_0)$. Indeed,  by the Boundary Monotonicity Inequality (see Proposition~\ref{corintmonotform}), we have 
\[ \lim_{r\to 0} \frac{1}{r}\mathcal{E}_\lambda(Q_\lambda, B_{r}(x_0) \cap \Omega)=\lim_{n\to \infty}\mathcal{E}_{\lambda r_n^2}(Q_{\lambda,r_n}, B_1\cap \Omega_n)=\frac{1}{2}\int_{B_1\cap\{x_3>0\}}|\nabla Q_*|^2\,dx=0 \, ,  \]
where we have set $\Omega_n:=r_n^{-1}(\Omega-x_0)$.

Let us now consider a degree zero homogeneous map $Q_*\in W^{1,2}_{\rm loc}(\R^3;\mathbb{S}^4)$ which is  an energy minimizing harmonic map in $\{x_3>0\}$, and such that $Q_*=Q_{\rm b}(x_0)=:e_0$ in $\{x_3<0\}$.  Setting $\mathbb{S}^2_+:=\mathbb{S}^2\cap\{x_3>0\}$, the homogeneity of $Q_*$ implies that $Q_*(x)=\omega\big(\frac{x}{|x|}\big)$ in $\{x_3>0\}$ where $\omega\in W^{1,2}(\mathbb{S}^2_+;\mathbb{S}^4)$ is a weakly harmonic map on $\mathbb{S}^2_+$ satisfying $\omega=e_0$ on $\partial\mathbb{S}^2_+$ in the sense of traces. It now suffices to show that $\omega\in C^\infty(\overline{\mathbb{S}^2_+})$. Indeed, by Lemaire rigidity theorem \cite[Theorem 3.2]{Le}, a smooth harmonic map on the (closed) half 2-sphere which is constant on the boundary has to be constant. In other words $\omega\equiv e_0$, whence $Q_*\equiv e_0$. 

The smoothness of $\omega$ in the interior  $\mathbb{S}^2_+$ follows from H\'elein's theorem \cite{He}. Smoothness up to the boundary $\partial\mathbb{S}^2_+$ could be asserted directly from \cite{Qing}, but we prefer to give a short argument illustrating in this simple case the reflection principle in Subsection \ref{subsecextension}. 

Consider the map $\widehat Q_*\in W^{1,2}_{\rm loc}(\R^3;\mathbb{S}^4)$ defined by 
$$\widehat Q_*(x):=\begin{cases}
Q_*(x) & \text{if $x_3>0$}\,,\\
{\bf \Sigma}Q_*(\bar{x}) & \text{if $x_3<0$}\,,
\end{cases}$$
where $\bar{x}=(x_1,x_2,-x_3)$ is the reflection of $x=(x_1,x_2,x_3)$ across the plane $\{x_3=0\}$, and $\boldsymbol{\Sigma}:=2 e_0\otimes e_0-{\rm id}$ is the geodesic reflection on $\mathbb{S}^4$ with respect to the point $e_0$. Following the proof of Proposition~\ref{ELeqExt} with $\lambda=0$ (see also Remark \ref{specifgeomreflec}), we infer that the reflected matrix $A(x)$ is the identity and $\widehat Q_*$ is weakly harmonic in $\R^3$. Since $\widehat Q_*$ clearly inherits  homogeneity from $Q_*$, we have $\widehat Q_*(x)=\widehat\omega\big(\frac{x}{|x|}\big)$ for a weakly harmonic map $\widehat\omega\in W^{1,2}(\mathbb{S}^2;\mathbb{S}^4)$. By H\'elein's theorem \cite{He}, $\widehat\omega$ is smooth on $\mathbb{S}^2$, and the conclusion follows since $\widehat\omega=\omega$ in~$\mathbb{S}^2_+$. 
\end{proof}

\subsection{Full regularity}

Combining the results from the subsections above with the $\eps-$regularity theorem and the higher regularity theorem from Section \ref{sec:eps-reg}, we are finally in the position to prove the first regularity result of the paper. 

\begin{proof}[Proof of Theorem \ref{thm:full-regularity}] 
Let $Q_\lambda$ be a minimizer of $\mathcal{E}_\lambda$ over $\mathcal{A}_{Q_{\rm b}}(\Omega)$. First, we prove interior regularity of $Q_\lambda$ by showing smoothness in a neighborhood of  an arbitrary point $x_0\in \Omega$. In view of Proposition \ref{interiorsmallenergy}, we have $\frac{1}{r} \mathcal
{E}_\lambda(Q_\lambda, B_r(x_0))\to 0$ as $r\to 0$. 
Combining Proposition \ref{corintmonotform} and Lemma~\ref{corolmonotform} (with $Q_{\rm ref}=Q_\lambda$) with Corollary \ref{corolhigherreg}, we infer that  $Q_\lambda \in C^{\omega}(B_\rho(x_0))$ for some radius $\rho>0$ possibly depending on the point $x_0$. Since $x_0 \in \Omega$ is arbitrary, we conclude that $Q_\lambda \in C^{\omega}(\Omega)$. 

To prove boundary regularity, we now fix an arbitrary point $x_0 \in \partial \Omega$. By Proposition \ref{bdsmallenergy}, we have $\frac{1}{r} \mathcal{E}_\lambda (Q_\lambda, B_r(x_0) \cap \Omega)  \to 0$ as $r \to 0$. Then we combine Proposition \ref{corintmonotform} and Lemma~\ref{lemmacontrolscalenergext} (with $Q_{\rm ref}=Q_\lambda$) with Corollary \ref{corolhigherregdbry} to conclude that $Q_\lambda\in C^{1,\alpha}(B_\rho(x_0)\cap\overline\Omega)$ for every $\alpha\in(0,1)$ and some radius $\rho>0$.  
Since $x_0$ is arbitrary, a covering argument yields $Q_\lambda \in C^{1,\alpha}(\overline{\Omega})$ for every $\alpha\in(0,1)$. Under the further assumption that $\partial\Omega$ is of class $C^{k,\beta}$ and $Q_{\rm b} \in C^{k,\beta}(\partial \Omega;\mathbb{S}^4)$ for some $\beta>0$ and $k\geq 2$, then Corollary \ref{corolhigherregdbry} with the same covering argument tells us that $Q_\lambda \in C^{k,\beta}(\overline{\Omega})$. Finally, if $\partial \Omega$ is real-analytic and $Q_{\rm b} \in C^\omega (\partial \Omega; \mathbb{S}^4)$, then Corollary \ref{corolhigherregdbry} again implies that $Q_\lambda \in C^\omega(\overline{\Omega})$.  
\end{proof}

%%%%%%%%%%%%%%%%%%%%%%%%%%%%%%%%%%%%%%%%%%%%%%%%%%%%%%%
%%%%%%%%%%%%%%%%%%%%%%%%%%%%%%%%%%%%%%%%%%%%%%%%%%%%%%%
  
\section{LdG-minimizers in the Lyuksyutov regime}

%%%%%%%%%%%%%%%%%%%%%%%%%%%%%%%%%%%%%%%%%%%%%%%%%%%%%%%
%%%%%%%%%%%%%%%%%%%%%%%%%%%%%%%%%%%%%%%%%%%%%%%%%%%%%%%

The main objective of this section is to prove Theorem \ref{thm:noisotropicphase}, and in particular to prove that  isotropic melting (i.e., presence of the zero phase) is avoided by minimizers of the energy functional $\mathcal{F}_{\lambda,\mu}$
in \eqref{LDGenergy} for values of the parameters in the Lyuksyutov regime $\mu\to\infty$.  More precisely, our main goal is to prove that the pointwise norm of any minimizer $Q_\lambda^\mu$ of $\mathcal{F}_{\lambda,\mu}$  
subject to an $\bbS^4$-valued boundary condition is uniformly bounded from below by a positive constant whenever $\mu$ is large enough (and $\lambda$ of order one). As a consequence we deduce that the radial hedgehog \eqref{radialhedgehog} is not energy minimizing and in Theorem \ref{hedgehoginstability} below we will show that it is not even a stable critical point of the energy functional $\mathcal{F}_{\lambda,\mu}$.

Throughout this section, we assume again that the boundary $\partial\Omega$ is of class $C^3$, and that the boundary condition $Q_{\rm b}$ belongs to $C^{1,1}(\partial\Omega;\mathbb{S}^4)$. Given $\lambda>0$ and $\mu>0$, we shall consider critical point of $\mathcal{F}_{\lambda,\mu}$ over the class $W^{1,2}(\Omega;\mathcal{S}_0)$, including as a particular case solutions of the variational problem 
$$\min\Big\{ \mathcal{F}_{\lambda,\mu}(Q) : Q\in W_{Q_{\rm b}}^{1,2}(\Omega;\mathcal{S}_0)\Big\} $$
whose resolution follows from the direct method of calculus of variations. We may denote by $Q_\lambda^\mu$ a critical point of $\mathcal{F}_{\lambda,\mu}$, or simply by $Q^\mu$ (if no confusion arises) hiding the dependence on the fixed parameter $\lambda$ to simplify the notation. We start with elementary/classical considerations and a priori estimates on~$Q^\mu$.

\subsection{A priori estimates}

In view of the explicit expression \eqref{Newpotential2} of the potential $W$, the Euler-Lagrange equation characterizing a critical point $Q^\mu\in W^{1,2}_{Q_{\rm b}}(\Omega;\mathcal{S}_0)$ reads as follows 
\begin{equation}\label{ELeqQmu}
\begin{cases}
\displaystyle -\Delta  Q^\mu=\lambda\left((Q^\mu)^2-\frac{1}{3}|Q^\mu|^2I-\frac{1}{\sqrt{6}}|Q^\mu|^2Q^\mu\right)+\mu(1-|Q^\mu|^2)Q^\mu & \text{in $\Omega$}\,,\\[8pt]
Q^\mu=Q_{\rm b} & \text{on $\partial\Omega$}\,,
\end{cases}
\end{equation}
 with the term $\frac{1}{3}|Q^\mu|^2I$ due to the traceless constraint.

Let us start the analysis by establishing the regularity of critical points. 

\begin{lemma}
If $Q^\mu$ is a critical point point of $\mathcal{F}_{\lambda,\mu}$ over $W_{Q_{\rm b}}^{1,2}(\Omega;\mathcal{S}_0)$, then $Q^\mu\in C^\omega(\Omega)\cap C^{1,\alpha}(\overline\Omega)$ for every $\alpha\in(0,1)$. In addition, 
\begin{enumerate}
\item[(i)] if $\partial\Omega$ is of class $C^{k,\beta}$ and $Q_{\rm b}\in C^{k,\beta}(\partial\Omega;\mathbb{S}^4)$ for some $\beta>0$ and $k\geq 2$, then $Q^\mu\in C^{k,\beta}(\overline\Omega)$;
\item[(ii)] if $\partial\Omega$ is real-analytic and $Q_{\rm b}\in C^{\omega}(\partial\Omega;\mathbb{S}^4)$, then $Q^\mu\in C^{\omega}(\overline\Omega)$.
\end{enumerate}
\end{lemma}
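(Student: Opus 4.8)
The plan is to treat \eqref{ELeqQmu} as a semilinear elliptic system with right-hand side having subquadratic growth, and to run the standard bootstrap. The essential point is that, unlike the constrained equation \eqref{MasterEq}, equation \eqref{ELeqQmu} carries \emph{no} quadratic-gradient nonlinearity: the right-hand side depends only on $Q^\mu$ (polynomially), not on $\nabla Q^\mu$. So the regularity is much softer than in Theorem \ref{thm:full-regularity} and no $\eps$-regularity machinery is needed.

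\textbf{Step 1: interior regularity.} First I would establish an a priori $L^\infty$ bound on $Q^\mu$. Testing \eqref{ELeqQmu} with a suitable truncation, or invoking the maximum principle for the scalar quantity $|Q^\mu|^2$ (which satisfies $-\Delta |Q^\mu|^2 = -2|\nabla Q^\mu|^2 + 2\mu(1-|Q^\mu|^2)|Q^\mu|^2 + (\text{lower order in }\lambda)$, hence is a subsolution of a suitable equation where $|Q^\mu|^2 > \max(1, \text{const}(\lambda,\mu))$), gives $\|Q^\mu\|_{L^\infty(\Omega)} \le C(\lambda,\mu)$. With $Q^\mu \in L^\infty$, the right-hand side of \eqref{ELeqQmu} is in $L^\infty_{\rm loc}$, so $\Delta Q^\mu \in L^\infty_{\rm loc}(\Omega)$ and $L^p_{\rm loc}$ elliptic estimates give $Q^\mu \in W^{2,p}_{\rm loc}$ for all $p<\infty$, hence $Q^\mu \in C^{1,\alpha}_{\rm loc}$. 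Then the right-hand side, being a polynomial in $Q^\mu$ composed with a $C^{1,\alpha}$ map, is itself $C^{1,\alpha}_{\rm loc}$, so Schauder estimates bootstrap $Q^\mu$ to $C^\infty_{\rm loc}(\Omega)$; analytic regularity in the interior then follows from the classical results on analyticity of solutions of elliptic systems with analytic (here polynomial) nonlinearity, exactly as cited in the proof of Corollary \ref{corolhigherreg} via \cite[Chapter 6]{Morrey}.

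\textbf{Step 2: boundary regularity.} Here I would first obtain $C^{1,\alpha}(\overline\Omega)$ up to the boundary. Since $Q_{\rm b}\in C^{1,1}(\partial\Omega;\mathbb{S}^4)$ and $\partial\Omega$ is $C^3$, the datum $Q_{\rm b}$ extends to a $C^{1,1}$ map on $\overline\Omega$ (material of Subsection \ref{subsecextension}); subtracting this extension and using that the right-hand side of \eqref{ELeqQmu} is bounded, global $W^{2,p}(\Omega)$ estimates for the Dirichlet problem (e.g. \cite[Theorem 8.12]{GilbTrud}, componentwise since the principal part is the Laplacian) yield $Q^\mu \in W^{2,p}(\Omega)$ for all $p<\infty$, hence $Q^\mu\in C^{1,\alpha}(\overline\Omega)$ for every $\alpha\in(0,1)$ by Sobolev embedding. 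For (i), assuming $\partial\Omega$ of class $C^{k,\beta}$ and $Q_{\rm b}\in C^{k,\beta}$, the right-hand side is now $C^{1,\alpha}(\overline\Omega)$, so global Schauder estimates up to the boundary (\cite[Chapter 6]{GilbTrud}) give $Q^\mu\in C^{2,\alpha}(\overline\Omega)$; iterating, each application of boundary Schauder raises the regularity by one derivative, the limiting obstruction being the regularity of $\partial\Omega$ and of $Q_{\rm b}$, so one arrives at $Q^\mu\in C^{k,\beta}(\overline\Omega)$. For (ii), when $\partial\Omega$ is real-analytic and $Q_{\rm b}\in C^\omega$, analytic regularity up to the boundary follows from Morrey's analyticity theorem for elliptic systems with analytic data (\cite[Chapter 6]{Morrey}), after flattening the boundary by an analytic change of coordinates.

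\textbf{Main obstacle.} There is no genuinely hard step: the proof is entirely a linear-elliptic bootstrap, the only mild care being the a priori $L^\infty$ bound needed to launch it (one must verify that the $\mu$-term has the favourable sign so that large values of $|Q^\mu|$ are excluded, which is where the structure $+\mu(1-|Q^\mu|^2)Q^\mu$ matters). Everything else is the same routine used in Corollaries \ref{corolhigherreg} and \ref{corolhigherregdbry}, minus the H\'elein/$\eps$-regularity input, since the critical quadratic term is absent from \eqref{ELeqQmu}.
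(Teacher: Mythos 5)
Your overall plan is sound, and Steps 2--4 (local $W^{2,p}$, Schauder bootstrap, Morrey analyticity, and the corresponding boundary statements) are exactly what the paper does. The only genuine weak point is Step 1: you propose to establish the a priori $L^\infty$ bound by ``invoking the maximum principle for the scalar quantity $|Q^\mu|^2$'', but at that stage $Q^\mu$ is only known to lie in $W^{1,2}(\Omega;\mathcal{S}_0)$, so the pointwise identity $-\Delta|Q^\mu|^2 = -2|\nabla Q^\mu|^2 + 2Q^\mu:\Delta Q^\mu$ is not available and the classical maximum principle cannot be applied directly -- indeed, in the paper the maximum principle bound $|Q^\mu|\le 1$ (Lemma~\ref{maxprinciplemma}) is a \emph{consequence} of the regularity lemma, not an input to it. Your parenthetical alternative via a Stampacchia-type truncation argument could in principle be made to work (using the coercive sign of the $\mu(1-|Q^\mu|^2)Q^\mu$ term at infinity), but it is not spelled out and is considerably more delicate than what is actually needed.

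The paper's route avoids the $L^\infty$-first strategy entirely with a quicker Sobolev bootstrap that exploits the cubic growth of the right-hand side: $Q^\mu\in W^{1,2}(\Omega)\hookrightarrow L^6(\Omega)$ implies the cubic nonlinearity lies in $L^2(\Omega)$; combined with a $C^{1,1}$ extension of the boundary datum, the $W^{2,2}$ Dirichlet estimate (\cite[Theorem 8.12]{GilbTrud}) gives $Q^\mu\in W^{2,2}(\Omega)$, hence (in dimension $3$) $Q^\mu\in W^{1,6}(\Omega)\hookrightarrow L^\infty(\Omega)$; then the right-hand side is in $L^\infty(\Omega)$ and \cite[Theorem 8.34]{GilbTrud} yields $Q^\mu\in C^{1,\alpha}(\overline\Omega)$. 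This requires neither a maximum principle nor the sign structure of the nonlinearity. If you replace your Step~1 with this argument, the rest of your proof matches the paper's. (Minor note: in part~(i), if the right-hand side is $C^{1,\alpha}$ then Schauder actually yields $C^{3,\alpha}$, not $C^{2,\alpha}$; the iteration is still correct.)
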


\begin{proof}
In view of equation \eqref{ELeqQmu}, the fact that $Q^\mu\in C^{1,\alpha}(\overline\Omega)$ for every $\alpha\in(0,1)$ follows exactly as in the proof of Proposition~\ref{monotonprop}, Step 1. Then, a classical bootstrap argument based on Schauder estimates shows that  $Q^\mu\in C^\infty(\Omega)$ (see e.g. \cite[Chapters 6 \& 8]{GilbTrud}), and the standard results in \cite[Chapter 6]{Morrey}  give interior analytic regularity. 
Assuming that $\partial\Omega$ is of class $C^{k,\beta}$ and $Q_{\rm b}\in C^{k,\beta}(\partial\Omega;\mathbb{S}^4)$ with $k\geq 2$, we have $Q_\lambda\in C^{k,\beta}(\overline\Omega)$ by standard elliptic regularity at the boundary, see e.g. \cite[Chapter 6]{GilbTrud}. The corresponding conclusion within the analytic class follows again from the results in \cite[Chapter 6]{Morrey}.
\end{proof}

We now prove an a priori estimate on the modulus and  on the gradient of a critical  point reminiscent from the Ginzburg-Landau theories.   

\begin{lemma}\label{maxprinciplemma}
If $Q^\mu$ is a critical point point of $\mathcal{F}_{\lambda,\mu}$ over $W_{Q_{\rm b}}^{1,2}(\Omega;\mathcal{S}_0)$, then $|Q^\mu|\leq 1$ in $\overline\Omega$. 
\end{lemma}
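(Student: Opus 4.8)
The statement $|Q^\mu|\leq 1$ in $\overline\Omega$ is a maximum principle for the scalar function $u:=|Q^\mu|^2$, exploiting the sign of the penalty term in \eqref{ELeqQmu} together with the fact that the nonlinearity coming from $W$ does not increase $|Q^\mu|^2$ when $|Q^\mu|\geq 1$. Since $Q^\mu\in C^\infty(\Omega)\cap C^1(\overline\Omega)$ by the previous lemma and $|Q^\mu|=|Q_{\rm b}|=1$ on $\partial\Omega$, it suffices to rule out an interior maximum of $u$ exceeding $1$.

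First I would compute the equation satisfied by $u=|Q^\mu|^2={\rm tr}((Q^\mu)^2)$. Taking the (Frobenius) inner product of \eqref{ELeqQmu} with $Q^\mu$ and using the pointwise identity $-\tfrac12\Delta u = -{\rm tr}(Q^\mu\Delta Q^\mu)-|\nabla Q^\mu|^2$, one gets
\begin{equation*}
-\frac12\Delta u + |\nabla Q^\mu|^2 = \lambda\Big({\rm tr}((Q^\mu)^3)-\frac13 u\,{\rm tr}(Q^\mu)-\frac{1}{\sqrt6} u^2\Big) + \mu(1-u)u\,.
\end{equation*}
Since ${\rm tr}(Q^\mu)=0$, the middle term drops. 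The key algebraic fact is the Cauchy--Schwarz-type bound $|{\rm tr}(Q^3)|\leq \tfrac{1}{\sqrt6}|Q|^3$ valid for all traceless symmetric $Q$ (this is exactly the statement that the signed biaxiality $\widetilde\beta$ of Definition \ref{defsignedbiaxiality} lies in $[-1,1]$, cf. \eqref{signedbiaxiality}). Hence ${\rm tr}((Q^\mu)^3)-\tfrac{1}{\sqrt6}u^2 = \tfrac{1}{\sqrt6}u^{3/2}\big(\widetilde\beta(Q^\mu)-u^{1/2}\big)\leq \tfrac{1}{\sqrt6}u^{3/2}(1-u^{1/2})$, which is $\leq 0$ wherever $u\geq 1$. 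Therefore, on the open set $\{u>1\}$ (if nonempty), the right-hand side is $\leq 0$, and dropping the nonnegative term $|\nabla Q^\mu|^2$ on the left gives $-\Delta u \leq 0$, i.e. $u$ is subharmonic on $\{u>1\}$.

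Then I would conclude by the strong maximum principle applied to the subharmonic function $u$ on the open set $\{x\in\Omega: u(x)>1\}$: if this set were nonempty, $u$ would attain its supremum (which is $>1$ by compactness of $\overline\Omega$ and the boundary condition $u=1$ on $\partial\Omega$) at an interior point, forcing $u$ to be constant on the connected component containing that point — but that component has part of its boundary where $u=1$ (either on $\partial\Omega$ or on the level set $\{u=1\}$), a contradiction with $u>1$ there. Alternatively, and perhaps more cleanly, one argues directly: suppose $M:=\max_{\overline\Omega}u>1$, attained at $x_0\in\Omega$; then on a neighborhood of $x_0$ we have $u>1$, hence $-\Delta u\leq 0$ there, and the strong maximum principle yields $u\equiv M$ on a neighborhood of $x_0$, so $\{u=M\}$ is open and closed in $\Omega$, hence $u\equiv M$ on $\Omega$ (using $(HP_2)$-type connectedness, or just on the component of $x_0$), contradicting continuity up to $\partial\Omega$ where $u=1<M$. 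I do not expect any genuine obstacle here; the only point requiring a little care is the verification of the inequality $|{\rm tr}(Q^3)|\leq \tfrac{1}{\sqrt6}|Q|^3$ for traceless symmetric matrices, which follows by diagonalizing and optimizing $\mu_1^3+\mu_2^3+\mu_3^3$ under $\sum\mu_i=0$, $\sum\mu_i^2=1$ (the extrema being attained precisely at the uniaxial matrices), and this is already implicit in the paper's discussion of $\widetilde\beta$.
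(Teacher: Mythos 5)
Your proof is correct and proceeds along essentially the same lines as the paper's: compute the scalar PDE for $|Q^\mu|^2$ by pairing the Euler--Lagrange equation with $Q^\mu$, invoke the bound $\sqrt6\,{\rm tr}(Q^3)\leq |Q|^3$ (i.e.\ $\widetilde\beta\leq1$) to control the cubic term, and conclude by a maximum principle at an interior extremum. The paper works with $1-|Q^\mu|^2$ and evaluates the resulting differential inequality at a minimum point, which is slightly leaner than your strong-maximum-principle/open-closed argument; in fact once you note that the $\mu$-term gives the \emph{strict} inequality $\Delta u>0$ on $\{u>1\}$, you can finish immediately by contradicting $\Delta u\leq0$ at an interior maximum, with no appeal to connectedness or the strong form of the maximum principle.
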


\begin{proof}
Consider the scalar function $u:=1-|Q^\mu|^2$. In view of the previous lemma and equation \eqref{ELeqQmu}, $u$ is continuous in $\overline{\Omega}$ is a classical solution to 
\begin{equation}\label{eqprincipmax}
-\Delta u +2\mu|Q^\mu|^2u\geq \frac{2\lambda}{\sqrt{6}} \big(|Q^\mu|^4-\sqrt{6}{\rm tr}((Q^\mu)^3)\big) \quad\text{in $\Omega$}\,.
\end{equation}
Let $x_0\in\overline\Omega$ be a minimum point for $u$, and assume by contradiction that $u(x_0)<0$, (in other words, $|Q^\mu(x_0)|>1$). Since $u=1-|Q_{\rm b}|^2\equiv 0$ on $\partial\Omega$, we must have $x_0\in \Omega$. Consequently, $\Delta u(x_0)\geq 0$, and \eqref{eqprincipmax} leads to 
\begin{equation}\label{calccontradicmodulus}
0> |Q^\mu(x_0)|^4-\sqrt{6}{\rm tr}\big((Q^\mu)^3 \big)(x_0)\geq |Q^\mu(x_0)|^3-\sqrt{6}{\rm tr}\big((Q^\mu)^3 \big)(x_0)\,.
\end{equation}
However, \eqref{signedbiaxiality} tells us that the right-hand side of \eqref{calccontradicmodulus} is nonnegative, a contradiction.
\end{proof}

\begin{lemma}\label{estigradLDG}
If $Q^\mu$ is a critical point point of $\mathcal{F}_{\lambda,\mu}$ over $W^{1,2}_{Q_{\rm b}}(\Omega;\mathcal{S}_0)$, then
$$|\nabla Q^\mu|\leq C\big(\sqrt{\lambda+\mu}+1\big) \quad\text{in $\overline\Omega$}\,,$$
for a constant $C$ depending only on $\Omega$ and $Q_{\rm b}$. 
\end{lemma}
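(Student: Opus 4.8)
\textbf{Proof plan for Lemma \ref{estigradLDG}.} The statement is a global Lipschitz bound on a critical point $Q^\mu$ of $\mathcal{F}_{\lambda,\mu}$, with explicit (optimal-in-scaling) dependence $|\nabla Q^\mu|\lesssim \sqrt{\lambda+\mu}+1$. The natural approach is a rescaling/blow-up argument combined with the interior and boundary small-energy regularity already developed in Section \ref{sec:eps-reg}, exactly in the spirit of Proposition \ref{epsregunifesti} but now for the penalized functional rather than the constrained one. The key preliminary facts at our disposal are: (i) by Lemma \ref{maxprinciplemma}, $|Q^\mu|\le 1$ in $\overline\Omega$, so the right-hand side of \eqref{ELeqQmu} is pointwise bounded by $C(\lambda+\mu)$; (ii) the energy $\mathcal{F}_{\lambda,\mu}(Q^\mu)$ is controlled by $\mathcal{F}_{\lambda,\mu}(\bar Q_{\rm b})\le C_{Q_{\rm b}}(\lambda+1)$ (choosing the $\mathbb{S}^4$-valued reference extension, for which the penalty term vanishes), hence $\|\nabla Q^\mu\|_{L^2(\Omega)}^2\le C_{Q_{\rm b}}(\lambda+1)$ when $Q^\mu$ is a minimizer; for a general critical point one instead assumes (or carries along) a bound on the Dirichlet energy. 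Actually, to keep the lemma applicable to arbitrary critical points as stated, I would use directly the Bochner-type inequality: set $\Lambda:=\lambda+\mu+1$, rescale $\widetilde Q(x):=Q^\mu(x_0+x/\sqrt{\Lambda})$ so that $\widetilde Q$ solves an equation with coefficients of size $O(1)$, and apply a maximum-principle/Moser argument to the energy density.

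Here is the concrete sequence of steps. First, fix $x_0\in\overline\Omega$ and the scaling parameter $t:=(\lambda+\mu+1)^{-1/2}$. Second, establish a Bochner inequality for $e^\mu:=\tfrac12|\nabla Q^\mu|^2+\lambda W(Q^\mu)+\tfrac{\mu}{4}(1-|Q^\mu|^2)^2$ of the form $-\Delta e^\mu\le C(\lambda+\mu+1)(e^\mu)^{?}\cdots$ — more precisely, following the computation in the proof of Lemma \ref{Bochineq} but keeping the penalty term: differentiating \eqref{ELeqQmu}, using $|Q^\mu|\le1$, and using Lemma \ref{lemmaspectrboch} to absorb the dangerous quadratic term $2\,{\rm tr}((\partial_kQ^\mu)Q^\mu\partial_kQ^\mu)$, one obtains an inequality of the type $-\Delta e^\mu\le C\big(e^\mu)^2+C(\lambda+\mu)e^\mu$. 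Third, rescale: let $\widetilde Q(y):=Q^\mu(x_0+ry)$ with $r$ chosen comparable to $t$ (and, near the boundary, first straighten $\partial\Omega$ via the reflection construction of Subsection \ref{subsecextension}, noting $|\nabla\widehat Q^\mu|$ transfers across the reflection as in Lemma \ref{lemmacontrolscalenergext} / Remark \ref{specifgeomreflec}); then $\widetilde e$ satisfies $-\Delta\widetilde e\le C\widetilde e^2+C\widetilde e$ on a unit ball, and the rescaled Dirichlet energy on the unit ball is $O(1)$ by the monotonicity/energy bound. Fourth, run the Schoen-type scaling argument of Proposition \ref{epsregunifesti} (maximizing $(\tfrac12-\sigma)^2\sup_{B_\sigma}\widetilde e$, passing to a further blow-up, and deriving a contradiction via Moser's Harnack inequality if the blow-up radius were $>1$) to conclude $\sup_{B_{1/4}}\widetilde e\le C$, i.e. $|\nabla Q^\mu(x_0)|^2\le C r^{-2}\le C(\lambda+\mu+1)$; near the boundary one uses the $L^\infty$ gradient bound coming from Corollary \ref{corolhigherregdbry}-type estimates applied to the straightened/reflected equation together with the $C^{1,1}$ control of $Q_{\rm b}$.

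There is a subtlety: Proposition \ref{epsregunifesti} is stated for solutions of \eqref{MasterEq} (the constrained equation), so I cannot invoke it as a black box; instead I must redo its short proof for the penalized energy density $e^\mu$, which is legitimate because the only two ingredients needed are (a) the Bochner inequality $-\Delta e^\mu\le C (e^\mu)^2$ after rescaling so that $\lambda,\mu=O(1)$, and (b) interior monotonicity of the rescaled energy, which holds for critical points of $\mathcal{F}_{\lambda,\mu}$ by the standard inner-variation Pohozaev computation (the functional $\mathcal{F}_{\lambda,\mu}$ is smooth, so no penalty-approximation is even needed here — one may argue directly as in Remark \ref{remmonotsmoothsol}). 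I expect the main obstacle to be bookkeeping at the boundary: making the reflection argument produce a clean equation of the form ${\rm div}(A\nabla\widehat Q^\mu)=G$ with $|G|\le C(1+\lambda+\mu+|\nabla\widehat Q^\mu|^2)$ on a full ball centered at $x_0\in\partial\Omega$, so that Proposition \ref{propLipcont} (or the rescaled Bochner argument) applies and delivers the gradient bound with the correct $\sqrt{\lambda+\mu}$ scaling uniformly up to $\partial\Omega$; this requires tracking that the extra lower-order terms generated by $\boldsymbol{\Sigma}$ and its derivatives (which involve only $\Omega$ and $Q_{\rm b}$, not $\lambda,\mu$) contribute only to the "$+1$" in the final bound. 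Once the equation and the Bochner inequality are set up correctly, the rest is the routine Schoen scaling, and the constant $C$ depends only on $\Omega$ and $Q_{\rm b}$ as claimed.
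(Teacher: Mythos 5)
Your proposal is a valid \emph{strategy} in spirit but takes an enormously heavier route than the paper, and as written it has a genuine gap. The paper's proof is a one-liner: set $H$ to be the harmonic extension of $Q_{\rm b}$ (so $H\in C^{1,\alpha}(\overline\Omega)$ with norm depending only on $\Omega$ and $Q_{\rm b}$), put $U_\mu:=Q^\mu-H$, observe $U_\mu=0$ on $\partial\Omega$ and $\|\Delta U_\mu\|_{L^\infty(\Omega)}\le C(\lambda+\mu)$ (this uses only \eqref{ELeqQmu} and the maximum-principle bound $|Q^\mu|\le 1$ from Lemma~\ref{maxprinciplemma}), and invoke the elementary interpolation inequality $\|\nabla U_\mu\|_{L^\infty}\le C\|\Delta U_\mu\|_{L^\infty}^{1/2}\|U_\mu\|_{L^\infty}^{1/2}$ (as in \cite[Lemma A.2]{BBH1}) for functions vanishing on the boundary. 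No Bochner inequality, no monotonicity, no reflection.

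The genuine gap in your plan is that the Schoen scaling argument you model on Proposition~\ref{epsregunifesti} does \emph{not} run on $|Q^\mu|\le1$ alone: it requires smallness of the rescaled energy $\tfrac1r\mathcal{F}_{\lambda,\mu}(Q^\mu,B_r(x_0))$ at some scale, which you yourself note is unavailable for a general critical point (and even for minimizers is not straightforwardly uniform up to the boundary without the strong-convergence argument of Theorem~\ref{thm:noisotropicphase}). Without this input the Moser/Harnack contradiction step cannot close. Two further obstacles: (a) Lemma~\ref{lemmaspectrboch}, which absorbs the dangerous quadratic $\mathrm{tr}((\partial_kQ)Q\,\partial_kQ)$ term in the Bochner computation, requires $Q\in\mathbb{S}^4$ and $T:Q=0$, neither of which hold for $Q^\mu$, so the asserted inequality $-\Delta e^\mu\le C(e^\mu)^2+C(\lambda+\mu)e^\mu$ is not obtained by simply "keeping the penalty term''; and (b) the reflection construction of Subsection~\ref{subsecextension} (Lemma~\ref{eqextalongtangfield}, Proposition~\ref{ELeqExt}) is built for maps in $\mathcal{A}_{Q_{\rm b}}(\Omega)$ and tests against tangent vector fields along $Q_\lambda$ in $\mathbb{S}^4$; it does not transfer to the unconstrained equation \eqref{ELeqQmu}, so the claimed form $\mathrm{div}(A\nabla\widehat Q^\mu)=G$ with the right growth across $\partial\Omega$ would have to be established from scratch. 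The scaling intuition that motivates your Step~1 (dilate by $(\lambda+\mu+1)^{-1/2}$ so the right-hand side becomes $O(1)$) is exactly right, but once you have $\|\Delta Q^\mu\|_\infty\le C(\lambda+\mu)$ and $\|Q^\mu\|_\infty\le1$, that intuition is already captured optimally by the interpolation inequality, and the subtraction of $H$ deals with the boundary in the cleanest possible way.
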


\begin{proof}
Consider $H$ to be the harmonic extension of $Q_{\rm b}$ to the domain $\Omega$, i.e., 
$$\begin{cases}
\Delta H=0 & \text{in $\Omega$}\,,\\
H=Q_{\rm b} & \text{on $\partial\Omega$}\,.
\end{cases}$$
By our regularity assumption on $\partial\Omega$ and $Q_{\rm b}$, we have $H\in C^{1,\alpha}(\overline\Omega)\cap C^2(\Omega)$ for every $\alpha\in(0,1)$. Setting $U_\mu:=Q^\mu-H$, we deduce from \eqref{ELeqQmu} and Lemma \ref{maxprinciplemma} that $\|\Delta U_\mu\|_{L^\infty(\Omega)}\leq C(\lambda+\mu)$, and $U_\mu=0$ on $\partial\Omega$. By interpolation (see e.g. \cite[Lemma A.2]{BBH1}) and Lemma \ref{maxprinciplemma} again, we conclude that  
$$\|\nabla U_\mu\|_{L^\infty(\Omega)}\leq C\|\Delta U_\mu\|^{1/2}_{L^\infty(\Omega)}\|U_\mu\|^{1/2}_{L^\infty(\Omega)}\leq C\sqrt{\lambda+\mu} \,, $$
for a constant $C$ depending only on $\Omega$ and $Q_{\rm b}$. Since $\|\nabla Q^\mu\|_{L^\infty(\Omega)}\leq \|\nabla U_\mu\|_{L^\infty(\Omega)}+\|\nabla H\|_{L^\infty(\Omega)}$, the conclusion follows. 
\end{proof}

The last ingredients we need are the following monotonicity formulae.  

\begin{lemma}\label{monotformLDG}
If $Q^\mu$ is a critical point point of $\mathcal{F}_{\lambda,\mu}$ over $W^{1,2}_{Q_{\rm b}}(\Omega;\mathcal{S}_0)$, then
\begin{enumerate}
\item for every $x_0\in\Omega$ and every $0<\rho<r<{\rm dist}(x_0,\partial\Omega)$, we have 
\begin{equation}\label{montonotLDGint}
\frac{1}{\rho}\mathcal{F}_{\lambda,\mu}\big(Q^\mu,B_\rho(x_0)\big) \leq \frac{1}{r}\mathcal{F}_{\lambda,\mu}\big(Q^\mu,B_r(x_0)\big)\,;
\end{equation}
\item  there exist a radius ${\bf r}_\Omega>0$ (depending only on $\Omega$) and  a constant $C^\lambda_{Q_{\rm b}}>0$ depending only $\lambda$, $\Omega$, $Q_{\rm b}$, and on (an upper bound of) $\|\nabla Q^\mu\|_{L^2(\Omega)}$ but independent of $\mu$, such that 
\begin{equation}\label{montonotLDGbdry}
\frac{1}{\rho}\mathcal{F}_{\lambda,\mu}\big(Q^\mu,B_\rho(x_0)\big) \leq \frac{1}{r}\mathcal{F}_{\lambda,\mu}\big(Q^\mu,B_r(x_0)\big)+C^\lambda_{Q_{\rm b}}(r-\rho)
\end{equation}
for every $x_0\in\partial\Omega$ and every $0<\rho<r<{\bf r}_\Omega$. 
\end{enumerate}
\end{lemma}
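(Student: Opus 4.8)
The statement is a pair of monotonicity formulae for critical points $Q^\mu$ of the \emph{unconstrained} functional $\mathcal{F}_{\lambda,\mu}$, and the natural strategy is the classical Pohozaev/inner-variation argument applied directly to the smooth solution $Q^\mu$ of the Euler--Lagrange equation \eqref{ELeqQmu}. Since $Q^\mu$ is already known to be smooth in $\Omega$ and $C^{1,\alpha}$ up to the boundary (by the preceding lemma), no approximation scheme is needed here --- in contrast with Proposition \ref{monotonprop}, where the norm constraint forced a Ginzburg--Landau regularization. First I would record the energy density $e_{\lambda,\mu}(Q^\mu):=\frac12|\nabla Q^\mu|^2+\lambda W(Q^\mu)+\frac\mu4(1-|Q^\mu|^2)^2$ and note that all three terms are nonnegative (the nonnegativity of $W$ on $\mathcal{S}_0$ was already observed via \eqref{Newpotential1}).

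\textbf{Interior formula.} To prove \eqref{montonotLDGint}, assume without loss of generality $x_0=0$, take the $L^2(\mathcal{S}_0)$-inner product of \eqref{ELeqQmu} with the radial multiplier $(x\cdot\nabla)Q^\mu$, and integrate over $B_t$ with $t\in(\rho,r)$. The left-hand side produces, after integration by parts, the boundary term $\tfrac t2\int_{\partial B_t}|\nabla Q^\mu|^2\,d\mathcal{H}^2 - t\int_{\partial B_t}|\partial_{|x|}Q^\mu|^2\,d\mathcal{H}^2 - \tfrac12\int_{B_t}|\nabla Q^\mu|^2\,dx$, exactly as in Step 2 of the proof of Proposition \ref{monotonprop}. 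The right-hand side contributes $\int_{B_t}(x\cdot\nabla)\big(\lambda W(Q^\mu)+\tfrac\mu4(1-|Q^\mu|^2)^2\big)\,dx$ (using that the algebraic right-hand side of \eqref{ELeqQmu} is precisely $-\nabla\big(\lambda W+\tfrac\mu4(1-|\cdot|^2)^2\big)$ restricted to $\mathcal{S}_0$, with the traceless Lagrange-multiplier term $\tfrac13|Q^\mu|^2 I$ killed by $\mathrm{tr}\,\partial_kQ^\mu=0$), and integrating this by parts turns it into a boundary term $t\int_{\partial B_t}(\lambda W+\tfrac\mu4(1-|Q^\mu|^2)^2)\,d\mathcal{H}^2$ minus $3\int_{B_t}(\lambda W+\tfrac\mu4(1-|Q^\mu|^2)^2)\,dx$. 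Rearranging and dividing by $t^2$ gives
\begin{equation*}
\frac{d}{dt}\Big(\frac1t\mathcal{F}_{\lambda,\mu}(Q^\mu,B_t)\Big)=\frac1t\int_{\partial B_t}\Big|\frac{\partial Q^\mu}{\partial|x|}\Big|^2\,d\mathcal{H}^2+\frac{2}{t^2}\int_{B_t}\Big(\lambda W(Q^\mu)+\frac\mu4(1-|Q^\mu|^2)^2\Big)\,dx\geq 0\,,
\end{equation*}
and integrating from $\rho$ to $r$ yields \eqref{montonotLDGint}. (In fact this gives the slightly stronger identity with the two explicit nonnegative remainders.)

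\textbf{Boundary formula and the main obstacle.} For \eqref{montonotLDGbdry} I would follow Step 3 of the proof of Proposition \ref{monotonprop} almost verbatim: fix $x_0\in\partial\Omega$, use the same smooth solid-angle bounds \eqref{propregbdomega} coming from the $C^3$-regularity of $\partial\Omega$, multiply \eqref{ELeqQmu} by $(x\cdot\nabla)Q^\mu$ and integrate over $B_t(x_0)\cap\Omega$, and control the boundary integrals over $B_t(x_0)\cap\partial\Omega$ using $Q^\mu=Q_{\rm b}$ there. The key estimate needed is a bound on $\int_{\partial\Omega}|\partial_\nu Q^\mu|^2\,d\mathcal{H}^2$ in terms of $\|\nabla_{\rm tan}Q_{\rm b}\|^2_{L^\infty}$, $\lambda\|W(Q_{\rm b})\|_{L^1(\partial\Omega)}$ and $\|\nabla Q^\mu\|^2_{L^2(\Omega)}$; this is obtained exactly as \eqref{estnormderGL} by testing \eqref{ELeqQmu} against $(V\cdot\nabla)Q^\mu$ with $V=-\nabla\Phi_\Omega$, $\Phi_\Omega$ the torsion function, and invoking Hopf's lemma --- crucially the extra term $\mu(1-|Q^\mu|^2)Q^\mu\cdot(V\cdot\nabla)Q^\mu$ integrates (by the divergence-structure computation) into $-\int_\Omega\frac\mu4(1-|Q^\mu|^2)^2\mathrm{div}\,V$, which has the favorable sign and does \emph{not} produce any $\mu$-dependent contribution to the final constant. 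This is the point requiring the most care: one must check that every occurrence of the penalization term, both in the interior Pohozaev identity and in the boundary flux estimate, packages into a nonnegative bulk integral or into a term already present on the left, so that $C^\lambda_{Q_{\rm b}}$ stays independent of $\mu$ (it will depend on $\lambda$ through $\|W(Q_{\rm b})\|_{L^1(\partial\Omega)}$ and on $\|\nabla Q^\mu\|_{L^2(\Omega)}$, as stated). Since $\|\nabla Q^\mu\|_{L^2(\Omega)}\le 2\mathcal{F}_{\lambda,\mu}(Q^\mu)\le 2\mathcal{F}_{\lambda,\mu}(\bar Q_{\rm b})$ for minimizers (and is assumed bounded for general critical points in the statement), this dependence is harmless downstream. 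Collecting the boundary flux terms as in \eqref{estibdpohoz}--\eqref{addpotenttermonbdry}, dividing by $t^2$ and integrating from $\rho$ to $r$ gives \eqref{montonotLDGbdry} with ${\bf r}_\Omega$ as in Proposition \ref{monotonprop}. I expect no essential new difficulty beyond the bookkeeping of the $\mu$-term signs; the argument is genuinely a simplification of Proposition \ref{monotonprop} because $Q^\mu$ is smooth from the outset.
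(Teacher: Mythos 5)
Your proof is correct and is essentially the paper's own (unstated) proof: the authors simply assert that Lemma~\ref{monotformLDG} ``follows word by word'' from Steps~2 and~3 of the proof of Proposition~\ref{monotonprop}, now applied directly to the smooth solution $Q^\mu$ with no regularization, and they single out the same key observation you make --- that $|Q_{\rm b}|=1$ on $\partial\Omega$ makes the penalization term vanish on the boundary, so that $C^\lambda_{Q_{\rm b}}$ is independent of $\mu$.
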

The proof of this lemma follows word by word the one in Proposition \ref{monotonprop} (Step 2 \& Step 3), and we shall omit it. 
We just observe that the constant $C^\lambda_{Q_{\rm b}}$ in \eqref{montonotLDGbdry} is independent of $\mu$ because $Q_{\rm b}$ has always unit norm on $\partial \Omega$.
\vskip5pt

\subsection{Lyuksyutov regime and absence of isotropic melting}

We now complete the proof of Theorem \ref{thm:noisotropicphase} analyzing the asymptotic behavior as $\mu\to+\infty$ of minimizers of $\mathcal{F}_{\lambda,\mu}$ over the class $W^{1,2}_{Q_{\rm b}}(\Omega;\mathcal{S}_0)$. The heart of the matter is Proposition \ref{propnomelt} below. We emphasize that Proposition~\ref{propnomelt} does not rely on energy minimality but on the a priori strong convergence towards a smooth limiting map. Even if not surprising, this statement allows for some flexibility in its application, and we shall indeed  use it in our companion paper \cite{DMP2} when discussing the Lyuksiutov regime in the class of axially symmetric maps. 

\begin{proposition}\label{propnomelt}
Given a sequence $\mu_n\to+\infty$, consider for each $\mu_n$  a critical point $Q_\lambda^{\mu_n}$ of $\mathcal{F}_{\lambda,\mu_n}$ over $W^{1,2}_{Q_{\rm b}}(\Omega;\mathcal{S}_0)$.  Assume that $Q_\lambda^{\mu_n}\rightharpoonup Q_\lambda$ weakly in $W^{1,2}(\Omega;\mathcal{S}_0)$ as $n\to\infty$ for some $Q_\lambda\in\mathcal{A}_{Q_{\rm b}}(\Omega)\cap C^1(\overline\Omega;\mathbb{S}^4)$, and that 
$$\lim_{n\to\infty}\mathcal{F}_{\lambda,\mu_n}(Q_\lambda^{\mu_n})=\mathcal{E}_\lambda(Q_\lambda)\,. $$
Then, 
\begin{enumerate}
\item $Q_\lambda^{\mu_n}\to Q_\lambda$ strongly in $W^{1,2}(\Omega;\mathcal{S}_0)$; 
\vskip5pt
\item $\displaystyle\mu_n\int_\Omega(1-|Q_\lambda^{\mu_n}|^2)^2\,dx\to 0$; 
\vskip5pt
\item $|Q_\lambda^{\mu_n}|\to 1$ uniformly in $\overline\Omega$.
\end{enumerate}
\end{proposition}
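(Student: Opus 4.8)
\textbf{Proof proposal for Proposition \ref{propnomelt}.}

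The plan is to follow the classical Ginzburg-Landau scheme, exploiting in an essential way the $C^1$ regularity of the limiting map $Q_\lambda$ (which in the application comes from Theorem \ref{thm:full-regularity}). First I would establish claims (1) and (2) together by a soft lower semicontinuity argument. Since $W\geq 0$ on $\mathcal{S}_0$ and $Q_\lambda^{\mu_n}\rightharpoonup Q_\lambda$ weakly in $W^{1,2}$, the compact Sobolev embedding $W^{1,2}(\Omega)\hookrightarrow L^4(\Omega)$ gives $Q_\lambda^{\mu_n}\to Q_\lambda$ in $L^4$, hence $\int_\Omega W(Q_\lambda^{\mu_n})\,dx\to\int_\Omega W(Q_\lambda)\,dx$ (note $W$ is a quartic polynomial). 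Combining the convergence of the total energies with lower semicontinuity of the Dirichlet integral, one gets
\[
\mathcal{E}_\lambda(Q_\lambda)\leq\liminf_n\mathcal{E}_\lambda(Q_\lambda^{\mu_n})\leq\limsup_n\Big(\mathcal{E}_\lambda(Q_\lambda^{\mu_n})+\tfrac{\mu_n}{4}\int_\Omega(1-|Q_\lambda^{\mu_n}|^2)^2\,dx\Big)=\mathcal{E}_\lambda(Q_\lambda)\,,
\]
which forces $\mu_n\int_\Omega(1-|Q_\lambda^{\mu_n}|^2)^2\,dx\to 0$ (claim (2)) and $\|\nabla Q_\lambda^{\mu_n}\|_{L^2}\to\|\nabla Q_\lambda\|_{L^2}$; together with the weak convergence this yields the strong $W^{1,2}$-convergence (claim (1)). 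This part is entirely routine.

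The substantive part is claim (3), the uniform lower bound $|Q_\lambda^{\mu_n}|\to 1$. Here I would argue as in \cite[Propositions 4 and 6]{MaZa}, in the spirit of Ginzburg-Landau. The key mechanism: because $Q_\lambda$ is smooth and $\mathbb{S}^4$-valued, for any $\eta>0$ one can choose a scale $r_\eta>0$ with $\frac1r\mathcal{E}_\lambda(Q_\lambda,B_r(x_0)\cap\Omega)<\eta$ uniformly in $x_0\in\overline\Omega$ and $r<r_\eta$; by the strong $W^{1,2}$-convergence (claim (1)) and the energy convergence, the same smallness
\[
\frac1r\,\mathcal{F}_{\lambda,\mu_n}\big(Q_\lambda^{\mu_n},B_r(x_0)\cap\Omega\big)+\frac{1}{4\mu_n r}\int_{B_r(x_0)\cap\Omega}(1-|Q_\lambda^{\mu_n}|^2)^2\,dx<2\eta
\]
holds for $n$ large, at some fixed scale $r$. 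One then runs a clearing-out / $\varepsilon$-regularity argument on the scalar function $\rho_n:=|Q_\lambda^{\mu_n}|$ (or on $1-\rho_n^2$), using: the equation \eqref{ELeqQmu}, the a priori bounds $|Q_\lambda^{\mu_n}|\leq 1$ (Lemma \ref{maxprinciplemma}) and $|\nabla Q_\lambda^{\mu_n}|\leq C\sqrt{\lambda+\mu_n}$ (Lemma \ref{estigradLDG}), and the monotonicity formulae (Lemma \ref{monotformLDG}). Concretely, the potential term in \eqref{ELeqQmu} shows $1-|Q_\lambda^{\mu_n}|^2$ satisfies a differential inequality of the type $-\Delta(1-\rho_n^2)+2\mu_n\rho_n^2(1-\rho_n^2)\gtrsim 0$ modulo a term controlled by $\lambda$; a rescaling $\widetilde Q(x)=Q_\lambda^{\mu_n}(x_0+x/\sqrt{\mu_n})$ turns this into a fixed-scale problem, and a standard sub-mean-value / Moser iteration argument (combined with the $L^2$-smallness $\mu_n\int(1-|Q_\lambda^{\mu_n}|^2)^2\to0$ from claim (2)) upgrades the smallness of the scaled energy into a pointwise bound $1-|Q_\lambda^{\mu_n}(x_0)|^2\leq C\eta$. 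Covering $\overline\Omega$ by finitely many such balls gives uniform convergence.

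The main obstacle I anticipate is handling the boundary points $x_0\in\partial\Omega$ cleanly: interior Moser iteration must be replaced by its boundary counterpart, using the Dirichlet condition $|Q_\lambda^{\mu_n}|=1$ on $\partial\Omega$ together with the boundary monotonicity inequality \eqref{montonotLDGbdry} (whose error constant $C^\lambda_{Q_{\rm b}}(r-\rho)$ is harmless at small scales since $\lambda$ is fixed). A minor technical point is that, unlike in pure Ginzburg-Landau, the potential here is not strictly convex near the vacuum manifold $\mathbb{R}P^2$ — it only penalizes the norm, not the biaxiality — so one must be careful that the differential inequality for $1-|Q_\lambda^{\mu_n}|^2$ uses only the $\mu_n$-term and not any spurious coercivity of $W$; this is exactly why the sign computation in Lemma \ref{maxprinciplemma}, namely $|Q|^4-\sqrt6\,\mathrm{tr}(Q^3)\geq|Q|^3-\sqrt6\,\mathrm{tr}(Q^3)\geq 0$ on $\{|Q|\geq 1\}$, reappears and must be tracked with the correct signs. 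Everything else is a faithful adaptation of \cite{MaZa}.
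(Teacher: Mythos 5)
Your Step~1 (claims (1) and (2)) is identical to the paper's: $L^4$-compactness gives convergence of $\int_\Omega W(Q_\lambda^{\mu_n})\,dx$, and the sandwich with lower semicontinuity of the Dirichlet integral yields both claims. Nothing to add there.

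For claim (3) you and the paper both run a clearing-out argument, but the mechanism differs, and the paper's version is both simpler and sidesteps exactly the issues you flag as potential obstacles. You propose to propagate the smallness of the scaled energy \emph{down} to the microscopic scale $r_n\sim\mu_n^{-1/2}$ via the monotonicity formula, rescale by $\sqrt{\mu_n}$, and then prove a pointwise bound on $1-|Q_\lambda^{\mu_n}|^2$ by Moser iteration on the scalar PDE for $1-\rho_n^2$. This would work, but note that the coefficient $\mu_n\rho_n^2$ in that PDE degenerates precisely near the low-norm points you want to exclude, and the $\lambda$-term on the right-hand side $|Q|^4-\sqrt6\,\mathrm{tr}(Q^3)$ is \emph{not} sign-definite when $|Q|<1$ (it can be negative, e.g.\ when $\widetilde\beta(Q)$ is close to $1$), so you would indeed have to track these with care. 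The paper avoids the scalar PDE entirely and argues by contradiction in the opposite direction: if $|Q_\lambda^{\mu_n}(x_n)|\leq\delta$ at some point, the uniform gradient bound $|\nabla Q_\lambda^{\mu_n}|\leq C\sqrt{\mu_n}$ (Lemma~\ref{estigradLDG}) forces $|Q_\lambda^{\mu_n}|^2\leq\frac{1+\delta^2}{2}$ on the whole ball $B_{r_n}(x_n)$ with $r_n=\kappa\mu_n^{-1/2}$, which immediately produces the pointwise lower bound
\[
\frac{\mu_n}{4r_n}\int_{B_{r_n}(x_n)}\bigl(1-|Q_\lambda^{\mu_n}|^2\bigr)^2\,dx \;\geq\; \frac{\pi\kappa^2(1-\delta^2)^2}{12}
\]
on the scaled energy at the microscopic scale; monotonicity is then used \emph{upward} to transfer this to a macroscopic scale $r_0$, contradicting the smallness of $\frac{1}{r_0}\mathcal{E}_\lambda(Q_\lambda,B_{r_0}(x_0)\cap\Omega)$ that comes from $Q_\lambda\in C^1(\overline\Omega;\mathbb{S}^4)$ and the strong convergence. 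No Moser iteration, no $\varepsilon$-regularity machinery, no delicate sign tracking, and the interior/boundary dichotomy reduces to which version of Lemma~\ref{monotformLDG} you invoke (the boundary error $C^\lambda_{Q_{\rm b}}r_0$ being harmless, as you correctly anticipated). So: your scheme is viable, but the paper's is the more economical one and dissolves precisely the technical worries you raised.
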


\begin{proof}
{\it Step 1.} We start proving items (1) and (2). First, notice that $Q_\lambda^{\mu_n}\to Q_\lambda$ strongly in $L^4(\Omega)$ by the compact embedding $W^{1,2}(\Omega)\hookrightarrow L^4(\Omega)$. Hence $\int_\Omega W(Q_\lambda^{\mu_n})\,dx\to \int_\Omega W(Q_{\lambda})\,dx$ and by lower semicontinuity of the Dirichlet integral  we get $\mathcal{E}_\lambda(Q_\lambda)\leq\liminf_{n\to\infty} \mathcal{E}_\lambda(Q_\lambda^{\mu_n})$.  

Hence,  we have 
$$\mathcal{E}_\lambda(Q_\lambda)\leq\liminf_{n\to\infty} \mathcal{E}_\lambda(Q_\lambda^{\mu_n})+ \limsup_{n \to \infty} \displaystyle \frac{\mu_n}4\int_\Omega(1-|Q_\lambda^{\mu_n}|^2)^2\,dx  \leq \lim_{n\to\infty}\mathcal{F}_{\lambda,\mu_n}(Q_\lambda^{\mu_n})=\mathcal{E}_\lambda(Q_\lambda)\,. $$
Therefore $\mu_n\int_\Omega(1-|Q_\lambda^{\mu_n}|^2)^2\,dx\to 0$ and $\|\nabla Q_\lambda^{\mu_n}\|^2_{L^2(\Omega)}\to \|\nabla Q_\lambda\|^2_{L^2(\Omega)}$. Combined with the weak $W^{1,2}$-convergence, this latter fact implies that $Q_\lambda^{\mu_n}\to Q_\lambda$ strongly in $W^{1,2}(\Omega)$. 

\vskip5pt

\noindent{\it Step 2.} It now remains to prove that $|Q_\lambda^{\mu_n}|\to 1$ uniformly in $\overline\Omega$. Given $\delta\in(0,1)$ arbitrary, we thus have to prove that $|Q_\lambda^{\mu_n}|>\delta$ on $\overline{\Omega}$ for $n$ large enough. We argue by contradiction assuming that, along a (not relabeled) subsequence, there exists $x_n\in\Omega$ such that $|Q_\lambda^{\mu_n}(x_n)|\leq \delta$. Extracting a further subsequence if necessary, we can assume that $x_n\to x_0$ as $n\to\infty$ for some $x_0\in\overline\Omega$. In view of Lemma \ref{estigradLDG} (and the fact that $|Q^\mu|=1$ on $\partial\Omega$), we can find a constant $\kappa\in(0,1)$ independent of $n$ such that for $r_n:=\kappa\mu^{-1/2}_n \to 0$ and for all $n$ we have 
\begin{equation}\label{contradassumpmelt}
B_{r_n}(x_n)\subset\Omega \quad\text{and}\quad |Q_{\mu_n}|^2\leq \frac{1+\delta^2}{2}\text{ in $B_{r_n}(x_n)$}\,.
\end{equation}
We now distinguish two cases: 
\vskip3pt

\noindent{\it Case 1: $x_0\in\Omega$.} The limiting map $Q_\lambda$ being of class $C^1$, we can find a radius $r_0\in(0,{\rm dist}(x_0,\partial\Omega))$ such that  
$$\frac{1}{r_0}\mathcal{E}_\lambda(Q_\lambda,B_{r_0}(x_0))< \frac{\pi\kappa^2(1-\delta^2)^2}{24}\,. $$
From Step 1, we deduce that for $n$ large enough,
\begin{equation}\label{condcontrameltnlarge}
\frac{1}{r_0}\mathcal{F}_{\lambda,\mu_n}(Q_\lambda^{\mu_n},B_{r_0}(x_0))<  \frac{\pi\kappa^2(1-\delta^2)^2}{24}\,.
\end{equation}
On the other hand, still for  $n$ large enough, we have $|x_n-x_0|<r_0/2$ and $r_n<r_0/2$. Then we infer from \eqref{contradassumpmelt} and \eqref{montonotLDGint} that 
\begin{multline*}
\frac{\pi\kappa^2(1-\delta^2)^2}{12}\leq \frac{\mu_n}{4r_n}\int_{B_{r_n}(x_n)}\big(1-|Q_\lambda^{\mu_n}|^2\big)^2\,dx\leq \frac{1}{r_n}\mathcal{F}_{\lambda,\mu_n}(Q_\lambda^{\mu_n},B_{r_n}(x_n))\\
\leq \frac{2}{r_0}\mathcal{F}_{\lambda,\mu_n}(Q_\lambda^{\mu_n},B_{r_0/2}(x_n))\leq  \frac{2}{r_0}\mathcal{F}_{\lambda,\mu_n}(Q_\lambda^{\mu_n},B_{r_0}(x_0))\,,
\end{multline*}
which contradicts \eqref{condcontrameltnlarge}. 
\vskip3pt

\noindent{\it Case 2: $x_0\in\partial\Omega$.} Once again, since $Q_\lambda\in C^1(\overline\Omega)$ and $\partial\Omega$ is of class $C^3$, we can find  a small radius $r_0\in(0,{\bf r}_\Omega)$ where ${\bf r}_\Omega$ is given by Lemma \ref{monotformLDG} such that the nearest point projection on $\partial\Omega$ is well defined in the $r_0$-tubular neighborhood of $\partial\Omega$, and 
$$\frac{1}{r_0}\mathcal{E}_\lambda(Q_\lambda,B_{r_0}(x_0)\cap\Omega)+C^\lambda_{Q_{\rm b}} r_0< \frac{\pi\kappa^2(1-\delta^2)^2}{48}\,,$$
where the constant $C^\lambda_{Q_{\rm b}}$ is also given by Lemma \ref{monotformLDG} (notice that $\|\nabla Q_\lambda^{\mu_n}\|_{L^2(\Omega)}$ is bounded by Step 1). 
From Step 1, we deduce that for $n$ large enough,
\begin{equation}\label{condcontrameltnlargebdry}
\frac{1}{r_0}\mathcal{F}_{\lambda,\mu_n}(Q_\lambda^{\mu_n},B_{r_0}(x_0)\cap\Omega)+C^\lambda_{Q_{\rm b}} r_0<  \frac{\pi\kappa^2(1-\delta^2)^2}{48}\,.
\end{equation}
If we denote $y_n\in\partial\Omega$ the projection of $x_n$ on $\partial\Omega$, we have for $n$ large enough (by \eqref{contradassumpmelt}), 
$$r_n\leq |y_n-x_n|={\rm dist}(x_n,\partial\Omega)\leq |x_n-x_0| <\frac{r_0}{4}\,,$$
so that $|y_n-x_0|<r_0/2$. Arguing as in Case 1 and setting $d_n:=|y_n-x_n|$, we infer from \eqref{contradassumpmelt} and \eqref{montonotLDGint}-\eqref{montonotLDGbdry} that 
\begin{multline*}
\frac{\pi\kappa^2(1-\delta^2)^2}{12}\leq 
\frac{1}{r_n}\mathcal{F}_{\lambda,\mu_n}(Q_\lambda^{\mu_n},B_{r_n}(x_n))\leq \frac{1}{d_n}\mathcal{F}_{\lambda,\mu_n}(Q_\lambda^{\mu_n},B_{d_n}(x_n))\\
\leq \frac{1}{d_n}\mathcal{F}_{\lambda,\mu_n}(Q_\lambda^{\mu_n},B_{2d_n}(y_n)\cap\Omega)\leq \frac{4}{r_0}\mathcal{F}_{\lambda,\mu_n}(Q_\lambda^{\mu_n},B_{r_0/2}(y_n)\cap\Omega)+C^\lambda_{Q_{\rm b}}r_0\\
\leq  \frac{4}{r_0}\mathcal{F}_{\lambda,\mu_n}(Q_\lambda^{\mu_n},B_{r_0}(x_0)\cap\Omega)+C^\lambda_{Q_{\rm b}}r_0\,,
\end{multline*}
which contradicts \eqref{condcontrameltnlargebdry}. 
\end{proof}

\begin{proof}[Proof of Theorem  \ref{thm:noisotropicphase}]
Let us consider an arbitrary sequence $\mu_n\to+\infty$ and corresponding $Q_\lambda^{\mu_n}$ minimizing $\mathcal{F}_{\lambda,\mu_n}$ over $W^{1,2}_{Q_{\rm b}}(\Omega;\mathcal{S}_0)$. Since the map $\bar Q_{\rm b}\in \mathcal{A}_{Q_{\rm b}}(\Omega)$ is an admissible competitor to the minimality of $Q_\lambda^{\mu_n}$, we have 
\begin{equation}\label{almostdonesect4}
\mathcal{F}_{\lambda,\mu_n}(Q_\lambda^{\mu_n})\leq \mathcal{F}_{\lambda,\mu_n}(\bar Q_{\rm b})=\mathcal{E}_\lambda(\bar Q_{\rm b})\,. 
\end{equation}
Therefore, the sequence $\{Q_\lambda^{\mu_n}\}$ is bounded in $W^{1,2}(\Omega; \mathcal{S}_0)$, and we can extract a (not relabeled) subsequence such that $Q_\lambda^{\mu_n}\rightharpoonup Q_\lambda$ weakly  in $W^{1,2}(\Omega)$ for some $Q_{\lambda}\in W^{1,2}_{Q_{\rm b}}(\Omega;\mathcal{S}_0)$. By the compact embedding $W^{1,2}(\Omega)\hookrightarrow L^4(\Omega)$, we have $\int_{\Omega}(1-|Q_\lambda^{\mu_n}|^2)^2\,dx\to \int_{\Omega}(1-|Q_{\lambda}|^2)^2\,dx$, and it follows from \eqref{almostdonesect4} that
$$ \int_{\Omega}(1-|Q_{\lambda}|^2)^2\,dx=\lim_{n\to\infty} \int_{\Omega}(1-|Q_\lambda^{\mu_n}|^2)^2\,dx\leq \lim_{n\to\infty}\frac{1}{\mu_n} \mathcal{F}_{\lambda,\mu_n}(Q_\lambda^{\mu_n}) = 0\,.$$
Hence $|Q_\lambda|=1$ a.e. in $\Omega$, so that $Q_\lambda\in \mathcal{A}_{Q_{\rm b}}(\Omega)$.  

Since any $Q\in \mathcal{A}_{Q_{\rm b}}(\Omega)$ is in fact admissible to test the minimality of $Q_\lambda^{\mu_n}$, we can proceed as in \eqref{almostdonesect4} and use the lower semicontinuity of $\mathcal{E}_\lambda$ to infer that 
\begin{equation}\label{samdbefterror}
\mathcal{E}_\lambda(Q_\lambda)\leq \liminf_{n\to\infty}\mathcal{E}_\lambda(Q_\lambda^{\mu_n})\leq  \liminf_{n\to\infty}\mathcal{F}_{\lambda,\mu_n}(Q_\lambda^{\mu_n})\leq \mathcal{E}_\lambda(Q)
\end{equation}
for every $Q\in \mathcal{A}_{Q_{\rm b}}(\Omega)$. Hence $Q_\lambda$ is a minimizer of $\mathcal{E}_\lambda$ over $\mathcal{A}_{Q_{\rm b}}(\Omega)$, and we deduce from Theorem~\ref{thm:full-regularity} 
that $Q_\lambda\in C^{1,\alpha}(\overline\Omega)$. In addition, using $Q=Q_\lambda$ as competitor in \eqref{samdbefterror} we obtain that  $\mathcal{F}_{\lambda,\mu_n}(Q_\lambda^{\mu_n})\to \mathcal{E}_\lambda(Q_\lambda)$. The conclusion now follows from Proposition \ref{propnomelt}. 
\end{proof}

\subsection{Instability of the melting hedgehog}
In this subsection, we discuss instability of the melting hedgehog $H^\mu_\lambda$ given in \eqref{radialhedgehog} in the Lyuksyutov regime $\mu \to \infty$. 
The (in)stability property here is similar to the one in \cite{INSZ1}, where the low-temperature regime $a^2\to \infty$ is considered. The main result in \cite{INSZ1} is in fact the stability of the melting 
hedgehog in a different range of parameters. Stability is obtained through a careful spectral decomposition, which also gives as a byproduct  instability as $a^2\to \infty$. Here 
we shall use a  different and more direct perturbation argument. More precisely, the instability property of $H^\mu_\lambda$ will essentially follow  from the corresponding one for the constant norm hedgehog $\bar{H}$ seen as a degree-zero homogeneous harmonic map into $\mathbb{S}^4$. 

First we recall that the constant norm hedgehog 
\[
\bar{H}(x)=\sqrt{\frac32} \left( \frac{x}{|x|} \otimes \frac{x}{|x|}-\frac13 I \right)\, 
\]
satisfies $\bar{H} \in W^{1,2}_{\rm{loc}}(\mathbb{R}^3;\mathbb{R}P^2) \cap C^\infty(\mathbb{R}^3\setminus \{ 0\};\mathbb{R}P^2)$. It is a critical point of $\mathcal{E}_\lambda$ both for $\lambda=0$ (i.e., a weakly harmonic map into $\mathbb{S}^4$), and a critical point for $\lambda>0$ since $\nabla_{\rm tan}W(\bar{H}) \equiv 0$.  
In order to discuss its stability properties, we first set for any $\Phi \in C^\infty_c( B_1; \mathcal{S}_0)$, 
\[ \mathcal{E}^{''}_\lambda(\Phi;\bar{H}):=\left[ \frac{d^2}{dt^2} \mathcal{E}_\lambda \left( \frac{\bar{H}+t\Phi}{|\bar{H}+t\Phi|} \right) \right]_{t=0}  \, . \]
The second variation formula for harmonic maps (see, e.g., \cite[Chapter 1]{LiWa2}) yields
\begin{equation}
\label{secondvarElambda}
 \mathcal{E}^{''}_\lambda(\Phi;\bar{H})=\int_{B_1} |\nabla \Phi_T|^2- |\nabla \bar{H}|^2 |\Phi_T|^2 +\lambda D^2_{\rm tan} W (\bar{H}) \Phi :\Phi \, dx   \, ,
 \end{equation}
where $\Phi_T :=\Phi-\bar{H} (\bar{H}:\Phi)$ is the tangential component of $\Phi$ along $\bar{H}$, and
\begin{multline}
\label{D2W}
D^2_{\rm tan} W (\bar{H}) \Phi :\Phi := \left[ \frac{d^2}{dt^2} W \left( \frac{\bar{H}+t\Phi}{|\bar{H}+t\Phi|} \right) \right]_{t=0} =D^2 W (\bar{H}) \Phi_T :\Phi_T  \\ = \frac{1}{\sqrt{6}} \left(  2 (\bar{H} : \Phi_T)^2 + |\Phi_T|^2 - \sqrt{6}\, {\rm tr} (\bar{H} \Phi_T^2) \right)
= \frac{1}{\sqrt{6}} \left(    |\Phi_T|^2 - \sqrt{6}\, {\rm tr} (\bar{H} \Phi_T^2) \right) \, . 
\end{multline} 

Due to the $O(3)$-equivariance of $\bar{H}$, the second variation $\mathcal{E}^{''}_0(\Phi;\bar{H})$ takes a particularly simple form whenever $\Phi$ is a radial vector field. 

\begin{lemma}
\label{secondvarEzeroradial}
For any $\bar{v} \in \mathbb{S}^4$ and any radial function $\eta \in C^\infty_c (B_1 \setminus \{ 0 \})$, we have
\begin{equation}
\label{D2Ezeroradial}
 \mathcal{E}^{''}_0(\eta \bar{v};\bar{H})=\frac45 \int_{B_1} |\nabla \eta|^2- \frac{3}{|x|^2}|\eta|^2  \, dx \,  . 
\end{equation}
\end{lemma}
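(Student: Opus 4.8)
The strategy is to exploit the $O(3)$-equivariance of $\bar H$ to reduce the computation of $\mathcal{E}_0''(\eta\bar v;\bar H)$ to three scalar integrals, only one of which carries the angular information. First I would note that $\Phi=\eta\bar v$ with $\eta$ radial and $\bar v\in\mathbb{S}^4$ constant, so $\nabla\Phi=(\nabla\eta)\otimes\bar v$ and hence $|\nabla\Phi|^2=|\nabla\eta|^2$. The tangential component along $\bar H$ is $\Phi_T=\eta(\bar v-(\bar H:\bar v)\bar H)$; its gradient picks up a term from $\nabla\bar H$, so that $|\nabla\Phi_T|^2=|\nabla\eta|^2\,|\bar v-(\bar H:\bar v)\bar H|^2+\eta^2\,\big|\nabla\big((\bar H:\bar v)\bar H\big)\big|^2+\text{cross terms}$, and likewise $|\Phi_T|^2=\eta^2|\bar v-(\bar H:\bar v)\bar H|^2$. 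The key point is that all the $\bar v$-dependent coefficients appearing here — namely $|\bar v-(\bar H:\bar v)\bar H|^2$, the analogous quantity built from $\nabla\bar H$, and $\mathrm{tr}(\bar H\Phi_T^2)$ — are functions only of $\frac{x}{|x|}$ and, after integration over $\mathbb{S}^2$, average to universal constants independent of $\bar v$.

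\textbf{Key steps.} Concretely, write $n=x/|x|$ and $\bar H=\sqrt{3/2}(n\otimes n-\tfrac13 I)$. I would compute: (i) $|\nabla\bar H|^2=\tfrac{6}{|x|^2}$ (this is the standard fact that $\bar H$ is, up to the factor, the Veronese map composed with $n$, whose energy density is $\tfrac{6}{|x|^2}$; equivalently $|\nabla_{\rm tan}\bar H|^2$ on $\mathbb{S}^2$ equals $6$); (ii) the average over $\bar v\in\mathbb{S}^4$ — or equivalently, by equivariance, the average over $n\in\mathbb{S}^2$ for fixed $\bar v$ — of $|\bar v-(\bar H:\bar v)\bar H|^2$, of $\eta^2\,|\nabla((\bar H:\bar v)\bar H)|^2$, and of $D^2_{\rm tan}W(\bar H)\Phi:\Phi$ from \eqref{D2W}. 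Because the integrand in $\mathcal{E}_0''$ (take $\lambda=0$, so the $W$-term drops) is $|\nabla\Phi_T|^2-|\nabla\bar H|^2|\Phi_T|^2$, after performing the angular average each term becomes a radial integral of $|\nabla\eta|^2$ or of $\tfrac{\eta^2}{|x|^2}$ against an explicit numerical constant. The constants $\tfrac45$ and $3$ then emerge from the representation-theoretic bookkeeping: $\mathbb{R}P^2\hookrightarrow\mathbb{S}^4$ via the (rescaled) Veronese embedding has second fundamental form of constant norm, and the relevant eigenvalue computation for the Jacobi operator of $\bar H$ (a harmonic map with $|\nabla\bar H|^2=6/|x|^2$) on the equivariant radial sector produces exactly the ratio $4/5$ for the leading coefficient and the shift $-3/|x|^2$.

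\textbf{Alternative, cleaner route.} Rather than averaging, I would more likely fix a convenient $\bar v$ and exploit the following: the value of $\mathcal{E}_0''(\eta\bar v;\bar H)$ must, by the equivariance of $\bar H$ under $O(3)$ acting diagonally, be \emph{independent of the choice of unit vector} $\bar v$ up to the decomposition of $\mathcal{S}_0$ into irreducible pieces for the isotropy; since $\eta$ is radial, only the ``constant'' angular mode contributes, and one can just test with $\bar v$ pointing in a direction making $(\bar H:\bar v)$ and the associated geometry easiest to evaluate (e.g. $\bar v=\sqrt{3/2}(e_3\otimes e_3-\tfrac13 I)$, so $\bar H:\bar v=\tfrac32\cos^2\theta-\tfrac12$ in spherical coordinates). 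Then $|\Phi_T|^2$, $|\nabla\Phi_T|^2$ and $|\nabla\bar H|^2$ become explicit functions of $\theta$, and $\int_{\mathbb{S}^2}(\cdots)\,d\mathcal{H}^2$ are elementary trigonometric integrals. Collecting terms and using $\int_{\mathbb{S}^2}1=4\pi$ gives \eqref{D2Ezeroradial}.

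\textbf{Main obstacle.} The only real difficulty is the bookkeeping of the cross term $\int \nabla\eta\cdot\nabla(\text{angular part})\,\eta\,(\ldots)$ in $|\nabla\Phi_T|^2$: since $\eta$ is radial and the angular factor depends only on $n$, this cross term integrates to zero over each sphere $|x|=r$ (the angular gradient is orthogonal to the radial direction), but one must check this carefully rather than assume it. Once that vanishing is established, the remaining computation is a short explicit integration. I do not expect any analytic subtlety — $\eta\in C^\infty_c(B_1\setminus\{0\})$ keeps everything away from the singularity of $\bar H$, so all integrals converge and integration by parts is unproblematic.
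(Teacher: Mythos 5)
Your proposal is correct and follows essentially the same route as the paper: fix an orthonormal basis $\{e_i\}$ of $\mathcal{S}_0$, write $\bar h:=\bar H:\bar v$ in spherical coordinates, observe that radiality of $\eta$ forces the $B_1$-integral to split into a purely radial integral against an angular average over $\mathbb{S}^2$, and evaluate that average using the orthogonality $\int_{\mathbb{S}^2}(\bar H:e_i)(\bar H:e_j)\,{\rm d}{\rm vol}_{\mathbb{S}^2}=\tfrac{4\pi}{5}\delta_{ij}$ together with $\Delta_{\mathbb{S}^2}\bar h=-|\nabla_{\rm tan}\bar H|^2\,\bar h=-6\bar h$, which are exactly the ``eigenvalue computation'' and ``elementary trigonometric integrals'' you gesture at. The one point you mildly overcomplicate is the cross term in $|\nabla\Phi_T|^2$: since $\nabla\eta$ is radial and the other factor is the gradient of a $0$-homogeneous quantity (hence tangential), the cross term vanishes \emph{pointwise}, not merely after integration over each sphere.
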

\begin{proof}
Let ${\bf i} = (1,0,0)^t$, ${\bf j}=(0,1,0)^t$, ${\bf k} = (0,0,1)^t$ be the canonical basis of $\R^3$. From these vectors, we construct a distinguished orthonormal basis of $\mathcal{S}_0$ by setting 
\begin{gather*}
	e_0 =\sqrt\frac{3}{2} \left({\bf k}  \otimes {\bf k}  - \frac{1}{3}I\right) \,, \quad e_1=\frac{1}{\sqrt{2}}({\bf i} \otimes {\bf k}  + {\bf k}  \otimes {\bf i}) \, , \quad e_2=\frac{1}{\sqrt{2}}({\bf j} \otimes {\bf k}  + {\bf k} \otimes {\bf j} ) \,, \\
	e_3=\frac{1}{\sqrt{2}}({\bf i} \otimes {\bf i}- {\bf j}\otimes {\bf j}), \quad e_4=\frac{1}{\sqrt{2}}({\bf i}  \otimes {\bf j} + {\bf j} \otimes {\bf i}) \,. 
\end{gather*}
In terms of the latitude $\theta \in [0,\pi]$ and of the colatitude $\phi \in [0,2\pi)$ on $\mathbb{S}^2$, the components of $\bar{H}$ with respect to this basis are easily seen to be
\begin{gather*}
	\bar{H}:e_0 = \frac{3}{2}\left(\cos^2\theta - \frac{1}{3}\right) \,, \quad \bar{H}: e_1 = \frac{\sqrt{3}}{2} \sin{2\theta} \cos\phi \,, \quad \bar{H}: e_2= \frac{\sqrt{3}}{2} \sin{2\theta} \sin\phi \,, \\
	\bar{H}:e_3 = \frac{\sqrt{3}}{2} \sin^2{\theta} \cos{2\phi}\,, \quad \bar{H}: e_4 = \frac{\sqrt{3}}{2} \sin^2{\theta} \sin{2\phi} \, .
\end{gather*}
Therefore a straightforward calculation gives 
\begin{equation}
\label{orthogonality}
	\int_{\bbS^2} (\bar{H}:e_i )(\bar{H}: e_j) \,{\rm d}{\rm vol}_{\bbS^2} = \frac{4 \pi}{5} \delta_{ij}
\end{equation}
for any $i, j = 0,\dots, 4$. As a consequence, if we write $\bar{v}=\sum_i \bar{v}_i e_i$ with $|\bar{v}|^2=\sum_i \bar{v}_i^2=1$, then $\bar{h}:=\bar{H}:\bar{v}$ satisfies 
\begin{equation}
\label{L2normhbar}
\int_{\bbS^2} \bar{h}^2 \,{\rm d}{\rm vol}_{\bbS^2} = \sum_{i,j=0}^4\int_{\bbS^2} (\bar{H}:e_i)(\bar{H}:e_j) \bar{v}_i  \bar{v}_j \,{\rm d}{\rm vol}_{\bbS^2}= \sum_{i,j=0}^4 \frac{4 \pi}{5} \delta_{ij}\bar{v}_i  \bar{v}_j =\frac{4 \pi}{5}  \, .
\end{equation}
Next, we notice that $\bar{H}$ is a degree-zero homogeneous harmonic map and $|\nabla \bar{H}|^2= |\nabla_{\rm tan} \bar{H}|^2=\frac{6}{|x|^2}$, hence  
\[
	\Delta_{\bbS^2} \bar{h} = -\abs{\nabla_{\rm tan} \bar{H}}^2 \bar{h} = -6 \bar{h} \, ,
\]
and in view of \eqref{L2normhbar} we obtain

\begin{equation}\label{2nd-var-hbar}
	\int_{\bbS^2} \bar{h}^2 |\nabla_{\rm tan} \bar{H} |^2 \,{\rm d}{\rm vol}_{\bbS^2} = \int_{\bbS^2} |\nabla \bar{h}|^2  \,{\rm d}{\rm vol}_{\bbS^2} =6 \int_{\bbS^2} \bar{h}^2 \,{\rm d}{\rm vol}_{\bbS^2}= \frac{6}{5} \cdot 4 \pi \, . 
\end{equation}

Finally, evaluating $\mathcal{E}''_0$ in \eqref{secondvarElambda} for $\Phi=\eta \bar{v}$ and integrating by parts, since $\eta$ is radial and \eqref{2nd-var-hbar} holds, we conclude that
\begin{gather*}
	\mathcal{E}_0^{''}(\eta \bar{v}; \bar{H}) = \int_{B_1} (1-\bar{h}^2)\abs{\nabla \eta}^2  + \frac{\eta^2}{\abs{x}^2} \left( 2 \abs{\nabla \bar{h}}^2 - (1-\bar{h}^2)\abs{\nabla_{\rm tan} \bar{H}}^2 \right) \,dx \\
	= \left( \int_{\bbS^2} (1-\bar{h}^2) {\rm d}{\rm vol}_{\bbS^2} \right) \int_0^1 (\eta')^2 r^2 dr + \left( \int_{\bbS^2}   2 \abs{\nabla \bar{h}}^2 - (1-\bar{h}^2)\abs{\nabla_{\rm tan} \bar{H}}^2 \, {\rm d}{\rm vol}_{\bbS^2} \right) \int_0^1 \eta^2 dr \\
	= \frac45 \cdot 4\pi \int_0^1 (\eta')^2 r^2 dr -\frac{12}{5} \cdot 4\pi  \int_0^1 \eta^2 dr= \frac45 \int_{B_1} |\nabla \eta|^2- \frac{3}{|x|^2}|\eta|^2  \, dx \,  ,
\end{gather*}
and the proof is complete.
\end{proof}

The instability property of $\bar{H}$ for the Dirichlet energy $\mathcal{E}_0$ along some vector field can be derived from the general instability result for harmonic tangent maps from $\mathbb{R}^3$ in to $\mathbb{S}^4$ proved in \cite{SU3} and \cite{LiWa1}, whence the existence of at least one direction of instability can be obtained through a contradiction argument which yields a negative second variation along {\em some} conformal vector field on $\mathbb{S}^4$ localized on the domain by a radial function. Here, exploiting the $O(3)$-equivariance of $\bar{H}$ and using Lemma \ref{secondvarEzeroradial}, we obtain a stronger and more explicit instability result for $\bar{H}$ as a common critical point of all the functionals $\mathcal{E}_\lambda$  along {\em any} suitably localized conformal vector fields on $\mathbb{S}^4$.

\begin{proposition}
\label{consthedgehoginstability}
Let $\bar{H}$ be the constant norm hedgehog. There exists a radial function $\xi \in C^\infty_c (B_1 \setminus \{ 0\})$ such that  for any vector $\bar{v} \in \mathbb{S}^4$,  $\bar{H}$ 
is a critical point of $\mathcal{E}_0$ which is  unstable along the vector field $\Phi:=\xi \bar{v}$, i.e., $\mathcal{E}^{''}_0(\Phi;\bar{H})<0$.

As a consequence, for each $\lambda>0$ there exists a radial function $\xi_\lambda \in C^\infty_c (B_1 \setminus \{ 0\})$ such that  for any vector $\bar{v} \in \mathbb{S}^4$,  $\bar{H}$ is a critical point of $\mathcal{E}_\lambda$ which is unstable along the vector field $\Phi_\lambda:=\xi_\lambda \bar{v}$, i.e., $\mathcal{E}^{''}_\lambda(\Phi_\lambda;\bar{H})<0$.
\end{proposition}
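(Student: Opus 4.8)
The plan is to reduce both assertions, via the second variation formulae \eqref{secondvarElambda}--\eqref{D2W} together with Lemma \ref{secondvarEzeroradial}, to producing a single radial test function on which the relevant quadratic form is strictly negative, and then to handle the potential correction by a localization (scaling) argument near the origin.

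\textbf{Step 1: the case $\lambda=0$.} By Lemma \ref{secondvarEzeroradial}, for any radial $\eta\in C^\infty_c(B_1\setminus\{0\})$ and any $\bar v\in\mathbb{S}^4$ one has $\mathcal{E}_0''(\eta\bar v;\bar H)=\frac45\int_{B_1}\big(|\nabla\eta|^2-3|x|^{-2}|\eta|^2\big)\,dx$, and the right-hand side does not involve $\bar v$. Hence it suffices to find a radial $\eta$ with $\int_{B_1}|\nabla\eta|^2\,dx<3\int_{B_1}|x|^{-2}|\eta|^2\,dx$; in polar coordinates this reads $\int_0^1(\eta')^2r^2\,dr<3\int_0^1\eta^2\,dr$. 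The sharp constant in this one-dimensional Hardy inequality over $C^\infty_c((0,1))$ equals $\tfrac14$ (as seen via the change of variable $t=-\log r$), which is far below $3$; so such $\eta$ exists in abundance — concretely, a smooth truncation of $r^{-1/2+\delta}$ for small $\delta>0$ works. Since $\bar H$ is a critical point of $\mathcal{E}_0$ (a weakly harmonic map into $\mathbb{S}^4$), setting $\xi:=\eta$ and $\Phi:=\xi\bar v$ gives $\mathcal{E}_0''(\Phi;\bar H)<0$ for every $\bar v\in\mathbb{S}^4$.

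\textbf{Step 2: the case $\lambda>0$.} Combining \eqref{secondvarElambda}--\eqref{D2W} with Lemma \ref{secondvarEzeroradial}, for a radial $\eta$ and $\Phi=\eta\bar v$, with $\Phi_T=\eta\big(\bar v-(\bar H:\bar v)\bar H\big)$ (so that $\Phi_T:\bar H=0$ and $|\Phi_T|^2\le\eta^2$), one gets
\begin{equation*}
\mathcal{E}_\lambda''(\eta\bar v;\bar H)=\frac45\int_{B_1}\Big(|\nabla\eta|^2-\frac{3}{|x|^2}\eta^2\Big)\,dx+\frac{\lambda}{\sqrt6}\int_{B_1}\Big(|\Phi_T|^2-\sqrt6\,{\rm tr}(\bar H\Phi_T^2)\Big)\,dx.
\end{equation*}
Because the spectrum of $\bar H(x)$ is $\{2/\sqrt6,-1/\sqrt6,-1/\sqrt6\}$ for every $x\neq0$, we have ${\rm tr}(\bar H\Phi_T^2)\ge-\tfrac1{\sqrt6}|\Phi_T|^2$, whence the potential integrand is at most $\tfrac2{\sqrt6}|\Phi_T|^2\le\tfrac2{\sqrt6}\eta^2$, \emph{uniformly in $\bar v$}. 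Now rescale: for $\rho\in(0,1)$ put $\eta_\rho(x):=\eta(x/\rho)\in C^\infty_c(B_\rho\setminus\{0\})\subset C^\infty_c(B_1\setminus\{0\})$; a direct scaling computation gives $\int|\nabla\eta_\rho|^2=\rho\int|\nabla\eta|^2$, $\int|x|^{-2}\eta_\rho^2=\rho\int|x|^{-2}\eta^2$, and $\int\eta_\rho^2=\rho^3\int\eta^2$, so that
\begin{equation*}
\mathcal{E}_\lambda''(\eta_\rho\bar v;\bar H)\le\rho\cdot\frac45\int_{B_1}\Big(|\nabla\eta|^2-\frac{3}{|x|^2}\eta^2\Big)\,dx+\frac{2\lambda}{\sqrt6}\,\rho^3\int_{B_1}\eta^2\,dx.
\end{equation*}
The first term is a negative multiple of $\rho$ and the second a positive multiple of $\rho^3$; hence for $\rho=\rho_\lambda$ small enough the total is strictly negative. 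Taking $\xi_\lambda:=\eta_{\rho_\lambda}$ and $\Phi_\lambda:=\xi_\lambda\bar v$, and recalling that $\bar H$ is a critical point of $\mathcal{E}_\lambda$ because $\nabla_{\rm tan}W(\bar H)\equiv0$, we conclude $\mathcal{E}_\lambda''(\Phi_\lambda;\bar H)<0$ for every $\bar v\in\mathbb{S}^4$.

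\textbf{Main obstacle.} There is no genuine analytic difficulty here; the one point to get right is that the $\mathcal{E}_0$-part of the second variation is scale-invariant (it is conformally covariant, reflecting that $\bar H$ is a homogeneous harmonic map from $\mathbb{R}^3$) while the potential contribution scales like $\rho^3$, so that concentrating the already-negative test function near the singular point absorbs the $\lambda$-term. The remaining care is purely bookkeeping: ensuring the potential bound, and hence the final conclusion, is uniform over $\bar v\in\mathbb{S}^4$, which is automatic since $|\bar H|\equiv1$ and its eigenvalues are independent of $x$.
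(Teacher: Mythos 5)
Your proof is correct and follows essentially the same strategy as the paper: Step~1 is the same use of Lemma \ref{secondvarEzeroradial} plus the sharp Hardy constant $\tfrac14<3$ (the paper constructs the explicit sequence $\eta_n=[\min\{n|x|,|x|^{-1/2}\}-2]_+$ and mollifies, but your truncated $r^{-1/2+\delta}$ works equally well), and Step~2 is the same localization by rescaling $\eta\mapsto\eta(\cdot/\rho)$. The only cosmetic difference is that in Step~2 you bound the potential integrand pointwise via ${\rm tr}(\bar H\Phi_T^2)\geq-\tfrac1{\sqrt6}|\Phi_T|^2$ and then scale each integral separately, whereas the paper uses the $0$-homogeneity of $\bar H$ to write the exact factorization $\mathcal{E}''_\lambda(\Phi^\delta;\bar H)=\delta\big(\mathcal{E}''_0(\Phi;\bar H)+\lambda\delta^2\int_{B_1}D^2W(\bar H)\Phi_T:\Phi_T\,dx\big)$; these are two ways of recording the same $\rho$ versus $\rho^3$ scaling.
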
 
\begin{proof}
As already proved in Lemma \ref{secondvarEzeroradial} above, we have
\[ \mathcal{E}^{''}_0(\eta \bar{v};\bar{H})=\frac45 \int_{B_1} |\nabla \eta|^2- \frac{3}{|x|^2}|\eta|^2  \, dx \, \]
for any radial function $\eta \in C^\infty_c (B_1 \setminus \{ 0 \})$. In view of the standard Hardy inequality in $\mathbb{R}^3$,
the quadratic form is not bounded from below and there exists a radial function $\eta \in C^\infty_0 (B_1 \setminus \{ 0 \})$ such that $\mathcal{E}^{''}_0(\eta \bar{v};\bar{H})<0$. Indeed, setting $\eta_n(x):=[\min\{ n |x|, |x|^{-1/2}\}-2 ]_+$,  we have a sequence of radial functions $\eta_n \in \rmLip(B_1)$ compactly supported in $B_1\setminus\{0\}$ satisfying  
\[ \int_{B_1} |\nabla \eta_n|^2 \, dx= \frac14 \int_{\frac1{n^{2/3}}<|x|<\frac14} \frac{dx}{|x|^3}+\mathcal{O}(1) =\frac14 \int_{B_1} \frac{\eta_n^2}{|x|^2} \, dx+\mathcal{O}(1)\quad\text{as $n\to\infty$} \, , \]
whence $\mathcal{E}^{''}_0(\eta_n \bar{v};\bar{H})\to -\infty$ as $n \to \infty$. In particular, $\mathcal{E}^{''}_0(\eta_n \bar{v};\bar{H})<0$ for $n$ large enough.
Finally, as $\eta_n \equiv 0$ for $|x|<1/n$ and $|x|>1/4$, taking $\xi=\eta_n *\rho_\varepsilon$ a regularization by convolution with $\eps<1/n$ and $\{ \rho_\eps \}$ a family of radial mollifiers, we have a (family of) radial function $\xi \in C^\infty_c (B_1 \setminus \{ 0\})$ satisfying $\mathcal{E}^{''}_0(\xi \bar{v};\bar{H})<0$ for $\eps>0$ small enough, which proves the first claim of the theorem.

In order to discuss the case $\lambda>0$, we rescale the radial function $\xi$ above setting $\xi_\delta(x):=\xi(x/\delta)$ for $0<\delta<1$ to be chosen later. Computing the second variation of $\mathcal{E}_\lambda$ along the vector field $\Phi^\delta:=\xi_\delta \bar{v} \in C^\infty_c( B_1; \mathcal{S}_0)$, equation \eqref{secondvarElambda} with $\Phi^\delta_T=\Phi^\delta-\bar{H} (\bar{H}:\Phi^\delta)$ (the tangential component of $\Phi^\delta$ along $\bar{H}$) yields 
\[ \mathcal{E}^{''}_\lambda(\Phi^\delta;\bar{H})=\mathcal{E}^{''}_0(\Phi^\delta;\bar{H})+\lambda \int_{B_1}D^2_{\rm tan} W (\bar{H}) \Phi^\delta :\Phi^\delta \, dx \, .
\]
As $\bar{H}$ is degree-zero homogeneous, a simple rescaling gives
\begin{multline*} \mathcal{E}^{''}_\lambda(\Phi^\delta;\bar{H})=\int_{B_1} |\nabla \Phi^\delta_T|^2- |\nabla \bar{H}|^2 |\Phi^\delta_T|^2 \, + D^2 W (\bar{H}) \Phi^\delta_T : \Phi^\delta_T  dx \\
=\delta \left(  \mathcal{E}^{''}_0(\Phi;\bar{H}) + \lambda \delta^2 \int_{B_1}  D^2 W (\bar{H}) \Phi_T : \Phi_T  dx \right) \, .
 \end{multline*}
 Since by construction $ \mathcal{E}^{''}_0(\Phi;\bar{H})<0$, the conclusion follows for $\delta>0$ small enough.
\end{proof}

Finally we consider the radial hedgehog $H^\mu_\lambda$ as the uniaxial critical point of the functional $\mathcal{F}_{\lambda,\mu}$ of the form \eqref{radialhedgehog} discussed in the introduction. Recall that such critical point is the unique minimizer of $\mathcal{F}_{\lambda,\mu}$ in the class of $O(3)$-equivariant maps in $W^{1,2}(B_1;\mathcal{S}_0)$ which agree with $\bar{H}$ on the boundary (see \cite[Theorem 1.4]{INSZ2}).  Moreover, arguing as in the proof of Theorem \ref{thm:noisotropicphase} above, it is not difficult to show that $H^\mu_\lambda \to \bar{H}$ strongly in $W^{1,2}$ as $\mu \to \infty$ (convergence of minimizers in the class of $O(3)$-equivariant maps). In addition, the convergence is locally uniform away from the origin because $|H^\mu_\lambda|=\sqrt{2/3}\, s^\mu_\lambda \to 1$ locally uniformly away from the origin as $\mu \to \infty$.

Exploiting the aforementioned convergence of $H^\mu_\lambda$ to its constant norm counterpart, we are going to infer the instability property of $H^\mu_\lambda$ from the corresponding one for $\bar{H}$ passing to the limit in the second variations of the energies $\mathcal{F}_{\lambda,\mu}$, and using Proposition~\ref{consthedgehoginstability}. With this respect, we first set for any $\Psi \in C^\infty_c( B_1; \mathcal{S}_0)$,
\[ \mathcal{F}_{\lambda,\mu}^{'}(\Psi;H^\mu_\lambda):=\left[ \frac{d}{dt} \mathcal{F}_{\lambda,\mu} \left( H^\mu_\lambda+t\Psi \right)  \right]_{t=0}  \, , \qquad \mathcal{F}_{\lambda,\mu}^{''}(\Psi;H^\mu_\lambda):=\left[ \frac{d^2}{dt^2} \mathcal{F}_{\lambda,\mu} \left( H^\mu_\lambda+t\Psi \right)  \right]_{t=0}  \, .\]
Simple calculations based on \eqref{LDGenergy} now yield 
\begin{equation}
\label{firstvarFlambdamu}
\mathcal{F}_{\lambda,\mu}^{'}(\Psi;H^\mu_\lambda)=\int_{B_1} \nabla H^\mu_\lambda : \nabla \Psi + \lambda \nabla W (H^\mu_\lambda) :\Psi +\mu (|H^\mu_\lambda|^2-1) H^\mu_\lambda :\Psi \, dx  \, ,
\end{equation}
and
\begin{equation}
\label{secondvarFlambdamu}
 \mathcal{F}_{\lambda,\mu}^{''}(\Psi;H^\mu_\lambda)=\int_{B_1} |\nabla \Psi|^2 +\lambda D^2 W (H^\mu_\lambda) \Psi :\Psi+\mu \left( 2 (H^\mu_\lambda:\Psi)^2 + (|H^\mu_\lambda|^2-1)|\Psi|^2 \right) \, dx   \, .
 \end{equation}

We have the following instability result for the radial hedgehog in the Lyuksyutov regime.

\begin{theorem}
\label{hedgehoginstability}
Let $\lambda>0$ be fixed and for each $\mu>0$, let $H^\mu:=H^\mu_\lambda$ be the radial hedgehog. 
There exists a radial function $\xi \in C^\infty_c (B_1 \setminus \{ 0\})$ such that the following holds. Given a vector $\bar{v} \in \mathbb{S}^4$,  if $\Phi_T$ denotes the tangential part along $\bar{H}$ of the vector field $\Phi=\xi \bar{v}\in C^\infty_c(B_1 \setminus \{ 0\};\mathcal{S}_0)$, then 
$\Phi_T \in C^\infty_c(B_1 \setminus \{ 0\};\mathcal{S}_0) $ and $\mathcal{F}_{\lambda,\mu}^{''}(\Phi_T;H^\mu_\lambda)<0$  for all $\mu$ large enough. 
As a consequence, the radial hedgehog ${H}^\mu_\lambda$ is an unstable critical point of $\mathcal{F}_{\lambda,\mu}$ for all $\mu$ sufficiently large.
\end{theorem}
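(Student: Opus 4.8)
The plan is to derive the instability of $H^\mu_\lambda$ from the already-established instability of the constant-norm hedgehog $\bar H$ (Proposition~\ref{consthedgehoginstability}) by a limiting argument in the second variations as $\mu\to\infty$. Fix the radial function $\xi\in C^\infty_c(B_1\setminus\{0\})$ and the direction $\bar v\in\mathbb{S}^4$ provided by Proposition~\ref{consthedgehoginstability}, so that $\mathcal{E}''_\lambda(\Phi;\bar H)<0$ with $\Phi=\xi\bar v$. The natural test field to use for $H^\mu_\lambda$ is the tangential part $\Phi_T=\Phi-\bar H(\bar H:\Phi)$ along $\bar H$; since $\bar H$ is smooth away from the origin and $\xi$ is compactly supported in $B_1\setminus\{0\}$, we have $\Phi_T\in C^\infty_c(B_1\setminus\{0\};\mathcal{S}_0)$, as claimed. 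Note $\mathcal{E}''_\lambda(\Phi;\bar H)=\mathcal{E}''_\lambda(\Phi_T;\bar H)$ by \eqref{secondvarElambda}–\eqref{D2W}, since only the tangential component enters.

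\textbf{Step 1: strong convergence $H^\mu_\lambda\to\bar H$.} First I would record, arguing exactly as in the proof of Theorem~\ref{thm:noisotropicphase} (lower semicontinuity, the competitor $\bar H$, and the fact that $\bar H$ is the unique $O(3)$-equivariant minimizer of $\mathcal{E}_\lambda$ agreeing with $\bar H$ on $\partial B_1$), that $H^\mu_\lambda\to\bar H$ strongly in $W^{1,2}(B_1;\mathcal{S}_0)$, that $\mu\int_{B_1}(1-|H^\mu_\lambda|^2)^2\,dx\to 0$, and that $|H^\mu_\lambda|=\sqrt{2/3}\,s^\mu_\lambda\to 1$ locally uniformly in $\overline{B_1}\setminus\{0\}$ as $\mu\to\infty$. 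On the support of $\Phi_T$, which is a compact subset of $B_1\setminus\{0\}$, this last fact gives uniform convergence of $|H^\mu_\lambda|$ and, combined with the equation and elliptic regularity (as in Corollary~\ref{corolhigherreg}), $C^1_{\rm loc}$ convergence $H^\mu_\lambda\to\bar H$ away from the origin.

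\textbf{Step 2: passing to the limit in the second variation.} Using \eqref{secondvarFlambdamu} with $\Psi=\Phi_T$ supported away from the origin, I would split
\begin{equation*}
\mathcal{F}_{\lambda,\mu}^{''}(\Phi_T;H^\mu_\lambda)=\int_{B_1}\!\Big(|\nabla\Phi_T|^2+\lambda D^2W(H^\mu_\lambda)\Phi_T:\Phi_T\Big)dx+\mu\!\int_{B_1}\!\Big(2(H^\mu_\lambda:\Phi_T)^2+(|H^\mu_\lambda|^2-1)|\Phi_T|^2\Big)dx.
\end{equation*}
For the first integral, the $C^1_{\rm loc}$ (hence uniform on $\operatorname{supp}\Phi_T$) convergence $H^\mu_\lambda\to\bar H$ and continuity of $D^2W$ give convergence to $\int_{B_1}\big(|\nabla\Phi_T|^2+\lambda D^2W(\bar H)\Phi_T:\Phi_T\big)dx$. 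For the $\mu$-term, since $|H^\mu_\lambda|\le 1$ by Lemma~\ref{maxprinciplemma}, the second summand $\mu(|H^\mu_\lambda|^2-1)|\Phi_T|^2\le 0$, so it only helps; and for the first summand I would show $\mu\int_{B_1}(H^\mu_\lambda:\Phi_T)^2\,dx\to 0$: indeed $\bar H:\Phi_T=0$ identically (by definition of $\Phi_T$), and on $\operatorname{supp}\Phi_T$ one has $|H^\mu_\lambda:\Phi_T|=|(H^\mu_\lambda-\bar H):\Phi_T|\le\|H^\mu_\lambda-\bar H\|_{L^\infty(\operatorname{supp}\Phi_T)}|\Phi_T|$, while the uniform decay rate of $\|H^\mu_\lambda-\bar H\|_{L^\infty}$ away from the origin (quantified via the ODE for $s^\mu_\lambda$, or via the $\eps$-regularity and interpolation estimates of Section~\ref{sec:eps-reg} applied to the rescaled equation) beats $\mu^{1/2}$, so $\mu\|H^\mu_\lambda-\bar H\|_{L^\infty(\operatorname{supp}\Phi_T)}^2\to 0$. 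Hence $\limsup_{\mu\to\infty}\mathcal{F}_{\lambda,\mu}^{''}(\Phi_T;H^\mu_\lambda)\le\mathcal{E}''_\lambda(\Phi_T;\bar H)=\mathcal{E}''_\lambda(\Phi;\bar H)<0$, which gives $\mathcal{F}_{\lambda,\mu}^{''}(\Phi_T;H^\mu_\lambda)<0$ for all $\mu$ large; since $\mathcal{F}^{'}_{\lambda,\mu}(\cdot;H^\mu_\lambda)=0$ as $H^\mu_\lambda$ is a critical point, this means $H^\mu_\lambda$ is unstable.

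\textbf{Main obstacle.} The delicate point is the quantitative rate in $\mu\|H^\mu_\lambda-\bar H\|_{L^\infty(K)}^2\to 0$ on compact sets $K\Subset B_1\setminus\{0\}$, i.e. ruling out any boundary-layer-type slow convergence away from the origin. The safest route is to exploit the precise structure of the profile $s^\mu_\lambda$ solving its ODE with $s^\mu_\lambda(0)=0$, $s^\mu_\lambda(1)=\sqrt{3/2}$: standard Ginzburg–Landau-type estimates (in the spirit of the $\mu\int(1-|Q^\mu|^2)^2\to0$ bound and the monotonicity/elliptic estimates of Section~\ref{sec:eps-reg}) show that $\sqrt{3/2}-s^\mu_\lambda=O(\mu^{-1})$ uniformly on $\{|x|\ge\delta\}$ for each fixed $\delta>0$, giving $\mu\|H^\mu_\lambda-\bar H\|_{L^\infty(\{|x|\ge\delta\})}^2=O(\mu^{-1})\to 0$. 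Everything else is routine: assembling Step~1, the sign analysis of the $\mu$-term, and the continuity of $D^2W$.
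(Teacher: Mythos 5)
Your overall strategy (passing from the known instability of $\bar H$ for $\mathcal{E}_\lambda$ to instability of $H^\mu_\lambda$ for $\mathcal{F}_{\lambda,\mu}$ by a limiting argument in the second variations) is the correct one, and Step~1 is fine. However, Step~2 has two problems, one cosmetic and one fatal.

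The cosmetic issue first: the ``main obstacle'' you identify is not there. Since $H^\mu_\lambda$ is pointwise a scalar multiple of $\bar H$ (both are proportional to $\frac{x}{|x|}\otimes\frac{x}{|x|}-\frac13 I$), the identity $\bar H:\Phi_T\equiv 0$ forces $H^\mu_\lambda:\Phi_T\equiv 0$ exactly, not merely $O(\|H^\mu_\lambda-\bar H\|_\infty)$. No quantitative rate for $\|H^\mu_\lambda-\bar H\|_{L^\infty}$ is needed; the term $\mu\int 2(H^\mu_\lambda:\Phi_T)^2\,dx$ is identically zero.

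The fatal issue: discarding $\mu(|H^\mu_\lambda|^2-1)|\Phi_T|^2\leq 0$ as a ``helpful'' term does not give you the limit you claim. After the reductions you describe, your bound reads
\[
\limsup_{\mu\to\infty}\mathcal{F}_{\lambda,\mu}^{''}(\Phi_T;H^\mu_\lambda)\leq \int_{B_1}|\nabla\Phi_T|^2+\lambda D^2W(\bar H)\Phi_T:\Phi_T\,dx\,,
\]
but by \eqref{secondvarElambda}--\eqref{D2W} the quantity $\mathcal{E}''_\lambda(\Phi;\bar H)$ you want to compare against contains the additional \emph{strictly negative} term $-\int_{B_1}|\nabla\bar H|^2|\Phi_T|^2\,dx$. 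Your upper bound is thus strictly larger than $\mathcal{E}''_\lambda(\Phi;\bar H)$, and there is no reason it should be negative. The whole point is that the $\mu$-term does not simply vanish or help; one must show it converges exactly to $-\int_{B_1}|\nabla\bar H|^2|\Phi_T|^2\,dx$. This is where the real argument is: the paper tests the \emph{first} variation identity $\mathcal{F}'_{\lambda,\mu}(\Psi;H^\mu_\lambda)=0$ at the vector field $\Psi:=\frac{|\Phi_T|^2}{|H^\mu_\lambda|^2}H^\mu_\lambda$, rearranges to express $\int_{B_1}\mu(|H^\mu_\lambda|^2-1)|\Phi_T|^2\,dx$ in terms of $\int|\nabla H^\mu_\lambda|^2\frac{|\Phi_T|^2}{|H^\mu_\lambda|^2}$ and lower-order terms, and passes to the limit using $\nabla\bar H:\bar H\equiv 0$ and $\nabla W(\bar H):\bar H\equiv 0$. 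Without this step, or an equivalent device producing the $-\int|\nabla\bar H|^2|\Phi_T|^2$ contribution, the proof is incomplete.
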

\begin{proof}
Given $\lambda>0$ and $\bar{v} \in \mathbb{S}^4$, we fix the radial function $\xi \in C^\infty_c (B_1 \setminus \{ 0\})$ as constructed in Proposition \ref{consthedgehoginstability} (which depends on $\lambda$, but not on $\bar{v}$). Then we introduce the vector fields $\Phi:=\xi \bar{v}$ and $\Phi_T:=\Phi-\bar{H} (\bar{H}:\Phi)$. Since $\Phi_T \in C^\infty_c(B_1 \setminus \{ 0\};\mathcal{S}_0)$, it is admissible for the second variation  formula \eqref{secondvarFlambdamu}.

Since by construction $H^\mu_\lambda:\Phi_T\equiv 0$, we obtain
\begin{equation}
\label{tansecondvarFlambdamu}
 \mathcal{F}_{\lambda,\mu}^{''}(\Phi_T;H^\mu_\lambda)=\int_{B_1} |\nabla \Phi_T|^2 +\lambda D^2 W (H^\mu_\lambda) \Phi_T :\Phi_T+\mu   (|H^\mu_\lambda|^2-1)|\Phi_T|^2  \, dx   \, .
 \end{equation}

Recall that $H^\mu_\lambda \to \bar{H}$ strongly in $W^{1,2}(B_1;\mathcal{S}_0)$ and  locally uniformly away from the origin as $\mu \to \infty$. As a consequence, the dominated convergence theorem yields
\begin{equation}
\label{secondvarcontinuityI}
\lim_{\mu \to \infty} \int_{B_1} |\nabla \Phi_T|^2 +\lambda D^2 W (H^\mu_\lambda) \Phi_T :\Phi_T  \, dx=\int_{B_1} |\nabla \Phi_T|^2 +\lambda D^2 W (\bar{H}) \Phi_T :\Phi_T  \, dx \, .
\end{equation}
On the other hand, since $H^\mu_\lambda$ is a critical point of $\mathcal{F}_{\lambda,\mu}$, computing \eqref{firstvarFlambdamu} with the vector field $\Psi:=\frac{|\Phi_T|^2}{|H^\mu_\lambda|^2} H^\mu_\lambda \in C^\infty_c(B_1 \setminus \{ 0\};\mathcal{S}_0)$ yields
\[ 0=\mathcal{F}_{\lambda,\mu}^{'}\left( \Psi;H^\mu_\lambda\right)=\int_{B_1} \nabla H^\mu_\lambda : \nabla \left( \frac{|\Phi_T|^2}{|H^\mu_\lambda|^2} H^\mu_\lambda \right)+ \lambda \frac{|\Phi_T|^2}{|H^\mu_\lambda|^2}  \nabla W (H^\mu_\lambda) : H^\mu_\lambda  +\mu (|H^\mu_\lambda|^2-1) |\Phi_T|^2 \, dx \, ,  \]
whence
\begin{align*}
\int_{B_1} \mu(|H^\mu_\lambda|^2-1) |\Phi_T|^2 \, dx = & -\int_{B_1} |\nabla H^\mu_\lambda|^2  \frac{|\Phi_T|^2}{|H^\mu_\lambda|^2} \, dx \\
& -\int_{B_1} \nabla H^\mu_\lambda : H^\mu_\lambda \, \, \nabla  \frac{|\Phi_T|^2}{|H^\mu_\lambda|^2} 
 + \lambda \frac{|\Phi_T|^2}{|H^\mu_\lambda|^2}  \nabla W (H^\mu_\lambda) : H^\mu_\lambda \, dx  \, .
\end{align*}
Since $\nabla \bar{H}:\bar{H} \equiv 0$, $\nabla W(\bar{H}):\bar{H}=(1-\tilde{\beta}(\bar{H}))/\sqrt{6} \equiv 0$, $H^\mu_\lambda \to \bar{H}$ strongly in $W^{1,2}(B_1;\mathcal{S}_0)$ and uniformly on the support of  $\xi$, letting $\mu \to \infty$ in the previous formula leads to
\begin{equation}
\label{secondvarcontinuityII}
\lim_{\mu \to \infty} \int_{B_1} \mu (|H^\mu_\lambda|^2-1) |\Phi_T|^2 \, dx =-\int_{B_1} |\nabla \bar{H}|^2 |\Phi_T|^2 \, dx \, .
\end{equation}
Combining \eqref{tansecondvarFlambdamu} with \eqref{secondvarcontinuityI}-\eqref{secondvarcontinuityII} and taking into account \eqref{secondvarElambda} and \eqref{D2W}, we infer that
\[ \lim_{\mu \to \infty} \mathcal{F}_{\lambda,\mu}^{''}(\Phi_T;H^\mu_\lambda)=\mathcal{E}^{''}_\lambda(\Phi;\bar{H}) \, , \]
and the conclusion follows, since the right hand side is negative by construction of $\xi$ and $\Phi$.
\end{proof}

\begin{remark}
As $H^\mu_\lambda$ is $O(3)$-equivariant, it is also $\mathbb{S}^1$-equivariant in the sense of condition \eqref{equivariance}. Hence, if we choose $\bar{v}\in\mathbb{S}^4$ such that $R \bar{v} R^t=\bar{v}$ for any $R\in \mathbb{S}^1$, then each map $H^\mu_\lambda+t \xi \bar{v}$ is $\mathbb{S}^1$-equivariant for any $t \in \mathbb{R}$. As a consequence, according to Theorem \ref{hedgehoginstability} the radial hedgehog is an unstable critical point of $\mathcal{F}_{\lambda,\mu}$ also in the restricted class of $\mathbb{S}^1$-equivariant maps (a similar conclusion is valid for $\bar{H}$ as critical point of $\mathcal{E}_\lambda$ in view of Proposition \ref{consthedgehoginstability}).
\end{remark}

\begin{remark}
\label{instcomparison}
Besides the difference in the range of parameters observed in the introduction, our instability result in Theorem 4.8  differs from the one in \cite[Theorem 1.2]{INSZ1} because of the different choice of destabilizing perturbations. Indeed, in \cite{INSZ1} such a perturbation has only one component with respect to the reference moving frame $\{ E_i(\theta, \varphi)\}_{i=0,\ldots ,4}$ used there, which in polar coordinates has specifically the form $w(r) E_3(\theta,\varphi)$. It follows from the definition of $E_3$ that the destabilizing perturbation is a vector field along the image of $H^\mu_\lambda$ in $\mathcal{S}_0$ that at every point is tangent to the sphere passing through $H^\mu_\lambda$ but is however orthogonal to the tangent space of cone over $\R P^2$ at the point $H^\mu_\lambda$. In our case the perturbation along $H^\mu_\lambda$ has instead the form of the tangential component $\Phi_T$ to the sphere thorough $H^\mu_\lambda$ of a radial vector field $\Phi=\xi(r) \bar{v}$ for {\bf any} fixed nonzero constant vector $\bar{v} \in \mathcal{S}_0$. As a consequence our perturbations may have (and for suitable choices of $\bar{v}$ indeed have) nontrivial components along all the vector fields $E_1, \ldots, E_4$ of the frame (but always zero component along $E_0$).    
\end{remark}

In the next remark we discuss the role of the biaxial phase in the instability results.
\begin{remark}
\label{biaxialescape} Let $\Phi \in C^\infty_0(B_1;\mathcal{S}_0)$ be fixed and $\Phi_T$ its tangential part along $\bar{H}$. Simple calculations using \eqref{signedbiaxiality}, \eqref{redpotential}, \eqref{D2W}, and Lemma \ref{lemmaspectrboch}, give
\[ \left. \frac{d^2}{dt^2} \widetilde{\beta} \left( \frac{\bar{H}+t\Phi}{|\bar{H}+t\Phi|} \right) \right|_{t=0} =-3\sqrt{6}\left. \frac{d^2}{dt^2} W \left( \frac{\bar{H}+t\Phi}{|\bar{H}+t\Phi|} \right) \right|_{t=0}= -3 \left(    |\Phi_T|^2 - \sqrt{6}\, {\rm tr} (\bar{H} \Phi_T^2) \right) \leq -\frac32 |\Phi_T|^2\, , \]
and in turn
\[ \left. \frac{d^2}{dt^2} \widetilde{\beta} \left( H^\mu_\lambda+t\Phi \right) \right|_{t=0} =
\left. \frac{d^2}{dt^2} \widetilde{\beta} \left( \frac{\bar{H}+t\frac{\Phi}{|H^\mu_\lambda|}}{|\bar{H}+t\frac{\Phi}{|H^\mu_\lambda|}|} \right) \right|_{t=0}
\leq -\frac32 \frac{|\Phi_T|^2}{|H^\mu_\lambda|^2} \, . \]
Expanding around the value $t=0$ and using stationarity of $\bar{H}$ and $H^\mu_\lambda$ both for $\tilde{\beta}$ and for the energy functionals, as $t\to 0$ we infer
\begin{equation}
\label{Elambdaexpansion}
\widetilde{\beta} \left( \frac{\bar{H}+t\Phi}{|\bar{H}+t\Phi|} \right)\leq 1-\frac34 |\Phi_T|^2 t^2 +o(t^2) \, , \quad 
\mathcal{E}_\lambda \left( \frac{\bar{H}+t\Phi}{|\bar{H}+t\Phi|} \right)=\mathcal{E}_\lambda(\bar{H})+\mathcal{E}^{''}_\lambda(\Phi;\bar{H}) \frac{t^2}2+o(t^2) \, , 
\end{equation}
together with
\begin{equation}
\label{Flambdamuexpansion}
\widetilde{\beta} \left( H^\mu_\lambda+t\Phi \right) \leq 1-\frac34 \frac{|\Phi_T|^2}{|H^\mu_\lambda|^2}  t^2 +o(t^2) \, , \quad 
\mathcal{F}_{\lambda,\mu} \left( H^\mu_\lambda+t\Phi \right)=\mathcal{F}_{\lambda,\mu}(H^\mu_\lambda)+\mathcal{F}^{''}_{\lambda,\mu}(\Phi_T;H^\mu_\lambda) \frac{t^2}2+o(t^2) \, . 
\end{equation}
As a consequence of \eqref{Elambdaexpansion} and \eqref{Flambdamuexpansion}, we see that for $t$ sufficiently small {\em biaxial escape} occurs for the perturbed maps in the set where $\Phi_T \neq 0$. Moreover, if $\Phi=\xi \bar{v}$ with $\bar{v}\in\mathbb{S}^4$ and $\mu$ is large enough, then Proposition \ref{consthedgehoginstability} and Theorem \ref{hedgehoginstability} show that this escape is energetically more favourable because the second variations of the energy functionals in \eqref{Elambdaexpansion} and \eqref{Flambdamuexpansion} are negative.
\end{remark}

As a final remark in this section, we further comment on the actual range of validity of our results in the Lyuksyutov regime \eqref{eq:lyuk-regime}. 
\begin{remark}\label{rmk:rescaling} When studying asymptotic limits from a physical perspective, it is important that all quantities to be compared have the same physical dimensions. Experts often rescale the energy in such a way to recast it in a new fully adimensional form (see, e.g., \cite{Gart}). Our energy \eqref{LDGenergy} is only partially non-dimensionalized, because the terms under integral sign (including the volume element) are not pure numbers. In fact, recalling that $Q$-tensors are adimensional by definition and noticing that $\lambda$ and $\mu$ have the dimension of the inverse of a length square, we see that the resulting energy $\mathcal{F}_{\lambda,\mu}$ has the physical dimensions of a length;  
in addition, the ratio $\mu/\lambda$ is adimensional and we are allowed to compare them in a physically meaningful way, considering in particular the case $\mu \gg \lambda$. Thus, in the Lyuksyutov regime \eqref{eq:lyuk-regime} we are requiring that on a fixed domain $\bf{\Omega}$ the parameter $\lambda$ is constant, hence of the same order of $(\rmdiam \bf{\Omega})^{-2}$, whereas $\mu$ is much larger. 

On the other hand, we could obtain a fully non-dimensionalized energy functional by first choosing a reference length and then rescaling the domain with respect to it. In the present situation there are at least three natural choices of length, namely, $\frac{1}{\sqrt{\lambda}}$, $\frac{1}{\sqrt\mu}$, and $\rmdiam\bf{\Omega}$, where the first two choices, up to an harmless numerical factor, correspond to the {\em biaxial coherence length} and the {\em nematic-isotropic correlation length} respectively, see \cite{PeTr,KVZ,Gart}. Calling $\ell$ the chosen reference length, the original energy functional $\tilde{\mathcal{F}}_{LG}(\bf{Q}, \bf{\Omega})$ under the further rescaling ${\bf x}=\ell x$ turns into the non-dimensionalized functional $\mathcal{F}_{\tilde\lambda,\tilde\mu}(Q, \Omega)$ as in 
 \eqref{LDGenergy}, where ${\bf \Omega}=\ell \Omega$ and the new parameters are given by 
$\tilde{\lambda} = \ell^2 \lambda$, $\tilde{\mu} = \ell^2 \mu$, with $\lambda$ and $\mu$ as in \eqref{eq:lyuk-regime}. Thus, the adimensional energy $\mathcal{F}_{\tilde\lambda,\tilde\mu}$ is formally identical to $\mathcal{F}_{\lambda,\mu}$, and our results continue to hold without any change in the regime $\tilde{\lambda}\sim 1$ and $\tilde{\mu}\gg 1$ on a fixed reference domain $\Omega$. It turns out that the second choice, $\ell=1/\sqrt{\mu}$, amounts to $\mu \sim \lambda$ and $\tilde{\mu} =1$, so it is not covered by our results. However, the first and the third choices both correspond to $\rmdiam{\bf \Omega} \sim 1/\sqrt{\lambda}$ and $\tilde{\mu} \sim \mu/\lambda \gg 1$, i.e., to the following generalization of \eqref{eq:lyuk-regime} to domains of unconstrained size, namely
\begin{equation}
\label{eq:lyuk-regime2}
\rmdiam{\bf \Omega} \sim \frac{1}{\sqrt{\lambda}}= \sqrt{\frac{L}{b^2 s_+}} \, , \qquad \frac{1}{\sqrt{\lambda}} \cdot \left(\frac{1}{\sqrt{\mu}}\right)^{-1}=\sqrt{\frac{a^2}{b^2 s_+}} \gg 1 \, .
\end{equation}
As a consequence, we see that the diameter $\rmdiam{\bf \Omega}$ must be comparable to the the biaxial coherence length, while the nematic correlation length must be negligible with respect to them.
Finally, notice that the second condition in \eqref{eq:lyuk-regime2} holds in particular in the {\em low temperature} limit $a^2 \to \infty$ but in domains ${\bf \Omega}$ of smaller and smaller size because of \eqref{splus}, or, alternatively, in the limit $b\to 0$ but on domains ${\bf \Omega}$  with suitably expanding diameter.
  For a more detailed discussion of this non-dimensionalization procedure and related issues, the interested reader is referred to \cite{Gart} and the references therein. 
\end{remark}

%%%%%%%%%%%%%%%%%%%%%%%%%%%%%%%%%%%%%%%%%%%%%%%%%%%%%%%
%%%%%%%%%%%%%%%%%%%%%%%%%%%%%%%%%%%%%%%%%%%%%%%%%%%%%%%

\section{Topology of minimizers}

%%%%%%%%%%%%%%%%%%%%%%%%%%%%%%%%%%%%%%%%%%%%%%%%%%%%%%%
%%%%%%%%%%%%%%%%%%%%%%%%%%%%%%%%%%%%%%%%%%%%%%%%%%%%%%%

 In this section, we discuss topological properties of field configurations $Q$ satisfying assumptions $(HP_0)-(HP_3)$ and, restricting to energy minimizing configurations, we will obtain as a particular case the proof of Theorem \ref{topology}.

 In connection with assumption $(HP_2)$, we start recalling the following auxiliary result which characterizes simple connectivity of any smooth bounded domain $\Omega \subset \mathbb{R}^3$.
 \begin{lemma}\cite[Thm. 3.2 and Corollary 3.5]{BeFr}
 Let $\Omega \subset \mathbb{R}^3$ be a bounded connected open set with boundary of class  $C^1$. Then $\Omega$ is simply connected if and only if its boundary can be written as $\partial \Omega=\cup_{i=1}^N S_i$ and each surface $S_i$ is diffeomorphic to the standard sphere $\mathbb{S}^2 \subset \mathbb{R}^3$.
 \label{simpledomains}
 \end{lemma}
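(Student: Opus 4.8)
\textbf{Proof proposal for Lemma \ref{simpledomains}.}

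The plan is to reduce the statement to the classical topological classification of compact orientable surfaces combined with a standard Alexander-duality / Mayer--Vietoris computation of the homology of $\Omega$. Since $\partial\Omega$ is a compact $C^1$ hypersurface in $\R^3$, it is automatically orientable (it bounds the bounded region $\Omega$ and has a global unit normal field $\nu$), and it has finitely many connected components $S_1,\dots,S_N$, each of which is a closed connected orientable surface. By the classification theorem each $S_i$ is diffeomorphic to a sphere with $g_i\geq 0$ handles, so $S_i$ is diffeomorphic to $\mathbb{S}^2$ if and only if $g_i=0$, i.e. if and only if $H_1(S_i;\Z)=0$, equivalently $b_1(S_i)=0$. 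Hence the claim is equivalent to showing that $\Omega$ is simply connected if and only if $\sum_{i=1}^N b_1(S_i)=0$.

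First I would record that ``$\Omega$ simply connected'' can be replaced by ``$H_1(\Omega;\Z)=0$''. One implication is trivial; for the converse one uses that $\Omega$ is an open connected $3$-manifold, hence homotopy equivalent to a CW complex of dimension $\leq 3$, and one invokes the fact (for open subsets of $\R^3$, which is the setting of \cite{BeFr}) that $\pi_1(\Omega)$ abelianizes faithfully enough for our purposes; alternatively I would simply cite \cite[Thm.\ 3.2 and Corollary 3.5]{BeFr} directly, since the lemma is quoted from there. Thus it suffices to compute $H_1(\Omega;\Z)$. I would do this via the Mayer--Vietoris sequence for the decomposition of $\R^3$ (or of a large open ball $B\supset\overline\Omega$) into $\Omega$ and an open collar neighborhood of $\partial\Omega$ together with the unbounded complement: writing $X:=\Omega$ and $Y:=\R^3\setminus\overline\Omega$ (or its intersection with $B$), with $X\cap Y$ homotopy equivalent to $\partial\Omega=\bigsqcup_i S_i$ and $X\cup Y$ homotopy equivalent to $\R^3$ minus finitely many points / a wedge depending on the nesting pattern of the components. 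The cleaner route is Alexander duality in $\mathbb{S}^3$: capping off $\R^3$ to $\mathbb{S}^3$ and writing $\mathbb{S}^3=\overline\Omega\cup_{\partial\Omega}(\mathbb{S}^3\setminus\Omega)$, Mayer--Vietoris gives an exact sequence relating $H_*(\overline\Omega)$, $H_*(\mathbb{S}^3\setminus\Omega)$, $H_*(\partial\Omega)=\bigoplus_i H_*(S_i)$ and $H_*(\mathbb{S}^3)$.

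The key computation is then the following: from the Mayer--Vietoris sequence one extracts
\begin{equation*}
0\to H_2(\mathbb{S}^3)\to \bigoplus_{i=1}^N H_1(S_i)\to H_1(\overline\Omega)\oplus H_1(\mathbb{S}^3\setminus\Omega)\to 0
\end{equation*}
(using $H_2(\mathbb{S}^3)=0$ in degree $2$ on the left once connectivity of the pieces is accounted for, and $H_1(\mathbb{S}^3)=0$), so that $b_1(\Omega)+b_1(\mathbb{S}^3\setminus\Omega)=\sum_i b_1(S_i)$, and a parallel argument in degree $2$ together with Alexander duality $H_1(\mathbb{S}^3\setminus\Omega)\cong H^1(\overline\Omega)\cong H_1(\overline\Omega)$ (the last by universal coefficients, all groups being free here) shows $b_1(\Omega)=b_1(\mathbb{S}^3\setminus\Omega)=\tfrac12\sum_i b_1(S_i)$. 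In particular $b_1(\Omega)=0$ if and only if $b_1(S_i)=0$ for every $i$, i.e. if and only if each $S_i$ is diffeomorphic to $\mathbb{S}^2$. Combined with the first paragraph this proves the equivalence. The main obstacle — and the reason I would lean on citing \cite{BeFr} rather than reproving everything — is the passage from vanishing of $H_1(\Omega;\Z)$ to genuine simple connectivity $\pi_1(\Omega)=1$: for general open subsets of $\R^3$ this requires some care (it is false for general $3$-manifolds, e.g. homology spheres), but it does hold in this ambient setting and is exactly what \cite[Thm.\ 3.2, Cor.\ 3.5]{BeFr} supplies; since the lemma is explicitly attributed to that reference, it is legitimate here simply to quote it, and the homological discussion above serves only to make the statement transparent.
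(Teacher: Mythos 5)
The paper offers no proof of this lemma at all: it is stated as a direct citation of \cite[Thm.~3.2 and Cor.~3.5]{BeFr}, so there is nothing in the paper itself to compare your sketch against. That said, your homological computation is essentially correct: with the identification $\mathbb{S}^3=\overline\Omega\cup(\mathbb{S}^3\setminus\Omega)$, collar neighbourhoods of the $C^1$ boundary, Mayer--Vietoris, and $H_1(\mathbb{S}^3)=H_2(\mathbb{S}^3)=0$, one gets $H_1(\partial\Omega;\Z)\cong H_1(\Omega;\Z)\oplus H_1(\mathbb{S}^3\setminus\Omega;\Z)$, all free because $H_1(\partial\Omega)$ is; combined with Alexander duality and universal coefficients this indeed yields $b_1(\Omega)=b_1(\mathbb{S}^3\setminus\Omega)=\tfrac12\sum_i b_1(S_i)$, so $H_1(\Omega;\Z)=0$ iff every $S_i$ is a sphere.

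The genuine gap is exactly the one you flag at the end: passing from $H_1(\Omega;\Z)=0$ to $\pi_1(\Omega)=1$ is not automatic, and your remark that one should ``simply quote \cite{BeFr}'' for this is not a cosmetic shortcut but the whole point. It is worth being a bit more precise about where the difficulty lives. The implication ``$\pi_1(\Omega)=1\Rightarrow H_1(\Omega)=0\Rightarrow$ each $S_i\cong\mathbb{S}^2$'' is fully covered by your homology argument. The converse, ``each $S_i\cong\mathbb{S}^2\Rightarrow\pi_1(\Omega)=1$,'' is \emph{not} naturally obtained by first proving $H_1=0$ and then upgrading (for general compact $3$-manifolds $H_1=0$ does not give $\pi_1=1$, and the upgrade you allude to has no elementary proof in this generality); rather, the standard argument uses the smooth/PL Schoenflies theorem in $\mathbb{S}^3$ (Alexander): each $C^1$-embedded sphere $S_i$ bounds a ball on each side, so after capping off the appropriate boundary balls $\overline\Omega$ becomes a closed $3$-submanifold of $\mathbb{S}^3$, hence $\mathbb{S}^3$ itself, and $\Omega$ is $\mathbb{S}^3$ minus finitely many disjoint closed balls, which is simply connected by general position. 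That is the content \cite{BeFr} supplies and which cannot be replaced by the Mayer--Vietoris/Alexander-duality bookkeeping alone. Since the paper itself cites the reference instead of proving the lemma, your decision to do likewise is entirely appropriate; just be aware that the ``hard'' direction hinges on Schoenflies rather than on an $H_1\to\pi_1$ upgrade.
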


 As already mentioned in the Introduction, by assumption $(HP_1)$ the maximal eigenvalue $\lambda_{\rm max}(x)$ of the matrix $Q(x)$ is simple for every $x\in\partial \Omega$, and there is a well defined smooth eigenspace map $V_{\max} \colon \partial \Omega \to \mathbb{R}P^2$.  In addition, as $\Omega$ is simply connected and in view of Lemma \ref{simpledomains}, there exists a smooth lifting $v_{\rm max} \in C^1(\partial \Omega; \mathbb{S}^2)$ such that, under the inclusion $\mathbb{R}P^2 \subset \mathbb{S}^4$, we have  $V_{\rm max} (x)=\sqrt{3/2} (v_{\rm max}(x) \otimes v_{\rm max}(x)-\frac13 I)$ for all $x \in \partial \Omega$. 

Notice that, as in \eqref{uniaxialbc}, the case $\bar{\beta}=1$ in $(HP1)$ corresponds to $Q/|Q| \colon \partial \Omega \to \mathbb{R}P^2\subset \mathbb{S}^4$. In this case we have $\lambda_{\rm max} \equiv \sqrt{\frac23}|Q|$ on $\partial \Omega$. Still in view of $(HP_2)$ there exists a map $v' \in C^1(\partial \Omega; \mathbb{S}^2)$ such that $Q =|Q|\sqrt{3/2} (v'\otimes v'-\frac13 I)$  on $\partial \Omega$ (under the inclusion $\mathbb{R}P^2 \subset \mathbb{S}^4$). Hence, under the assumption $\bar{\beta}=1$, one has $Q\equiv |Q| V_{\rm max}$ on $\partial \Omega$.

 Recall also that assumption $(HP_3)$ on the lifting $v_{\rm max}$ of the map $V_{\rm max} \colon \partial \Omega \to \mathbb{R}P^2$, namely  that the total degree $ \deg(v_{\rm max}, \partial \Omega)= \sum_{i=1}^N \deg(v_{\rm max},S_i)$ is odd, does not depend on the chosen lifting. Indeed, since on each spherical component $S_i$ of $\partial\Omega$ the lifting exists by simple connectivity of $S_i$,  and it is unique up a sign, each $\deg(v_{\rm max},S_i)$ may only change by a sign when passing to a different lifting.
 
Now we discuss properties of the biaxiality regions defined in \eqref{biaxialityregions}. The first result below shows that the biaxial escape observed in the introduction is indeed topological in nature,  and that every possible value of the biaxiality is attained. 
\begin{lemma}
\label{notempty}
Let $\Omega \subset \mathbb{R}^3$ be a bounded open set with boundary of class  $C^1$, and $Q:\overline{\Omega}\to\mathcal{S}_0$.  
If $\Omega$ and $Q$ satisfy $(HP_0)-(HP_3)$, then the subset $\{ \beta=-1\}\subset \overline{\Omega}$ 
is not empty. As a consequence, $\{ \beta=t\}\subset \overline{\Omega}$ is not empty for every $t \in [-1,\beta_0]$, where $\beta_0:= \max_{\partial \Omega} \beta$. In particular, if $\bar{\beta}=1$, then the range of $\beta$ is $[-1,1]$.  
\end{lemma}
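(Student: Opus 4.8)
\textbf{Proof plan for Lemma \ref{notempty}.}
The plan is to argue by contradiction: assume that $\{\beta=-1\}=\emptyset$, i.e., $\widetilde{\beta}\circ Q(x)>-1$ for every $x\in\overline{\Omega}$. Under $(HP_0)$ the configuration $Q$ is continuous and nowhere zero on the compact set $\overline{\Omega}$, so this assumption is equivalent to saying that the \emph{minimal} eigenvalue $\lambda_{\min}(x)$ of $Q(x)$ is never double; equivalently, the gap between the two lowest eigenvalues is bounded below by a positive constant on $\overline{\Omega}$. Consequently there is a well-defined continuous \emph{minimal eigenspace map} $V_{\min}\colon\overline{\Omega}\to\mathbb{R}P^2$, and since $Q$ is analytic in $\Omega$ and $C^1$ up to the boundary, $V_{\min}$ inherits enough regularity to talk about degrees of its liftings on the boundary spheres.

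The next step is to relate $V_{\min}$ on $\partial\Omega$ to $V_{\max}$, the maximal eigenspace map appearing in $(HP_1)$--$(HP_3)$. Here I would use $(HP_1)$: since $\bar\beta=\min_{\partial\Omega}\widetilde\beta\circ Q>-1$ \emph{and} (by the same contradiction hypothesis applied on $\partial\Omega$) $\widetilde\beta\circ Q<1$ fails only in the degenerate positive-uniaxial case, one sees that at every boundary point the spectrum of $Q(x)$ has three values, two of which we can consistently track; the point is that on $\partial\Omega$ the map $V_{\min}$ is also well-defined and $C^1$. Because $V_{\min}$ extends \emph{continuously to all of $\overline{\Omega}$}, its restriction to $\partial\Omega=\bigcup_{i=1}^N S_i$ is null-homotopic in $\mathbb{R}P^2$ as a map on the boundary of the domain; in particular, using $(HP_2)$ and Lemma \ref{simpledomains} (each $S_i\cong\mathbb{S}^2$), the total degree $\sum_i\deg(v_{\min},S_i)$ of any lifting $v_{\min}$ of $V_{\min}|_{\partial\Omega}$ must be \emph{even} — indeed it is zero, since a continuous extension of $V_{\min}$ to the 3-dimensional body forces each spherical boundary component to bound, hence the pulled-back orientation class (equivalently the parity of the degree of the $\mathbb{S}^2$-lifting) vanishes. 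This is the standard fact that a map $\mathbb{S}^2\to\mathbb{R}P^2$ extending over the ball it bounds has a lifting of degree $0$.

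The crux is then a parity comparison between $V_{\min}$ and $V_{\max}$ on $\partial\Omega$. At each $x\in\partial\Omega$, the three eigenlines of $Q(x)$ form an orthonormal frame of $\R^3$ (a point of the flag manifold), so the ordered triple $(v_{\min}(x),v_{\mathrm{mid}}(x),v_{\max}(x))$ — after a choice of signs — is a continuous section of the frame bundle over each $S_i$. The key algebraic observation is that the three associated maps to $\mathbb{R}P^2$ satisfy a relation on degrees modulo $2$: the sum of the three degrees $\deg(v_{\min},S_i)+\deg(v_{\mathrm{mid}},S_i)+\deg(v_{\max},S_i)$ is even for each $i$, because an orthonormal-frame-valued map $S_i\to SO(3)$ lifts to $\mathbb{S}^3$ iff its three columns have liftings whose degrees sum to an even number (this is exactly the statement that $w_2$ of the pulled-back frame bundle, computed via any of the three line sub-bundles, is consistent). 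Combining this with $(HP_3)$ (total degree of $v_{\max}$ is odd) and the previous step (total degree of $v_{\min}$ is even) would force the total degree of $v_{\mathrm{mid}}$ to be odd; but $v_{\mathrm{mid}}$ is, under the contradiction hypothesis, \emph{also} a continuous eigenspace map on all of $\overline{\Omega}$ (the middle eigenvalue is simple throughout once the top and bottom gaps are both positive), so by the extension argument its total boundary degree is even — contradiction. Hence $\{\beta=-1\}\neq\emptyset$.

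Finally, once $\{\beta=-1\}\neq\emptyset$, the last assertions are elementary: $\widetilde\beta\circ Q$ is continuous on the connected set $\overline\Omega$ (since $Q\in C^0(\overline\Omega;\mathcal{S}_0\setminus\{0\})$ and $\widetilde\beta$ is continuous on $\mathcal{S}_0\setminus\{0\}$) and $\Omega$ is connected by $(HP_2)$, so its image is a subinterval of $[-1,1]$ containing both $-1$ and $\beta_0=\max_{\partial\Omega}\beta$; by the intermediate value theorem $\{\beta=t\}\neq\emptyset$ for all $t\in[-1,\beta_0]$, and when $\bar\beta=1$ we have $\beta_0=1$, so the range is exactly $[-1,1]$. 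The main obstacle I anticipate is making the degree/parity bookkeeping in the third paragraph fully rigorous — in particular, justifying that the middle eigenspace map is genuinely $C^1$ (not merely continuous) and that the ``$w_2$ via any eigenline'' identity is stated and applied correctly across all components $S_i$ simultaneously; the rest is routine given $(HP_2)$, Lemma \ref{simpledomains}, and the extension argument for maps into $\mathbb{R}P^2$.
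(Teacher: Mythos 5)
Your proposal contains a genuine gap rooted in a sign error on the biaxiality convention. Recall from Definition \ref{defsignedbiaxiality} and the discussion below it: $\widetilde\beta(Q)=+1$ iff the \emph{minimal} eigenvalue is double (positive uniaxial), while $\widetilde\beta(Q)=-1$ iff the \emph{maximal} eigenvalue is double (negative uniaxial). Hence the contradiction hypothesis $\widetilde\beta\circ Q>-1$ on $\overline\Omega$ says that the \emph{maximal} eigenvalue is everywhere simple, i.e., the gap between the two \emph{highest} eigenvalues is positive — the opposite of what you wrote. This is not a cosmetic slip: it inverts the whole geometric picture and sends you down an avoidably complicated path.

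With the correct reading, the contradiction hypothesis gives a $C^1$ extension of the \emph{maximal} eigenspace map $V_{\max}$ to all of $\overline\Omega$ (no detour needed), and the paper's argument is then direct: since $\overline\Omega$ is simply connected this extension lifts to $\tilde v\in C^1(\overline\Omega;\mathbb{S}^2)$, Stokes's theorem gives $\deg(\tilde v,\partial\Omega)=0$, and because any two liftings of $V_{\max}$ over each sphere $S_i$ agree up to a sign, $\deg(v_{\max},\partial\Omega)\equiv\deg(\tilde v,\partial\Omega)\equiv 0 \pmod 2$, contradicting $(HP_3)$. Your route through $V_{\min}$ and $V_{\mathrm{mid}}$ cannot be repaired: under the contradiction hypothesis you only control the top gap, so $V_{\min}$ and $V_{\mathrm{mid}}$ need not be well-defined anywhere the biaxiality parameter reaches $+1$. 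This is not a pathological corner case but the model scenario — for the homeotropic boundary condition in \eqref{hedgehogbdrydata} one has $\widetilde\beta\circ Q\equiv 1$ on all of $\partial\Omega$, so the minimal eigenvalue is double on the entire boundary and your $V_{\min}$, $V_{\mathrm{mid}}$, and the orthonormal-frame/$w_2$ bookkeeping have no meaning there. The concluding paragraph (connectedness plus the intermediate value theorem) is fine, but it is downstream of the part that fails.
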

\begin{proof}
The consequence follows trivially from the definition of $\beta_0$, as the set $\Omega$ (hence $\overline{\Omega}$) is connected, and furthermore $\beta_0=1$ whenever $\bar\beta=1$.

To prove the first statement, we argue by contradiction assuming that $\min_{\overline{\Omega}} \beta > -1$. Then the maximal eigenvalue $\lambda_{\rm max}(x)$ of $Q(x)$ is always simple for every $x\in\overline{\Omega}$, hence of class $C^1$, and there is a well defined eigenspace map $\bar{V} \in C^1(\overline{\Omega};\mathbb{R}P^2)$ which extends $V_{\rm max}$ from the boundary of $\Omega$ to its interior. Since $\overline{\Omega}$ is simply connected this map can be lifted to $\tilde{v} \in C^1( \overline{\Omega} ; \mathbb{S}^2)$ which has to satisfy $\deg(\tilde{v},\partial \Omega)=0$ by Stokes's theorem. On the other hand, as both $v_{\rm max}$ and $\tilde{v}$ are liftings of the same map $V_{\rm max}$ at the boundary, we have $v_{\rm max}=\pm \tilde{v}$ on each $S_i$, whence $\deg (v_{\rm max}, S_i)=\pm \deg(\tilde{v},S_i)$ for all $i=1,\ldots, N$. Summing up over $i$ and passing to $mod \, \, 2$, we have  
\[ \deg(v_{\rm max}, \partial \Omega)= \sum_{i=1}^N \deg(v_{\rm max},S_i) = \sum_{i=1}^N \deg(\tilde{v},S_i)=  0 \quad mod \, \, 2  \, \, ,\]
which contradicts $(HP_3)$.
\end{proof}

 We now further investigate properties of the biaxiality regions $\{\beta\leq t\}$, $\{\beta\geq t\}$.  
 The following lemma and its corollary below represent the key points where the analyticity assumption is used.

\begin{lemma}
\label{analytic}
Let $\Omega \subset \mathbb{R}^3$ be a bounded open set with boundary of class  $C^1$, and $Q:\overline{\Omega}\to\mathcal{S}_0$.  If  $\Omega$ and $Q$ satisfy $(HP_0)-(HP_3)$, then the set of singular (critical) value of $\beta=\widetilde{\beta}\circ Q$ in $(-1, \bar{\beta})$ is at most countable and can accumulate only at $\bar{\beta}$. As a consequence, 
\begin{itemize}
\item[1)] for any $t \in (-1, \bar{\beta})$ there exists a regular value $t' \in (-1,t)$ such that $\{ \beta\geq t \} \subset \overline{\Omega}$ is a deformation retract of $\{ \beta \geq t'\}$;
\vskip3pt
\item[2)] for any $t \in [-1, \bar{\beta})$ there exists a regular value $t' \in (t,\bar{\beta})$ such that $\{ \beta\leq t \} \subset  \Omega$ is a deformation retract of $\{ \beta \leq t'\}$.
\end{itemize}
\end{lemma}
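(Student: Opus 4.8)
The core of the statement is that $\beta = \widetilde\beta\circ Q$ is a real-analytic function on $\Omega$ (by $(HP_0)$, since $Q\in C^\omega(\Omega)$ and $\widetilde\beta$ is real-analytic on $\mathcal S_0\setminus\{0\}$, and $Q$ never vanishes), and hence its set of critical values has special structure. The plan is first to invoke the analytic Morse–Sard theorem (from \cite{SoSo}, as announced after the statement of Theorem \ref{topology}) to control the critical values, then to use the gradient flow of $\beta$ to build the claimed deformation retracts on intervals of regular values.

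First I would establish that, restricted to any compact subset $K\Subset\Omega$, the function $\beta$ is analytic and therefore the set of its critical values in $\mathbb R$ is at most countable with no accumulation point in the open interval of values achieved on the interior; this is the content of the analytic Sard theorem. The subtlety is that $\overline\Omega$ is not open and $Q$ is only $C^1$ up to the boundary, so one must be careful about which critical values are being counted. For this I would cover $(-1,\bar\beta)$ by an exhaustion: since $\bar\beta = \min_{\partial\Omega}\beta$, for any $t < \bar\beta$ the sublevel-behavior near $t$ involves only points in the \emph{interior} $\Omega$ (points of $\partial\Omega$ have $\beta\geq\bar\beta > t$), so after a routine compactness argument the relevant critical points of $\beta$ with critical value $\leq t + \varepsilon < \bar\beta$ lie in a compact subset of $\Omega$ where analyticity applies. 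This gives that the critical values in $(-1,\bar\beta)$ form an at most countable set whose only possible accumulation point, within $[-1,\bar\beta]$, is $\bar\beta$ itself — proving the first assertion.

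Next, for the deformation retract statements 1) and 2), I would fix $t\in(-1,\bar\beta)$ (resp.\ $t\in[-1,\bar\beta)$) and, using the first part, pick a regular value $t'\in(-1,t)$ (resp.\ $t'\in(t,\bar\beta)$) so close to $t$ that the closed ``slab'' $\{t'\leq\beta\leq t\}$ (resp.\ $\{t\leq\beta\leq t'\}$) contains no critical value of $\beta$ in its \emph{open} range and, crucially, is a compact subset of $\Omega$ — again this uses $t'<\bar\beta$, so the whole slab avoids $\partial\Omega$. On this slab $\nabla\beta$ is nonvanishing and bounded, so I would construct the retraction by flowing along $-\nabla\beta/|\nabla\beta|^2$ (resp.\ $+\nabla\beta/|\nabla\beta|^2$), which moves points at unit speed in the $\beta$-value and carries $\{\beta\geq t'\}$ onto $\{\beta\geq t\}$ (resp.\ $\{\beta\leq t'\}$ onto $\{\beta\leq t\}$) in finite time; truncating the flow with a cutoff that equals $1$ on $\{\beta\leq t\}$ (resp.\ $\{\beta\geq t\}$) and vanishes outside a slightly larger slab produces a genuine deformation retraction that is the identity on the target set. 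Standard ODE theory (the vector field is locally Lipschitz, compactly supported, hence complete) gives a well-defined flow, and the retract is strong.

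The main obstacle I anticipate is the bookkeeping at the boundary: making sure that the deformation retract statements are genuinely about $\{\beta\geq t\}\subset\overline\Omega$ (which may touch $\partial\Omega$) versus $\{\beta\leq t\}\subset\Omega$ (which does not), and that the gradient-flow argument only ever needs to act in the interior where $Q$ is analytic. This is handled precisely by the observation $\bar\beta=\min_{\partial\Omega}\beta$: every slab of values strictly below $\bar\beta$ is interior, so in statement 1) the flow is supported away from $\partial\Omega$ and fixes a neighborhood of $\partial\Omega$ (hence extends continuously to all of $\overline\Omega$), while in statement 2) both sets involved are already compactly contained in $\Omega$. Once this is set up, the analytic input is used only to guarantee the existence of the nearby regular value $t'$, and everything else is a soft deformation argument; the only mildly delicate point is verifying that the truncated flow still realizes the full passage from level $t'$ to level $t$, which follows from a uniform lower bound on $|\nabla\beta|$ on the (compact, critical-value-free) closed slab.
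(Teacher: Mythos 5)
Your first assertion---that the set of singular values in $(-1,\bar\beta)$ is at most countable and accumulates only at $\bar\beta$---is correct, and your direct compactness argument (observing that $\{\beta\leq s\}$ is a compact subset of $\Omega$ for every $s<\bar\beta$ and exhausting $(-1,\bar\beta)$) is a clean variant of the paper's proof, which reaches the same conclusion by a sequential contradiction argument: a sequence of critical points with singular values accumulating below $\bar\beta$ would, up to a subsequence, converge to a boundary point where $\beta<\bar\beta$, contradicting the definition of $\bar\beta$. Both routes rest on the same two ingredients (the analytic Sard theorem of \cite{SoSo} on compact subsets of $\Omega$, and the fact that $\{\beta\leq s\}\Subset\Omega$ for $s<\bar\beta$); yours avoids the contradiction and is perhaps slightly tidier.

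There is, however, a genuine gap in your deformation-retract step. Statements 1) and 2) are asserted for an \emph{arbitrary} $t\in(-1,\bar\beta)$ (resp.\ $t\in[-1,\bar\beta)$), including the case where $t$ is a singular value---and this case is genuinely needed downstream: claim 3) of Proposition~\ref{nosimplyconnected} applies statement 2) precisely at the singular value $t=-1$. Your construction flows along $\pm\nabla\beta/|\nabla\beta|^2$ over the closed slab $\{t'\leq\beta\leq t\}$ (resp.\ $\{t\leq\beta\leq t'\}$) and justifies that the flow realizes the full passage in finite time by claiming a uniform lower bound on $|\nabla\beta|$ there, calling the slab ``critical-value-free.'' But when $t$ is singular, $\nabla\beta$ vanishes at some point of $\{\beta=t\}$, which does lie in the closed slab: the lower bound fails, the rescaled gradient field blows up, and ``standard ODE theory'' does not give a complete flow---indeed for merely smooth functions gradient trajectories near a critical point may have infinite length and fail to converge. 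Your own phrasing signals the tension: you carefully arrange that the \emph{open} range $(t',t)$ contains no critical value, and then immediately assert that $\nabla\beta$ is nonvanishing on the whole closed slab, which includes the possibly-critical level $\{\beta=t\}$. For regular $t$ your argument is fine; for singular $t$ an analytic input beyond Sard's theorem is required. This is precisely where the paper invokes {\L}ojasiewicz's retraction theorem \cite[Theorem~5]{Lo}: the {\L}ojasiewicz gradient inequality guarantees that gradient trajectories of a real-analytic function have finite length and converge, and this is what makes the retraction onto a singular level set well-defined and continuous. Without this (or an equivalent) analytic ingredient, the singular case is not established. As a minor side remark, the signs in your vector fields are reversed: to carry $\{\beta\geq t'\}$ onto $\{\beta\geq t\}$ with $t'<t$ one must flow along $+\nabla\beta/|\nabla\beta|^2$, which increases $\beta$, and dually in case 2).
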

\begin{proof}
Since $\beta=\widetilde{\beta} \circ Q \in C^\omega(\Omega)$, by Sard's theorem for analytic functions (see \cite{SoSo}) the set of singular value is finite on each compact set $K \subset \Omega$, hence all but countably many $t \in (-1,\bar{\beta})$ are regular for $\beta$ in $\Omega$. For such $t$, the level set $\{ \beta =t \}$ is contained in $\Omega$ by definition of $\bar{\beta}$ and it is a finite union of analytic, connected, orientable and boundaryless surfaces. However, since the singular values are finite on compact sets and in view of the definition of $\bar{\beta}$, the only accumulation point for the singular values can be $\bar{\beta}$. Indeed, otherwise there would be a countably many distinct singular value $\beta_n \to \beta_* \in [-1, \bar{\beta})$ and corresponding distinct critical points $x_n \in \{ \beta= \beta_n\} \subset \Omega$ such that up to subsequences $x_n \to x_* \in \{ \beta=\beta_*\}$. Notice that $x_* \in \partial \Omega$, otherwise $x_*$ would be a critical point as well and $\beta_*$ would be a singular value, with coutably many singular values attained in a neighborhood of $x_*$, which contradicts Sard's theorem. Thus $x_* \in  \{ \beta=\beta_*\} \cap \partial \Omega$, which is however impossible by definition of $\bar{\beta}$.
To conclude the proof, we observe that the set of regular value is open. Then, given a regular value $t$, choosing $t'$ sufficiently close to $t$, the conclusion 1) (resp. 2)) follows by a standard retraction following the gradient (resp. negative gradient) flow associated with $\beta$ in $\Omega$ in a neighboorhood of $\{\beta=t \} \subset \Omega$. Actually the same argument applies for any singular value $t$, such value being isolated by the discussion above, and the conclusion follows from real analyticity and the retraction theorem of {\L}ojasiewicz (see \cite[Theorem 5]{Lo}).   
\end{proof}

\begin{corollary}
\label{analyticcor}
Let $\Omega \subset \mathbb{R}^3$ be a bounded open set with boundary of class  $C^1$, and $Q:\overline{\Omega}\to\mathcal{S}_0$.  If  $\Omega$ and $Q$ satisfy $ (HP_0)-(HP_3)$ with $\bar{\beta}=1$ and $Q\in C^\omega(\overline{\Omega}; \mathcal{S}_0)$, then the set of singular (critical) value of $\beta$ in $(-1, 1)$ is finite, and there exists a regular value $t' \in (-1,1)$ such that $\{ \beta=1\} \subset \overline{\Omega}$ is a deformation retract of $\{ \beta \geq t'\} \subset \overline{\Omega}$.
\end{corollary}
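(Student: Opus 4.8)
\textbf{Proof proposal for Corollary \ref{analyticcor}.}

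The plan is to deduce this statement as the boundary-inclusive refinement of Lemma \ref{analytic}, using the extra hypothesis $Q\in C^\omega(\overline\Omega)$ to control the biaxiality function up to $\partial\Omega$. First I would observe that $\beta=\widetilde\beta\circ Q$ is now real-analytic on a neighbourhood of the compact set $\overline\Omega$ (or at least on $\overline\Omega$ itself, after extending $Q$ analytically across $\partial\Omega$, which is possible since both $\partial\Omega$ and $Q_{\rm b}$ are analytic in the relevant case). Then the analytic Sard theorem of \cite{SoSo} applies to $\beta$ on the whole compact $\overline\Omega$, not just on compact subsets of $\Omega$: this immediately gives that the set of singular (critical) values of $\beta$ in $(-1,1)$ is \emph{finite}, which is the first assertion. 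This is exactly the improvement over Lemma \ref{analytic}, where only countability with possible accumulation at $\bar\beta=1$ could be obtained because criticality was controlled only on compacts of $\Omega$.

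Next I would produce the regular value $t'$ and the retraction. Since the critical values in $(-1,1)$ form a finite set, there exists $t'\in(-1,1)$ which is a regular value of $\beta$ and such that the interval $(t',1)$ contains no critical value of $\beta$ in $\overline\Omega$. The claim is then that $\{\beta=1\}$ is a deformation retract of $\{\beta\ge t'\}$. To see this, note that on $\{\beta\ge t'\}\setminus\{\beta=1\}$ the gradient $\nabla\beta$ does not vanish (no critical values in $(t',1)$, and the boundary $\partial\Omega\cap\{\beta\ge t'\}$ is handled separately since there $\beta$ may equal $1$ only where $\bar\beta=1$ forces $Q\in\R P^2$-valued behaviour — in fact by $(HP_1)$ with $\bar\beta=1$ we have $\beta\equiv1$ on $\partial\Omega$ only on the set where $Q=|Q|V_{\rm max}$, and $\partial\Omega\subset\{\beta=1\}$ exactly when the boundary datum is $\R P^2$-valued). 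One then retracts along the positive gradient flow of $\beta$, exactly as in the proof of Lemma \ref{analytic}, pushing $\{\beta\ge t'\}$ onto $\{\beta=1\}$; near the (finitely many) critical points lying on $\{\beta=1\}$ and near $\partial\Omega$ one invokes the {\L}ojasiewicz retraction theorem \cite{Lo} (as in Lemma \ref{analytic}, item 1)) to guarantee the retraction extends continuously. The analyticity of $Q$ up to $\overline\Omega$ is what makes {\L}ojasiewicz applicable at the boundary, which is why $C^\omega(\overline\Omega)$ (rather than merely $C^\omega(\Omega)$) is assumed.

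The main obstacle I anticipate is the behaviour at the boundary $\partial\Omega$: Lemma \ref{analytic} deliberately restricts to $(-1,\bar\beta)$ precisely because the level sets $\{\beta=t\}$ for $t$ near $\bar\beta$ may touch $\partial\Omega$, and one must check that the gradient flow of $\beta$ does not push points out of $\overline\Omega$ and that the flow is well-defined and continuous up to $\partial\Omega$. In the situation $\bar\beta=1$ with analytic data one expects $\partial\Omega\subseteq\{\beta=1\}$ (the $\R P^2$-valued boundary case), so the target set $\{\beta=1\}$ already contains the boundary and the flow stays inside $\overline\Omega$ moving strictly increasing $\beta$ until it reaches $\{\beta=1\}$; the delicate point is uniform control of the flow near $\{\nabla\beta=0\}\cap\{\beta=1\}$ and near $\partial\Omega$, which is exactly where {\L}ojasiewicz's gradient inequality and the associated retraction theorem do the work. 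Everything else — finiteness of critical values, existence of $t'$, and the homotopy equivalence away from critical points — is routine once the analytic Sard theorem is invoked on all of $\overline\Omega$.
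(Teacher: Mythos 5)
Your proposal matches the paper's own argument essentially line for line: analytically extend $Q$ (by power series) to an open set $\widehat\Omega\supset\overline\Omega$, apply the analytic Sard theorem of \cite{SoSo} to $\widehat\beta$ on the compact $\overline\Omega$ to get finiteness of the critical values, observe that $1$ is the (maximal) critical value, pick a slightly smaller regular value $t'$ so that $(t',1)$ is free of critical values, and retract $\{\beta\ge t'\}$ onto $\{\beta=1\}$ by the gradient flow of $\widehat\beta$ in $\Omega$, invoking the {\L}ojasiewicz retraction theorem \cite{Lo} where $\nabla\widehat\beta$ degenerates (including on $\partial\Omega$, which is contained in $\{\beta=1\}$ since $\bar\beta=1$). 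Correct, and the same route as the paper.
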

\begin{proof}
The proof is similar to the one of Lemma \ref{analytic}, so it will be just sketched. In view of the analytic regularity up to the boundary, the tensor $Q$ has an analytic extension $\widehat{Q}$ (simply by power series) to a larger open set $\widehat{\Omega}\supset\overline{\Omega}$. Then the function $\widehat{\beta}:=\widetilde{\beta}\circ \widehat{Q}$ is analytic in $\widehat{\Omega}$ with finitely many critical values in $\overline{\Omega}$ again by Sard's theorem. Clearly $1$ is a critical value (maximum) of $\widehat\beta$. Hence, choosing a slightly smaller regular value $t'$, the conclusion still follows from \cite{Lo} retracting the set $\{ \beta \geq t'\} \subset \overline{\Omega}$ onto $\{ \beta=1\}$ by the gradient flow of $\widehat\beta$ in $\Omega$. 
\end{proof}

The first information on the topology of the biaxiality regions is contained in the following result. 
\begin{proposition}
\label{nosimplyconnected}
Let $\Omega \subset \mathbb{R}^3$ be a bounded open set with boundary of class  $C^1$, and $Q:\overline{\Omega}\to\mathcal{S}_0$.  If $\Omega$ and $Q$ satisfy $ (HP_0)-(HP_3)$, then the biaxiality regions satisfy
\begin{itemize}
\item[1)] $\{ \beta \geq t \}$ is not simply connected for any $t \in (-1, \bar{\beta})$;
\vskip3pt
\item[2)] $\{ \beta \leq t \}$ is not simply connected for any $ t\in (-1, \bar{\beta})$;
\vskip3pt
\item[3)] the negative uniaxial set $\{  \beta=-1\}$ is not simply connected;
\vskip3pt
\item[4)] $\{ \beta=t\}$ contains a surface of positive genus for any regular value $t\in (-1,\bar{\beta})$  of the function $\beta$;
\vskip3pt
\item[5)] if in addition $\bar{\beta}=1$ and $Q\in C^\omega(\overline{\Omega}; \mathcal{S}_0)$, then the set $\{ \beta=1\} \subset \overline{\Omega}$ is not simply connected.
\end{itemize}
\end{proposition}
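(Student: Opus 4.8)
\textbf{Proof strategy for Proposition \ref{nosimplyconnected}.}

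The plan is to prove all five claims by a single degree-counting mechanism, the core of which is a contradiction with $(HP_3)$. First I would set up the following key observation, to be used repeatedly: if $K\subset\overline\Omega$ is a compact set on which $\beta>-1$ everywhere (equivalently, on which $\lambda_{\rm max}$ is simple), then the eigenspace map $V_{\rm max}$ extends continuously to a map $\widehat V\in C^0(K;\R P^2)$; and if moreover $K$ is simply connected, then $\widehat V$ lifts to $\widehat v\in C^0(K;\mathbb S^2)$, which when restricted to $\partial\Omega$ agrees with $\pm v_{\rm max}$ on each spherical component $S_i$ of $\partial\Omega$. The upshot is that if such a simply connected $K$ contains all of $\partial\Omega$ (as it does in the superlevel cases), then summing local degrees and reducing mod $2$ forces $\deg(v_{\rm max},\partial\Omega)\equiv 0\ (\mathrm{mod}\ 2)$, contradicting $(HP_3)$. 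This is exactly the argument already run in the proof of Lemma \ref{notempty}, and I would present it as the engine of the proof.

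For claim 1), suppose $\{\beta\ge t\}$ is simply connected for some $t\in(-1,\bar\beta)$. Since $t>-1$ we have $\beta>-1$ on $\{\beta\ge t\}$, and since $t<\bar\beta=\min_{\partial\Omega}\beta$ — wait, more carefully: by $(HP_1)$, $\bar\beta=\min_{\partial\Omega}\beta>-1$, and $t<\bar\beta$ means $\partial\Omega\subset\{\beta>t\}\subset\{\beta\ge t\}$, so $\{\beta\ge t\}$ is a simply connected compact set containing $\partial\Omega$ on which $\lambda_{\rm max}$ is simple; the engine above gives the contradiction. For claim 3), $\{\beta=-1\}$ is nonempty by Lemma \ref{notempty}; if it were simply connected one would need a slightly different argument since $\{\beta=-1\}$ need not contain $\partial\Omega$ — here I would instead use claim 2) with $t$ close to $-1$: by Lemma \ref{analytic}(2) the set $\{\beta\le t\}$ deformation retracts onto $\{\beta=-1\}$ for $t$ close enough to $-1$ among regular values, hence they have the same fundamental group, and claim 2) for such $t$ yields the conclusion. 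So claim 3) reduces to claim 2).

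Claim 2) is the genuinely different case, since $\{\beta\le t\}\subset\Omega$ does \emph{not} meet $\partial\Omega$, so the engine does not apply directly; this is the main obstacle. The idea is a complementarity/Alexander-duality style argument: if both $\{\beta\le t\}$ were simply connected \emph{and} $\{\beta\ge t\}$ were simply connected we could glue liftings across the separating surface $\{\beta=t\}$ (take a regular value, splitting into the two pieces which are homotopy-retracts of the closed regions by Lemma \ref{analytic}) to produce a global continuous lifting $\tilde v$ of $V_{\rm max}$ on all of $\overline\Omega$, again forcing $\deg(v_{\rm max},\partial\Omega)\equiv 0$. But claim 1) already gives that $\{\beta\ge t\}$ is \emph{not} simply connected; what we actually need is to show that the nontrivial loop in $\{\beta\ge t\}$ is ``responsible'' for the odd degree, so that it cannot simultaneously be killed in the complement. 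Concretely, I would argue: pick a regular value $t\in(-1,\bar\beta)$ (enough by Lemma \ref{analytic}), so $\Sigma:=\{\beta=t\}$ is a smooth closed surface splitting $\overline\Omega$ into $U^-=\{\beta<t\}$ and $U^+=\{\beta>t\}\cup\partial\Omega$, with $\overline{U^\pm}$ retracting onto $\{\beta\le t\}$, $\{\beta\ge t\}$ respectively. On $\overline{U^+}$ (which contains $\partial\Omega$ and has $\beta>-1$) the map $V_{\rm max}$ is defined; on $\overline{U^-}$ (where $\beta\le t<\bar\beta\le 1$, so also $\beta>-1$... no: $\beta$ can be $-1$ inside $U^-$!). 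Here is the correct split: $V_{\rm max}$ is globally defined only where $\beta>-1$. So I would instead run the argument on the set $\{-1<\beta\le t\}$, noting that by Lemma \ref{notempty} the obstruction to lifting $V_{\rm max}$ over $\{\beta>-1\}$ near $\{\beta=-1\}$ is precisely measured by $(HP_3)$. If $\{\beta\le t\}$ were simply connected, then so would be a regular neighborhood, and one could push the lifting of $V_{\rm max}$ from a neighborhood of $\partial\Omega$ through $\{\beta\ge t\}$ only if $\{\beta\ge t\}$ were also simply connected — contradicting claim 1) — \emph{unless} the nontrivial $\pi_1$ of $\{\beta\ge t\}$ maps trivially into $\pi_1(\overline\Omega\setminus\{\beta\le t\})$; and since $\overline\Omega$ is simply connected, $\pi_1(\{\beta\ge t\})\to\pi_1(\overline\Omega)$ is trivial, so a loop $\gamma\subset\{\beta\ge t\}$ bounds a disk $D$ in $\overline\Omega$, which must meet $\{\beta\le t\}$, forcing a nontrivial linking and hence a nontrivial loop in $\{\beta\le t\}$. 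This is the Alexander-duality heart of the matter.

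For claim 4), given a regular value $t\in(-1,\bar\beta)$, $\Sigma=\{\beta=t\}$ is a finite disjoint union of smooth closed orientable surfaces in $\Omega$. If every component of $\Sigma$ had genus zero, each would be a $2$-sphere, and each would be simply connected; I would then show this contradicts claims 1)–2), because a union of $2$-spheres separating $\overline\Omega$ into two regions $\{\beta<t\}$ and $\{\beta>t\}$ (both deformation retracts of the respective closed regions) would — by the engine argument applied spherewise: the lifting $v_{\rm max}$ restricted to each spherical component of $\Sigma$ makes the pull-back bundle $v_{\rm max}^*T\mathbb S^2$ trivial, so its Euler number vanishes — force each of the two regions to carry a well-defined lifting, making $\{\beta\ge t\}$ and $\{\beta\le t\}$ both admit liftings of $V_{\rm max}$, whence both simply-connected-enough to contradict claims 1)–2). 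I would phrase this precisely via the observation already flagged in the introduction: on a spherical biaxial component the Euler class of $v_{\rm max}^*T\mathbb S^2$ vanishes, and the sum of Euler numbers over all components of $\Sigma$ bounding $\{\beta\ge t\}$ must equal $\deg(v_{\rm max},\partial\Omega)\ (\mathrm{mod}\ 2)$ by a cobordism argument, which is odd by $(HP_3)$ — a contradiction. Hence some component has positive genus.

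Finally, for claim 5), assume $\bar\beta=1$ and $Q\in C^\omega(\overline\Omega)$. By Corollary \ref{analyticcor} there is a regular value $t'\in(-1,1)$ such that $\{\beta=1\}$ is a deformation retract of $\{\beta\ge t'\}$; in particular $\pi_1(\{\beta=1\})\cong\pi_1(\{\beta\ge t'\})$, and the latter is nontrivial by claim 1). Hence $\{\beta=1\}$ is not simply connected. I would remark that $\{\beta=1\}\supset\partial\Omega$ when $\bar\beta=1$ (since then $Q=|Q|V_{\rm max}$ on $\partial\Omega$ forces $\beta=1$ there), so the content of the claim is genuinely about the part of $\{\beta=1\}$ inside $\Omega$.

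\textbf{Main obstacle.} The delicate point is claim 2) (and the linking step feeding claim 4)): unlike the superlevel sets, $\{\beta\le t\}$ is interior and disjoint from $\partial\Omega$, so one cannot read off the degree obstruction directly — one must transport it across the separating biaxial surface using simple connectivity of $\overline\Omega$ together with an Alexander-duality / linking argument, keeping careful track that $V_{\rm max}$ is only globally defined on $\{\beta>-1\}$. I would isolate this as a lemma: \emph{if $\{\beta\le t\}$ is simply connected for some $t\in(-1,\bar\beta)$, then $V_{\rm max}$ admits a continuous lifting on $\{\beta>-1\}$, contradicting $(HP_3)$ via Lemma \ref{notempty}'s computation.} Once that lemma is in place, claims 1)–5) follow as above.
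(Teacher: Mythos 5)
Your ``engine'' is not sufficient as stated for claim 1). When $K=\{\beta\ge t\}$ with $t>-1$ a regular value, the boundary of $K$ consists of $\partial\Omega$ \emph{and} the inner surfaces $\{\beta=t\}=\bigcup_j\widetilde S_j$. Stokes' theorem then gives $\deg(\widehat v,\partial\Omega)+\sum_j\deg(\widehat v,\widetilde S_j)=0$, which does \emph{not} directly contradict $(HP_3)$: it merely transfers the odd degree from $\partial\Omega$ to the inner surfaces. One still must show each $\deg(\widehat v,\widetilde S_j)$ vanishes, which is exactly the Euler-class/trivializing-frame computation you invoke for claim 4). The paper runs that computation \emph{inside} the proof of claim 1): assuming all $\widetilde S_j$ are spheres, the two remaining eigenvector fields $\widetilde v_{\rm mid},\widetilde v_{\rm min}$ (which exist because $\widetilde S_{j_*}$ is simply connected) trivialize $\widehat v^*T\mathbb S^2$ over $\widetilde S_{j_*}$, forcing $\deg(\widehat v,\widetilde S_{j_*})=0$, contradicting the forced nonvanishing. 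Thus claims 1) and 4) are proved simultaneously rather than one feeding the other.

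Your claim 2) is where the real gap lies. The Alexander-duality sketch does not close: knowing that a non-contractible loop $\gamma\subset\{\beta\ge t\}$ bounds a disk $D\subset\overline\Omega$ meeting $\{\beta<t\}$ tells you that $\{\beta\ge t\}$ has complicated topology, but it does not by itself produce a non-contractible loop in $\{\beta\le t\}$; ``nontrivial linking'' is not a consequence of $D$ merely intersecting $\{\beta<t\}$. The paper's route is both shorter and sound: by claim 1) and Lemma \ref{simpledomains}, the open set $\{\beta>t\}$ being non-simply-connected forces one component of its boundary to have positive genus, and since $\partial\Omega$ consists of spheres by $(HP_2)$, the positive-genus component must lie in $\{\beta=t\}$. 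Applying Lemma \ref{simpledomains} once more to the open set $\{\beta<t\}$, whose boundary is exactly $\{\beta=t\}$, shows $\{\beta<t\}$ is not simply connected, and the retraction $\{\beta\le t\}\simeq\{\beta<t\}$ concludes. You should replace your linking sketch by this two-sided application of Lemma \ref{simpledomains}.
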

\begin{proof}
In view of Lemma \ref{analytic} it is enough to prove claim 1) and 2) for a regular value $t \in (-1, \bar{\beta})$ since (non)simple connectivity passes to deformation retracts. A similar argument applies to claim 3). Indeed, $t=-1$ is a singular value (minimum), and it is isolated by Lemma \ref{analytic}. Hence, combining claim 2) for regular values $t'$ close to $-1$, the set $\{ \beta \leq t' \}$ is not simply connected, and thus its deformation retract $\{ \beta =-1\}$ is also nonsimply connected.

Let us now prove claims 1) and 4). We assume that $t\in (-1, \bar{\beta})$ is a fixed regular value of $\beta \in C^\omega(\Omega)$. Then 
  the set $\{ \beta \geq t\}$ is the closure of the open set $\Omega \cap \{ \beta>t\}$ which is bounded with smooth boundary. In addition, $\{ \beta \geq t\}$ and $\Omega \cap \{ \beta>t\}$ are homotopically equivalent (by inward-retracting both sets along the normal direction in a small neighborhood of the boundary). So it is enough to show that $\widetilde{\Omega}:=\Omega \cap \{ \beta>t\}$ is not simply connected.
Observe that in view of the regularity of $t$ and the smoothness of the boundary, we can write $\partial \widetilde\Omega$ as a disjoint union
\[ \partial \widetilde{\Omega}=\partial \Omega \cup \{ \beta=t\}= \left( \cup_{i=1}^N S_i \right) \cup \left(\cup_{j=1}^M {\widetilde{S}_j} \right)\, , \]
where each $S_i$ is diffeomorphic to $\mathbb{S}^2$ and each $\widetilde{S}_j$ is compact, analytic, connected, orientable and boundaryless surface because $\{ \beta=t\} \subset \Omega$. Now we claim that there exists an index $j_*$ such that the surface $\widetilde{S}_{j_*}$ has positive genus. In other words, claim 4) holds and the open set $\widetilde{\Omega}$ is not simply connected in view of Lemma \ref{simpledomains}, i.e., claim 1) also holds.

To prove the existence of the distinguished surface $\widetilde{S}_{j_*}$, we argue by contradiction assuming that the genus $g(\widetilde{S}_j)=0$ for each $j=1, \ldots, M$.  Hence the Euler characteristic $\chi(\widetilde{S}_j)=2-2g(\widetilde{S}_j)=2$ for each $j=1, \ldots, M$, and we shall derive a contradiction from this fact. Indeed, notice first that the maximal eigenvalue $\lambda_{\rm max}(Q(x))$ is simple for every $x\in\{ \beta \geq t\}\subset \overline{\Omega}$. Therefore, there is a well defined smooth eigenspace map $\widetilde{V} \colon \{ \beta \geq t\} \to \mathbb{R}P^2$, $\widetilde{V}(x)={\rm Ker} \left( Q(x)-\lambda_{\rm max}(Q(x) ) I\right)$. Since each $\widetilde{S}_j$ are assumed to be of zero genus, both $\widetilde{\Omega}$ and $\{ \beta \geq t\}$ are simply connected by Lemma~\ref{simpledomains}. Therefore the map $\widetilde{V} \in C^1( \{ \beta \geq t\} ; \mathbb{R}P^2)$ has a lifting $\widetilde{v} \in C^1( \{ \beta \geq t\};\mathbb{S}^2)$ as in the proof of Lemma \ref{notempty}.
From Stokes' theorem we infer that
\[ \deg(\widetilde{v},\partial \widetilde{\Omega})=\sum_{i=1}^N \deg(\widetilde{v},S_i) +\sum_{j=1}^M \deg(\widetilde{v},\widetilde{S}_j)=0 \, .\]
 Then assumption $(HP_3)$ yields $\sum_{j=1}^M \deg(\widetilde{v},\widetilde{S}_j)\neq 0$, so that there exists $1\leq j_* \leq M$ such that $\deg(\widetilde{v},\widetilde{S}_{j_*}) \neq 0$.

Now consider $F=T \mathbb{S}^2 \to \mathbb{S}^2$ the (real, oriented, rank-two) tangent bundle of $\mathbb{S}^2$ with its Euler class $e(F)\in H^2(\mathbb{S}^2;\mathbb{Z})$. With respect to a normalized volume form on $\mathbb{S}^2$, we can write $e(F)=2 d{\rm vol}_{\mathbb{S}^2} \in H^2_{dR}(\mathbb{S}^2;\mathbb{R})$, and its Euler number (i.e., Euler characteristic) is $\chi(\mathbb{S}^2)=\int_{\mathbb{S}^2} e(F)=2$.

Using the map $\widetilde{v}$ we can consider the pull-back bundle $\widetilde{v}^{*}F \to \widetilde{S}_{j_*} $ which is a smooth real oriented rank-two vector bundle over $\widetilde{S}_{j_*}$. By functoriality of the Euler class (see e.g. \cite{BT}), we have
\[ \int_{\widetilde{S}_{j_*}} e(\widetilde{v}^{*}F)=\int_{\widetilde{S}_{j_*}} \widetilde{v}^* e(F)=2  \int_{\widetilde{S}_{j_*}} \widetilde{v}^* d{\rm vol}_{\mathbb{S}^2}= 2\deg (\widetilde{v},\widetilde{S}_{j_*}) \neq 0 \, , \] 
hence the pull-back bundle $\widetilde{v}^{*}F \to \widetilde{S}_{j_*}$ is nontrivial. On the other hand, since $\widetilde{S}_{j_*} \subset \{ \beta=t\}$ and $t \in (-1,1)$ is a regular value, each eigenvalue $\lambda \in \sigma(Q(x))= \{\lambda_{\rm max}(x), \lambda_{\rm mid}(x), \lambda_{\rm min}(x)\}$ is simple for every $x\in\widetilde{S}_{j_*}$. Therefore there are well defined eigenspace maps $\widetilde{V}_{\rm mid} , \widetilde{V}_{\rm min} \in C^1( \widetilde{S}_{j_*} ; \mathbb{R}P^2)$ and corresponding liftings $\widetilde{v}_{\rm mid}, \widetilde{v}_{\rm min} \in C^1(\widetilde{S}_{j_*};\mathbb{S}^2)$ (since $\widetilde{S}_{j_*}$ simply connected, i.e., $g(\widetilde{S}_{j_*})=0$). By the spectral theorem we have
$ F_{\widetilde{v}(x)}=T_{\widetilde{v}(x)} \mathbb{S}^2=\{\widetilde{v}(x)\}^\perp=\mathbb{R} \widetilde{v}_{\rm mid}(x) \oplus \mathbb{R} \widetilde{v}_{\rm min}(x)$ for every $x \in \widetilde{S}_{j_*}$. 
Hence the bundle $\widetilde{v}^*F\to \widetilde{S}_{j_*}$ is trivial and $\widetilde{v}_{\rm mid}, \widetilde{v}_{\rm min} \in C^1(\widetilde{S}_{j_*};F)$ provides a trivializing frame (up to orientation), a contradiction.

To prove claim 2) we fix a regular value $t \in (-1, \bar{\beta})$, and we recall that $\partial \{ \beta \leq t\}= \{ \beta= t \} \subset \Omega$ is a finite union of  surfaces of class $C^1$ (in fact analytic) which are  disjoint, embedded, connected and boundaryless. Notice that $\partial \{ \beta \geq t \}= \partial \Omega \cup \{ \beta=t\}$ is also a finite union of $C^1$-surfaces which are disjoint, embedded, connected and boundaryless. Moreover, since $\Omega$ is simply connected and $\{ \beta \geq t \}$ is not (because of claim 1)), one of the components of $\{ \beta=t\}$ has positive genus  by Lemma~\ref{simpledomains}. Applying again Lemma \ref{simpledomains} to $\{ \beta<t\} \subset \Omega$, we infer that $\{ \beta<t \}$ is not simply connected because the total genus of its boundary is positive. Hence $\{ \beta \leq t\}$ is also not simply connected since the two sets are homotopically equivalent. 

Finally, the proof of claim 5) follows from claim 1) for regular values $t \in (-1,1)$ combined with the homotopic equivalence property stated in Corollary \ref{analyticcor}.
\end{proof}

As a direct consequence of the previous proposition, we have the linking property between biaxiality sets.

\begin{proposition}
\label{biaxiallinking}
Let $\Omega \subset \mathbb{R}^3$ be a bounded open set with boundary of class  $C^1$, and $Q:\overline{\Omega}\to\mathcal{S}_0$.  Assume  that $\Omega$ and $Q$ satisfy $(HP_0)-(HP_3)$. If $[t_1,t_2]\subset [-1, \bar{\beta})$ is such that $(t_1,t_2)$ contains no singular value of $\beta=\widetilde{\beta}\circ Q$, then $\{ \beta \leq t_1\} \subset \Omega$ and $\{ \beta \geq t_2\} \subset \overline{\Omega}$ are nonempty compact and disjoint subset of $\overline{\Omega}$, and they are mutually linked. 
\end{proposition}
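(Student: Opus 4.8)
The statement combines two assertions: first, that both biaxiality regions are nonempty, compact, and disjoint; second, that they are mutually linked in the sense of Definition~\ref{linking}. The first part is almost immediate from the earlier results. Nonemptiness of $\{\beta\le t_1\}$ follows from Lemma~\ref{notempty} (every value in $[-1,\beta_0]$ is attained, and $t_1\ge -1$), while nonemptiness of $\{\beta\ge t_2\}$ is trivial since $t_2<\bar\beta\le\beta_0$ and $\beta$ attains $\beta_0$ on $\partial\Omega$. Compactness of both sets is clear because they are closed subsets of the compact set $\overline\Omega$ by continuity of $\beta$ (assumption $(HP_0)$). Disjointness is obvious since $t_1<t_2$. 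The containment $\{\beta\le t_1\}\subset\Omega$ follows from the definition of $\bar\beta$ in $(HP_1)$: on $\partial\Omega$ one has $\beta\ge\bar\beta>t_1$.

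\textbf{The linking argument.} The heart of the matter is to show mutual linking, and I would argue by contradiction. Suppose, say, that $A:=\{\beta\le t_1\}$ is contractible in $\overline\Omega\setminus B$, where $B:=\{\beta\ge t_2\}$. By Proposition~\ref{nosimplyconnected}, claim 2), the set $A$ is \emph{not} simply connected (applied with $t=t_1\in(-1,\bar\beta)$; if $t_1=-1$ one uses claim 3) instead). So $A$ carries a noncontractible loop $\gamma$. Since $(t_1,t_2)$ contains no singular value of $\beta$, the set $\overline\Omega\setminus B=\{\beta<t_2\}$ deformation retracts onto $A=\{\beta\le t_1\}$: indeed, flowing along the (suitably cut-off) negative gradient flow of $\beta$ in the region $\{t_1<\beta<t_2\}$ pushes the whole open region $\{\beta<t_2\}$ down into $\{\beta\le t_1\}$, using that $\beta$ has no critical point in the closed slab $\{t_1\le\beta\le t_2\}\cap\Omega$ (critical points only at the regular/isolated-singular values $t_1$ itself, and that boundary case is handled exactly as in the proof of Lemma~\ref{analytic}, part 2). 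Consequently the inclusion $A\hookrightarrow\overline\Omega\setminus B$ is a homotopy equivalence, so $\gamma$ cannot be contractible in $\overline\Omega\setminus B$ either — contradicting the assumption that $A$ is contractible there. The symmetric statement, that $B$ is not contractible in $\overline\Omega\setminus A$, is obtained the same way: $\overline\Omega\setminus A=\{\beta>t_1\}$ deformation retracts (via the positive gradient flow of $\beta$) onto $\{\beta\ge t_2\}=B$, which is not simply connected by Proposition~\ref{nosimplyconnected}, claim 1); hence a noncontractible loop in $B$ stays noncontractible in $\overline\Omega\setminus A$.

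\textbf{Main obstacle and remaining details.} The technical point that needs care is the deformation-retraction claim when $t_1=-1$ or more generally when $t_1$ is a singular value rather than a regular one: the negative gradient flow is not directly defined there. This is precisely the situation Lemma~\ref{analytic} is designed to handle — by real analyticity, $t_1$ is an \emph{isolated} singular value, so one first retracts $\{\beta<t_2\}$ onto $\{\beta\le t'\}$ for a nearby regular $t'\in(t_1,t_2)$ by the smooth flow, and then invokes the {\L}ojasiewicz retraction theorem \cite{Lo} to retract $\{\beta\le t'\}$ onto $\{\beta\le t_1\}$; the composition gives the desired homotopy equivalence $A\simeq\overline\Omega\setminus B$. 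One should also note that contractibility of a set $A$ in an ambient space $X$ implies triviality of $\pi_1$ of the image of $A$ in $X$ up to conjugacy, which is all that is needed to get the contradiction with the existence of a noncontractible loop. I expect no further essential difficulty: once the two retraction statements are in place, the contradiction is formal. The proof of the final Proposition~\ref{biaxiallinking} is therefore a short deduction from Proposition~\ref{nosimplyconnected} and the retraction lemmas, and I would write it in that order: (i) the elementary nonemptiness/compactness/disjointness observations; (ii) the retraction statements for $\{\beta<t_2\}$ and $\{\beta>t_1\}$; (iii) the contradiction argument for each of the two non-contractibility assertions.
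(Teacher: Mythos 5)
Your proposal is correct and is essentially the paper's argument: nonemptiness, compactness and disjointness from Lemma~\ref{notempty} and continuity, and the linking claim reduced to Proposition~\ref{nosimplyconnected} via the $\pm$gradient-flow deformation retractions of $\{\beta<t_2\}$ onto $\{\beta\le t_1\}$ and $\{\beta>t_1\}$ onto $\{\beta\ge t_2\}$, with the {\L}ojasiewicz retraction theorem handling the singular endpoints exactly as in Lemma~\ref{analytic}. The paper states the final step directly (the inclusion is a homotopy equivalence, so contractibility in the ambient set is equivalent to intrinsic contractibility, which fails since neither set is simply connected), whereas you cast the same logic as a proof by contradiction, but the substance is identical.
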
 
\begin{proof}
In view of Lemma \ref{notempty} the sets $\{ \beta \leq t_1\} \subset \Omega$ and $\{ \beta \geq t_2\} \subset \overline{\Omega}$ are nonempty compact and disjoint subset of $\overline{\Omega}$. Since $[t_1,t_2]\subset [-1, \bar{\beta})$ we clearly have $\{ \beta \leq t_1\} \subset \overline{\Omega} \setminus \{ \beta \geq t_2\}=\{ \beta<t_2\}$ and $\{ \beta \geq t_2\} \subset \overline{\Omega} \setminus \{ \beta \leq t_1\}=\{ \beta>t_1\}$. As $(t_1,t_2)$ contains no singular value, these two sets are homotopically equivalent to $\{ \beta \leq t_1\}$ and $\{\beta \geq t_2\}$. Indeed, the gradient flow of $\pm \beta$ gives a deformation retract of each larger set onto the corresponding smaller one (this is standard if $t_1$ and $t_2$ are regular values, and otherwise, it follows from \cite[Theorem 5]{Lo} as in Lemma~\ref{analytic} thanks to real analyticity). Thus $\{ \beta \leq t_1\}$ is contractible in $\overline{\Omega} \setminus \{ \beta \geq t_2\}$ if and only if it is contractible, and $\{ \beta \geq t_2\}$ is contractible in  $ \overline{\Omega} \setminus \{ \beta \leq t_1\}$ if and only if it is contractible. On the other hand, the sets $\{ \beta \leq t_1\}$ and $\{ \beta \geq t_2\}$ are not simply connected by Proposition \ref{nosimplyconnected}. Hence both of them are not contractible and therefore mutually linked. 
\end{proof}

In the final result of this section, which contains Theorem \ref{topology} as a particular case, we summarize the topological information obtained as a straightforward combination of Lemma \ref{notempty}, Lemma  \ref{analytic}, Corollary \ref{analyticcor}, Proposition \ref{nosimplyconnected} and Proposition \ref{biaxiallinking}. 

\begin{theorem}
\label{Topology}
Let $\Omega \subset \mathbb{R}^3$ be a bounded open set with boundary of class $C^1$, and $Q:\overline{\Omega}\to\mathcal{S}_0$. Assume that $\Omega$ and $Q$ satisfy $(HP_0)-(HP_3)$ (e.g. $\partial \Omega$ has an odd number of connected components and that $Q(x)=\sqrt{3/2} (\overset{\rightarrow}{n}(x) \otimes \overset{\rightarrow}{n}(x)-\frac13 I)$ on $\partial \Omega$, so that $\bar{\beta}=1$). Then the biaxiality sets satisfy:
\begin{itemize}
\item[1)] the set of singular values of $\beta$ in $[-1,\bar{\beta}]$ is at most countable and can accumulate only at $\bar{\beta}$; moreover, for any regular value $-1<t<\bar{\beta}$,  the set $\{ \beta=t\} \subset \Omega$ is a smooth surface with a connected component of positive genus;
\vskip3pt
\item[2)] for any $-1\leq t_1<t_2 < \bar{\beta}$, the sets $\{ \beta \leq t_1\} \subset \Omega$ and $\{ \beta\geq t_2\}\subset \overline{\Omega}$ are nonempty, compact, and not simply connected; 
\vskip3pt
\item[3)] if in addition $Q \in C^\omega(\overline{\Omega})$ and $\bar{\beta}=1$, then $\{ \beta=1 \}\subset \overline{\Omega}$ is also nonempty, compact, and not simply connected; in particular $\{ \beta=1 \}\cap \Omega$ is not empty;   
\vskip3pt
\item[4)] for any $-1\leq t_1<t_2 < \bar{\beta}$ such that $(t_1,t_2)$ contains no singular value, the sets $\{ \beta\leq t_1\}$ and $\{ \beta \geq t_2\}$ are mutually linked.
\end{itemize}
\end{theorem}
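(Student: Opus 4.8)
\textbf{Proof plan for Theorem \ref{Topology}.} The statement is essentially a repackaging of the four technical results established just above it (Lemma \ref{notempty}, Lemma \ref{analytic}, Corollary \ref{analyticcor}, Proposition \ref{nosimplyconnected}, Proposition \ref{biaxiallinking}), so the plan is simply to check that the hypotheses of each of those results are met here and to assemble their conclusions. The only thing requiring a preliminary word is the parenthetical claim that the homeotropic boundary datum $Q_{\rm b}(x)=\sqrt{3/2}(\overset{\rightarrow}{n}(x)\otimes\overset{\rightarrow}{n}(x)-\tfrac13 I)$ together with ``$\partial\Omega$ has an odd number of connected components'' implies $(HP_1)$--$(HP_3)$ with $\bar\beta=1$. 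Indeed, for such $Q_{\rm b}$ one has $\widetilde\beta\circ Q\equiv 1$ on $\partial\Omega$ (it is positive uniaxial), so $(HP_1)$ holds with $\bar\beta=1$; the eigenspace map is $V_{\rm max}(x)=\sqrt{3/2}(\overset{\rightarrow}{n}(x)\otimes\overset{\rightarrow}{n}(x)-\tfrac13 I)$ with canonical local lifting $v_{\rm max}=\overset{\rightarrow}{n}$; and since $\partial\Omega=\cup_{i=1}^N S_i$ with each $S_i$ an embedded sphere by Lemma \ref{simpledomains}, the outward normal restricted to each $S_i$ is (up to sign) the Gauss map, so $\deg(v_{\rm max},S_i)=\pm 1$ and $\deg(v_{\rm max},\partial\Omega)\equiv N \pmod 2$, which is odd. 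Thus $(HP_3)$ holds.

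With $(HP_0)$--$(HP_3)$ in force I would then quote the results one by one. Claim 1): the countability of the singular values of $\beta$ in $[-1,\bar\beta]$ and the fact that they accumulate only at $\bar\beta$ is exactly Lemma \ref{analytic}; that for a regular value $-1<t<\bar\beta$ the level set $\{\beta=t\}$ is a smooth surface contained in $\Omega$ with a component of positive genus is claim 4) of Proposition \ref{nosimplyconnected}. Claim 2): nonemptiness of $\{\beta\le t_1\}$ and $\{\beta\ge t_2\}$ for $-1\le t_1<t_2<\bar\beta$ follows from Lemma \ref{notempty} (the range of $\beta$ contains $[-1,\beta_0]\supseteq[-1,\bar\beta)$ and $\beta\ge\bar\beta$ somewhere on $\partial\Omega$ by definition of $\bar\beta$); compactness is immediate since both are closed subsets of the compact set $\overline\Omega$ (with $\{\beta\le t_1\}\subset\Omega$ because $t_1<\bar\beta$); non-simple-connectivity is claims 1)--3) of Proposition \ref{nosimplyconnected}. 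Claim 3): under the extra hypothesis $Q\in C^\omega(\overline\Omega)$, $\bar\beta=1$, finiteness of the critical values and non-simple-connectivity of $\{\beta=1\}$ are claim 5) of Proposition \ref{nosimplyconnected} together with Corollary \ref{analyticcor}; that $\{\beta=1\}\cap\Omega\neq\emptyset$ follows because a compact, non-simply-connected subset of $\overline\Omega$ cannot be contained in $\partial\Omega=\cup S_i$, each $S_i$ being a sphere (a subset of a disjoint union of spheres that meets no interior point is a closed subset of $\bigsqcup\mathbb S^2$, hence either disconnected into spheres/discs/annuli-free pieces or simply connected if a single component). Claim 4): this is precisely Proposition \ref{biaxiallinking}, applied to the interval $(t_1,t_2)$ which by hypothesis contains no singular value.

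I do not anticipate any genuine obstacle, since all the substance has been carried out in the preceding lemmas and propositions; the proof is a bookkeeping exercise. The one place that deserves a careful sentence rather than a mere citation is the verification that the homeotropic datum realizes $(HP_1)$--$(HP_3)$ (the parenthetical ``e.g.'' in the statement), and the small point in claim 3) that a non-simply-connected compact subset of $\overline\Omega$ must have nonempty intersection with the interior $\Omega$; both are handled as above. Accordingly the write-up will consist of: (i) the reduction of the homeotropic example to $(HP_0)$--$(HP_3)$; (ii) a four-item list matching 1)--4) of the statement to Lemma \ref{notempty}, Lemma \ref{analytic}, Corollary \ref{analyticcor}, Proposition \ref{nosimplyconnected} and Proposition \ref{biaxiallinking}; (iii) the observation on $\{\beta=1\}\cap\Omega\neq\emptyset$. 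This completes the proof.
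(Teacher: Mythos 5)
Your proposal is correct and takes exactly the route the paper itself describes — the theorem is, in the paper's own words, ``a straightforward combination'' of Lemma \ref{notempty}, Lemma \ref{analytic}, Corollary \ref{analyticcor}, Proposition \ref{nosimplyconnected} and Proposition \ref{biaxiallinking}, and your item-by-item matching is the right bookkeeping. One small caution on the parenthetical you added for $\{\beta=1\}\cap\Omega\neq\emptyset$: the general assertion that a closed connected subset of a disjoint union of spheres is simply connected is false (an annulus in $\mathbb{S}^2$ is a counterexample), but you don't need it — since $\bar\beta=1$ and $\beta\leq 1$ everywhere, one has $\partial\Omega\subset\{\beta=1\}$, so $\{\beta=1\}\cap\Omega=\emptyset$ would force $\{\beta=1\}=\partial\Omega$, which is a disjoint union of \emph{full} spheres by Lemma \ref{simpledomains}, each component simply connected, contradicting claim 5) of Proposition \ref{nosimplyconnected}.
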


%%%%%%%%%%%%%%%%%%%%%%%%%%%%%%%%%%%%%%%%%%%%%%%%%%%%%%%
%%%%%%%%%%%%%%%%%%%%%%%%%%%%%%%%%%%%%%%%%%%%%%%%%%%%%%%
  
%=======================
% BIBLIOGRAPHY AND INDEX
%=======================

%%%%%%%%%%%%%%%%%%%%%%%%%%%%%%%%%%%%%%%%%%%%%%%%%%%%%%%
%%%%%%%%%%%%%%%%%%%%%%%%%%%%%%%%%%%%%%%%%%%%%%%%%%%%%%%

\end{document}